\documentclass[a4paper, 10pt]{article}
\usepackage{amsmath}
\numberwithin{equation}{section}
\usepackage{amssymb,esint,color,hyperref}
\usepackage{amscd}
\usepackage{xspace}
\usepackage{fancyhdr}
\usepackage{color}
\usepackage{verbatim}
\usepackage{graphicx}
\usepackage{cite}
\usepackage{stmaryrd}
\usepackage{bbm}
\usepackage{xcolor}
\usepackage{mathrsfs}
\usepackage{srcltx} 

\usepackage{marginnote}
\let\oldmarginpar\marginpar
\renewcommand\marginpar[1]{\-\oldmarginpar[\raggedleft\footnotesize #1]%
	{\raggedright\footnotesize\color{red} #1}} 
\newtheorem{theorem}{Theorem}[section]

\newtheorem{lemma}[theorem]{Lemma}

\newtheorem{remark}[theorem]{Remark}

\def\da{\mathsf{Data}}
\def\B{\mathsf{B}}
\def\A{\mathsf{A}}
\def\M{\mathsf{M}}

\def\G{\mathsf{G}}
\def\H{\mathsf{H}}

\def\K{\mathsf{K}}
\def\L{\mathsf{L}}

\def\T{\mathsf{T}}

\def\b{\mathbf{b}}

\def\HH{\dot{\mathbf{H}}}
\newcommand{\eps}{\varepsilon}
\newenvironment{proof}[1][Proof]{\textbf{#1.} }{\hfill\rule{0.5em}{0.5em}}
{\catcode`\@=11\global\let\AddToReset=\@addtoreset
	\AddToReset{equation}{section}
	
	\AddToReset{theorem}{section}

	\title{Schauder-type Estimates and Log-Critical Well-posedness for the Two-Phase Muskat Problem with Surface Tension}
	\author{
		{{\bf Ke Chen,\thanks{E-mail address: k1chen@polyu.edu.hk, Department of Applied Mathematics, The Hong Kong Polytechnic University, Kowloon, Hong Kong, PR China.}
				~~Ruilin Hu\thanks{E-mail address: huruilin16@mails.ucas.ac.cn, Academy of Mathematics and Systems Science, Chinese Academy of Sciences, Beijing, 100190, China.},
				~~Quoc-Hung Nguyen\thanks{E-mail address: qhnguyen@amss.ac.cn, Academy of Mathematics and Systems Science, Chinese Academy of Sciences, Beijing, 100190, China.}}}}
	
\begin{document}
		\maketitle
        \begin{abstract}
        We prove short-time well-posedness for the Muskat problem with surface tension in the full two-phase setting, allowing different viscosities, arbitrary density contrast, and rigid boundaries. In particular, no Rayleigh--Taylor sign condition on the density contrast is imposed. The interface is assumed to be a graph, uniformly separated from the fixed boundaries, and the initial data may be large in the log-critical class $\dot C^{1,\log^\varkappa}\cap H^1$, with $\varkappa>1$. Thus the result reaches the natural Lipschitz threshold up to a logarithmic correction.
        
The main difficulty is that, in the presence of viscosity jump and boundaries, the interface equation is not given by a closed explicit contour dynamics law. Instead, the normal velocity is recovered through an elliptic transmission problem in moving domains, and the resulting evolution is a genuinely nonlocal quasilinear equation. We derive sharp Schauder-type estimates, adapted to the log-critical scale, for the transmission operators generated by the bulk Darcy flow. These estimates identify the third-order parabolic mechanism produced by surface tension and control the nonlinear coupling between the interface geometry and the elliptic transmission structure.

The proof builds on the Schauder framework developed in Part~I of this series \cite{CHN1}, but requires a new analysis of the Muskat transmission problem in moving domains. Combining this elliptic theory with the contour formulation and time-weighted Hölder estimates, we obtain existence, uniqueness, smoothing, and stability for large interfaces in arbitrary dimension.
\end{abstract}

		\section{Introduction}\label{secintro}
In this paper, we study the Muskat problem with surface tension in the general two-phase setting, allowing both viscosity jump and rigid boundaries. The interface is represented as the graph of a function $\eta(t,x)$, and we address the Cauchy problem for large initial data near the critical Lipschitz threshold. Our main result establishes short-time well-posedness for initial interfaces in the logarithmically refined critical space
\[
\dot C^{1,\log^\varkappa}\cap H^1,\qquad \varkappa>1,
\]
under the natural geometric assumption that the interface remains uniformly separated from the fixed boundaries.

		The Muskat problem \cite{Mus34} describes the evolution of the interface between two immiscible fluids in a porous medium. This model has significant applications in various fields including petroleum engineering, groundwater hydrology, and environmental sciences. When the interface is represented as a graph of a time-dependent function $\eta(t,x)$, the system can be formulated as a free boundary problem governed by Darcy's law with appropriate boundary conditions at the interface and rigid boundaries.
		
		In the physical configuration, consider two fluids with densities $\rho^{\pm}$ and viscosities $\mu^{\pm}$ occupying domains $\Omega_t^{\pm}$ separated by an interface $\Sigma_t = \{(x,\eta(t,x)) : x \in \mathbb{R}^d\}$.
		\begin{figure}[h]\label{figure1}
			\centering
			\includegraphics[width=6cm,height=6cm]{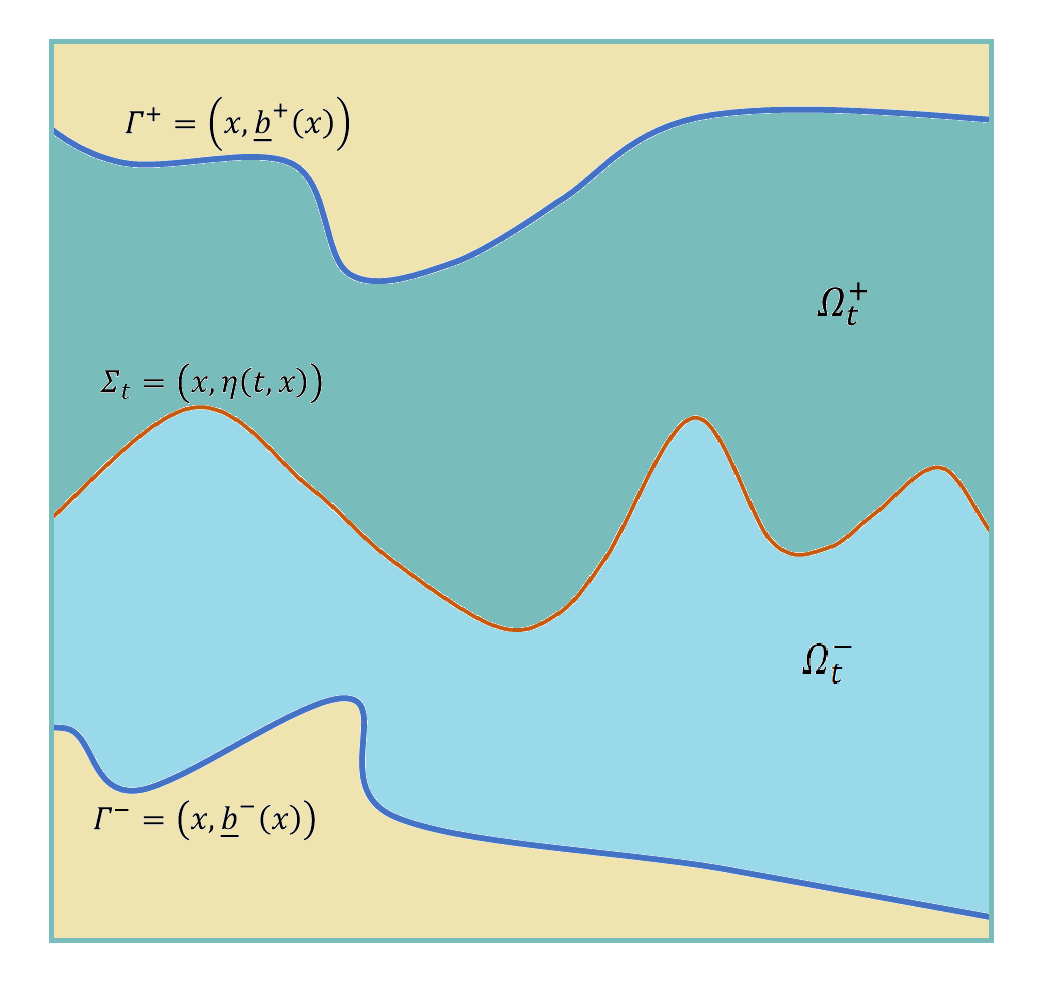}
			\caption{The Muskat problem}
		\end{figure}
        
		The domains are given by
		\begin{align*}
			&\Omega_{t}^{+}=\left\{(x, z) \in \mathbb{R}^{d} \times \mathbb{R}: \eta(t, x)<z<\underline{b}^{+}(x)\right\}, \\
			&\Omega_{t}^{-}=\left\{(x, z) \in \mathbb{R}^{d} \times \mathbb{R}: \underline{b}^{-}(x)<z<\eta(t, x)\right\},
		\end{align*}
		where $\underline{b}^\pm\in W^{1,\infty}(\mathbb{R}^d)$ are the parameterizations of the rigid boundaries
		$$\Gamma^{\pm}=\left\{\left(x, \underline{b}^{\pm}(x)\right): x \in \mathbb{R}^{d}\right\}.$$
		The incompressible fluid velocity $u^\pm$ in each region is governed by Darcy’s law \cite{DF96}
		\begin{align*}
			\mu_{\pm} u^{\pm}+\nabla_{x, z} p^{\pm}=-\rho^{\pm} \mathfrak{g} \vec{e}_{d+1}, \quad\quad\quad \operatorname{div}_{x, z} u^{\pm}=0 \quad \text { in } \Omega^\pm(t).
		\end{align*}
		Here $p^\pm$ is the pressure, and $\mathfrak{g}$ is the Earth’s gravity. In this section, we simply denote $\nabla=\nabla_x$, and $\nabla_{x,z}=(\nabla,\partial_z)$.
		The normal velocity is continuous at the interface
		\begin{align*}
			u^+\cdot n=	u^-\cdot n,
		\end{align*}
		where $n=\langle\nabla \eta\rangle^{-1}(-\nabla \eta, 1)$ and $\langle a\rangle=\sqrt{1+|a|^2}$.
		The pressure jump at the interface is proportional to the mean curvature, which incorporates the effect of surface tension (capillarity):
		\begin{align*}
			p^--p^+=-\sigma\operatorname{div}\left(\frac{\nabla\eta}{\langle\nabla \eta\rangle}\right),\ \ \ \text{on}\ \Sigma_t.
		\end{align*}
		For simplicity in the subsequent formulation, we assume the surface tension coefficient $\sigma=1$. The interface moves with the fluid
		\begin{equation*}
			\partial_{t} \eta=\left.\langle\nabla \eta\rangle u^{+} \cdot n\right|_{\Sigma_{t}}= \left. \mathbf{n}\cdot u^{+}\right|_{\Sigma_{t}},
		\end{equation*}
		where we denote $\mathbf{n}=\langle\nabla \eta\rangle n$.
		At the two rigid boundaries, the no-penetration boundary conditions are imposed
		\begin{align*}
			u^\pm\cdot \nu^\pm=0,\ \ \text{on}\ \Gamma^\pm,
		\end{align*}
		where $\nu^\pm=\frac{1}{\langle\nabla \underline{b}^\pm\rangle}(-\nabla\underline{b}^\pm,1)$.
		
Introducing the modified pressures
\begin{align*}
&q^\pm(x,z)=p^\pm(x,z)+\rho^\pm \mathfrak g z,\\&
\varrho_0=(\rho^- - \rho^+)\mathfrak g,
\end{align*}
the system can be rewritten as the elliptic transmission problem
		\begin{equation}\label{sysq}
			\begin{aligned}
				&\Delta_{x, z} q^{\pm}=0, \ \ \text{in}\ \Omega^\pm(t),\\
				&q^--q^+=-\operatorname{div}\left(\frac{\nabla \eta}{\langle\nabla \eta\rangle}\right)+\varrho_0\eta,\ \ \text{on}\ \Sigma_t,\\
				&\mathbf{n}\cdot\nabla_{x,z}\left(\frac{1}{\mu_+}q^+-\frac{1}{\mu_-}q^-\right) =0,\ \ \text{on}\ \Sigma_t,\\
				& \nu^\pm\cdot\nabla_{x,z}q^\pm=0,\ \ \text{on}\ \Gamma^\pm.
			\end{aligned} 
		\end{equation}
		The evolution equation of the interface reads
		\begin{align}\label{evo}
			\partial_t\eta=-\frac{1 }{\mu_+}\left.\mathbf{n}\cdot\nabla_{x,z}q^+\right|_{\Sigma_t}.
		\end{align}
        In the infinite-depth case, where no rigid boundaries are present, the graph equation enjoys a natural scaling symmetry:
        $$
\eta(t,x)\mapsto \eta_\lambda(t,x)
:=\lambda^{-1}\eta(\lambda^3 t,\lambda x),
\qquad \lambda>0.
$$
Consequently, both
$
\dot W^{1,\infty}(\mathbb R^d)$  and 
$
\dot H^{1+d/2}(\mathbb R^d)
$
are scale-invariant spaces for the interface. 

        \subsection{Related works}
        The well-posedness and regularity theory for the Muskat problem has been extensively studied under a variety of physical assumptions and analytic frameworks.

    For the two-phase Muskat problem,     in the absence of surface tension, the stability of the problem is governed by the Rayleigh--Taylor condition, which, in its simplest form, requires the denser fluid to lie below the lighter one \cite{Escher12}. If this condition fails, the interface may become unstable, leading to ill-posedness or finite-time singularity formation \cite{SCH04}. Even in initially stable configurations, smooth solutions can develop vertical tangents, known as ``turning waves'', and subsequently lose the Rayleigh--Taylor stability condition in finite time \cite{Castro2012, Castro2013}. We refer to the surveys of Gancedo~\cite{Gancedo} and Granero-Belinch\'on--Lazar~\cite{BelinchLazar} for further discussion of singularity formation.

In the stable regime without surface tension, global solutions have been constructed in various settings; see, for instance, \cite{Cameron2019,Cameron2020,CBS,1Constantin2010,CGSV17,CCG16,DengLei2017,Cordoba2011,Cordoba2013}. Foundational local well-posedness results include the works of Yi~\cite{Yi2003}, Ambrose~\cite{Ambrose2004, Ambrose2007}, C\'ordoba and Gancedo~\cite{Cordoba2007Contour, Cordoba2009}, and Cheng, Granero-Belinch\'on, and Shkoller~\cite{CBS}. More recently, local well-posedness in subcritical spaces was obtained by Constantin--Gancedo--Shvydkoy--Vicol~\cite{CGSV17} in Sobolev spaces $W^{2,p}(\mathbb R)$, by Abels--Matioc~\cite{Ables-Matioc} in $W^{s,p}$ with $s>1+1/p$, by Matioc~\cite{Matioctams,1Matioc2018}, and by Alazard--Lazar~\cite{Alazard-Lazar} in $H^s(\mathbb R)$ with $s>3/2$.

We next recall several developments at critical regularity. The first results in this direction concerned small-data solutions in Wiener-type spaces. Constantin, C\'ordoba, Gancedo, and Strain~\cite{1Constantin2010} established global small-data solutions without viscosity jump, and this was later extended to three dimensions in joint work with Piazza~\cite{CCG16}. Gancedo, Garc\'ia-Ju\'arez, Patel, and Strain~\cite{GGPS} further developed this theory and constructed global small-data strong solutions allowing viscosity jump in both two and three dimensions.

There has also been substantial progress in critical Sobolev spaces. C\'ordoba and Lazar~\cite{Cordoba-Lazar-H3/2}, while working in the subcritical space $\dot H^{5/2}$, derived a priori estimates at the critical level $\dot H^{3/2}$. This was later extended to three dimensions by Gancedo and Lazar~\cite{1Gancedo2020Global}. The first fully critical result was obtained by Alazard and the third author, who constructed two-dimensional solutions with initial data in $H^{3/2}\cap \dot W^{1,\infty}$ \cite{Alazard2020}; see also the log-subcritical work \cite{ThomasHfirst}, where unbounded slopes are allowed. Depending on the size of the data, these results yield either local well-posedness for large data or global well-posedness for small data. In subsequent work, the $L^\infty$ slope assumption was removed, yielding well-posedness directly in $H^{3/2}$ \cite{Alazard2020endpoint}. This analysis was later extended to three dimensions in $\dot H^2\cap W^{1,\infty}$ \cite{TH4}.

Further critical small-data results include the works of Cameron, who first studied well-posedness in $\dot W^{1,\infty}\cap L^2$ in two dimensions \cite{Cameron2019}, and later in three dimensions under merely sublinear growth at infinity \cite{Cameron2020}. A notable feature of Cameron's results is that they allow medium-sized data, in the sense that the slope is bounded by $1$ rather than by a perturbatively small constant. Huy Nguyen~\cite{HuyNguyen2021} established well-posedness in the Besov space $\dot B_{\infty,1}^1$, which embeds into the critical Lipschitz class $\dot W^{1,\infty}$. For genuinely large data, global regularity is not expected in general due to possible singularity formation. In one space dimension, under a monotonicity assumption on the initial interface, Deng--Lei--Lin~\cite{DengLei2017} constructed global weak solutions. Local well-posedness in the critical H\"older space $C^1$ was established by the first and third authors together with Xu~\cite{KeC1}. More recently, \cite{CCHNX} obtained local existence of classical solutions in regimes where the product of maximal and minimal slopes is allowed to be large in arbitrary dimension. We also mention the work of Garc\'ia-Ju\'arez, G\'omez-Serrano, Huy Nguyen, and Pausader~\cite{GGNP} on small self-similar solutions emanating from exact corner singularities, as well as \cite{GGHP} on the desingularization of small moving corners.

	The inclusion of surface tension introduces the mean-curvature term, which yields a third-order parabolic mechanism for graph interfaces. We recall the results most relevant to the surface-tension problem.  Huy Nguyen \cite{NguAdv} and Huy Nguyen--Pausader~\cite{1HuyNguyen2020}  proved local well-posedness in any subcritical Sobolev spaces $H^s(\mathbb{R}^d)$ with $s > 1 + \frac{d}{2}$, considering both one fluid or two fluids settings, with or without viscosity jump, with or without rigid boundaries.  The proof employed a paradifferential approach for the Dirichlet-Neumann operator, see also \cite{ABZ11,ABZ14} for related studies in the context of water waves. In two dimensions, A.-V. Matioc and B.-V. Matioc \cite{MATIOC2022} introduced a reformulation based on layer-potential operators and obtained local well-posedness with surface tension and general viscosities in subcritical $L^p$ based Sobolev spaces. More recently, Lazar \cite{Lazar24} proved a global well-posedness result for the 2D surface-tension problem without viscosity jump, with any regular enough initial data which is small in some critical space but possibly large in Lipschitz.  The general two-phase problem with viscosity jump and rigid boundaries leads to a different difficulty, since the normal velocity is recovered through an elliptic transmission problem rather than an explicit closed contour equation. We also mention the recent work of Wan and Yang
\cite{WanYangElasticMuskat}, where the two-dimensional Muskat problem with
an elastic interface was studied via a Dirichlet--Neumann formulation.
Their model exhibits a higher-order quasilinear parabolic structure, in
contrast to the surface-tension-driven problem considered here.

   For the one-phase Muskat problem, a remarkable global theory for large
Lipschitz graphs was recently developed by Dong, Gancedo and Huy Nguyen \cite{DGN2D}.
In two space dimensions, they proved global well-posedness for arbitrary
periodic Lipschitz initial interfaces by combining quantitative estimates
for layer potentials with pointwise elliptic regularity in Lipschitz
domains. This theory was subsequently extended to the three-dimensional
one-phase problem \cite{DGN3D}, where the analysis relies on delicate structural
properties of the Dirichlet-to-Neumann map and layer potentials in
Lipschitz graph domains. In the presence of surface
tension, Dong and Kwon \cite{DKSurfaceTension} recently proved global
well-posedness for small one-phase data in $H^s$ with  $s>d/2+1$.

In the present work, we study the Muskat problem with surface tension in the general two-phase setting illustrated in Figure~\ref{figure1}, allowing viscosity jump and rigid boundaries. Our aim is to develop a well-posedness theory near the critical Lipschitz threshold for large interfaces, a delicate issue already highlighted in \cite{1HuyNguyen2020}.

The main difficulty is that the interface equation \eqref{evo} is not a closed local equation for $\eta$. Rather, the normal velocity is determined implicitly through the elliptic transmission problem \eqref{sysq} in moving domains. Consequently, the resulting interface operator is both nonlocal and quasilinear. In the presence of viscosity jump and rigid boundaries, the structural simplifications available in the equal-viscosity contour-dynamics formulation are no longer present. The key task is therefore to obtain sharp regularity estimates for the transmission operators associated with the bulk Darcy flow and to couple these estimates with the interface evolution.

Our analysis is rooted in the Schauder framework developed in Part~I of this series~\cite{CHN1}, where a general theory was established for parabolic equations governed by nonlocal pseudo-differential operators. We also refer to the work of Constantin, Ignatova, and the third author~\cite{CIN}, where a related freezing-coefficient strategy was used to study the critical SQG equation in bounded domains and to establish global regularity. The present work, however, requires several new ingredients specific to the Muskat transmission problem. We first recast the free-boundary system as a contour-dynamics equation for the interface, in which the normal velocity is determined implicitly through an elliptic transmission problem in moving domains. We then identify the third-order parabolic principal part generated by surface tension and prove Schauder-type estimates, in the logarithmically refined critical scale, for the elliptic transmission operators associated with the bulk Darcy flow. These estimates allow us to close the nonlinear argument in the class\footnote{On $\mathbb T^d$, the condition
$h\in \dot C^{1,\log^\varkappa}\cap H^1$ is equivalent to
$h\in C^{1,\log^\varkappa}$, since the $H^1$ norm controls the
zeroth-order part and periodicity removes the homogeneous ambiguity.}
\[
\dot C^{1,\log^\varkappa}\cap H^1,
\qquad \varkappa>1,
\]
and to prove short-time well-posedness in arbitrary dimension.

		\subsection{Main result}
        
		For $\beta\in[0,1),\varkappa>0$, $k\in\mathbb{N}$, define
		\begin{equation}\label{ctlog}
			\begin{aligned}
				&\|h\|_{\dot C^{\beta,\log^\varkappa}}:=\sup_{x\neq y}\frac{|h(x)-h(y)|}{\log^{-\varkappa}\left(2+\frac{1}{|x-y|}\right)|x-y|^{\beta}},\\
				&\|h\|_{\dot C^{k+\beta,\log ^\varkappa}}=\|\nabla^k h\|_{\dot C^{\beta,\log^\varkappa}}.
			\end{aligned}
		\end{equation}
      Since
\[
\log^{-\varkappa}\left(2+\frac{1}{|x-y|}\right)
\leq C_\varkappa,
\]
we obtain
\[
\|h\|_{\dot C^\beta}
\lesssim_\varkappa
\|h\|_{\dot C^{\beta,\log^\varkappa}}.
\]
Moreover,
\begin{align*}
\log^{-\varkappa}\left(2+\frac{1}{|x-y|}\right)\to 0~~\text{as}~~|x-y|\to 0,
\end{align*}
so $\dot C^{\beta,\log^\varkappa}$ is a logarithmic refinement of the usual Hölder space
$\dot C^\beta$ at small scales. In particular, the following inclusion is strict:
\[
\dot C^{\beta,\log^\varkappa}\subsetneq \dot C^\beta .
\]
We also use the shorthand notation
		$$\|h\|_{\dot C^{\log^\varkappa}}=\|h\|_{\dot C^{0,\log^\varkappa}}.$$ 
        For later use, we record that, for every $\varkappa>1,$
		\begin{equation*}
			\int_{|\alpha|<1}\|\delta_\alpha f\|_{L^\infty}\frac{d\alpha}{|\alpha|^d}\lesssim\frac{1}{\varkappa-1}\|f\|_{\dot C^{\log^\varkappa}}.
		\end{equation*}
We now state the main result of the paper.
       \begin{theorem}\label{thmgm}
Let $\varkappa>1$, $n\in\mathbb N^+$. Assume that
\[
\eta_0\in H^1(\mathbb R^d)\cap
\dot C^{1,\log^{2\varkappa-1}}(\mathbb R^d)
\]
satisfies the separation condition
\[
\operatorname{dist}(\eta_0,\Gamma^\pm)>2\mathbf r>0.
\]
Then there exists a time
\[
T=T(\eta_0,\varkappa,\mathbf r,m)>0,
\]
such that the system \eqref{sysq}--\eqref{evo} admits a unique solution
\[
\eta\in C([0,T];H^1(\mathbb R^d)\cap\dot C^{1}(\mathbb R^d)).
\]Moreover, this solution satisfies
\begin{align}
\sup_{t\in[0,T]}
\left(
\|\eta(t)\|_{H^1\cap\dot C^{1,\log^\varkappa}}
+
t^{\frac{m}{3}}|\log t|^\varkappa
\|\nabla\eta(t)\|_{\dot C^{m}}
\right)
&\leq
C\|\eta_0\|_{H^1\cap\dot C^{1,\log^{\varkappa}}},
\label{fes2}\\
\inf_{t\in[0,T]}\operatorname{dist}(\eta(t),\Gamma^\pm)
&>\mathbf r,
\label{dist*}
\end{align}
for some constant $C>0$.
\end{theorem}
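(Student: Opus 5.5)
The plan is to reduce \eqref{sysq}--\eqref{evo} to a nonlocal quasilinear third-order parabolic equation for $\eta$ and run a fixed-point argument in a time-weighted log-Hölder space. I would use the Schauder theory of Part~I \cite{CHN1} for the linear problem and new elliptic estimates for the transmission operators to control the nonlinearity. First, I would flatten both phases $\Omega_t^\pm$ with a regularized (smoothing) diffeomorphism $(x,z)\mapsto (x,\varrho^\pm(x,z))$ adapted to $\eta$. This is well defined thanks to the separation condition $\operatorname{dist}(\eta,\Gamma^\pm)>\mathbf r$. Writing $q^\pm$ via the Dirichlet--Neumann operators $G^\pm(\eta)$ of the two layers, the transmission system reduces to an equation on $\mathbb R^d$ for the interface trace $f=q^-|_{\Sigma}$:
\[
\tfrac{1}{\mu_+}G^+(\eta)(f-g)+\tfrac{1}{\mu_-}G^-(\eta)f=0,\qquad g=-\operatorname{div}\Big(\tfrac{\nabla\eta}{\langle\nabla\eta\rangle}\Big)+\varrho_0\eta .
\]
The resulting interface equation has the form
\[
\partial_t\eta=-\mathcal G(\eta)\,g,\qquad \mathcal G(\eta)=\tfrac{1}{\mu_++\mu_-}\big(G^+(\eta)^{-1}+G^-(\eta)^{-1}\big)^{-1}\cdot(\text{normalization}).
\]

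Next, I would identify the principal part by freezing coefficients at each point $x_0$. In that regime $\mathcal G(\eta)\approx c(\nabla\eta(x_0))|D|_{\nabla\eta(x_0)}$, a first-order symbol depending on the frozen slope, and the curvature term contributes $-\operatorname{div}(A(\nabla\eta(x_0))\nabla\eta)$. The resulting principal operator is a third-order elliptic pseudo-differential operator with coefficients depending on $\nabla\eta$, of the type treated in \cite{CHN1}. The Schauder estimate there gives, for the linearized problem, control of
\[
\sup_t\Big(\|\eta\|_{\dot C^{1,\log^\varkappa}}+t^{m/3}|\log t|^\varkappa\|\nabla\eta\|_{\dot C^m}\Big)
\]
by the data plus remainders. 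The loss from $\log^{2\varkappa-1}$ to $\log^\varkappa$ in the hypothesis is what I expect the frozen-coefficient commutator summation to cost. The $H^1$ bound comes from the energy identity $\frac{d}{dt}\big(\|\nabla\eta\|^2\text{-type surface energy}+\varrho_0\|\eta\|_{L^2}^2\big)=-(\text{Darcy dissipation})$. The term with $\varrho_0$ of either sign is lower order and is absorbed by Grönwall on short times, so no Rayleigh--Taylor condition is needed.

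The main step, which I expect to be the obstacle, is the Schauder-type estimate for the remainder $\mathcal G(\eta)g-(\text{frozen principal part})$. It must be bounded in $\dot C^{\log^\varkappa}$-type norms by $\|\nabla\eta\|_{\dot C^{\log^\varkappa}}$ times the weighted higher norms, with a constant that is small for short time. This requires estimates for the elliptic transmission problem in the flattened Lipschitz-with-log-modulus strip: expanding the coefficient matrices, showing that the Dirichlet--Neumann operators differ from their constant-slope versions by operators of order $1$ with log-small coefficients, and using $\int_{|\alpha|<1}\|\delta_\alpha f\|_{L^\infty}|\alpha|^{-d}d\alpha\lesssim(\varkappa-1)^{-1}\|f\|_{\dot C^{\log^\varkappa}}$ to sum dyadic scales. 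The rigid boundaries only add smoothing contributions at scale $\mathbf r$.

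Finally, I would close a contraction in the space defined by the left side of \eqref{fes2} on $[0,T]$. I would first construct solutions for mollified data, then pass to the limit by compactness using the uniform bounds. Uniqueness and stability would follow from the same estimates applied to the difference of two solutions in a weaker norm, namely $C([0,T];H^1\cap\dot C^1)$. Continuity of $\eta$ in $\dot C^1$ gives \eqref{dist*} for small $T$.
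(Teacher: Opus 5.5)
There is a genuine gap at the point where your argument has to close. You assert that the remainder is bounded ``with a constant that is small for short time,'' but for large data at this critical regularity nothing in your scheme produces that smallness. For a generic $\eta_0\in\dot C^{1,\log^\varkappa}$, the quantities $\sup_{t\le T}\|\nabla\eta(t)\|_{\dot C^{\log^\varkappa}}$ and $\sup_{t\le T}t^{m/3}|\log t|^\varkappa\|\nabla\eta(t)\|_{\dot C^m}$ do not tend to zero as $T\to0$. The quasilinear error from varying the frozen slope, and the critical explicit and transmission terms, are of the same order as the principal part. They are controlled only quadratically by such norms, so a contraction cannot absorb them.

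A related problem: freezing at $\nabla\eta(x_0)$ gives a symbol whose $x$-dependence is only log-continuous, and its derivative bounds blow up as $t\to0$. The Schauder theory of \cite{CHN1} is therefore not available for your linearized operator. That theory is Lemma~\ref{lemlog} here; it needs the uniform bounds \eqref{condop} on $\nabla_x^j\A$, and it uses H\"older regularity of the symbol in $x$ to produce the small factor $T^{\sigma'/s}$ in the remainder.

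The missing idea is a smooth reference profile. The paper takes $\phi=\eta_0*\rho_{\eps_1}$ with $\|\eta_0-\phi\|_{H^1\cap\dot C^{1,\log^\varkappa}}\le\varepsilon_0$, and freezes the principal operator at $\nabla\phi(x)$ rather than at $\nabla\eta(x)$; see \eqref{evoeta}. The linear operator then has smooth coefficients, and Lemma~\ref{lemlog} applies with constants depending on $\phi$. Every error is bounded by $(\|\eta-\phi\|_{X_T}+T^{1/20}\mathfrak M_\phi)^2$ times a polynomial in the norms, via \eqref{stsm}. This is small because $\eta$ stays close to $\phi$ in the critical norm and $\phi$ is smooth.

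This is exactly what the hypothesis $\eta_0\in\dot C^{1,\log^{2\varkappa-1}}$ is for. Since $2\varkappa-1>\varkappa$, it gives $\|\eta_0-\eta_0*\rho_\varepsilon\|_{H^1\cap\dot C^{1,\log^\varkappa}}\to0$, which fails for general elements of $\dot C^{1,\log^\varkappa}$. It is not a loss incurred in commutator summation, and your outline contains no argument that would produce such a loss.

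The same small factor $\|(f-\phi,g-\phi)\|_{X_T}+T^{1/20}\mathfrak M_\phi$ drives the stability estimate (Lemmas~\ref{lemH1con} and~\ref{lemH2con}), which the paper proves in the full norm $X_T$. Your plan to contract in the weaker norm $C([0,T];H^1\cap\dot C^1)$ sits exactly at the Lipschitz endpoint where both the Schauder and the transmission estimates fail. Compare \eqref{gaes}, and the divergence of the viscosity-jump coefficient for merely Lipschitz interfaces.

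Finally, your Dirichlet--Neumann reformulation is a legitimate alternative to the paper's direct flattening with frozen transmission kernels and the endpoint estimate of Lemma~\ref{lemLip}. However, it leaves all the log-critical estimates for $G^\pm(\eta)$ still to be built.
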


\begin{remark}
Our method also applies, with only minor modifications, to the Muskat problem without surface tension in the Rayleigh--Taylor stable regime. In this setting, the curvature term is absent and the leading-order dynamics are governed by a first-order parabolic operator generated by the density contrast through the elliptic transmission problem. Combining the freezing-coefficient argument with the corresponding first-order Schauder estimates yields local well-posedness for initial data in
\[
H^1\cap C^{1,\log^{2\varkappa-1}},
\qquad \varkappa>1,
\]
under the same separation condition.
\end{remark}
The estimate \eqref{fes2} reflects the regularizing effect of surface tension, while \eqref{dist*} guarantees that the interface $\Sigma_t$ remains uniformly separated from the rigid boundaries $\Gamma^\pm$ (see Figure~\ref{figure1}).

The regularity assumption in Theorem~\ref{thmgm} is logarithmically critical. More precisely, compared with the critical Lipschitz class, we require the additional logarithmic regularity
\[
f_0\in \dot C^{1,\log^{2\varkappa-1}},\qquad \varkappa>1.
\]
This condition is needed to control singular integral terms generated by the viscosity jump. Our method does not appear to close in the pure Lipschitz class.

To illustrate the difficulty, consider the two-dimensional Muskat problem without rigid boundaries, which admits the contour dynamics formulation (see \cite{Matioctams})

		\begin{equation}\label{mus2d}
			\partial_t f(x) = \int_{\mathbb{R}}
			\frac{1 + \partial_x f(x) \Delta_\alpha f(x)}{\left\langle \Delta_\alpha f(x) \right\rangle^2}
			\omega(x - \alpha) \frac{d\alpha}{\alpha},
		\end{equation}
where
\[
\Delta_\alpha f(x)
=
\frac{f(x)-f(x-\alpha)}{\alpha},
\]
and the vorticity density $\omega$ is determined by
\begin{align}
			& \tilde \sigma \partial_x\left(\frac{\partial_x^2f}{\langle\partial_x f\rangle^3}\right)-\varrho_0 \partial_x f(x)=\omega(x)+(\mu_--\mu_+) K(f)(\omega)(x),\label{ome}\\
			&
			K(f)(\omega)(x)=\frac{1}{\pi} \int_{\mathbb{R}} \frac{\partial_x f(x)-\Delta_\alpha f(x)}{\left\langle\Delta_\alpha f(x)\right\rangle^2} \omega(x-\alpha) \frac{d \alpha}{\alpha}.\nonumber
		\end{align}
Here $\tilde\sigma>0$ denotes the surface tension coefficient and
\[
\varrho_0=(\rho^--\rho^+)\mathfrak g
\]
is proportional to the density contrast. The regime $\varrho_0>0$ corresponds to the Rayleigh--Taylor stable configuration, whereas $\varrho_0< 0$ corresponds to the unstable regime.\\

When the viscosities coincide, namely $\mu_-=\mu_+$, the operator $K(f)$ disappears and \eqref{ome} reduces to the equation studied in \cite{CHN1}. In contrast, when $\mu_-\neq\mu_+$, the term $K(f)(\omega)$ acts as a genuinely nonlinear perturbation of the surface-tension dynamics.\\

The leading dissipative mechanism is still provided by the surface-tension operator
\[
\partial_x\Bigl(
\frac{\partial_x^2f}{\langle\partial_xf\rangle^3}
\Bigr),
\]
which is of third order. To derive a priori estimates, one must control terms involving
\[
\int_{\mathbb R}
\frac{1+\partial_xf(x)\Delta_\alpha f(x)}
{\langle\Delta_\alpha f(x)\rangle^2}
\,K(f)(\omega)(x-\alpha)\,
\frac{d\alpha}{\alpha}.
\]
At the linearized level, this requires controlling the singular coefficient
\[
\int_{\mathbb R}
\frac{\partial_xf(x)-\Delta_\alpha f(x)}
{\langle\Delta_\alpha f(x)\rangle^2}
\,\frac{d\alpha}{\alpha}.
\]
Since
\[
\partial_xf(x)-\Delta_\alpha f(x)
=
O\!\left(
\log^{-\varkappa}\!\left(2+\frac1{|\alpha|}\right)
\right),
\]
for $f\in\dot C^{1,\log^\varkappa}$, the above integral is convergent precisely when $\varkappa>1$. This logarithmic gain is therefore sufficient to make the viscosity-jump contribution integrable, whereas the corresponding integral is only borderline convergent in the pure Lipschitz class. This provides a heuristic explanation for the appearance of the space $\dot C^{1,\log^\varkappa}$ in our analysis.\\

Several difficulties are specific to the general Muskat setting. First, the evolution equation is not local and not closed at the level of the interface: the normal velocity must be reconstructed from an elliptic transmission problem in domains depending on \(\eta\). Second, when viscosity jump and rigid boundaries are present, one loses the structural simplifications available in the equal-viscosity contour-dynamics formulation, so the interface equation involves more intricate nonlocal operators. Third, the analysis is carried out at critical regularity, where one cannot rely on classical Lipschitz estimates and must instead work in the logarithmically corrected space \(\dot C^{1,\log^\varkappa}\), with \(\varkappa>1\). Finally, it is essential to propagate the separation condition between the interface and the rigid boundaries, since otherwise the elliptic transmission problem degenerates and the contour formulation ceases to be valid.

The proof of Theorem \ref{thmgm} begins with a reformulation of the free-boundary problem as a nonlocal evolution equation for the interface \(\eta\). This equation is obtained by solving the elliptic transmission problem for the modified pressures \(q^\pm\) in the moving domains $\Omega_t^\pm$. Unlike in the equal-viscosity case, the resulting evolution is not a closed contour equation with an explicit leading operator. The normal velocity is instead determined implicitly through the bulk Darcy flow, and the corresponding interface operator depends nonlinearly both on the geometry of the interface and on the transmission structure across \(\Sigma_t\).\\

A central step of the proof is to extract the leading surface-tension contribution in a form adapted to this logarithmically critical analysis. We derive a freezing-coefficient contour formulation of the interface equation. Schematically,
\[
\partial_t\eta-\mathcal L_{\nabla\eta}\eta
=
\mathcal R[\eta]+N[\eta],
\]
where $\mathcal L_{\nabla\eta}$ is a third-order nonlocal parabolic operator with coefficients frozen at the local slope $\nabla\eta(x)$. More precisely, its principal part is given by
\begin{align}\label{evoeta0}
&\partial_t \eta (x)
-\frac{\tilde c_d\langle\nabla \eta(x)\rangle^2 }{\mu_++\mu_-}
\int _{\mathbb{R}^d}
\frac{B(\nabla\eta(x)):\delta_\alpha \nabla^2 \eta(x)}
{\langle\hat \alpha \cdot \nabla\eta(x)\rangle^{d+1}}
\frac{d\alpha}{|\alpha|^{d+1}}
\nonumber\\
&\qquad =
\frac{\tilde c_d\langle\nabla \eta(x)\rangle^2 }{\mu_++\mu_-}
\int_{\mathbb{R}^d}
\frac{\delta_\alpha B(\nabla \eta)(x): \nabla^2\eta(x-\alpha)}
{\langle\hat \alpha \cdot \nabla\eta(x)\rangle^{d+1}}
\frac{d\alpha}{|\alpha|^{d+1}}
+N[\eta](x),
\end{align}
as shown in Lemma~\ref{lemevoeta}, where $\tilde c_d>0$ is a  constant and 
\begin{equation}\label{ndefB0}
B(b)=\frac{\mathrm{Id}}{\langle b\rangle}
-\frac{b\otimes b}{\langle b\rangle^3}.
\end{equation}
In dimension $d=1$, this principal operator has the leading behavior
\[
\frac{\tilde c_d}{\mu_++\mu_-}
\frac{|D|^3 \eta}{\langle \partial_x \eta\rangle^3}.
\]

The term $N[\eta]$ in \eqref{evoeta0} contains the remaining nonlinear and transmission contributions. Thus the contour formulation separates the leading dissipative singular operator from the lower-order transmission errors.

This separation is essential at logarithmically critical regularity. One cannot close the estimates by treating the full transmission operator as an unstructured nonlocal nonlinearity. Instead, we exploit the explicit third-order parabolic principal operator and estimate $N[\eta]$ using sharp Schauder-type bounds for elliptic transmission problems in moving domains.

A second main ingredient is therefore an endpoint regularity theory for such transmission problems. After flattening the interface and freezing the geometry, we are led to divergence-form elliptic systems with coefficients depending only on the tangential variable and with jump conditions across a flat interface. More precisely, set
\[
\mathbb R^{d+1}_\tau=
\begin{cases}
\mathbb R^d\times(0,\tau), & \tau>0,\\
\mathbb R^d\times(\tau,0), & \tau<0.
\end{cases}
\]
Let
\[
A(x)\in C^{\log^\varkappa}
\bigl(\mathbb R^d;\mathbb R^{(d+1)\times(d+1)}_{\rm sym}\bigr)
\]
be uniformly elliptic, and set
\[
\mu(z)=\mu_+\mathbf 1_{z\ge 0}+\mu_-\mathbf 1_{z<0}.
\]
We consider the model transmission problem
\[
\begin{aligned}
\operatorname{div}_{x,z}\bigl(\mu(z)^{-1}A(x)\nabla_{x,z}u\bigr)
&=
\operatorname{div}_{x,z}\bigl(\mu(z)^{-1}g\bigr),
&&\text{in } \mathbb R^{d+1}_{2r}\cup \mathbb R^{d+1}_{-2r},\\
[u]_{z=0}
&=h,
&&\text{on } \{z=0\},\\
e_{d+1}\cdot
\bigl[\mu(z)^{-1}(A(x)\nabla_{x,z}u-g)\bigr]_{z=0}
&=0,
&&\text{on } \{z=0\},
\end{aligned}
\]
where
\[
[w]_{z=0}=w(0^-)-w(0^+).
\]
For this model problem, we prove a log-Dini Lipschitz estimate of the form
\[
\begin{aligned}
\|\nabla_{x,z}u\|_{L^\infty(\mathbb R^{d+1}_{r/2}\cup \mathbb R^{d+1}_{-r/2})}
\leq\;&
C\|h\|_{C^{1,\log^\varkappa}(\mathbb R^d)}
+
C\|g\|_{C^{\log^\varkappa}(\mathbb R^{d+1}_{2r}\cup \mathbb R^{d+1}_{-2r})}
\\
&+
C\Bigl(
\|\nabla_{x,z}u\|_{L^2(\mathbb R^{d+1}_{2r}\cup \mathbb R^{d+1}_{-2r})}
+
\|g\|_{L^2(\mathbb R^{d+1}_{2r}\cup \mathbb R^{d+1}_{-2r})}
\Bigr),
\end{aligned}
\]
see Lemma~\ref{lemLip} for the precise statement. This endpoint estimate gives the required control of the transmission error $N[\eta]$ in the critical topology.

With the parabolic principal structure and the endpoint transmission estimates in hand, we derive a priori bounds in
\[
\dot C^{1,\log^{\varkappa}}\cap H^1,
\]
together with time-weighted higher-order Hölder estimates. 
The stronger assumption $\eta_0\in H^1(\mathbb R^d)\cap \dot C^{1,\log^{2\varkappa-1}}(\mathbb R^d)$ in Theorem \ref{thmgm} is used only to construct a smooth reference profile $\phi$ such that
\begin{equation}
\|\eta_0-\phi\|_{\dot C^{1,\log^\varkappa}\cap H^1}
\end{equation}
is sufficiently small. Around this reference profile,
 the principal operator is frozen and the transmission terms are estimated. We then solve the approximate problems, obtain estimates uniform in the approximation parameter, and pass to the limit by compactness. Finally, uniqueness follows from stability estimates for the difference of two solutions.

\subsection{Organization of the paper}

The remainder of the paper is organized as follows.

In Section~\ref{secsch}, we establish the Schauder-type estimates for nonlocal parabolic equations. These estimates provide the main analytical framework for treating the logarithmically critical regularity.

In Section~\ref{seccon}, we derive the contour-dynamics formulation of the Muskat problem. A key point is the freezing-coefficient argument, which allows us to identify the principal third-order parabolic operator and separate it from the lower-order transmission errors.

Section~\ref{secest} is devoted to the estimates of the nonlinear terms. We control both the explicit critical nonlinear contribution and the implicit transmission term coming from the elliptic system.

Finally, in Section~\ref{secpro}, we prove the main theorem by deriving a priori estimates, constructing solutions through smooth approximation, and establishing uniqueness and stability.\vspace{0.3cm}\\
		{\bf{Data availability statement.}} This research does not have any associated data.\vspace{0.1cm}\\
		{\bf {Conflict of Interest.}} The authors declare that there are no conflicts of interest.\vspace{0.1cm}\\
		{\bf{Acknowledgments.}}  The research of Quoc-Hung Nguyen was supported by the CAS Project for Young Scientists in Basic Research, Grant No. YSBR-031; the National Key R\&D Program of China under Grant No. 2021YFA1000800; and the NSFC under Grant Nos. 1251101538 and 12595282.  Ke Chen is supported by the Research Centre for Nonlinear Analysis at The Hong Kong Polytechnic University.\vspace{0.1cm}\\
        {\bf{Note}.} {This manuscript corresponds to one part of a study initially published on arXiv (arXiv:2407.05313). The original comprehensive preprint has been divided into a series of papers, each separately addressing the well-posedness of certain free boundary problems, the general case of the Muskat problem, and problems with fixed boundaries. The present article forms Part II of this series.}

		\section{Schauder-type estimates}\label{secsch}

In this section, we develop the Schauder-type estimates needed to treat the transmission operators arising in the Muskat problem. While the argument builds on the framework introduced in ~\cite{CHN1}, the estimates are adapted here to the logarithmically corrected Hölder scale and to endpoint $L^p$ bounds, both of which play a crucial role in the critical regularity analysis below.\\

The main model is a nonlocal parabolic equation driven by a pseudo-differential operator of order $s>0$. Such estimates will later be applied to the frozen-coefficient operators arising from the surface-tension Muskat equation and to the elliptic transmission errors generated by the viscosity jump. In particular, the logarithmic estimates below are designed to close the nonlinear argument in the space
\[
\dot C^{1,\log^{\varkappa}}\cap H^1,
\qquad \varkappa>1.
\]

We first introduce the notation used throughout the paper.
		\begin{itemize}
			\item \textit{Fourier Transform.} For a function $f \in L^1(\mathbb{R}^d)$, we define its Fourier transform and inverse Fourier transform as
			\begin{equation*}
				\begin{aligned}
					&\mathcal{F}(f)(\xi)=\frac{1}{(2\pi)^{\frac{d}{2}}}\int_{\mathbb{R}^d}f(x)e^{-ix\cdot \xi}dx,\\
					&\mathcal{F}^{-1}(g)(x)=\frac{1}{(2\pi)^{\frac{d}{2}}}\int_{\mathbb{R}^d}g(\xi)e^{ix\cdot \xi}d\xi.
				\end{aligned}
			\end{equation*}
			We also write $\hat f(\xi)=\mathcal{F}(f)(\xi)$ for short. 
			It is easy to check $\mathcal{F}\mathcal{F}^{-1}(f)=\mathcal{F}^{-1}\mathcal{F}(f)=f$.
			Moreover, 
			\begin{align*}
				\mathcal{F}(f\ast g)(\xi)=(2\pi)^\frac{d}{2}\hat f(\xi)\hat g(\xi).
			\end{align*}
					
					\item \textit{Finite Difference Operators.}
					\begin{align*}
						&\delta_\alpha f(x)=f(x)-f(x-\alpha),\\&
						\Delta_\alpha f(x)=\frac{\delta_\alpha f(x)}{\alpha}~~\text{in } ~\mathbb{R},\quad\quad\quad\Delta_\alpha f(x)=\frac{\delta_\alpha f(x)}{|\alpha|}~~\text{in} ~\mathbb{R}^l, ~l\geq 2.
					\end{align*}
					The finite difference operators appearing in this paper are all in the spatial variable, \textit{ i.e.} for $h(t,x)$ defined in $[0,T]\times \mathbb{R}^d$, we denote $\delta_\alpha h(t,x)=(\delta_\alpha h(t,\cdot))(x)$.
					\item \textit{Fractional Laplacian operators.}
					\begin{equation}\label{deffracla}
						\Lambda=(-\Delta)^{\frac{1}{2}},\quad\quad\quad\Lambda^a=(-\Delta)^{\frac{a}{2}}.
					\end{equation}
					\item \textit{Gradient Operator.} For a vector-valued function $f = (f^1, \dots, f^n) \in C^1(\mathbb{R}^d; \mathbb{R}^n)$, the gradient $ \nabla f $ is the $d \times n$ matrix given by
					$$
					(\nabla f)_{ij}=\partial_{i}f^j.
					$$
					\item \textit{Multi-Index Derivatives.}
					For any function $f$, and any $\beta=(\beta_1,...\beta_d)\in\mathbb{N}^d$, we denote $\partial_x^\beta f=\partial_1^{\beta_1}...\partial_d^{\beta_d}f$. With a slight abuse of notation, for $m\in\mathbb{N}$, we denote $\nabla^mf=(\partial^\beta_x f)_{|\beta|=m}$, where $|\beta|=\sum_{i=1}^d\beta_i$.
					\item \textit{Friedrichs Mollifier.} We denote $\rho_\eps$ the standard Friedrichs mollifier with parameter $\eps>0$.
					\item \textit{Integer Part.} For $s\in\mathbb{R}^+$, denote $[s]=\max\{n:n\in\mathbb{N},n\leq s\}$ the integer part of $s$.
					\item \textit{Time-Space Norms.}  For $f:[0,T]\times \mathbb{R}^d\to \mathbb{R}^m$, we denote $\|f\|_{L^\infty(I;X)}=\sup_{t\in I}\|f(t)\|_X$, where $I\subset[0,T]$ and $X$ is a Banach space equipped with norm $\|\cdot\|_X$. Specifically, when $I=[0,T]$, we simply write $\|\cdot\|_{L^\infty_TX}$.
					\item \textit{Indicator Function.} For a measurable set $A \subset \mathbb{R}^d$, define  
					\begin{equation*}\label{indicaf}
						\begin{aligned}
							\mathbf{1}_A(x):=\begin{cases}
								1,\quad\quad x\in A,\\
								0,\quad\quad x\notin A.
							\end{cases}
						\end{aligned}
					\end{equation*}
                    \item \textit{Besov Spaces.} For $f\in \mathcal{S}'(\mathbb{R}^d)$, $p\in[1,\infty]$ and $\kappa\in(0,1)$, we denote the $\dot B_{p,\infty}$ norm as
                    \begin{equation*}
                        \|f\|_{\dot B_{p,\infty}^\kappa}=\sup_{\alpha}\frac{\|\delta_\alpha f\|_{L^p}}{|\alpha|^\kappa},
                    \end{equation*}
                    and the Banach space $\dot B_{p,\infty}^\kappa$ equipped with the corresponding norm. Furthermore, for any $m\in\mathbb{N}$, we denote $\dot B_{p,\infty}^{m+\kappa}=\{f|\nabla^mf\in\dot B_{p,\infty}^\kappa\}$.
				\end{itemize}

				Consider the Cauchy problem
				\begin{equation}\label{eqpara}
					\begin{aligned}
						&	\partial_{t} u(t, x)+\mathcal{L}_{s} u(t, x)=\mathcal{P}_{\gamma} f(t, x)+g(t,x) \quad \text { in } (0, \infty)\times\mathbb{R}^{d},\\
						&	u|_{t=0}=u_0,
					\end{aligned} 
				\end{equation}
				where $0<\gamma<s$ are fixed parameters.\\
                
                The linearized equation associated with \eqref{evoeta0} fits into the form
\eqref{eqpara} with \(s=3\), reflecting the third-order parabolic structure
generated by surface tension. We therefore formulate the Schauder estimates in
the following general pseudo-differential setting, which will later be applied
to the frozen-coefficient operators arising in the Muskat equation.\\

To make the assumptions precise, we impose the following conditions on the parameters, forcing terms, and principal operator:\vspace{0.2cm}\\
				1. {\bf Regularity Parameters:} 
				We  fix $\varkappa>1$, $m\in\mathbb{N}_+$, and  select parameter $\kappa $ such that   \begin{align}
					\label{defpara}
					\kappa,\kappa+\gamma,\kappa+\gamma-s\notin\mathbb{N}, \ \ \ \text{and}  \ \ s-\gamma<\kappa<s.
				\end{align}
				For simplicity, denote $\kappa_0=\gamma-s+\kappa$. \vspace{0.2cm}\\
				2. {\bf Forcing Terms:} The external input $f:(0,T)\times \mathbb{R}^d\to \mathbb{R}^N$ is a given forcing term satisfying $\da_T(f,g)<\infty$, where
				\begin{align}
					\da_T(f,g)= \sup_{t\in[0,T]}&|\log t|^\varkappa (t^{\frac{\kappa}{s}}\|f(t)\|_{\dot C^{\kappa_0}}+t^{\frac{m+\kappa}{s}}\|f(t)\|_{\dot C^{m+\kappa _0}})\nonumber\\
					&+ \int_0^T\|g(\tau)\|_{\dot C^{\log^{\varkappa}}}d\tau+\sup_{t\in[0,T]}|\log t|^\varkappa t^{\frac{m}{s}+1}\|g(t)\|_{\dot C^{m}}.\label{defda1}
				\end{align}
                For the result in $L^p$ setting ($p\in[1,\infty]$), we suppose $\da_T^p(f,g)<\infty$ with
                \begin{equation}\label{defda2}
					\begin{aligned}
						\da_T^p(f,g)=\sup_{t\in[0,T]}(t^{\frac{\kappa}{s}}\|f(t)\|_{\dot B_{p,\infty}^{\kappa_0}}+t^{\frac{m+\kappa}{s}}\|f(t)\|_{\dot B_{p,\infty}^{m+\kappa_0}})+\|g\|_{L^1_TL^p}+\sup_{t\in[0,T]}t^{\frac{m}{s}+1}\|\nabla^mg(t)\|_{L^p}.
					\end{aligned}
				\end{equation}
				3. {\bf Principal Operator:} The Pseudo-differential operators $\mathcal{L}_{s}$  of order $s>0$ is defined by
				\begin{align}\label{defop}
					&\mathcal{L}_su(t,x)=(2\pi)^{-\frac{d}{2}}\int_{\mathbb{R}^d}\A(t,x,\xi)\hat u(t,\xi)e^{ix\cdot\xi}d\xi,\quad\quad\quad
				\end{align}
				where $\hat u(t,\xi)$  denotes the Fourier transform of $u(t,x)$ with respect to the spatial variable $x$, $\A(t,x,\xi)\in \mathbb{R}^{N\times N}$ is a  matrix satisfying 
				\begin{equation}\label{condop}
					\begin{aligned}
						&\quad\A(t,x,\xi)\geq c_0|\xi|^s\mathrm{Id},\\
						&\sum_ {j,l\leq d+ m+s+2 } {|\xi|^{l-s}}{\left|\nabla _x^j\nabla^{l}_\xi \A(t,x,\xi)\right| }\leq c_1,
						\ \ \forall  \xi \neq 0, (t,x)\in(0,T)\times\mathbb{R}^d,
					\end{aligned} 
				\end{equation}
				for some constants $0<c_0<c_1$. Here $\mathrm{Id}$ is the $N\times N$ identity matrix. 
				And $\mathcal{P}_{\gamma}$ is a differential operator defined by 
				\begin{align}\label{defPg}
					\mathcal{P}_{\gamma} f(t,x)=\int_{\mathbb{R}^d} \B(\xi)\hat f(t,\xi)e^{ix\cdot\xi}d\xi,\ \ \ \ 0<\gamma\leq s,
				\end{align}
				where the symbol $\B(\xi)\in \mathbb{R}^{N\times N}$ satisfies the growth condition
				\begin{align*}
					\left|\nabla^{j}_\xi \B(\xi)\right| \lesssim |\xi|^{\gamma-j},\ \ \ \forall 0\leq j\leq d+m+\kappa+2.
				\end{align*}
				~~
				We now apply the freezing-coefficient method to \eqref{eqpara}. Fix
$x_0\in\mathbb R^d$. Freezing the symbol at $x_0$, we rewrite \eqref{eqpara} as
				\begin{equation}\label{eqpafi}
					\begin{aligned}
						&\partial_tu(t,x)+(2\pi)^{-\frac{d}{2}}\int _{\mathbb{R}^d}\A(t,x_0,\xi)\hat{u}(t,\xi)e^{ix\cdot\xi}d\xi=\mathcal{P}_{\gamma} f(t, x)+g(t,x)+R_{x_0}[u](t,x),
					\end{aligned}
				\end{equation}
				with 
				\begin{align}\label{defRx0}
					R_{x_0}[u](t,x)=(2\pi)^{-\frac{d}{2}}\int _{\mathbb{R}^d}(\A(t,x_0,\xi)-\A(t,x,\xi))\hat{u}(t,\xi)e^{ix\cdot\xi} d\xi.
				\end{align}
                Thus the variable-coefficient equation is viewed as a constant-coefficient
equation, with the variation of the coefficients treated as a perturbative
remainder.\\
Let $\K_{x_0}(t,\tau,x)$ denote the fundamental solution of the adjoint frozen
system
				\begin{equation}\label{defk0}
					\begin{aligned}
						&-\partial_\tau \K_{x_0}(t,\tau,x)+(2\pi)^{-\frac{d}{2}}\int _{\mathbb{R}^d}\hat \K_{x_0}(t,\tau,\xi)\A(\tau,x_0,\xi)e^{ix\cdot \xi}d\xi=0, \ \ \ \ \ (\tau,x) \in (0,t)\times \mathbb{R}^d,\\
						&\lim_{\tau\to t^-}\K_{x_0}(t,\tau,x)=\delta(x)\mathrm{Id},\ \ \ \ \ x\in\mathbb{R}^d,
					\end{aligned}
				\end{equation}
				here $\mathrm{Id}$ is the $N\times N$ identity matrix,  $\hat \K_{x_0}(t,\tau,\xi)$ is the Fourier transform of  $\K_{
					x_0}(t,\tau,x)$ in $x$-variable, and $\delta(x)$ is the Dirac delta function. 
                By Duhamel's formula, for every $x_0\in\mathbb R^d$, the solution can be written as
				\begin{align}
					u(t,x)=&\int_{\mathbb{R}^d} \K_{x_0}(t,0,x-z)u_0(z)dz+\int_0^t \int _{\mathbb{R}^d}\K_{x_0}(t,\tau,x-z)(\mathcal{P}_\gamma f+g)(\tau,z)dzd\tau\nonumber \\
					&+\int_0^t\int_{\mathbb{R}^d} \K_{x_0}(t,\tau,x-z)  R_{x_0}[u](\tau,z)dzd\tau\nonumber\\
					:=	&u_{L,x_0}(t,x)+u_{N,x_0}(t,x)+u_{R,{x_0}}(t,x),\label{uform}
				\end{align}
				which holds for any $x_0\in\mathbb{R}^d$.\\
                
                We shall use the following kernel estimates for the frozen fundamental solution. 
They are consequences of the ellipticity and symbol bounds in \eqref{condop}; 
see \cite{CHN1} for the proof.
				\begin{lemma}\label{lemfourierK}The following pointwise estimates hold for $\K_{x_0}(t_1,t_2,x)$:
					\begin{align}
						&\sup_{x_0\in\mathbb{R}^d}|\K_{x_0}(t_1,t_2,x)|\lesssim \frac{t_1-t_2}{((t_1-t_2)^\frac{1}{s}+|x|)^{d+s}},\ \ \ \forall \ 0<t_2<t_1,\ x\in\mathbb{R}^d,\label{ptKd}\\
						&\sup_{x_0\in\mathbb{R}^d}|\nabla_x^l\K_{x_0}(t_1,t_2,x)|\lesssim \frac{1}{((t_1-t_2)^\frac{1}{s}+|x|)^{d+l}},\ \ \forall l\in\mathbb{N}.\label{ptKd1}
					\end{align}
					And 	for any $\alpha\neq 0$, $0\leq \sigma<1$,
					\begin{align}\label{delKL1}
						\left\|\sup_{x_0\in\mathbb{R}^d}	|\nabla _x^l\delta_{\alpha}\K_{x_0}|(t_1,t_2)|\cdot|^{\sigma}\right\|_{L^1_x}\lesssim |\alpha|^{\sigma}\min\left\{1,\frac{|\alpha|}{(t_1-t_2)^{\frac{1}{s}}}\right\}^{1-\sigma} (t_1-t_2)^{-\frac{l}{s}},\ \ \ \ \forall l\in\mathbb{N}.
					\end{align}  
					As a direct result of \eqref{ptKd} and \eqref{ptKd1}, we obtain 
					\begin{align}\label{kerL1}
						\left\|\sup_{x_0\in\mathbb{R}^d}|\nabla _x^l\K_{x_0}|(t_1,t_2)|\cdot|^{\sigma}\right\|_{L^1_x}\lesssim (t_1-t_2)^{-\frac{l-\sigma}{s}},\ \ \forall l\in\mathbb{N},\ 0\leq \sigma<l.
					\end{align} 
					The implicit constants in \eqref{ptKd}- \eqref{kerL1} depend on $c_0$ and $c_1$. Specifically, we have 
					\begin{align}\label{KL1}
						\left\|\sup_{x_0\in\mathbb{R}^d}|\K_{x_0}|(t_1,t_2)\right\|_{L^1}\leq C,
					\end{align}
					with $C>0$ that is independent of $c_0,c_1$.
				\end{lemma}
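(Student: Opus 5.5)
The frozen symbol $\A(\tau,x_0,\xi)$ does not depend on $x$, so we can solve \eqref{defk0} explicitly on the Fourier side:
\[
\hat\K_{x_0}(t_1,t_2,\xi)=(2\pi)^{-\frac d2}\,\mathcal T\exp\Bigl(-\int_{t_2}^{t_1}\A(\tau,x_0,\xi)\,d\tau\Bigr),
\]
where $\mathcal T\exp$ denotes the time-ordered exponential. In the scalar or commuting case this is just the exponential of the integral.

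\emph{Decay and smoothness of the symbol.} Set $\theta=t_1-t_2$. By the ellipticity in \eqref{condop}, $|\hat\K|\lesssim e^{-c_0\theta|\xi|^s}$. Differentiating the ODE in $\xi$ and using the symbol bounds $|\nabla_\xi^l\A|\le c_1|\xi|^{s-l}$, an induction (Duhamel in $\tau$ for the derivatives) gives
\[
|\nabla_\xi^l\hat\K|\lesssim_{c_0,c_1}\sum_{k\le l}(\theta|\xi|^s)^k|\xi|^{-l}e^{-c\theta|\xi|^s}\lesssim |\xi|^{-l}e^{-c\theta|\xi|^s/2},\qquad l\le d+m+s+2 .
\]
Because these bounds are uniform in $x_0$, taking the supremum over $x_0$ is harmless.

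\emph{Pointwise bounds \eqref{ptKd}, \eqref{ptKd1}.} Rescale $\xi=\theta^{-1/s}\zeta$ and $x=\theta^{1/s}y$. This reduces everything to the case $\theta=1$, and then $\nabla_x^l\K=\theta^{-(d+l)/s}G_l(y)$ with $G_l$ bounded. For decay in $|y|\ge1$, integrate by parts $d+l+1$ times in $\zeta$, or more precisely use a Littlewood--Paley splitting at frequency $|\zeta|\sim|y|^{-1}$. The symbol $\hat G_l$ behaves like $|\zeta|^{l}$ near $0$ and is homogeneous of order $l$ up to a smooth factor. This yields $|G_l(y)|\lesssim|y|^{-d-l}$, which is \eqref{ptKd1}.

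For $l=0$ the symbol at $\zeta\to0$ equals $1$ minus something of size $\theta|\xi|^s$. The constant $1$ contributes nothing away from the origin, and the remainder, which is homogeneous of order $s$, gives decay $|y|^{-d-s}$. This is \eqref{ptKd}. The finite number of $\xi$-derivatives in \eqref{condop} is exactly what is needed here. This step is the main technical obstacle when $s$ is not an integer: only finitely many smooth derivatives are available, and the singularity at $\zeta=0$ must be handled by a dyadic decomposition rather than by naive integration by parts.

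\emph{$L^1$ bounds \eqref{delKL1}, \eqref{kerL1}, \eqref{KL1}.} For \eqref{kerL1}, integrate the pointwise bounds. Near the origin $|x|<\theta^{1/s}$ one uses $\theta^{-(d+l)/s}|x|^\sigma$. Far away one uses $|x|^{\sigma-d-l}$ when $l\ge1$; for $l=0$ and $\sigma<s$ one uses the $\theta|x|^{\sigma-d-s}$ bound.

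For \eqref{delKL1}:
\begin{itemize}
\item If $|\alpha|\ge\theta^{1/s}$, apply the triangle inequality together with \eqref{kerL1}, using $|x|^\sigma\le|x-\alpha|^\sigma+|\alpha|^\sigma$.
\item If $|\alpha|<\theta^{1/s}$, write $\delta_\alpha\nabla^l\K=\int_0^1\alpha\cdot\nabla^{l+1}\K(x-r\alpha)\,dr$ and apply \eqref{kerL1} with $l+1$. This gives $|\alpha|\theta^{-(l+1-\sigma)/s}=|\alpha|^\sigma(|\alpha|/\theta^{1/s})^{1-\sigma}\theta^{-l/s}$.
\end{itemize}

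Finally, \eqref{KL1} follows from \eqref{ptKd} with constants that do depend on $c_0,c_1$. To get a constant independent of them, I would try to rescale time by $c_1$ (the constant $1$ in the symbol is scale invariant). Alternatively, I would observe that \eqref{KL1} is used with the dependence absorbed, since $\int\K=\hat\K(0)=\mathrm{Id}$. I would defer to \cite{CHN1} for that refinement.
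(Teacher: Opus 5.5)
The one real gap is the last assertion, \eqref{KL1} with $C$ independent of $c_0,c_1$, which you leave to \cite{CHN1}. Neither of your suggestions can give it. Rescaling $t$ (or $\xi$) multiplies $c_0$ and $c_1$ by the same factor, so the dependence on $c_1/c_0$ survives. The identity $\int\K_{x_0}\,dx=(2\pi)^{d/2}\hat\K_{x_0}(0)=\mathrm{Id}$ controls only the integral of a kernel that in general changes sign, so it yields $\|\K_{x_0}\|_{L^1}\ge1$ and nothing more.

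In fact no bound uniform in $c_0,c_1$ can hold under \eqref{condop}. For $d=N=1$ take $\A(\xi)=\xi^s$ for $\xi>0$ and $\A(\xi)=\lambda|\xi|^s$ for $\xi<0$, so that $c_0=1$ and $c_1\simeq\lambda$. For $t_1-t_2=1$,
\[
\K(x)=F(x)+\lambda^{-1/s}\,\overline{F(\lambda^{-1/s}x)},\qquad
F(y)=\frac{1}{2\pi}\int_0^\infty e^{iy\xi-\xi^s}\,d\xi=\frac{i}{2\pi y}+o(|y|^{-1}),
\]
and $F$ is bounded. Hence $|\K(x)|\ge(4\pi|x|)^{-1}$ for $R_0\le|x|\le\epsilon\lambda^{1/s}$, and $\|\K\|_{L^1}\gtrsim\log\lambda$.

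What your argument does prove, and what you should state rather than defer, is \eqref{KL1} with $C$ depending only on the ellipticity constant and the $\xi$-derivative bounds of the frozen symbol $\A(\cdot,x_0,\cdot)$, not on its $x$-regularity. (One has $C=1$ for nonnegative scalar kernels.) This is presumably the form needed later: the frozen Muskat symbol at $x_0$ depends only on $\nabla\phi(x_0)$, while the $x$-part of $c_1$ blows up as $\varepsilon_1\to0$.

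Apart from this, the paper gives no proof and simply cites \cite{CHN1}. Your route to \eqref{ptKd}--\eqref{kerL1} is the standard one and works after three repairs. That route consists of the explicit Fourier solution, rescaling $\theta=t_1-t_2$ to $1$ (which preserves \eqref{condop} with the same constants), a dyadic decomposition at $|\zeta|\sim|y|^{-1}$, integration of the pointwise bounds, and the split $|\alpha|\gtrless\theta^{1/s}$ for \eqref{delKL1}.

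(i) Your bound for $\nabla_\xi^l\hat\K$ keeps the $k=0$ term, so it only gives $|\nabla_\zeta^j\hat\K|\lesssim|\zeta|^{-j}$ near $0$. That suffices for \eqref{ptKd1}. The $|y|^{-d-s}$ tail in \eqref{ptKd}, however, needs $|\nabla_\zeta^j(\hat\K-\hat\K(0))|\lesssim|\zeta|^{s-j}$ for $|\zeta|\le1$. This holds because every term of the Duhamel/Fa\`a di Bruno expansion carries at least one factor $\nabla_\xi^i\A$. The remainder is not homogeneous, but these bounds are all the dyadic sum uses.

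(ii) The dyadic sum needs more than $d+l$ (resp.\ $d+s$) $\xi$-derivatives, while \eqref{condop} supplies only $\lfloor d+m+s+2\rfloor$. So you obtain \eqref{ptKd1} only for $d+l<\lfloor d+m+s+2\rfloor$, and \eqref{delKL1} one order lower, not for all $l\in\mathbb N$. This is enough for the orders used in Lemma~\ref{lemlog}; the unrestricted claim does not follow from \eqref{condop} alone.

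(iii) For $|\alpha|<\theta^{1/s}$ you must also shift the weight, $|x|^\sigma\le|x-r\alpha|^\sigma+|\alpha|^\sigma$. For $l=0$ your triangle-inequality step needs $\sigma<s$; if $s\le\sigma<1$, use the difference structure on $|x|\ge2|\alpha|$ together with \eqref{ptKd1} at order one.
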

				
				We prove the following Schauder-type estimate, which is an analogue to \cite[Proposition 2.9]{CHN1} in  H\"{o}lder space with logarithmic derivatives and $L^p$ spaces. We shall need both Hölder-type and \(L^p\)-based versions of the Schauder estimate. The former is used to control the principal nonlocal evolution in the logarithmically critical topology. The latter is needed for the lower-order quantities entering through the elliptic transmission problem. Indeed, the remainder terms in the contour formulation depend implicitly on the bulk variables, and their estimates involve not only pointwise moduli of continuity but also lower-order energy norms, such as the \(L^2\) norm of the elliptic solution and of the forcing terms. For this reason we record a parallel \(L^p\) estimate, which will be used to propagate these lower-order bounds and to control the implicit forcing terms generated by the transmission system.

				\begin{lemma}
					\label{lemlog}
					i) Consider the Cauchy problem \eqref{eqpara} with $\A(t,x,\xi)$ satisfying \eqref{condop}, if $\|u_0\|_{\dot C^{\log^\varkappa}}+\da_T(f,g)<\infty$, then there exists $T>0$ and a unique solution $$u\in C((0,T],\dot C^{\log^\varkappa}_x(\mathbb{R}^d))\cap L^\infty_{{loc}}((0,T],\dot C^{m+\kappa}_x(\mathbb{R}^d)),$$ such that the following estimate holds:\\
					\begin{equation}\label{relog}
						\begin{aligned}
							&\sup_{t\in[0,T]}(\|u(t)\|_{\dot C^{\log^\varkappa}}+|\log t|^{\varkappa}t^{\frac{m+\kappa}{s}}\|u(t)\|_{\dot C^{m+\kappa}})	\lesssim \|u_0\|_{\dot C^{\log^\varkappa}}+\da_T(f,g).
						\end{aligned}
					\end{equation}
                    ii) Consider the Cauchy problem \eqref{eqpara} with $\A(t,x,\xi)$ satisfying \eqref{condop}, if $\|u_0\|_{L^p}+\da_T^p(f,g)<\infty$, then there exists $T>0$ and a unique solution $u\in C((0,T],L^p_x)\cap L^\infty_{{loc}}((0,T],\dot B^{m+\kappa}_{p,\infty})$ such that the following estimate holds:\\
                    \begin{equation}\label{reLp}
						\begin{aligned}
							&\sup_{t\in[0,T]}(\|u(t)\|_{L^p}+t^{\frac{m+\kappa}{s}}\|u(t)\|_{\dot B^{m+\kappa}_{p,\infty}})	\lesssim \|u_0\|_{L^p}+\da_T^p(f,g).
						\end{aligned}
					\end{equation}
				\end{lemma}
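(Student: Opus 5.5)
We prove both parts through the frozen Duhamel representation \eqref{uform}: for each $x_0$ we estimate $u_{L,x_0}$, $u_{N,x_0}$, $u_{R,x_0}$ at or near $x_0$, and then take the supremum over $x_0$. The kernel bounds of Lemma~\ref{lemfourierK} hold uniformly in $x_0$, so the whole argument follows \cite[Proposition 2.9]{CHN1}. The only new work is to track the logarithmic weights in i) and to replace pointwise Hölder quotients by $L^p$ norms of differences in ii). The plan is an a priori estimate on the weighted quantity
\[
M(T):=\sup_{t\le T}\Big(\|u(t)\|_{\dot C^{\log^\varkappa}}+|\log t|^{\varkappa}t^{\frac{m+\kappa}{s}}\|u(t)\|_{\dot C^{m+\kappa}}\Big),
\]
followed by existence through smooth approximation (mollified data and symbols) and compactness. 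Uniqueness follows by applying the same estimate to the difference of two solutions, which has zero data.

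\textbf{Linear and forcing parts.} By \eqref{KL1}, convolution with $\K_{x_0}(t,0,\cdot)$ is bounded on $\dot C^{\log^\varkappa}$ with constant independent of $c_0,c_1$, since $\delta_\alpha$ commutes with convolution. For the high norm, split $\delta_\alpha\nabla^m u_{L,x_0}$ according to whether $|\alpha|\gtrless t^{1/s}$ and use \eqref{delKL1} with $l=m$ or $l=m+1$. This gives
\[
\|u_L(t)\|_{\dot C^{m+\kappa}}\lesssim t^{-\frac{m+\kappa}{s}}\log^{-\varkappa}(2+t^{-1/s})\|u_0\|_{\dot C^{\log^\varkappa}},
\]
because only increments of $u_0$ at scale $\sim t^{1/s}$ enter. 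This is exactly the factor $|\log t|^{-\varkappa}$.

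For $\mathcal P_\gamma f$ we move $\B(D)$ onto the kernel. Then $\B(D)\K_{x_0}$ behaves like a derivative of order $\gamma$, so the Duhamel integrand is controlled by $(t-\tau)^{-(\gamma-\kappa_0+\theta)/s}\tau^{-\kappa/s}|\log\tau|^{-\varkappa}$ times $\da_T$. Since $\kappa_0=\kappa+\gamma-s$ and $\kappa<s$, the time integrals are Beta-type and converge. The logarithm $|\log\tau|^{-\varkappa}$ is comparable to $|\log t|^{-\varkappa}$ on $[t/2,t]$ and is harmless on $[0,t/2]$. The $g$ term is handled directly: the $\int\|g\|_{\dot C^{\log^\varkappa}}$ part by \eqref{KL1}, and the $\dot C^m$ part by distributing derivatives as in the high-norm bound above.

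\textbf{Remainder, and the main obstacle.} Evaluating at $x=x_0$ after differentiation, $R_{x_0}[u](\tau,z)$ vanishes at $z=x_0$. By \eqref{condop} it is bounded by $\min(|z-x_0|,1)$ times an operator of order $s$ acting on $u$, plus lower-order commutator terms. The factor $|z-x_0|$ is absorbed by the kernel weight $|\cdot|^\sigma$ in \eqref{kerL1} and \eqref{delKL1}. This yields a bound of the form
\[
\int_0^t (t-\tau)^{-1+\epsilon}\,\tau^{-\frac{m+\kappa}{s}}|\log\tau|^{-\varkappa}\,d\tau\cdot M(t),
\]
and the analogous bound for the low norm. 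To make it small we use interpolation and smallness of $t$: either the prefactor is $t^{\epsilon'}$, or, at the critical level, lower-order norms are interpolated between $\|u\|_{\dot C^{\log^\varkappa}}$ and the weighted top norm with a gain $\epsilon$. I expect this to be the main obstacle. At the bottom level $\dot C^{\log^\varkappa}$ there is no room in the Hölder exponent, so the gain must come from the logarithm. Concretely, one must check that $\int_0^{t}\tau^{-1}|\log\tau|^{-\varkappa}d\tau\sim|\log t|^{1-\varkappa}$ is finite and small, which is precisely where $\varkappa>1$ is used, together with \eqref{defpara} to avoid integer exponents. The resulting inequality $M(T)\le C(\|u_0\|+\da_T)+o_{T\to0}(1)M(T)$ then closes for small $T$.

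\textbf{Part ii).} The $L^p$ version is identical, and slightly easier because no logarithms are involved. Young's inequality with \eqref{KL1}, \eqref{delKL1} and \eqref{kerL1} replaces the pointwise bounds. For the remainder, the $x_0$-supremum kernel bounds allow $R_{x_0}$ to be treated via Minkowski's inequality in $x_0=x$: the convolution is dominated by $\sup_{x_0}|\nabla^l\K_{x_0}|$, which lies in $L^1$. The gain $t^{\epsilon}$ then comes from $\kappa<s$ strictly.
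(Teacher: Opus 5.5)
Your framework matches the paper's: the frozen Duhamel formula evaluated at $x_0=x$, kernel bounds uniform in $x_0$, Beta-type time integrals, and absorption of a small remainder. Your outline for $u_{L,x_0}$ and $u_{N,x_0}$ is fine, except that the top seminorm must be taken on $\delta_\alpha\nabla^{m+[\kappa]}$ rather than $\delta_\alpha\nabla^m$, since $\kappa>2$ in the application.

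The gap is in $u_{R,x_0}$. You keep the whole order-$s$ symbol difference on $u$. The $u$-factor then costs $\tau^{-1}|\log\tau|^{-\varkappa}$, and integrability at $\tau=0$ rests entirely on $\int_0^t\tau^{-1}|\log\tau|^{-\varkappa}\,d\tau<\infty$, i.e. on $\varkappa>1$. In part i) this can be pushed through at the bottom level: the weight $|z-x_0|^{\eta'}$ against $\delta_\alpha\K_{x_0}$ gives a factor $|\alpha|^{\eta'}$, which absorbs $\log^\varkappa(2+|\alpha|^{-1})$, and $|\log t|^{1-\varkappa}\to0$.

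Part ii), however, is not ``identical and slightly easier''. The norm $Y^p_T$ carries no logarithm, and the order-$s$ quantity is only $\lesssim\tau^{-1}\|u\|_{Y^p_T}$ in $L^p$, which is sharp for $u_0\in L^p$. Hence $\int_0^t(t-\tau)^{\eta'/s}\tau^{-1}\,d\tau=\infty$, and the same divergence occurs on $[0,t/2]$ for the top seminorm. The condition $\kappa<s$ does not repair this.

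There are two related problems at the top level. First, your displayed bound diverges at $\tau=0$ whenever $m+\kappa\ge s$, as in the application, so the time interval must be split at $t/2$. Second, on $[t/2,t]$, putting $\nabla^m$ on $R_{x_0}[u]$ while the full order-$s$ operator acts on $u$ would require $\|u(\tau)\|_{\dot C^{m+s+\epsilon}}$, which is not controlled because $\kappa<s$. Your display tacitly assumes that part of the order of $R_{x_0}$ has been transferred onto the kernel, and that is exactly the step you have not supplied.

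The missing device is the paper's splitting of the symbol difference through $|\xi|^{-\sigma}$, writing $u_{R,x_0}=u^1_{R,x_0}+u^2_{R,x_0}$:
\begin{itemize}
\item $u^1_{R,x_0}$ pairs $\Lambda^\sigma\K_{x_0}$ with $R^1_{x_0}[u]$, which has symbol $|\xi|^{-\sigma}(\A(t,x_0,\xi)-\A(t,x,\xi))$ and hence order $s-\sigma$;
\item $u^2_{R,x_0}$ carries the commutator $R^2[u]$, whose $z$-integral has the integrable weight $\min(1,|z|)|z|^{-d-\sigma}$ because $\sigma<1$.
\end{itemize}
With this splitting the $u$-factor has time singularity $\tau^{-(s-\sigma)/s}$, which is integrable with or without logarithms. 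The extra factor $(t-\tau)^{-\sigma/s}$ from $\Lambda^\sigma\K_{x_0}$ is paid for by the coefficient weight $|x-x_0|^{\eta'}$ with $\eta'<\sigma$, via \eqref{kerL1}--\eqref{delKL1}. Choosing $\sigma\in(s-\kappa,s-[\kappa])$ keeps the top-level regularity budget $m+s-\sigma<m+\kappa$. The result is a power gain $T^{\sigma'/s}$, valid uniformly in i) and ii). In particular, the smallness of the remainder does not come from $\varkappa>1$; the paper's remainder estimate does not use that condition at all.
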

				\begin{proof} We only establish a priori estimates. The existence and uniqueness of solution can be obtained following \cite{CHN1}. Let $T\in(0,1)$. Denote
					\begin{equation}\label{defxt}
						\|u\|_{X_T^\varkappa }=\sup_{t\in[0,T]}(\|u(t)\|_{\dot C^{\log^\varkappa}}+|\log t|^\varkappa  t^{\frac{m+\kappa}{s}}\|u(t)\|_{\dot C^{m+\kappa}}),
					\end{equation}
				\begin{equation*}
					\|u\|_{Y_T^p}=\sup_{t\in[0,T]}\left(\|u(t)\|_{L^p}+t^{\frac{m+\kappa}{s}}\|u(t)\|_{\dot B_{p,\infty}^{m+\kappa}}\right).
				\end{equation*}
                By interpolation, for every $t\in(0,T)$, we obtain that for any $n\in\mathbb{N}$, $\sigma\in(0,1)$ such that $n+\sigma\leq m+\kappa$,
                \begin{equation}\label{intp11}
                \begin{aligned}
                &\|u(t)\|_{\dot C^{n+\sigma}}\lesssim |\log t|^{-\varkappa}t^{-\frac{n+\sigma}{s}}\|u\|_{X_T^\varkappa },
                \\
                &\|u(t)\|_{\dot B^{n+\sigma}_{p,\infty}}\lesssim  |\log t|^{-\varkappa}t^{-\frac{n+\sigma}{s}}\|u\|_{Y_T^p}.
                \end{aligned}
                \end{equation}
					By the formula \eqref{uform}, we obtain 
					\begin{equation}\label{deu}
						\begin{aligned}
                         &\| \nabla^n u\|_{L^p}\leq \|(\nabla^nu_{L,x_0})|_{x_0=x}\|_{L^p}+\|(\nabla^nu_{N,x_0})|_{x_0=x}\|_{L^p}+\|(\nabla^nu_{R,x_0})|_{x_0=x}\|_{L^p},\\
                            &\|\delta_\alpha \nabla^n u\|_{L^p}\leq \|(\delta_\alpha \nabla^nu_{L,x_0})|_{x_0=x}\|_{L^p}+\|(\delta_\alpha \nabla^nu_{N,x_0})|_{x_0=x}\|_{L^p}+\|(\delta_\alpha \nabla^nu_{R,x_0})|_{x_0=x}\|_{L^p},
						\end{aligned} 
					\end{equation}
					for any $\alpha\in\mathbb{R}^d$ and any $n\in\mathbb{N}$.\vspace{0.1cm}\\
                    {\bf Step 1. Estimate of $u_{L,x_0}$}\\
					We first consider $$u_{L,x_0}=\int_{\mathbb{R}^d}\K_{x_0}(t,0,z)u_0(x-z)dz.$$
					By \eqref{kerL1}, it holds
                          \begin{equation}\label{uLLp}
                        \|u_{L,x_0}(t,x)|_{x_0=x}\|_{L_x^p}\lesssim \big\|\sup_{z}|\K_{z}(t,0,\cdot)|\big\|_{L^1}\|u_0\|_{L^p}\lesssim\|u_0\|_{L^p},
                    \end{equation}
                    and 
					\begin{align}\label{logL}
						\|(\delta_\alpha u_{L,x_0})(t,x)|_{x_0=x}\|_{L^\infty}
						&\lesssim \sup_{z\in\mathbb{R}^d}\|\K_{z}(t,0,\cdot)\|_{L^1}\|\delta_\alpha u_0\|_{L^\infty_x}
						\lesssim  \log^{-\varkappa}\left(2+|\alpha|^{-1}\right)\|u_0\|_{\dot C^{\log^\varkappa}}. 
					\end{align}
					Moreover, 
					we have 
					\begin{align*}
						|\delta_\alpha \nabla_{x}^{m+[\kappa]}u_{L,x_0}(t,x)|=&\left|\int_{\mathbb{R}^d}\delta_\alpha\nabla_{x}^{m+[\kappa]}\K_{x_0}(t,0,z)u_0(x-z)dz\right|.
					\end{align*}
                  Fix $x_0=x$, and take $L^p$ in $x$ on both sides, from \eqref{delKL1}, we obtain 
                      \begin{equation}\label{uLBe}
                        \begin{aligned}
                            \|(\delta_\alpha \nabla^{m+[\kappa]}u_{L,x_0})(t,x)|_{x_0=x}\|_{L_x^p}&\lesssim\left\|\sup_{z}|\delta_\alpha \nabla^{m+[\kappa]}\K_{z}|(t,0,\cdot)\right\|_{L^1}\|u_0\|_{L^p}\\&\lesssim |\alpha|^{\kappa-[\kappa]} t^{-\frac{m+\kappa}{s}}\|u_0\|_{L^p}.
                        \end{aligned}
                    \end{equation}
                    On the other hand, observe that $\int \delta_\alpha \nabla_{x}^{m+[\kappa]}K_{x_0}(t,0,z)dz=0$, we obtain 
                    	\begin{align*}
						|\delta_\alpha \nabla_{x}^{m+[\kappa]}u_{L,x_0}(t,x)|=&\int_{\mathbb{R}^d}|\delta_\alpha \nabla_{x}^{m+[\kappa]}\K_{x_0}(t,0,z)||u_0(x-z)-u_0(x)|dz\\
						\lesssim& \int_{\mathbb{R}^d}|\delta_\alpha\nabla_{z}^{m+[\kappa]}\K_{x_0}(t,0,z)|\log^{-\varkappa}\left(2+|z|^{-1}\right)dz\|u_0\|_{\dot C^{\log^\varkappa }}.
					\end{align*}
					Using the elementary inequality 
					\begin{equation}\label{ine2}
						|\log(b)|\lesssim\log (2+a^{-1}) \log(2+ab),\ \ \ \forall a>0,b>2,
					\end{equation}
					we obtain 
					$$
					\log^{-\varkappa}\left(2+|z|^{-1}\right)\lesssim \log^{\varkappa}\left(2+(t^{-\frac{1}{s}}|z|)\right) |\log t|^{-\varkappa}.
					$$
					Observe that  $\log^\varkappa(2+(t^{-\frac{1}{s}}|x|))\lesssim 1+(t^{-\frac{1}{s}}|x|)^\eta$ for $0<\eta\ll 1$. Combining this with \eqref{delKL1} yields
					\begin{align*}
						&\int_{\mathbb{R}^d}|\delta_\alpha\nabla_{z}^{m+[\kappa]}\K_{x_0}(t,0,z)|\log^{-\varkappa}\left(2+|z|^{-1}\right)dz\\
						&\quad\quad\quad\lesssim |\log t|^{-\varkappa}\int_{\mathbb{R}^d}|\delta_\alpha\nabla_{z}^{m+[\kappa]}\K_{x_0}(t,0, z)|(1+(t^{-\frac{1}{s}}| z|)^\eta)dz\\
						&\quad\quad\quad\lesssim |\log t|^{-\varkappa}t^{-\frac{m+[\kappa]+\eta}{s}}|\alpha|^{\eta}\min\{1,t^{-\frac{1}{s}}|\alpha| \}^{1-\eta}
						\\&\quad\quad\quad\lesssim |\alpha|^{\kappa-[\kappa]} |\log t|^{-\varkappa} t^{-\frac{m+\kappa}{s}}.
					\end{align*}
					So we obtain
					\begin{align}\label{loghL}
						\sup_\alpha \frac{	\|(\delta_\alpha \nabla_{x}^{m+[\kappa]}u_{L,x_0})(t,x)|_{x_0=x}\|_{L^\infty_x}}{|\alpha|^{\kappa-[\kappa]}}\lesssim |\log t|^{-\varkappa}t^{-\frac{m+\kappa}{s}}\|u_0\|_{\dot C^{\log^\varkappa}}.
					\end{align}
                    We conclude from \eqref{uLLp} and \eqref{uLBe} that 
                    \begin{align}\label{uLLp1}
                      \sup_{t\in[0,T]}\left(    \|u_{L,x_0}(t,x)|_{x_0=x}\|_{L_x^p}+t^\frac{m+\kappa}{s}\sup_\alpha \frac{    \|(\delta_\alpha \nabla^{m+[\kappa]}u_{L,x_0})(t,x)|_{x_0=x}\|_{L_x^p}}{|\alpha|^{\kappa-[\kappa]}}\right)\lesssim \|u_0\|_{L^p},
                    \end{align}
                    and from \eqref{logL}, \eqref{loghL} that 
                    	\begin{equation}\label{ulhol}
						\begin{aligned}
							\sup_{t\in[0,T]}\sup_\alpha&\left(\frac{\|(\delta_\alpha u_{L,x_0})(t,x)|_{x_0=x}\|_{L^\infty_x}}{\log^{-\varkappa}(2+|\alpha|^{-1})}+|\log t|^\varkappa   t^\frac{m+\kappa}{s}\frac{\|(\delta_\alpha \nabla_x ^{m+[\kappa]}u_{L,x_0})(t,x)|_{x_0=x}\|_{L^\infty_x}}{|\alpha|^{\kappa-[\kappa]}}\right)\\
							&\lesssim \|u_0\|_{\dot C^{\log^\varkappa}}.
						\end{aligned}
					\end{equation}
                    \vspace{0.1cm}
                    {\bf Step 2. Estimate of $u_{N,x_0}$}\\
					Then we consider $u_{N,x_0}=u_{N,1,x_0}+u_{N,2,x_0}$ with 
					\begin{align}
						& u_{N,1,x_0}(t,x)= \int_0^t \int _{\mathbb{R}^d}\K_{x_0}(t,\tau,x-z)\mathcal{P}_\gamma f(\tau,z)dzd\tau,\label{esdelL}\\
						&u_{N,2,x_0}(t,x)= \int_0^t \int _{\mathbb{R}^d}\K_{x_0}(t,\tau,x-z)g(\tau,z)dzd\tau\label{esdelg}.
					\end{align}
					First, for $u_{N,1,x_0}$, 
					recalling the definition of $\mathcal{P}_\gamma$ in \eqref{defPg}, we have 
					\begin{align*}
						u_{N,1,x_0}(t,x)&=	\int_0^t \int_{\mathbb{R}^d}  L_{x_0}(t,\tau,x-z) \Lambda^{\vartheta}f(\tau,z)dz d\tau.
					\end{align*}
					Here $L_{x_0}(t,x)$ is defined by  
					$$
					L_{x_0}(t,\tau,x) =\frac{1}{(2\pi)^\frac{d}{2}}\int _{\mathbb{R}^d}\frac{\B(\xi)}{|\xi|^{\vartheta}} \mathcal{F}(\K_{x_0}(t,\tau))(\xi)e^{ix\cdot \xi}d\xi,
					$$
					for some $\vartheta\in((\kappa_0-1)_+,\kappa_0)$.\\
                    Since the Fourier multiplier satisfies
\[
\left.
\frac{\B(\xi)}{|\xi|^\vartheta}
\widehat{\K}_{x_0}(t,\tau,\xi)
\right|_{\xi=0}
=0,
\]
we have
\[
\widehat L_{x_0}(t,\tau,0)=0.
\]
Therefore,
\[
\int_{\mathbb R^d}L_{x_0}(t,\tau,x)\,dx=0.
\]
By the pointwise estimate Lemma 2.1 in \cite{CHN1}, we have  
      \begin{align}\label{S2eq32}
        &\sup_{z}|\nabla^kL_{z}(t,\tau,y)| \lesssim (t-\tau)^{-\frac{d+k+\gamma-\vartheta}{s}}\left(1+\frac{|y|}{(t-\tau)^\frac{1}{s}}\right)^{-d-\gamma+\vartheta},\\
        &\int_{\mathbb{R}^d} \sup_{z}|\delta_\alpha \nabla^kL_{z}(t,\tau,y)| |y|^{\sigma} dy \lesssim |\alpha|^{\sigma}(t-\tau)^{-\frac{k+\sigma+\gamma-\vartheta}{s}}\left\{1,\frac{|\alpha|}{(t-\tau)^{\frac{1}{s}}}\right\}^{1-\sigma},\nonumber
      \end{align}              
for any $\sigma\in(0,\gamma-\vartheta)$. Then, we obtain 
                    \begin{equation}\label{uN1Lp}
                    \begin{aligned}
                    \|u_{N,1,x_0}(t,x)|_{x_0=x}\|_{L^p_x}&\lesssim \int_0^t \int_{\mathbb{R}^d}  \sup_{z}|L_{z}(t,\tau,y)|\|\Lambda^{\vartheta}f(\tau,x-y)-\Lambda^{\vartheta}f(\tau,x)\|_{L^p_x}dy d\tau \\
                    &\lesssim \int_0^t\int_{\mathbb{R}^d}\sup_{z}|L_{z}(t,\tau,y)||y|^{\kappa_0-\vartheta} dy \|\Lambda^\vartheta f(\tau)\|_{\dot B^{\kappa_0-\vartheta}_{p,\infty}}d\tau\\
                    &\overset{\eqref{S2eq32}}{\lesssim}\int_0^t \frac{1}{(t-\tau)^\frac{\gamma-\kappa_0}{s}\tau^{\frac{\kappa}{s}}}d\tau\sup _{t \in[0, T]}  t^{\frac{\kappa}{s}}\|f(t)\|_{\dot{B}^{\kappa_0}_{p,\infty}}\\
                    &\lesssim \da_T^p(f,0),
                    \end{aligned}
                    \end{equation}
                    where in the third inequality we used the standard equivalence
\[
\|\Lambda^\vartheta f(\tau)\|_{\dot B^{\kappa_0-\vartheta}_{p,\infty}}
\sim
\|f(\tau)\|_{\dot B^{\kappa_0}_{p,\infty}}.
\]
					Moreover, for any $\alpha\in\mathbb{R}^d$,
					\begin{align*}
						|\delta_\alpha u_{N,1,x_0}(t,x)|
						\lesssim &\left|\int_0^t \int_{\mathbb{R}^d} \delta_\alpha L_{x_0}(t,\tau,x-z)(\Lambda^{\vartheta}f(\tau,z)-\Lambda^{\vartheta}f(\tau,x))d\tau dz\right|\\
						\lesssim &	\int_0^t\int_{\mathbb{R}^d}| \delta_\alpha L_{x_0}(t,\tau,z)||z|^{\kappa_0-\vartheta}dz\|f(\tau)\|_{\dot{C}^{\kappa_0}}d\tau\\
						\overset{\eqref{S2eq32}}{\lesssim}&\int_0^t \frac{|\alpha|^{\kappa_0-\vartheta}}{(t-\tau)^\frac{\gamma-\vartheta}{s}|\log \tau|^\varkappa\tau^{\frac{\kappa}{s}}}\min\left\{1,\frac{|\alpha|}{(t-\tau)^\frac{1}{s}}\right\}^{1-(\kappa_0-\vartheta)}d\tau\sup _{t \in[0, T]} |\log t|^\varkappa t^{\frac{\kappa}{s}}\|f(t)\|_{\dot{C}^{\kappa_0}}.
					\end{align*}
					Note that $|\log t|\lesssim |\log \tau|$ for any $0<\tau<t<\frac{1}{2}$, then it follows that
					\begin{align*}
						&\int_0^t \frac{|\alpha|^{\kappa_0-\vartheta}}{(t-\tau)^\frac{\gamma-\vartheta}{s}|\log \tau|^\varkappa\tau^{\frac{\kappa}{s}}}\min\left\{1,\frac{|\alpha|}{(t-\tau)^\frac{1}{s}}\right\}^{1-(\kappa_0-\vartheta)}d\tau\\
						&\quad\quad\quad\lesssim |\log t|^{-\varkappa}\min\left\{1,|\alpha| t^{-1/s}\right\}^\varepsilon\lesssim \log^{-\varkappa} (2+|\alpha|^{-1}),
					\end{align*}
					where $0<\varepsilon<\min\{1,\frac{\kappa}{10}\}$. The last inequality can be proved by considering the case $|\alpha|<t^{\frac{1}{s}}$ and $|\alpha|>t^{\frac{1}{s}}$ separately. If $|\alpha|>t^{\frac{1}{s}}$, it holds automatically. Otherwise denote $b=|\alpha|t^{-\frac{1}{s}}<1$. If $b<t^{\frac{1}{2s}}$, then $b^\frac{\varepsilon}{\kappa}\log(2+b^{-1}t^{-\frac{1}{s}})\lesssim b^{\frac{\varepsilon}{\kappa}}\log(2+b^{-\frac{3}{2}})\lesssim 1$. If $b>t^{\frac{1}{2s}}$, then $b^\frac{\varepsilon}{\kappa}\log(2+b^{-1}t^{-\frac{1}{s}})\lesssim\log(2+t^{-\frac{1}{2s}})\lesssim |\log t|$.\\
                    Hence, we deduce that 
					\begin{align}\label{clog1}
						\log^{\varkappa} (2+|\alpha|^{-1}) \|(\delta_\alpha u_{N,1,x_0})(t,x)|_{x_0=x}\|_{L^\infty_x}
						\lesssim & \da_T(f,0).
					\end{align}
            For higher order derivatives, using integration by parts, we obtain 
                    \begin{equation}\label{uN1Be}
                    \begin{aligned}
                    &\|(\delta_\alpha\nabla^{m+[\kappa]}u_{N,1,x_0})(t,x)|_{x_0=x}\|_{L^p_x}\\
                    &\quad\quad\lesssim \int_0^\frac{t}{2} \int_{\mathbb{R}^d}  \sup_{z}|\delta_\alpha \nabla^{m+[\kappa]}L_{z}(t,\tau,y)|\|\Lambda^{\vartheta}f(\tau,x-y)-\Lambda^{\vartheta}f(\tau,x)\|_{L^p_x} dyd\tau\\
                    &\quad\quad\quad\quad+\int_\frac{t}{2}^t \int_{\mathbb{R}^d}  \sup_{z}|\delta_\alpha\nabla^{[\kappa]} L_{z}(t,\tau,y)|\|\Lambda^{\vartheta}\nabla^{m}f(\tau,x-y)-\Lambda^{\vartheta}\nabla^{m}f(\tau,x)\|_{L^p_x} dy d\tau \\
                    &\quad\quad\overset{\eqref{S2eq32}}{\lesssim}|\alpha|^{-\vartheta+\kappa_0}\int_0^\frac{t}{2} \frac{\min\{1,\frac{|\alpha|}{(t-\tau)^{\frac{1}{s}}}\}^{1-\kappa_0+\vartheta}}{(t-\tau)^\frac{m+\gamma+\vartheta-\kappa_0}{s}\tau^{\frac{\kappa}{s}}}d\tau\sup _{t \in[0, T]}  t^{\frac{\kappa}{s}}\|f(t)\|_{\dot{B}^{\kappa_0}_{p,\infty}}\\
                    &\quad\quad\quad\quad+|\alpha|^{-\vartheta+\kappa_0}\int_\frac{t}{2}^t \frac{\min\{1,\frac{|\alpha|}{(t-\tau)^{\frac{1}{s}}}\}^{1-\kappa_0+\vartheta}}{(t-\tau)^\frac{\gamma+\vartheta-\kappa_0}{s}\tau^{\frac{m+\kappa}{s}}}d\tau\sup _{t \in[0, T]}  t^{\frac{m+\kappa}{s}}\|f(t)\|_{\dot{B}^{m+\kappa_0}_{p,\infty}}\\
                    &\quad\quad\lesssim |\alpha|^{\kappa-[\kappa]}t^{-\frac{m+\kappa}{s}}\da_T^p(f,0).
                    \end{aligned}
                    \end{equation}
                    Specifically, in the case $p=\infty$, we also have  
                        \begin{equation}\label{uN1Hol}
                    \begin{aligned}
                    &\|(\delta_\alpha\nabla^{m+[\kappa]}u_{N,1,x_0})(t,x)|_{x_0=x}\|_{L^\infty_x}\\
                     &\quad\quad\overset{\eqref{S2eq32}}{\lesssim} |\alpha|^{-\vartheta+\kappa_0}\int_0^\frac{t}{2} \frac{\min\{1,\frac{|\alpha|}{(t-\tau)^{\frac{1}{s}}}\}^{1-\kappa_0+\vartheta}}{(t-\tau)^\frac{m+\gamma+\vartheta-\kappa_0}{s}\tau^{\frac{\kappa}{s}}|\log \tau|^\varkappa}d\tau\sup _{t \in[0, T]} |\log t|^\varkappa t^{\frac{\kappa}{s}}\|f(t)\|_{\dot{C}^{\kappa_0}}\\
                    &\quad\quad\quad\quad+|\alpha|^{-\vartheta+\kappa_0}\int_\frac{t}{2}^t \frac{\min\{1,\frac{|\alpha|}{(t-\tau)^{\frac{1}{s}}}\}^{1-\kappa_0+\vartheta}}{(t-\tau)^\frac{\gamma+\vartheta-\kappa_0}{s}\tau^{\frac{m+\kappa}{s}}|\log\tau|^\varkappa}d\tau\sup _{t \in[0, T]} |\log t|^\varkappa t^{\frac{m+\kappa}{s}}\|f(t)\|_{\dot{C}^{m+\kappa_0}}\\
                    &\quad\quad\lesssim |\alpha|^{\kappa-[\kappa]}|\log t|^{-\varkappa}t^{-\frac{m+\kappa}{s}}\da_T(f,0).
                    \end{aligned}
                    \end{equation}
                    We conclude from \eqref{uN1Lp}, \eqref{clog1}, \eqref{uN1Be} and \eqref{uN1Hol} that 
                    \begin{align}\label{uN1lp}
                     \sup_{t\in[0,T]}\left(    \|u_{N,1,x_0}(t,x)|_{x_0=x}\|_{L_x^p}+t^\frac{m+\kappa}{s}\sup_\alpha \frac{    \|(\delta_\alpha \nabla^{m+[\kappa]}u_{N,1,x_0})(t,x)|_{x_0=x}\|_{L_x^p}}{|\alpha|^{\kappa-[\kappa]}}\right)\lesssim \da_T^p(f,0),
                    \end{align}
                    	\begin{equation}\label{un1hol}
						\begin{aligned}
							\sup_{t\in[0,T]}\sup_\alpha&\left(\frac{\|(\delta_\alpha u_{N,1,x_0})(t,x)|_{x_0=x}\|_{L^\infty_x}}{\log^{-\varkappa}(2+|\alpha|^{-1})}+|\log t|^\varkappa   t^\frac{m+\kappa}{s}\frac{\|(\delta_\alpha \nabla_x ^{m+[\kappa]}u_{N,1,x_0})(t,x)|_{x_0=x}\|_{L^\infty_x}}{|\alpha|^{\kappa-[\kappa]}}\right)\\
							&\lesssim \da_T(f,0).
						\end{aligned}
					\end{equation}
                    For $u_{N,2,x_0}$, by \eqref{esdelg} and Lemma \ref{lemfourierK} we have 
                    \begin{equation}\label{un2}
                        					\begin{aligned}
                    \| u_{N,2,x_0}(t,x)|_{x_0=x}\|_{L^p_x}&\lesssim \int_0^t \big\|\sup_{z}|\K_{z}(t,\tau)|\big\|_{L^1} \|g(\tau)\|_{L^p}d\tau\lesssim \da_T^p(0,g),\\
						\|(\delta_\alpha u_{N,2,x_0})(t,x)|_{x_0=x}\|_{L^\infty_x}&\lesssim \log ^{-\varkappa}(2+|\alpha|^{-1}) \int_0^t\big\|\sup_{z} |\K_{z}(t,\tau)|\big\|_{L^1} \|g(\tau)\|_{\dot C^{\log^{\varkappa}}}d\tau\\
						&\lesssim \log ^{-\varkappa}(2+|\alpha|^{-1}) \da_T(0,g).
					\end{aligned}
                    \end{equation}
                    			For higher order derivative, using integration by parts and applying \eqref{delKL1}, 
                                   \begin{equation}\label{uN2Be}      \begin{aligned}
                        \|(\delta_\alpha \nabla^{m+[\kappa]}u_{N,2,x_0})(t,x)|_{x_0=x}\|_{L_x^p}&\lesssim\int_0^\frac{t}{2}\big\|\sup_{z}|\delta_\alpha\nabla^{m+[\kappa]}\K_{z}|(t,\tau)\big\|_{L^1}\|g(\tau)\|_{L^p}d\tau\\
                        &\quad\quad+\int_\frac{t}{2}^t\big\|\sup_{z}|\delta_\alpha \nabla^{[\kappa]}\K_{z}|(t,\tau)\big\|_{L^1}\|\nabla^mg(\tau)\|_{L^p}d\tau\\
                        &\lesssim |\alpha|^{\kappa-[\kappa]}\left(t^{-\frac{m+\kappa}{s}}\|g\|_{L_T^1L^p}+\int_\frac{t}{2}^t(t-\tau)^{-\frac{\kappa}{s}}\|\nabla^mg(\tau)\|_{L^p}d\tau\right)\\
                        &\lesssim |\alpha|^{\kappa-[\kappa]}t^{-\frac{m+\kappa}{s}}\da_T^p(0,g).
                    \end{aligned}
                    \end{equation}
                    Similarly, we obtain 
                    \begin{align}\label{un2lp}
                       \|(\delta_\alpha \nabla^{m+[\kappa]}u_{N,2,x_0})(t,x)|_{x_0=x}\|_{L_x^p}\lesssim |\alpha|^{\kappa-[\kappa]}|\log t|^{-\varkappa}t^{-\frac{m+\kappa}{s}}\da_T^p(0,g).
                    \end{align}
             	Combining \eqref{uN1lp}--\eqref{un2lp} yields
					\begin{equation}\label{nlin}
						\begin{aligned}
							\sup_{t\in[0,T]}\sup_{\alpha}\left(\frac{{\|(\delta_\alpha u_{N,x_0})|_{x_0=x}\|_{L^\infty}}}{\log^{-\varkappa}(2+|\alpha|^{-1})}+|\log t|^\varkappa t^{\frac{m+\kappa}{s}}\frac{\|(\delta_\alpha \nabla^{m+[\kappa]}u_{N,x_0})|_{x_0=x}\|_{L^\infty}}{|\alpha|^{\kappa-[\kappa]}}\right)
							\lesssim \da_T(f,g),
						\end{aligned}
					\end{equation}
                    \begin{equation}\label{uNBe}
                        \sup_{t\in[0,T]}\left(\|u_{N,x_0}(t,x)|_{x_0=x}\|_{L_x^p}+\sup_{\alpha}t^{\frac{m+\kappa}{s}
                        }\frac{\|(\delta_\alpha \nabla^{m+[\kappa]}u_{N,x_0})(t,x)|_{x_0=x}\|_{L_x^p}}{|\alpha|^{\kappa-[\kappa]}}\right)\lesssim\da_T^p(f,g).
                    \end{equation}
                    \vspace{0.1cm}
                    {\bf Step 3. Estimate of $u_{R,x_0}$}\\
					Finally, we estimate the remainder term $u_{R,x_0}$. Let $0<\sigma<s$. Denote
					\begin{align*}
						& R^1_{x_0}[u](t,x)=(2\pi)^{-\frac{d}{2}}\int _{\mathbb{R}^d}|\xi|^{-\sigma}(\A(t,x_0,\xi)-\A(t,x,\xi))\widehat{u}(t,\xi)e^{ix\cdot\xi} d\xi,\\
						&R^2[u](t,x)=C_{d,\sigma}\int_{\mathbb{R}^d}\int _{\mathbb{R}^d}|\xi|^{-\sigma}(\A(t,x-z,\xi)-\A(t,x,\xi))\widehat{\tau_zu}(t,\xi)e^{ix\cdot\xi}\frac{ d\xi dz}{|z|^{d+\sigma}},
					\end{align*}
					with $\tau_zu(x)=u(x+z)$. Then
					\begin{align}
						u_{R,{x_0}}(t,x)&= -\int_0^{t}\int_{\mathbb{R}^d}\Lambda^\sigma\K_{x_0}(t,\tau,x-z)R_{x_0}^1[u](\tau,z)dzd\tau-\int_0^{t}\int_{\mathbb{R}^d}\K_{x_0}(t,\tau,x-z)R^2[u](\tau,z)dzd\tau\nonumber\\
						&:=  u_{R,{x_0}}^1(t,x)+  u_{R,{x_0}}^2(t,x).\label{urx0}
					\end{align}
					We need the following technical lemma, the proof of which can be derived from singular integral theory. 
					\begin{lemma}\label{intp}
						Let $\mathbf{m}(x,\xi)$ be a Fourier multiplier of order $\sigma>-d$, with\begin{align*}
							\sup_{x}| \nabla_\xi^k\mathbf{m}(x,\xi)|\leq  |\xi|^{\sigma-k},\ \ \ \forall 0\leq k\leq d+|\sigma|+1.
						\end{align*}
						Define
\[
\mathcal T f(x)
=
\int_{\mathbb R^d}
\mathbf m(x,\xi)\widehat f(\xi)e^{ix\cdot \xi}\,d\xi .
\]
Then, for every $1<p<\infty$ and every sufficiently small $\varepsilon>0$,
\[
\|\mathcal T f\|_{L^p}
\lesssim_\varepsilon
\|\Lambda^{\sigma+\varepsilon}f\|_{L^p}^{1/2}
\|\Lambda^{\sigma-\varepsilon}f\|_{L^p}^{1/2}.
\]
					\end{lemma}
					We now return to the estimates of $R_{x_0}^1[u]$ and $R_{x_0}^2[u]$. By Lemma~\ref{intp} and \eqref{intp11}, for any
$\eta'\in[0,1]$, 
					\begin{equation}\label{ptRx}
						\begin{aligned}
							\vert R_{x_0}^1[u](t,x)\vert&\lesssim |x_0-x|^{\eta'}\|u\|_{\dot C^{s-\sigma-\eps}}^{\frac{1}{2}}\|u\|_{\dot C^{s-\sigma+\eps}}^{\frac{1}{2}}
                            \\&\lesssim |x_0-x|^{\eta'}|\log t|^{-\varkappa}t^{-\frac{s-\sigma}{s}}\|u\|_{X_T^\varkappa}.
						\end{aligned}
					\end{equation}
The estimate for higher derivatives follows in the same way, except that
derivatives may also fall on the $x$-dependent symbol. More precisely, when
$\nabla_x^k$ is applied to $R_{x_0}^1[u]$, we obtain terms of the form
\[
(2\pi)^{-\frac d2}i^{k_2}
\int_{\mathbb R^d}
|\xi|^{-\sigma}
\nabla_x^{k_1}(\A(t,x_0,\xi)-\A(t,x,\xi))
\,\widehat u(t,\xi)\,
\xi^{\otimes k_2}
e^{ix\cdot \xi}\,d\xi,
\qquad k_1+k_2=k .
\]
If $k_1=0$, the previous argument gives the factor
\[
|x_0-x|^{\eta'}
|\log t|^{-\varkappa}
t^{-\frac{k+s-\sigma}{s}}
\|u\|_{X_T^\varkappa}.
\]
If $k_1>0$, the derivative falls on the coefficient. In this case we use
Lemma~\ref{intp} with a symbol of order $k_2+s-\sigma$. This yields
\[
\left|
(2\pi)^{-\frac d2}
\int_{\mathbb R^d}
|\xi|^{-\sigma}
\nabla_x^{k_1}\A(t,x,\xi)
\widehat u(t,\xi)
\xi^{\otimes k_2}
e^{ix\cdot \xi}\,d\xi
\right|
\lesssim t^{1/s}
|\log t|^{-\varkappa}
t^{-\frac{k+s-\sigma}{s}}
\|u\|_{X_T^\varkappa}.
\]
Therefore,
\begin{equation}
|\nabla_x^k R_{x_0}^1[u](t,x)|
\lesssim
\left(|x_0-x|^{\eta'}+t^{1/s}\right)
|\log t|^{-\varkappa}
t^{-\frac{k+s-\sigma}{s}}
\|u\|_{X_T^\varkappa}.\label{ptR2x}
\end{equation}
					For $R^2[u]$, we apply Lemma~\ref{intp} with
\[
\mathbf m_z(x,\xi)
=
|\xi|^{-\sigma}
\frac{\A(t,x-z,\xi)-\A(t,x,\xi)}{\min\{1,|z|\}},
\]
for each fixed $z\neq0$. Then applying \eqref{intp11} yields
					\begin{align*}
						\| R^2[u](t)\|_{L^\infty}\lesssim \int_{\mathbb{R}^d}\frac{\min\{1,z\}}{|z|^{d+\sigma}} dz\|\Lambda^{s-\sigma+\epsilon}u(t)\|_{L^\infty}^{\frac{1}{2}}\|\Lambda^{s-\sigma-\epsilon}u(t)\|_{L^\infty}^{\frac{1}{2}} \lesssim  t^{-\frac{s-\sigma}{s}}\|u\|_{X_T^{\varkappa}}.
					\end{align*}
                    Here we used $\sigma<1$, so that
\[
\int_{\mathbb R^d}
\frac{\min\{1,|z|\}}{|z|^{d+\sigma}}\,dz<\infty .
\]
                    Similarly, we can prove for any $0\leq k\leq m$,
                    \begin{align}\label{ptR2k}
						\| \nabla^kR^2[u](t)\|_{L^\infty}\lesssim  t^{-\frac{k+s-\sigma}{s}}\|u\|_{X_T^{\varkappa}}.
					\end{align}
                    Combining \eqref{ptRx}, \eqref{ptR2x}, and Lemma~\ref{lemfourierK}, we first estimate the low-order part of $u_{R,x_0}^1$. Taking $x_0=x$, we obtain
					\begin{align*}
						\vert u_{R,x_0}^1(t,x)|_{x_0=x}\vert&\lesssim \
						\int_0^t |\log \tau|^{-\varkappa}\tau^{-\frac{s-\sigma}{s}}\int_{\mathbb{R}^d}\sup_z |\Lambda^\sigma \K_{z}(t,\tau,x-y)||x-y|^{\eta'} dy d\tau \|u\|_{X_T^\varkappa}
						\\
						&\lesssim  \int_0^t(t-\tau)^{-\frac{\sigma-\eta'}{s}}|\log \tau|^{-\varkappa}\tau^{-\frac{s-\sigma}{s}}d\tau \|u\|_{X_T^\varkappa}\lesssim  |\log t|^{-\varkappa}T^{\frac{\eta'}{s}}\|u\|_{X_T^\varkappa},\\
						\vert \nabla u_{R,x_0}^1(t,x)|_{x_0=x}\vert&\lesssim  
						\int_0^t |\log \tau|^{-\varkappa}\tau^{-\frac{s-\sigma}{s}}\int_{\mathbb{R}^d}\sup_z|\nabla \Lambda^\sigma \K_{z}(t,\tau,x-y)||x-y| dyd\tau \|u\|_{X_T^\varkappa}
						\\
						&\lesssim  \int_0^t(t-\tau)^{-\frac{\sigma}{s}}|\log \tau|^{-\varkappa}\tau^{-\frac{s-\sigma}{s}}d\tau \|u\|_{X_T^\varkappa}\lesssim  |\log t|^{-\varkappa}\|u\|_{X_T^\varkappa},
					\end{align*}
					for any $0<\eta'<\sigma$. \\
                    Interpolating between these two bounds yields, for some $\sigma'\in(0,\sigma)$,
					\begin{align*}
						\sup_{\alpha} \left(\|(\delta_\alpha u_{R,x_0}^1)(t,x)|_{x_0=x}\|_{L^\infty}\log^{\varkappa}(2+|\alpha|^{-1})\right)\lesssim  T^\frac{\sigma'}{s}\|u\|_{X_T^{\varkappa}}.
					\end{align*}
                    We estimate higher order derivatives. By taking $\sigma\in(s-\kappa,s-[\kappa])$, and decomposing the integral interval into $[0,\frac{t}{2}]\cup[\frac{t}{2},t]$, we obtain 
					\begin{align*}
						\vert \delta_\alpha\nabla^{m+[\kappa]} u_{R,x_0}^{1}(t,x)\vert&\lesssim \left|\int_0^{\frac{t}{2}}\int_{\mathbb{R}^d}\delta_\alpha\nabla^{m+[\kappa]}\Lambda_z^\sigma\K_{x_0}(t,\tau,z)R_{x_0}^1(\tau,x-z)dzd\tau\right|\\
						&\quad+\left|\int_{\frac{t}{2}}^t\int_{\mathbb{R}^d}\delta_\alpha\nabla^{[\kappa]}\Lambda_z^\sigma\K_{x_0}(t,\tau,z)\nabla^mR_{x_0}^1(\tau,x-z)dzd\tau\right|\\
						&\lesssim (|\alpha|^{\kappa-[\kappa]}+|x_0-x|^{\kappa-[\kappa]})T^{\frac{\kappa-[\kappa]}{s}}|\log t|^{-\varkappa}t^{-\frac{m+\kappa}{s}}\|u\|_{X_T^{\varkappa}}.
					\end{align*}
					So we have proved
					\begin{align}\label{ur12}
						&\sup_{t\in[0,T]}\sup_{\alpha} \left(\frac{\|(\delta_\alpha u_{R,x_0}^1)(t,x)|_{x_0=x}\|_{L^\infty}}{\log^{-\varkappa}(2+|\alpha|^{-1})}+ |\log t|^\varkappa t^{\frac{m+\kappa }{s}}\frac{\|(\delta_\alpha\nabla^{m+[\kappa]} u_{R,x_0}^1)(t,x)|_{x_0=x}\|_{L^\infty}}{|\alpha|^{\kappa-[\kappa]}}\right) \nonumber\\
						&\quad\quad\lesssim T^\frac{\sigma'}{s}\|u\|_{X_T^\varkappa},
					\end{align}
                    for some $\sigma'\in(0,\sigma)$.
                    For the corresponding $L^p$ estimates of $u^1_{R,x_0}$, note that by Lemma \ref{intp} and similarly to \eqref{ptRx}, we have
                    \begin{equation}\label{S2eq1}
                        \begin{aligned}
                            &\|R_{x_0}^1[u](t,x-z)|_{x_0=x}\|_{L_x^p}\lesssim |z|^{\sigma'}t^{-\frac{s-\sigma}{s}}\|u\|_{Y_T^p},\\
                            &\|\nabla^kR_{x_0}^1[u](t,x-z)|_{x_0=x}\|_{L_x^p}\lesssim \left(|z|^{\sigma'}+t^{\frac{1}{s}}\right)t^{-\frac{s-\sigma}{s}}\|u\|_{Y_T^p},\quad\forall 0\leq k\leq m,
                        \end{aligned}
                    \end{equation}
                   for some $\sigma'\in(0,\sigma)$, which leads to
                    \begin{equation*}
                        \begin{aligned}
                            \|u_{R,x_0}^1(t,x)|_{x_0=x}\|_{L^p}&\lesssim \int_0^t\|\sup_{z}|\Lambda^\sigma\K_{z}|(t,\tau,y)|y|^{\eta'}\|_{ L_y^1}\||y|^{-\eta'}(R_{x_0}^1[u])(\tau,x-y)|_{x_0=x}\|_{L_z^\infty L_x^p}d\tau\\
                            &\lesssim T^{\frac{\sigma'}{s}}\|u\|_{Y_T^p}.
                        \end{aligned}
                    \end{equation*}
                    Similarly, for the estimate of $\dot B_{p,\infty}^{m+\kappa}$ norm, by \eqref{S2eq1}, we have
                    \begin{equation*}
                        \begin{aligned}
                            &\|(\delta_\alpha \nabla^{m+[\kappa]}u_{R,x_0}^1)(t,x)|_{x_0=x}\|_{L_x^p}\\
                            &\quad\lesssim \int_0^\frac{t}{2}\left\|\sup_{z}|\delta_\alpha\nabla_z^{m+[\kappa]}\Lambda^\sigma\K_{z}|(t,\tau,y)|y|^{\eta'}\right\|_{L_y^1}\left\||y|^{-\eta'}(R_{x_0}^1[u])|_{x_0=x}(\tau,x-y)\right\|_{L_y^\infty L_x^p}d\tau \\
                            &\quad\quad+\int_\frac{t}{2}^t\left\|\sup_{z}|\delta_\alpha\Lambda^\sigma\nabla^{[\kappa]}\K_{z}|(t,\tau,y)(\nabla_z^mR_{x_0}^1[u])|_{x_0=x}(\tau,x-y)\right\|_{L_y^1L_x^p}d\tau\\
                            &\quad\lesssim |\alpha|^{\kappa-[\kappa]}t^{-\frac{m+\kappa-\sigma'}{s}}\|u\|_{Y_T^p}.
                        \end{aligned}
                    \end{equation*}
					So we have proved 
                    \begin{equation}\label{uR1}
                    \sup_{t\in[0,T]}\left(\|u_{R,x_0}^1(t,x)|_{x_0=x}\|_{L_x^p}+t^{\frac{m+\kappa}{s}}\frac{\|(\delta_\alpha\nabla^{m+[\kappa]} u_{R,x_0}^1)(t,x)|_{x_0=x}\|_{L_x^p}}{|\alpha|^{\kappa-[\kappa]}}\right)\lesssim T^{\frac{\sigma'}{s}}\|u\|_{Y_T^p},
                    \end{equation}
                   for some $\sigma'\in(0,\sigma)$. The estimates for $u_{R,x_0}^2$ follow similarly from the $u_{N,2,x_0}$ part of \eqref{nlin} and \eqref{ptR2k}, say,
					\begin{align}\label{ur22}
						&\sup_{t\in[0,T]}\sup_{\alpha} \left(\frac{\|(\delta_\alpha u_{R,x_0}^2)(t,x)|_{x_0=x}\|_{L^\infty}}{\log^{-\varkappa}(2+|\alpha|^{-1})}+ |\log t|^\varkappa t^{\frac{m+\kappa }{s}}\frac{\|(\delta_\alpha\nabla^{m+[\kappa]} u_{R,x_0}^2)(t,x)|_{x_0=x}\|_{L^\infty}}{|\alpha|^{\kappa-[\kappa]}} \right)\nonumber\\
						&\quad\quad\quad\quad\quad\quad\lesssim \int_0^T\|R_2[u](\tau)\|_{\dot C^{\log^\varkappa}}d\tau+\sup_{t\in[0,T]}|\log t|^\varkappa t^{\frac{m}{s}+1}\|R_2[u](t)\|_{\dot C^m}\lesssim T^{\frac{\sigma}{s}}\|u\|_{X_T^{\varkappa}}.
					\end{align}
                    Similarly, by \eqref{un2}, \eqref{uN2Be} and Lemma \ref{intp}, we have
                    \begin{align}\label{uR2}
						&\sup_{t\in[0,T]}\left(\|(u_{R,x_0}^2(t,x))|_{x_0=x}\|_{L_x^p}+t^{\frac{m+\kappa}{s}}\frac{\|(\delta_\alpha\nabla^{m+[\kappa]} u_{R,x_0}^2)(t,x)|_{x_0=x}\|_{L_x^p}}{|\alpha|^{\kappa-[\kappa]}}\right)\nonumber\\
						&\quad\quad\quad\quad\quad\quad\lesssim \int_0^T\|R_2[u](\tau)\|_{L^p}d\tau+\sup_{t\in[0,T]}t^{\frac{m}{s}+1}\|R_2[u](t)\|_{W^{m,p}}\lesssim T^{\frac{\sigma}{s}}\|u\|_{Y_T^p}.
					\end{align}
					We obtain from \eqref{ur12} and \eqref{ur22} that 
					\begin{equation}\label{ur2}
						\begin{aligned}
							\sup_{t\in[0,T]}\sup_\alpha&\left(\frac{\|(\delta_\alpha u_{R,x_0})(t,x)|_{x_0=x}\|_{L^\infty_x}}{\log^{-\varkappa}(2+|\alpha|^{-1})}+|\log t|^\varkappa   t^\frac{m+\kappa}{s}\frac{\|(\delta_\alpha \nabla_x ^{m+[\kappa]}u_{R,x_0})(t,x)|_{x_0=x}\|_{L^\infty_x}}{|\alpha|^{\kappa-[\kappa]}}\right)\\
							&\quad\quad\quad\quad\lesssim T^\frac{\sigma'}{s} \|u\|_{X_T^{\varkappa}},
						\end{aligned}
					\end{equation}
                    and from \eqref{uR1} and \eqref{uR2} that
                    \begin{equation}\label{esuR}
                        \sup_{t\in[0,T]}\left(\|u_{R,x_0}(t,x)|_{x_0=x}\|_{L_x^p}+t^{\frac{m+\kappa}{s}}\sup_\alpha\frac{\|(\delta_\alpha\nabla^{m+[\kappa]} u_{R,x_0})(t,x)|_{x_0=x}\|_{L_x^p}}{|\alpha|^{\kappa-[\kappa]}}\right)\lesssim T^{\frac{\sigma'}{s}}\|u\|_{Y_T^p},
                    \end{equation}
					for some $\sigma'\in(0,\sigma)$. Combining \eqref{ulhol}, \eqref{nlin} and   \eqref{ur2} yields
					\begin{align*}
						\|u\|_{X_T^\varkappa }
						&\lesssim 
						\|u_0\|_{\dot C^{\log^\varkappa}}+\da_T(f,g)+T^\frac{\sigma'}{s}\|u\|_{X_T^\varkappa },
					\end{align*}
                    and \eqref{uLLp1}, \eqref{uNBe}, \eqref{esuR} give 
                    \begin{equation*}
                        \|u\|_{Y_T^p}\lesssim \|u_0\|_{L^p}+\da_T^p(f,g)+T^{\frac{\sigma'}{s}}\|u\|_{Y_T^p}.
                    \end{equation*}
					These imply \eqref{relog} and \eqref{reLp} by taking $T$ small enough.
					\vspace{0.3cm}
				\end{proof}

                \section{Contour dynamics formulation}\label{seccon}

The purpose of this section is to extract the leading singular part of the interface equation. This is one of the main contributions of the paper. In the general two-phase problem with viscosity jump and rigid boundaries, the interface velocity is not given by a closed explicit contour equation. Instead, it is obtained through the elliptic transmission problem \eqref{sysq}. We show that, nevertheless, the evolution contains an explicit third-order parabolic principal operator generated by surface tension, while all effects coming from the viscosity jump, density contrast, and rigid boundaries can be placed into lower-order or perturbative terms.

The key point is to compare the true transmission velocity with a frozen-coefficient velocity obtained by freezing the geometry at the local slope of the interface. This allows us to identify the leading order operator in a singular-integral form. The resulting formula \eqref{evoeta'} is the starting point for the Schauder estimates and the nonlinear well-posedness argument in the logarithmically critical space.

Under the uniform separation condition from the fixed boundaries, the boundary contributions from $\Gamma^\pm$ do not affect the leading singular structure. Similarly, the density contrast term $\varrho_0\eta$ is lower order compared with the surface-tension contribution
\[
-\operatorname{div}\left(\frac{\nabla\eta}{\langle\nabla\eta\rangle}\right).
\]
We therefore first straighten the moving interface and then freeze the coefficients of the resulting elliptic transmission problem. More precisely, 
				define \begin{align}\label{relavq}
					v^\pm(x,z)=v^\pm_\eta(x,z)=q^\pm(x,z+\eta(x)).
				\end{align}
				Then 
				\begin{align*}
					&\nabla_{x,z}  q^\pm(x,z)=\M_0(\nabla\eta(x))(\nabla_{x,z}v^\pm)(x,z-\eta(x)),\\ &\Delta_{x,z} q^\pm(x,z)=\left(\operatorname{div}_{x,z}(\M(\nabla\eta(x))\nabla_{x,z}v^\pm)\right)(x,z-\eta(x)),
				\end{align*}
				with
				\begin{equation}\label{ndefM}
					\begin{aligned}
						\M_0(\nabla\eta)&=\left(\begin{array}{cc}
							\mathrm{	Id}&-\nabla \eta\\
							0&1
						\end{array}\right),\\
						\M(\nabla\eta)&=(\M_0(\nabla\eta))^\top\M_0(\nabla\eta)=\left(\begin{array}{cc}
							\mathrm{	Id}&-\nabla \eta\\
							-(\nabla \eta)^\top&1+|\nabla\eta|^2
						\end{array}\right).
					\end{aligned}    
				\end{equation}
				We obtain
				\begin{equation}\label{elleqeta}
					\begin{aligned}
						&\operatorname{div}_{x,z}(\M(\nabla \eta)\nabla_{x,z} v^\pm )=0,\ \ \text{in}\ \tilde \Omega_\eta^\pm,\\
						&v^--v^+=-\operatorname{div}\left(\frac{\nabla \eta}{\langle\nabla \eta\rangle}\right)+\varrho_0\eta, \quad \text { on } \{z=0\},\\
						&e_{d+1}\cdot (\M(\nabla \eta)(\mu_+^{-1}\nabla_{x,z}v^+-\mu_-^{-1}\nabla_{x,z}v^-))=0, \quad \text { on } \{z=0\},\\
						&\tilde \nu^\pm\cdot (\M(\nabla \eta)\nabla_{x,z} v^\pm) =0,\ \ \ \ \ \ \ \ \ \text{on} \ \tilde \Gamma^\pm_\eta,
					\end{aligned}
				\end{equation}
				where $e_{d+1}$ denotes the (d+1)-th unit vector, 
				\begin{equation}\label{defdomain}
					\begin{aligned}
						&\tilde \Omega_{\eta}^{+}=\left\{(x, z) \in \mathbb{R}^{d} \times \mathbb{R}: 0<z<\underline{b}^{+}(x)-\eta(t,x)\right\}, \\
						&\tilde \Omega_{\eta}^{-}=\left\{(x, z) \in \mathbb{R}^{d} \times \mathbb{R}: \underline{b}^{-}(x)-\eta(t, x)<z<0\right\},\\
						&\tilde \Gamma ^\pm_\eta=\{(x,z):z=\underline{b}^\pm(x)-\eta(t,x)\},
					\end{aligned} 
				\end{equation}
                and $\tilde \nu^\pm=\frac{1}{\langle \nabla \left(\underline{b}^{\pm}(x)-\eta(t,x)\right) \rangle}\left(-\nabla \underline{b}^{\pm}(x)+\nabla \eta(t,x),1\right)$ denote the outward normal vector of $\tilde \Gamma_\eta ^\pm$. 
				Note that we transform the effect of free interface $\Sigma_t=\{(x,z):z=\eta(x)\}$ in the system \eqref{sysq} to the variable coefficient $\M(\nabla\eta)$ in the system \eqref{elleqeta}. 
				We remark that $v^\pm$ is not differentiable at $z=0$. When evaluating a function with a ``$+$" or ``$-$" superscript at $z=0$, it implies approaching the limit from either the positive or negative direction. \textit{i.e.} $ v^\pm(x,0)=\lim_{z=0^\pm}v^\pm(x,z)$.
				
				With this notation, the interface equation \eqref{evo} becomes
				\begin{equation}\label{inievo}
					\begin{aligned}
						\partial_t \eta (x)&=-\frac{1}{\mu_+}e_{d+1}\cdot (\M(\nabla\eta)\nabla_{x,z}v^+(x,z)|_{z=0})
						\\&=\frac{1}{\mu_+}\nabla \eta(x)\cdot \nabla _xv^+(x,z)|_{z=0} -\frac{1}{\mu_+}\langle\nabla \eta(x)\rangle^2\partial_zv^+(x,z)|_{z=0},\\
						\eta|_{t=0}&=\eta_0.
					\end{aligned}
				\end{equation}
                Here and throughout, traces at $z=0$ are understood from the corresponding side of the interface. The formulation \eqref{inievo} is well-defined by the existence and uniqueness of weak solutions to the elliptic transmission problem \eqref{elleqeta}.

                \begin{remark}
The weak solvability of \eqref{elleqeta} follows from a standard variational
argument. Indeed, after removing the prescribed jump
\[
g_\eta:=-\operatorname{div}\left(\frac{\nabla\eta}{\langle\nabla\eta\rangle}\right)
+\varrho_0\eta
\]
across $\{z=0\}$, the transmission system can be written as a divergence-form
elliptic equation with piecewise coefficients and a source supported on the flat
interface.

More precisely, let $v_0^+$ solve
\[
\begin{cases}
\operatorname{div}_{x,z}\bigl(\M(\nabla\eta)\nabla_{x,z}v_0^+\bigr)=0,
& \text{in } \tilde\Omega_\eta^+,\\
v_0^+=g_\eta,
& \text{on } \{z=0\},\\
\widetilde\nu^+\cdot\M(\nabla\eta)\nabla_{x,z}v_0^+=0,
& \text{on } \tilde\Gamma_\eta^+ .
\end{cases}
\]
Define
\[
u^+:=v^+ + v_0^+,\qquad u^-:=v^- .
\]
Since \(v^- - v^+=g_\eta\) on \(\{z=0\}\), we have
\[
u^+=u^-
\qquad\text{on } \{z=0\}.
\]
Thus the function \(u\), defined by
\[
u=
\begin{cases}
u^+, & \text{in } \tilde\Omega_\eta^+,\\
u^-, & \text{in } \tilde\Omega_\eta^-,
\end{cases}
\]
is continuous across the flat interface. It satisfies, in the sense of distributions,
\[
\operatorname{div}_{x,z}\big(\overline\M(\nabla\eta)\nabla_{x,z}u\big)
=
F_\eta\,\delta_{z=0}
\qquad\text{in } \tilde\Omega_\eta,
\]
where $
F_\eta(x)
=
e_{d+1}\cdot
\mu_+^{-1}\M(\nabla\eta(x))
\nabla_{x,z}v_0^+(x,0),
$
together with the boundary condition
\[
\widetilde\nu^\pm\cdot
\overline\M(\nabla\eta)\nabla_{x,z}u=0
\qquad\text{on } \tilde\Gamma_\eta^\pm .
\]
Here
\[
\tilde\Omega_\eta
=
\left\{
(x,z):
\underline b^-(x)-\eta(t,x)<z<\underline b^+(x)-\eta(t,x)
\right\},
\]
and
\[
\overline\M(\nabla\eta)
=
\begin{cases}
\mu_+^{-1}\M(\nabla\eta),
& 0<z<\underline b^+(x)-\eta(t,x),\\[1mm]
\mu_-^{-1}\M(\nabla\eta),
& \underline b^-(x)-\eta(t,x)<z<0.
\end{cases}
\] Since $\M(\nabla\eta)$ is
uniformly elliptic under the Lipschitz bound on $\eta$, the standard
variational theory for uniformly elliptic divergence-form equations gives weak
solvability, up to the usual additive constant.
\end{remark}

We next introduce the frozen transmission problem. The proof relies on a
freezing-coefficient argument, in the spirit of \cite{CHN1},
adapted here to the elliptic transmission structure. The frozen problem can be
solved explicitly at the level of the boundary kernel, and its contribution
yields the leading singular operator in the evolution of $\eta$. The difference
between the true velocity and the frozen velocity will be treated as a
remainder.\\

To isolate the principal part, we freeze the matrix $\M(\nabla\eta)$ at a
constant slope. More precisely, for each fixed $\b\in\mathbb R^d$, let
$\tilde v^\pm_\b=\tilde v^\pm_\b[\mathcal G]$ solve
				\begin{equation}\label{eqmain}
					\begin{aligned}
						&\operatorname{div}_{x,z}(\M(\b)\nabla_{x,z}\tilde v^\pm_\b )=0,\ \ \text{in}\ \mathbb{R}^{d+1}_\pm,\\
						&\tilde v^-_\b-\tilde v^+_\b=\mathcal{G} \quad \text { on } \{z=0\},\\
						&e_{d+1}\cdot (\M(\b)(\mu_+^{-1}\nabla_{x,z}\tilde v^+_\b-\mu_-^{-1}\nabla_{x,z}\tilde v^-_\b))=0 \quad \text { on } \{z=0\},
					\end{aligned}
				\end{equation}
				where we denote $\mathcal{G}=\mathcal{G}[\eta]=-\operatorname{div}\left(\frac{\nabla \eta}{\langle\nabla \eta\rangle}\right)$.
Taking the Fourier transform in the tangential variable $x$, and denoting
				\begin{align*}
					V^\pm_\b(\xi,z)=(2\pi)^{-\frac{d}{2}}\int_{\mathbb{R}^d}\tilde v_\b ^\pm(x,z)e^{-i\xi\cdot x}dx,\ \ \ \xi\in\mathbb{R}^d,\ \ \ \quad z\in\mathbb{R}^\pm.
				\end{align*}
				One has 
				\begin{align*}
					&	(1+|\b|^2)\partial_z^2V^\pm_\b(\xi,z)-|\xi|^2V^\pm_\b(\xi,z)-2i\b\cdot \xi \partial_z V_\b^\pm(\xi,z)=0,\ \ \ (\xi,z)\in\mathbb{R}^{d+1}_\pm,\\
					&	V^-_\b(\xi,0)-V^+_\b(\xi,0)= \hat{\mathcal{G}}(\xi),\\
					&\left(-\frac{ V^+_\b}{\mu_+}+\frac{V^-_\b}{\mu_-}\right)i\xi\cdot \b+\left(\frac{\partial_z V^+_\b}{\mu_+}-\frac{\partial_zV^-_\b}{\mu_-}\right)(|\b|^2+1)=0,\ \ \ \text{on}\ \{z=0\}.
				\end{align*}
				By a direct calculation, we obtain the eigenvalues of the above ODE:
				\begin{align}\label{deflanb}
					\lambda^\pm=	\lambda^\pm(\xi,\b)=\frac{i\b\cdot \xi\pm \sqrt{\langle \b\rangle^2|\xi|^2-(\b\cdot \xi)^2}}{\langle \b\rangle^2}.
				\end{align}
				Imposing decay as $z\to\pm\infty$, we obtain
				\begin{align}\label{deffour}
					V^+_\b(\xi,z)=c^+\hat{\mathcal{G}}(\xi)e^{\lambda^-z}, \ \ \ \ \ V^-_\b(\xi,z)=c^-\hat{\mathcal{G}}(\xi)e^{\lambda^+z},
				\end{align}
				where 
				\begin{align*}
					c^\pm=\frac{\pm\mu_\pm  }{\mu_++\mu_-}.
				\end{align*}
				Taking inverse Fourier transform to obtain 
				\begin{align}\label{foutv}
					\tilde v^\pm_\b(x,z)=(2\pi)^{-\frac{d}{2}}c^\pm \int_{\mathbb{R}^d} \hat {\mathcal{G}}(\xi)e^{\lambda^\mp(\xi,\b) z}e^{ix\cdot \xi}d\xi,\quad\quad\quad z\in\mathbb{R}^\pm.
				\end{align}
				More precisely, we have
				\begin{align*}
					\tilde v^\pm_{\b}(x,z)=c^\pm \mathcal{V}_\b[\mathcal{G}](x,z),\quad\quad z\in\mathbb{R}^\pm,
				\end{align*}
				where 
				\begin{align}\label{defvb}
					\mathcal{V}_\b[\mathcal{G}](x,z)=\int_{\mathbb{R}^d} K_\b(x-y,z) \mathcal{G}(y)dy,
				\end{align}
				with
				\begin{equation}\label{defKor}
					\begin{aligned}
						K_\b(x,z)=\begin{cases}
							\frac{1}{(2\pi)^\frac{d}{2}}\int_{\mathbb{R}^d}\exp\left(ix\cdot\xi+\lambda^-(\xi,\b)z\right)d\xi,& z\geq0,\\
							\frac{1}{(2\pi)^\frac{d}{2}}\int_{\mathbb{R}^d}\exp\left(ix\cdot\xi+\lambda^+(\xi,\b)z\right)d\xi,& z<0.
						\end{cases}	
					\end{aligned}
				\end{equation}
				See Lemma \ref{Z10}  for the explicit formula of $	K_\b(x,z)$.\\
				Moreover, it is easy to check that 
				\begin{align}\label{dx0}
					\frac{\left(	\nabla_x\tilde v_\b^+\right)(x,0)}{\mu_+}=\frac{\nabla \mathcal{G}(x)}{\mu_++\mu_-}.
				\end{align}
				To obtain $ \partial_z\tilde v^-_\b|_{(x,0)}$, we proceed by analyzing its Fourier transform:
                \begin{align}\label{pztilvb}
						\partial_z\tilde {v}_{\b}^{\pm}[f]=c^{\pm}\mathcal{F}^{-1}\left(\lambda^{\mp}(\xi,\b)e^{\lambda^{\mp}(\xi,\b)z}\hat{\mathcal{G}}[f](\xi)\right).
					\end{align} 
                    \vspace{0.2cm}\\
We first record two elementary facts which will be used to identify the normal
derivative of the frozen transmission solution at the flat interface. The first
lemma computes the anisotropic constant appearing in the singular-integral
representation of the Fourier multiplier
\[
\frac{\sqrt{\langle\b\rangle^2|\xi|^2-(\b\cdot\xi)^2}}{\langle\b\rangle^2}.
\]
The second lemma then converts this multiplier into a principal value operator
in physical space.	\begin{lemma}\label{lemz0} Let $e\in \mathbb{S}^{d-1}$, $\b\in\mathbb{R}^d$. There holds, 
						\begin{align}\label{contc}
							\tilde c_d\int_{\mathbb{R}^d} \frac{1-\cos(e\cdot\alpha)}{\left((\alpha\cdot \b)^2+|\alpha|^2\right)^{\frac{d+1}{2}}}d\alpha=\frac{(\langle \b\rangle^2-(\b\cdot e)^2)^{\frac{1}{2}}}{\langle \b\rangle^2},
						\end{align}
						where $\tilde c_d=\pi^{-\frac{d+1}{2}}\Gamma\left(\frac{d+1}{2}\right)>0$.
					\end{lemma}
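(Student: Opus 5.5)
The plan is to reduce the anisotropic integral to an isotropic one by a linear change of variables, and then evaluate the isotropic integral with the standard Fourier representation of $|\xi|$. The quadratic form in the denominator is
\[
(\alpha\cdot\b)^2+|\alpha|^2=\alpha^\top Q\alpha,\qquad Q=\mathrm{Id}+\b\otimes\b .
\]
This matrix is symmetric positive definite, with $\det Q=\langle\b\rangle^2$. Its inverse is given by Sherman--Morrison:
\[
Q^{-1}=\mathrm{Id}-\frac{\b\otimes\b}{\langle\b\rangle^2}.
\]

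Substitute $\alpha=Q^{-1/2}\beta$, so that $d\alpha=\langle\b\rangle^{-1}d\beta$ and $\alpha^\top Q\alpha=|\beta|^2$. Also $e\cdot\alpha=(Q^{-1/2}e)\cdot\beta=:\zeta\cdot\beta$. The left side of \eqref{contc} becomes
\[
\frac{\tilde c_d}{\langle\b\rangle}\int_{\mathbb R^d}\frac{1-\cos(\zeta\cdot\beta)}{|\beta|^{d+1}}\,d\beta .
\]

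Next, use the classical identity
\[
\int_{\mathbb R^d}\frac{1-\cos(\zeta\cdot\beta)}{|\beta|^{d+1}}\,d\beta=C_d|\zeta|,
\qquad C_d=\frac{\pi^{\frac{d+1}{2}}}{\Gamma\left(\frac{d+1}{2}\right)}=\tilde c_d^{-1}.
\]
This is the singular-integral representation of $\Lambda=(-\Delta)^{1/2}$. The homogeneity in $\zeta$ follows by scaling $\beta\mapsto\beta/|\zeta|$, and rotation invariance reduces the computation to $\zeta=e_1$. The constant is the standard one for $(-\Delta)^{s/2}$ with $s=1$, namely
\[
C_{d,s}^{-1}=\frac{2^s\Gamma\left(\frac{d+s}{2}\right)}{\pi^{d/2}\left|\Gamma(-s/2)\right|},
\]
where one uses $|\Gamma(-1/2)|=2\sqrt\pi$. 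I would verify it quickly: integrate first over the $(d-1)$ transverse directions with $\int_{\mathbb R^{d-1}}(t^2+|y|^2)^{-\frac{d+1}{2}}dy=c\,t^{-2}$, which reduces to the one-dimensional identity $\int_{\mathbb R}(1-\cos t)t^{-2}dt=\pi$.

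The remaining step is to compute $|\zeta|^2$:
\[
|\zeta|^2=e^\top Q^{-1}e=1-\frac{(\b\cdot e)^2}{\langle\b\rangle^2}=\frac{\langle\b\rangle^2-(\b\cdot e)^2}{\langle\b\rangle^2}.
\]
Combining, the left side of \eqref{contc} equals
\[
\frac{1}{\langle\b\rangle}\cdot\frac{\left(\langle\b\rangle^2-(\b\cdot e)^2\right)^{1/2}}{\langle\b\rangle}=\frac{\left(\langle\b\rangle^2-(\b\cdot e)^2\right)^{1/2}}{\langle\b\rangle^2},
\]
which is the right side of \eqref{contc}.

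The only delicate point is tracking the exact value of the constant $C_d$, that is, checking that it equals $\tilde c_d^{-1}$. Everything else is linear algebra. If the direct Gamma-function computation becomes cumbersome, I would instead calibrate $C_d$ by testing against a Gaussian in the Fourier representation of $\Lambda$ under the paper's normalization.
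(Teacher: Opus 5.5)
Your proof is correct and is essentially the paper's argument: both reduce the anisotropic kernel to the isotropic identity $\int_{\mathbb R^d}(1-\cos\alpha_1)|\alpha|^{-d-1}\,d\alpha=\tilde c_d^{-1}$ by a linear change of variables, and then read off the factor $(\langle\b\rangle^2-(\b\cdot e)^2)^{1/2}/\langle\b\rangle^2$. The difference is only in execution. The paper rotates so that $e=e_1$ and $\b\in\operatorname{span}\{e_1,e_2\}$, then completes the square by hand, treating $d=1$ and $\b\parallel e$ separately. Your substitution $\alpha=Q^{-1/2}\beta$, with Sherman--Morrison giving $e^\top Q^{-1}e$, covers all cases at once, and you also verify the normalizing constant, which the paper only asserts.
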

                  		\begin{proof} By the normalization
\[
\tilde c_d
=
\pi^{-\frac{d+1}{2}}
\Gamma\left(\frac{d+1}{2}\right),
\]
we have
\[
\tilde c_d
\int_{\mathbb R^d}
\frac{1-\cos(\alpha_1)}{|\alpha|^{d+1}}\,d\alpha
=1.
\]
For $d=1$, since $e=\pm1$, the identity reduces to
\[
\tilde c_1
\int_{\mathbb R}
\frac{1-\cos\alpha}{(1+|\b|^2)|\alpha|^2}\,d\alpha
=
\frac1{1+|\b|^2},
\]
which proves the claim.\\
Assume now $d\ge2$. If $\b$ is parallel to $e$, the result follows directly by a
one-dimensional scaling in the $e$ direction, or by continuity from the
non-parallel case. Hence we may assume that $\b$ is not parallel to $e$.

We first use the following elementary rotation identity: for any
$\theta_1,\theta_2\in\mathbb S^{d-1}$ and any integrable function $G$,
					\begin{equation}\label{Z2}
						\int_{\mathbb{R}^d} G(\theta_1\cdot x,\theta_2\cdot x,|x|)dx=\int_{\mathbb{R}^d} G((1-(\theta_1\cdot\theta_2)^2)^{\frac{1}{2}}x_1+(\theta_1\cdot\theta_2)x_2,x_2,|x|)dx.
					\end{equation} 
					Indeed, the unit vectors 
					\begin{equation*}
						\theta_2, \tilde{\theta}_1= \frac{\theta_1-(\theta_1\cdot\theta_2)\theta_2}{(1-(\theta_1\cdot\theta_2)^2)^{\frac{1}{2}}}\in \mathbb{S}^{d-1},
					\end{equation*}
					are orthogonal. 
					Hence,  by rotation, 
					\begin{align*}
						\int_{\mathbb{R}^d} G(\theta_1\cdot x,\theta_2\cdot x,|x|)dx&=\int_{\mathbb{R}^d} G((\tilde{\theta}_1\cdot\theta_1)x_1+(\theta_1\cdot\theta_2)x_2,x_2,|x|)dx\\&=\int_{\mathbb{R}^d} G((1-(\theta_1\cdot\theta_2)^2)^{\frac{1}{2}}x_1+(\theta_1\cdot\theta_2)x_2,x_2,|x|)dx.
					\end{align*}
                      From \eqref{Z2}, we deduce that 
						\begin{align*}
							&\int_{\mathbb{R}^d} \frac{1-\cos(e\cdot \alpha)}{\left((\alpha\cdot \b)^2+|\alpha|^2\right)^{\frac{d+1}{2}}}d\alpha=\int_{\mathbb{R}^d} \frac{1-\cos(\alpha_1)}{\left(((\b\cdot e)\alpha_1+(|\b|^2-(\b\cdot e)^2)^{\frac{1}{2}}\alpha_2)^2+|\alpha|^2\right)^{\frac{d+1}{2}}}d\alpha\\&=\int_{\mathbb{R}^d} \frac{1-\cos(\alpha_1)}{\left((1+(\b\cdot e)^2)\alpha_1^2+(\langle \b\rangle^2-(\b\cdot e)^2)\alpha_2^2+2(\b\cdot e)(|\b|^2-(\b\cdot e)^2)^{\frac{1}{2}}\alpha_1\alpha_2+|\alpha'|^2\right)^{\frac{d+1}{2}}}d\alpha,
						\end{align*}
						where we denote $\alpha'=\alpha-\alpha_1e_1-\alpha_2e_2$.
						Note that 
						\begin{align*}
							&\left(1+(\b\cdot e)^2\right)\alpha_1^2+\left(\langle \b\rangle^2-(\b\cdot e)^2\right)\alpha_2^2+2(\b\cdot e)\left(|\b|^2-(\b\cdot e)^2\right)^{\frac{1}{2}}\alpha_1\alpha_2\\&=\frac{\langle \b\rangle^2}{\langle \b\rangle^2-(\b\cdot  e)^2}\alpha_1^2+\left((\langle \b\rangle^2-(\b\cdot e)^2)^{\frac{1}{2}}\alpha_2+\frac{(\b\cdot e)(|\b|^2-(\b\cdot e)^2)^{\frac{1}{2}}}{(\langle \b\rangle^2-(\b\cdot e)^2)^{\frac{1}{2}}}\alpha_1\right)^2.
						\end{align*}
						Using elementary linear transformations, we obtain 
						\begin{align*}
							\int_{\mathbb{R}^d} \frac{1-\cos(e\cdot \alpha)}{\left((\alpha\cdot \b)^2+|\alpha|^2\right)^{\frac{d+1}{2}}}d\alpha	&=\frac{1}{(\langle \b\rangle^2-(\b\cdot e)^2)^{\frac{1}{2}}}\int_{\mathbb{R}^d} \frac{1-\cos(\alpha_1)}{\left(\frac{\langle \b\rangle^2}{\langle \b\rangle^2-(\b\cdot e)^2}\alpha_1^2+\alpha_2^2+|\alpha'|^2\right)^{\frac{d+1}{2}}}d\alpha\\&
							=\frac{(\langle \b\rangle^2-(\b\cdot e)^2)^{\frac{1}{2}}}{\langle \b\rangle^2} \int_{\mathbb{R}^d} \frac{1-\cos(\alpha_1)}{|\alpha|^{d+1}}d\alpha=\tilde c_d^{-1}\frac{(\langle \b\rangle^2-(\b\cdot e)^2)^{\frac{1}{2}}}{\langle \b\rangle^2}.
						\end{align*}
						This completes the proof of the lemma.
					\end{proof}
					\begin{lemma}\label{lemfdz0}
						Let $\b\in \mathbb{R}^d$ and $	\lambda^\pm(\xi,\b)=\frac{i\b\cdot\xi\pm\sqrt{(1+|\b|^2)|\xi|^2-(\b\cdot \xi)^2}}{1+|\b|^2}$. Then
						\begin{align}\label{opL}
							\mathfrak{L}_{\pm,\b} f(x):=\mathcal{F}^{-1}\left(\lambda^\pm(\xi,\b)\hat f(\xi)\right)(x)=\frac{\b\cdot\nabla f(x)}{\langle \b\rangle^2}\pm\tilde c_d \mathrm{P.V.}\int_{\mathbb{R}^d} \frac{\delta_\alpha f(x)}{\langle \hat\alpha\cdot \b\rangle^{d+1}}\frac{d\alpha}{|\alpha|^{d+1}},
						\end{align}
						where $\tilde c_d $ is the positive constant in \eqref{contc}.
					\end{lemma}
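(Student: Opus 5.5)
The symbol splits as
\[
\lambda^\pm(\xi,\b)=\frac{i\b\cdot\xi}{\langle\b\rangle^2}\pm\frac{\sqrt{\langle\b\rangle^2|\xi|^2-(\b\cdot\xi)^2}}{\langle\b\rangle^2},
\]
and the plan is to treat the two pieces separately. The first piece is the Fourier multiplier of the first-order operator $\langle\b\rangle^{-2}\b\cdot\nabla$, since $\mathcal F(\nabla f)=i\xi\hat f$. This immediately gives the transport term $\b\cdot\nabla f/\langle\b\rangle^2$ in \eqref{opL}. The remaining task is to show that the second multiplier is the symbol of the principal value operator
\[
Tf(x):=\tilde c_d\,\mathrm{P.V.}\int_{\mathbb R^d}\frac{\delta_\alpha f(x)}{\langle\hat\alpha\cdot\b\rangle^{d+1}}\frac{d\alpha}{|\alpha|^{d+1}}.
\]

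First I will simplify the kernel. Since $\langle\hat\alpha\cdot\b\rangle^{d+1}|\alpha|^{d+1}=((\alpha\cdot\b)^2+|\alpha|^2)^{\frac{d+1}{2}}$, we have
\[
Tf(x)=\tilde c_d\int\frac{f(x)-f(x-\alpha)}{((\alpha\cdot\b)^2+|\alpha|^2)^{\frac{d+1}{2}}}\,d\alpha .
\]
The kernel $k(\alpha)=((\alpha\cdot\b)^2+|\alpha|^2)^{-\frac{d+1}{2}}$ is even and homogeneous of degree $-(d+1)$, so I will symmetrize under $\alpha\mapsto-\alpha$:
\[
Tf(x)=\frac{\tilde c_d}{2}\int\bigl(2f(x)-f(x-\alpha)-f(x+\alpha)\bigr)k(\alpha)\,d\alpha .
\]
For $f\in\mathcal S$ this integral converges absolutely: the second difference is $O(|\alpha|^2)$ near the origin and bounded at infinity, while $k(\alpha)\lesssim|\alpha|^{-d-1}$. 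This also justifies the principal value.

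Next I compute the symbol. Taking the Fourier transform and using Fubini, the symmetrized form gives the multiplier
\[
m(\xi)=\tilde c_d\int_{\mathbb R^d}(1-\cos(\alpha\cdot\xi))\,k(\alpha)\,d\alpha .
\]
Fubini is legitimate because $(1-\cos(\alpha\cdot\xi))k(\alpha)\lesssim\min(|\alpha|^2|\xi|^2,1)|\alpha|^{-d-1}$, which is integrable in $\alpha$ uniformly on compact sets of $\xi$. For $\xi\neq0$, set $e=\xi/|\xi|$ and change variables $\alpha=\beta/|\xi|$. The homogeneity of degree $-(d+1)$ of $k$, together with the Jacobian $|\xi|^{-d}$, gives
\[
m(\xi)=|\xi|\,\tilde c_d\int(1-\cos(e\cdot\beta))\,k(\beta)\,d\beta .
\]
By Lemma \ref{lemz0}, the right-hand side equals
\[
|\xi|\,\frac{(\langle\b\rangle^2-(\b\cdot e)^2)^{1/2}}{\langle\b\rangle^2}=\frac{\sqrt{\langle\b\rangle^2|\xi|^2-(\b\cdot\xi)^2}}{\langle\b\rangle^2}.
\]
At $\xi=0$ both sides vanish.

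Combining the two pieces gives $\mathcal F^{-1}(\lambda^\pm\hat f)=\langle\b\rangle^{-2}\b\cdot\nabla f\pm Tf$, which is \eqref{opL}. The only genuinely nontrivial input is the anisotropic constant, and that is supplied by Lemma \ref{lemz0}. The main point needing care is the justification of the principal value and of Fubini, which the symmetrization handles. For general $f$ (for instance $f\in C^{1,\epsilon}$ with appropriate decay), the identity extends by density, using the same second-difference bounds.
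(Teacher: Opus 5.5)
Your proposal is correct and follows essentially the paper's route: you split off the transport symbol $i\b\cdot\xi/\langle\b\rangle^2$ and identify the remaining multiplier by rescaling $\alpha\mapsto\alpha/|\xi|$ and applying Lemma~\ref{lemz0} with $e=\hat\xi$. The only difference is in how the evenness of the kernel is used: you symmetrize to second differences in physical space, while the paper discards the odd imaginary part of $1-e^{-i\xi\cdot\alpha}$ on the symbol side. Your justification of the principal value and of Fubini fills in details the paper leaves implicit.
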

                   
					\begin{proof}
						We have
						\begin{equation}\label{fopz}
							\begin{aligned}
								&	\mathcal{F}^{-1}\left(\lambda^\pm(\xi,\b)\hat f(\xi)\right)=\frac{1}{(2\pi)^\frac{d}{2}}\int_{\mathbb{R}^d}\lambda^\pm(\xi,\b)\hat f (\xi)\exp\left(ix\cdot\xi\right)d\xi \\
								&\ \ \ \ \ =\frac{\b\cdot\nabla f(x)}{\langle \b\rangle^2}\pm\frac{1}{(2\pi)^\frac{d}{2}}\int_{\mathbb{R}^d}\frac{\sqrt{\langle \b\rangle^2|\xi|^2-(\b\cdot\xi)^2}}{\langle \b\rangle^2}\hat{f}(\xi)\exp\left(ix\cdot \xi\right)d\xi .
							\end{aligned}
						\end{equation}
						 By Lemma~\ref{lemz0}, with $e=\hat\xi$, we obtain
\[
\frac{\sqrt{\langle \b\rangle^2|\xi|^2-(\b\cdot\xi)^2}}{\langle \b\rangle^2}
=
\tilde c_d |\xi|
\int_{\mathbb R^d}
\frac{1-\cos(\hat\xi\cdot\alpha)}
{\langle\hat\alpha\cdot\b\rangle^{d+1}}
\frac{d\alpha}{|\alpha|^{d+1}}.
\]
Equivalently, since the imaginary part is odd in $\alpha$,
\[
\frac{\sqrt{\langle \b\rangle^2|\xi|^2-(\b\cdot\xi)^2}}{\langle \b\rangle^2}
=
\tilde c_d
\int_{\mathbb R^d}
\frac{1-e^{-i\xi\cdot\alpha}}
{\langle\hat\alpha\cdot\b\rangle^{d+1}}
\frac{d\alpha}{|\alpha|^{d+1}},
\]
in the principal value sense.
						This implies that 
						\begin{align*}
							\frac{1}{(2\pi)^\frac{d}{2}}\int_{\mathbb{R}^d}\frac{\sqrt{\langle \b\rangle^2|\xi|^2-(\b\cdot\xi)^2}}{\langle \b\rangle^2}\hat f(\xi)\exp(ix\cdot\xi)d\xi=\tilde c_d\int_{\mathbb{R}^d} \frac{\delta_\alpha f(x)}{\langle \hat\alpha\cdot \b\rangle^{d+1}}\frac{d\alpha}{|\alpha|^{d+1}}.
						\end{align*}
						Combining this with \eqref{fopz} yields \eqref{opL}.
						This completes the proof of the lemma.
					\end{proof}\vspace{0.2cm}\\
				Applying Lemma \ref{lemfdz0} to obtain that 
				\begin{equation}\label{hz0}
					\begin{aligned}
						\lim_{z\to 0^+}	\int_{\mathbb{R}^d} \partial_zK_\b(x-y,z)\mathcal{G}(y)dy&=\frac{1}{(2\pi)^\frac{d}{2}}\int_{\mathbb{R}^d}\lambda^-(\xi,\b)\hat {\mathcal{G}}(\xi)\exp\left(ix\cdot\xi\right)d\xi \\
						&=\tilde c_d\int_{\mathbb{R}^d} \frac{\delta_\alpha \mathcal{G}(x)}{\langle \hat\alpha\cdot \b\rangle^{d+1}}\frac{d\alpha}{|\alpha|^{d+1}}+\frac{\b\cdot\nabla \mathcal{G}(x)}{\langle \b\rangle^2}.
					\end{aligned}
				\end{equation}
				Here $\tilde c_d$ is a constant depends only on dimension $d$.
				This implies that
				\begin{align}\label{z0}
					\frac{	\left(\partial_z \tilde v_\b^+\right)(x,0)}{\mu_+}=\frac{1}{\mu_++\mu_-}\left(\tilde c_d \int _{\mathbb{R}^d}\frac{\delta_\alpha\mathcal{G}(x)}{\langle\hat \alpha \cdot \b\rangle^{d+1}}\frac{d\alpha}{|\alpha|^{d+1}}+\frac{\b\cdot \nabla \mathcal{G}(x)}{\langle \b\rangle^2}\right).
				\end{align}
				For simplicity, denote 
				\begin{align}\label{ndefB}
					B(\b)=\frac{\mathrm{Id}}{\langle \b\rangle}-\frac{\b\otimes \b}{\langle\b\rangle^3}, \ \ \ \forall \b\in\mathbb{R}^d.
				\end{align}
				It is easy to check that \begin{align}\label{Bellip}
					\frac{1}{\langle \b\rangle^3} \mathrm{Id}\leq   B(\b)\leq 	\frac{1}{\langle \b\rangle} \mathrm{Id}.
				\end{align} And   \begin{align}\label{ndefG}
					\mathcal{G}[f]=-\operatorname{div}\left(\frac{\nabla f }{\langle\nabla f \rangle}\right) =-B(\nabla f):\nabla ^2 f=-\frac{\Delta f}{\langle \nabla f\rangle}+\frac{(\nabla f)^\top  \nabla ^2 f\nabla f}{\langle \nabla f\rangle^3}.
				\end{align}
				
				We add subscript in order to emphasize the dependence of the solution to elliptic systems on data. More precisely,  we denote $v_h^\pm(x,z)$ the solution to system \eqref{elleqeta} with data $\mathcal{G}=\mathcal{G}[h]$, 
				for any $h: \mathbb{R}^d\to \mathbb{R}$.
				With a slight abuse of notation, we simply denote 
				\begin{align}\label{deftvb}
					\tilde v^\pm_{\b}[\eta](x,z)=c^\pm  {\mathcal{V}}_{\b}[\mathcal{G}[\eta]](x,z).
				\end{align}
				In the evolution equation \eqref{inievo}, we approximate $(\nabla_{x,z}v^+_\eta)(x,0)$ by
\[
\left.(\nabla_{x,z}\tilde v^+_{\b}[\eta])(x,0)\right|_{\b=\nabla\eta(x)}.
\]
This is the central step in the freezing-coefficient argument.\\
 We denote \begin{align}
					\label{ndefbfb}
					\A(\b,\alpha)=\frac{\mathbf{B}(\b)}{\langle\hat \alpha \cdot \mathbf{b}\rangle^{d+1}},\quad\quad\quad	\mathbf{B}(\mathbf{b})=\frac{\tilde c_d }{\mu_++\mu_-}{\langle \mathbf{b}\rangle^2}B(\mathbf{b}).
				\end{align}	
                Define 
                \begin{equation}\label{defHim1}
                \H^{im}[\eta](x)=-\frac{1}{\mu_+}e_{d+1}\cdot\left. \left(\M(\nabla\eta(x))\left((\nabla_{x,z}v^+_\eta)(x,z)-(\nabla_{x,z}\tilde v^+_{\b}[\eta])(x,z)\big|_{\b=\nabla\eta(x)}\right)\right)\right|_{z=0}.
                \end{equation}
                The following lemma gives the desired decomposition of the interface equation into an explicit parabolic principal part and a remainder coming from the transmission error.
\begin{lemma}[Frozen-coefficient contour formulation]\label{lemevoeta} The equation \eqref{evo} can be rewritten as the following form:
\begin{align}\label{evoeta'}
					&\partial_t \eta (x)-\frac{\tilde c_d\langle\nabla \eta(x)\rangle^2 }{\mu_++\mu_-}\int _{\mathbb{R}^d}\frac{B(\nabla\eta(x)):\delta_\alpha \nabla^2 \eta(x)}{\langle\hat \alpha \cdot \nabla\eta(x)\rangle^{d+1}}\frac{d\alpha}{|\alpha|^{d+1}}\nonumber\\
						&\quad\quad\quad\quad\quad\quad\quad\quad=\frac{\tilde c_d\langle\nabla \eta(x)\rangle^2 }{\mu_++\mu_-}\int_{\mathbb{R}^d}\frac{\delta_\alpha B(\nabla \eta)(x): \nabla^2\eta(x-\alpha)}{\langle\hat \alpha \cdot \nabla\eta(x)\rangle^{d+1}}\frac{d\alpha}{|\alpha|^{d+1}}+ \H^{im}[\eta](x),
				\end{align} 
                where $\tilde c_d=\pi^{-\frac{d+1}{2}}\Gamma\left(\frac{d+1}{2}\right)>0$.
\end{lemma}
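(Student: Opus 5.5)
The identity is algebraic. Starting from \eqref{inievo}, I add and subtract the frozen velocity, using the definition \eqref{defHim1} of $\H^{im}[\eta]$:
\[
\partial_t\eta(x)=-\frac{1}{\mu_+}e_{d+1}\cdot\Bigl(\M(\b)\,(\nabla_{x,z}\tilde v^+_\b[\eta])(x,0)\Bigr)\Big|_{\b=\nabla\eta(x)}+\H^{im}[\eta](x).
\]
The task is therefore to compute the frozen term explicitly and match it with the two singular integrals in \eqref{evoeta'}.

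\textbf{Step 1: the frozen flux.} By \eqref{ndefM}, $e_{d+1}\cdot\M(\b)\nabla_{x,z}w=-\b\cdot\nabla_x w+\langle\b\rangle^2\partial_z w$. I insert \eqref{dx0} and \eqref{z0}, with $\b$ treated as a constant parameter, and set $\b=\nabla\eta(x)$ only at the end. This gives
\[
-\frac{1}{\mu_+}e_{d+1}\cdot\M(\b)\nabla_{x,z}\tilde v^+_\b\big|_{z=0}
=\frac{\b\cdot\nabla\mathcal G}{\mu_++\mu_-}
-\frac{\langle\b\rangle^2}{\mu_++\mu_-}\Bigl(\tilde c_d\int\frac{\delta_\alpha\mathcal G(x)}{\langle\hat\alpha\cdot\b\rangle^{d+1}}\frac{d\alpha}{|\alpha|^{d+1}}+\frac{\b\cdot\nabla\mathcal G}{\langle\b\rangle^2}\Bigr).
\]
The transport terms $\b\cdot\nabla\mathcal G$ cancel exactly. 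What remains is
\[
-\frac{\tilde c_d\langle\b\rangle^2}{\mu_++\mu_-}\int\frac{\delta_\alpha\mathcal G(x)}{\langle\hat\alpha\cdot\b\rangle^{d+1}}\frac{d\alpha}{|\alpha|^{d+1}}.
\]
This cancellation is the key structural point: the drift produced by the anisotropy disappears, leaving a purely dissipative operator.

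\textbf{Step 2: expanding $\delta_\alpha\mathcal G$.} By \eqref{ndefG}, $\mathcal G=-B(\nabla\eta):\nabla^2\eta$. The product rule for differences gives
\[
-\delta_\alpha\mathcal G(x)=B(\nabla\eta(x)):\delta_\alpha\nabla^2\eta(x)+\delta_\alpha B(\nabla\eta)(x):\nabla^2\eta(x-\alpha).
\]
Substituting this into the output of Step 1 produces exactly the principal term on the left of \eqref{evoeta'} and the commutator term on the right, with the signs as stated. Moving the principal term to the left completes the identity.

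\textbf{Main obstacle.} The only delicate point is justifying the formula for $\partial_z\tilde v^+_\b$ at $z=0$ in \eqref{z0}. This requires the limit in \eqref{hz0} and the principal value representation of Lemma~\ref{lemfdz0}, applied with a parameter $\b$ that is afterwards set equal to $\nabla\eta(x)$. For smooth $\eta$ (say $\mathcal G\in H^\infty$) this is immediate, because $\lambda^-(\xi,\b)\hat{\mathcal G}$ is integrable and the Fourier formula holds pointwise for each fixed $\b$. The singular integrals also converge absolutely as soon as $\nabla^2\eta$ is Hölder, since $\delta_\alpha\nabla^2\eta=O(|\alpha|^{\theta})$ near $0$ and is bounded at infinity. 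I would therefore prove the lemma for smooth $\eta$ and treat the general case by approximation, in the same way the later sections handle the regularized problems.
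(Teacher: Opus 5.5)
Your proposal is correct and is essentially the paper's proof. Both split off $\H^{im}$, evaluate the frozen flux through \eqref{dx0} and \eqref{z0} so that the drift terms $\b\cdot\nabla\mathcal G$ cancel, and expand $-\delta_\alpha\mathcal G$ by the discrete product rule. The paper only keeps $\b$ free a little longer, carrying the mismatch terms $\bigl(\mathbf{B}(\nabla\eta(x))-\mathbf{B}(\b)\bigr)$ and $\bigl(\nabla\eta(x)/\langle\nabla\eta(x)\rangle^2-\b/\langle\b\rangle^2\bigr)\cdot\nabla\mathcal G$ until it sets $\b=\nabla\eta(x)$.

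Two small cautions, neither affecting the conclusion:

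\emph{Convergence.} The integrals are principal values, not absolutely convergent. Even for smooth $\eta$, $\delta_\alpha\nabla^2\eta$ is only $O(|\alpha|)$. Convergence at $\alpha=0$ comes from the evenness of the kernel in $\alpha$, which kills the odd leading term once $\nabla^2\eta\in C^{1,\theta}$.

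\emph{Signs.} The inputs you cite contain compensating sign slips. The jump condition actually forces $c^+=-\mu_+/(\mu_++\mu_-)$, and \eqref{hz0} drops the minus sign that Lemma~\ref{lemfdz0} puts in front of the principal-value part of $\mathfrak{L}_{-,\b}$. Together these flip \eqref{dx0} and the drift part of \eqref{z0} simultaneously, so the frozen flux you obtain is nonetheless correct.
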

\begin{proof}
For any fixed   $\b\in\mathbb{R}^d$,
				the evolution equation \eqref{inievo} can be written as 
				\begin{equation}\label{eqeta1}
					\begin{aligned}
						\partial_t \eta (x)=&-\frac{1}{\mu_+}e_{d+1}\cdot (\M(\nabla\eta(x))(\nabla_{x,z}\tilde v^+_{\b}[\eta])(x,z))|_{z=0}\\
						&\quad\quad\quad-\frac{1}{\mu_+}e_{d+1}\cdot\left. \left(\M(\nabla\eta(x))\big(\nabla_{x,z}v^+_\eta(x,z)-\nabla_{x,z}\tilde v^+_{\b}[\eta](x,z)\big)\right)\right|_{z=0}.
					\end{aligned}
				\end{equation}
               Note that here the second term contributes only as a lower-order remainder term.\\
				By \eqref{dx0}, \eqref{z0}, the first term can be written as
				\begin{equation}\label{spli4}
					\begin{aligned}
						&-\frac{1}{\mu_+}e_{d+1}\cdot (\M(\nabla\eta(x))(\nabla_{x,z}\tilde v^+_{\b}[\eta])(x,z))|_{z=0}\\
						&=\frac{1}{\mu_+}\left.\left(\nabla \eta(x)\cdot \nabla_x \tilde v^+_{\b}[\eta](x,z)-\langle \nabla \eta(x)\rangle^2\partial_z\tilde v^+_{\b}[\eta](x,z)\right)\right|_{z=0}\\
						&=\frac{\langle\nabla \eta(x)\rangle^2 }{\mu_++\mu_-}\left(-\tilde c_d\int_{\mathbb{R}^d}\frac{\delta_\alpha \mathcal{G}[\eta](x)}{\langle\hat \alpha \cdot \mathbf{b}\rangle^{d+1}}\frac{d\alpha}{|\alpha|^{d+1}}+\left(\frac{\nabla \eta(x)}{\langle\nabla \eta(x)\rangle^2 }-\frac{\mathbf{b}}{\langle\mathbf{b}\rangle^2}\right)\cdot \nabla\mathcal{G}[\eta](x)\right)\\
						&=\int_{\mathbb{R}^d}\A(\b,\alpha):\delta_\alpha \nabla^2\eta(x)\frac{d\alpha}{|\alpha|^{d+1}}+\int_{\mathbb{R}^d}\frac{(\mathbf{B}(\nabla \eta(x))-\mathbf{B}(\mathbf{b})):\delta_\alpha \nabla^2\eta(x)}{\langle\hat \alpha \cdot \mathbf{b}\rangle^{d+1}}\frac{d\alpha}{|\alpha|^{d+1}}\\
						&\ \quad\quad+\frac{\tilde c_d\langle\nabla \eta(x)\rangle^2 }{\mu_++\mu_-}\int_{\mathbb{R}^d}\frac{\delta_\alpha B(\nabla \eta): \nabla^2\eta(x-\alpha)}{\langle\hat \alpha \cdot \mathbf{b}\rangle^{d+1}}\frac{d\alpha}{|\alpha|^{d+1}}+\frac{\langle\nabla \eta(x)\rangle^2 }{\mu_++\mu_-}\left(\frac{\nabla \eta(x)}{\langle\nabla \eta(x)\rangle^2 }-\frac{\mathbf{b}}{\langle \mathbf{b}\rangle^2}\right)\cdot \nabla\mathcal{G}[\eta](x),
					\end{aligned}
				\end{equation}
                with $\mathcal{G},\A,\mathbf{B}$ defined in \eqref{ndefG} and \eqref{ndefbfb}. We now isolate the principal part of the evolution equation. It is the first term on the right-hand side of \eqref{spli4}, evaluated at $\b=\nabla\eta(x)$. We therefore obtain 
                \begin{align*}
                -\frac{1}{\mu_+}e_{d+1}&\cdot (\M(\nabla\eta(x))(\nabla_{x,z}\tilde v^+_{\b}[\eta])(x,z))\big|_{z=0,\b=\nabla\eta(x)}\\
                &=\int_{\mathbb{R}^d}\A(\nabla\eta(x),\alpha):\delta_\alpha \nabla^2\eta(x)\frac{d\alpha}{|\alpha|^{d+1}}+\frac{\tilde c_d\langle\nabla \eta(x)\rangle^2 }{\mu_++\mu_-}\int_{\mathbb{R}^d}\frac{\delta_\alpha B(\nabla \eta): \nabla^2\eta(x-\alpha)}{\langle\hat \alpha \cdot \nabla\eta(x)\rangle^{d+1}}\frac{d\alpha}{|\alpha|^{d+1}}.
                \end{align*}
                Combining this with \eqref{eqeta1} yields \eqref{evoeta'} and completes the proof of the lemma. 
                   \end{proof}
                   \vspace{0.5cm}\\
                 We now use \eqref{evoeta'} to put the interface equation into the form of the
abstract parabolic equation studied in Section~\ref{secsch}. 
Assume $\phi$ is a smooth function that will be fixed later. Then \eqref{eqeta1} can be rewritten as
				\begin{align}\label{evoeta}
					\partial_t \eta (x)-\int _{\mathbb{R}^d}\A(\nabla\phi(x),\alpha):\delta_\alpha \nabla^2 \eta(x)\frac{d\alpha}{|\alpha|^{d+1}}=\H[\eta](x),
				\end{align} 
		where
\[
\H[\eta]=\H^{ex}[\eta]+\H^{im}[\eta].
\]
Here $\H^{im}$ is defined in \eqref{defHim1}, and the explicit error is
				\begin{equation}\label{defHgm}
					\begin{aligned}
						\H^{ex}[\eta](x)=	&\int _{\mathbb{R}^d}(\A(\nabla\eta(x),\alpha)-\A(\nabla\phi(x),\alpha)):\delta_\alpha \nabla^2 \eta(x)\frac{d\alpha}{|\alpha|^{d+1}}\\
						&	\quad\quad\quad\quad+\frac{\tilde c_d\langle\nabla \eta(x)\rangle^2 }{\mu_++\mu_-}\int_{\mathbb{R}^d}\frac{\delta_\alpha B(\nabla \eta): \nabla^2\eta(x-\alpha)}{\langle\hat \alpha \cdot \nabla\eta(x)\rangle^{d+1}}\frac{d\alpha}{|\alpha|^{d+1}}.
					\end{aligned}
				\end{equation}
                Thus the left-hand side of \eqref{evoeta} is a frozen-coefficient third-order
parabolic operator, while $\H^{ex}$ and $\H^{im}$ collect respectively the
explicit commutator error and the implicit transmission error. This is precisely
the form to which the Schauder estimates of Section~\ref{secsch} will be applied.
\vspace{0.2cm}\\
				Denote 
				\begin{align}\label{defwb}
					\omega_\mathbf{b}^\pm[\eta]=v^\pm_\eta-\tilde v^\pm_{\b}[\eta],\quad\quad \mathcal{Q}[\eta](x)=(\nabla_{x,z}\omega^+_\b[\eta])(x,z)\big|_{\b=\nabla\eta(x), z=0}.
				\end{align} Then 
				\begin{align}\label{defHim}
					\H^{im}[\eta](x)=-\frac{1}{\mu_+}e_{d+1}\cdot \left(\M(\nabla\eta(x))\mathcal{Q}[\eta](x)\right).
				\end{align} 
				Moreover,  from \eqref{elleqeta} and  \eqref{eqmain}, we know that  
				$\omega_\mathbf{b}^\pm=\omega_\mathbf{b}^\pm[\eta]$ solves the elliptic system
				\begin{equation}	\label{elliw}
					\begin{aligned}
						&\operatorname{div}_{x,z}(\M(\b)\nabla_{x,z} \omega^\pm_{\b})=\operatorname{div} F_{1,\b}^\pm,\ \ \text{in}\ \tilde\Omega_\eta^\pm,\\
						&\omega^-_{\b}-\omega^+_{\b}=F_2, \quad \text { on } \{z=0\},\\
						&e_{d+1}\cdot ((\mu_+^{-1}(\M(\b)\nabla_{x,z} \omega_\mathbf{b}^+-F_{1,\b}^+)-\mu_-^{-1}(\M(\b)\nabla_{x,z} \omega^-_{\b}-F_{1,\b}^-)))=0, \quad \text { on } \{z=0\},\\
						&\tilde \nu^\pm\cdot (\M(\nabla \eta)\nabla _{x,z} \omega^\pm_{\b}) =F_{3,\b}^\pm,\ \ \quad\quad\quad\quad\ \text{on}\  \tilde \Gamma_\eta^\pm,
					\end{aligned}
				\end{equation}	
				where $\tilde \Omega_\eta^\pm,\tilde \Gamma_\eta^\pm$ are defined in \eqref{defdomain}, and
				\begin{equation}\label{fortems}
					\begin{aligned}
						&F_{1,\b}^\pm=\left(\M(\mathbf{b})-\M(\nabla \eta)\right)\nabla_{x,z}  v^\pm_\eta,\\
						&F_2=\varrho_0\eta,\\
						&F_{3,\b}^\pm=\left.\tilde \nu^\pm\cdot (\M(\nabla \eta)(\nabla_{x,z}  v^\pm_\eta-\nabla_{x,z}\tilde v_{\mathbf{b}}^\pm[\eta])) \right|_{ \tilde \Gamma_\eta^\pm}.
					\end{aligned}
				\end{equation}
               
				\begin{remark}
		We note that the operator
\[
-\int_{\mathbb{R}^d}\A(\nabla\phi(x),\alpha):\delta_\alpha \nabla^2 \eta(x)\,\frac{d\alpha}{|\alpha|^{d+1}}
\]
can be written as a pseudo-differential operator of the form \eqref{defop}. Indeed,
					\begin{align*}
						-\int _{\mathbb{R}^d}\A(\nabla\phi(x),\alpha):\delta_\alpha \nabla^2 \eta(x)\frac{d\alpha}{|\alpha|^{d+1}}=(2\pi)^{-\frac{d}{2}}\int_{\mathbb{R}^d} \tilde \A(x,\xi)\hat \eta(\xi)e^{ix\cdot \xi}d\xi,
					\end{align*}
					where 
					\begin{align*}
						\tilde 	\A(x,\xi)=&\int_{\mathbb{R}^d}\A(\nabla\phi(x),\alpha):(\xi\otimes\xi)(1-e^{-i\alpha\cdot\xi})\frac{d\alpha}{|\alpha|^{d+1}}\\
						=&\frac{1}{2}\int_{\mathbb{R}^d}\A(\nabla\phi(x),\alpha):(\xi\otimes\xi)(2-e^{i\alpha\cdot\xi}-e^{-i\alpha\cdot\xi})\frac{d\alpha}{|\alpha|^{d+1}}.
					\end{align*}
                    In the second identity we used the symmetry
\[
\A(\mathbf b,\alpha)=\A(\mathbf b,-\alpha).
\]
By the ellipticity property \eqref{Bellip}, it follows that
					\begin{align*}
						&\frac{\tilde c_d}{(\mu_++\mu_-)\langle \nabla\phi(x)\rangle^{d+2}}\mathrm{Id}\leq \frac{\tilde \A(x,\xi)}{|\xi|^3}\leq 	\frac{\tilde c_d}{(\mu_++\mu_-)\langle \nabla\phi(x)\rangle}\mathrm{Id},\\ &|\nabla_x^{n_1}\nabla_{\xi}^{n_2}\tilde \A(x,\xi)|\lesssim (1+\|\nabla\phi\|_{C^{n_1}})^{n_1+10} |\xi|^{3-n_2},\ \forall n_1,n_2\in\mathbb{N}.
					\end{align*}
					Therefore, the operator on the left-hand side of \eqref{evoeta} satisfies condition \eqref{condop} with
\[
s=3,\qquad
c_0=\frac{\tilde c_d}{(\mu_++\mu_-)(1+\|\nabla\phi\|_{L^\infty})^{d+2}},
\qquad
c_1=\bigl(1+\|\nabla\phi\|_{C^{d+m+4}}\bigr)^{d+m+10}.
\]
					
				\end{remark}\vspace{0.3cm}

				\section{Estimates of nonlinear terms}\label{secest}

In this section, we estimate the nonlinear terms appearing on the right-hand side of the contour dynamics formulation \eqref{evoeta}. The goal is to show that the error terms $\H^{ex}$ and $\H^{im}$ satisfy the regularity assumptions required by the Schauder theory developed in Section~\ref{secsch}. In particular, we establish estimates that are compatible with the logarithmically critical topology and exhibit the smallness needed to close the nonlinear argument for sufficiently small times.

The analysis naturally splits into two parts. The term $\H^{ex}$ is explicit and arises from the freezing-coefficient decomposition of the principal operator. Its estimate relies on commutator-type bounds and the logarithmic regularity of the interface. The term $\H^{im}$ is more delicate, since it depends implicitly on the solution of the elliptic transmission problem. Its control requires the endpoint Schauder estimates for transmission systems established in the previous sections.

To measure the regularity of the interface and the nonlinear terms, we introduce the following norms.
				For $a\in(0,1), k\in\mathbb{N}$, define 
				\begin{align*}
					\|f\|_{\HH^{m+a}}:=\sup_{\alpha}\frac{\|\delta_\alpha \nabla^mf\|_{L^2}}{|\alpha|^a}.
				\end{align*}
                Note that $\|f\|_{\HH^{m+a}}$ coincide with the homogeneous Besov norm $\dot B^{m+a}_{2,\infty}$. 
				We fix regularity parameters 
				\begin{align}\label{consgm}
					\varkappa>1,\ \ \ m\in\mathbb{N},\ \ \ \kappa\in(2,3)\ \text{such that}\ 0<3-\kappa\ll 1.
				\end{align}
				For $h:[0,T]\times \mathbb{R}^d\to \mathbb{R}$ with $T\in (0,\frac{1}{2}],$ define
				\begin{equation}	\label{defnorgm}
					\begin{aligned}
						&\|h\|_T:=\sup_{t\in[0,T]}\left(\|\nabla h(t)\|_{ \dot C^{\log ^\varkappa}\cap L^2}+t^\frac{m+\kappa}{3}(|\log t|^\varkappa  \|\nabla h(t)\|_{\dot C^{m+\kappa}}+\|\nabla h(t)\|_{\HH^{m+\kappa}})\right),\\
						&\|h\|_{X_T}:=\|h\|_{L^\infty_TL^2}+\| h\|_T.
					\end{aligned}
				\end{equation}
                We also define the following non-endpoint norm.
                \begin{equation}
                	\begin{aligned}\label{defts}
						\|h\|_{T,*}:=&\sup_{t\in[0,T]}|\log t|^\varkappa(t^\frac{1}{15}  \|\nabla h(t)\|_{\dot C^{\frac{1}{5}}}+t^\frac{m+\kappa}{3} \|\nabla h(t)\|_{\dot C^{m+\kappa}})\\
						&+\sup_{t\in[0,T]}(t^\frac{1}{15}  \|\nabla h(t)\|_{\HH^{\frac{1}{5}}}+t^\frac{m+\kappa}{3} \|\nabla h(t)\|_{\HH^{m+\kappa}})+T^\frac{1}{15}\|h\|_T.
					\end{aligned}
                    \end{equation}
                    For a function $\phi$ on $\mathbb{R}^d$, we can define $\|\phi\|_T, \|\phi\|_{X_T}$ by considering it as a function defined on $[0,T]\times \mathbb{R}^d$ and is invariant with respect to the time variable.
                    Note that 
                    \begin{align}\label{stsm}
						\|f\|_{T,*}\lesssim \|f-\phi\|_{T}+\|\phi\|_{T,*}\lesssim \|f-\phi\|_{T}+T^\frac{1}{20}\|\nabla\phi\|_{C^{m+3}\cap H^{m+3}}.
					\end{align}
                    The norm $\|\cdot\|_{T,*}$ is a non-endpoint auxiliary norm used to quantify the smallness of the nonlinear terms. In particular, \eqref{stsm} shows that $\|f\|_{T,*}$ is small whenever $f$ remains close to a smooth reference profile $\phi$ in the $\|\cdot\|_T$ topology and the time interval is sufficiently short.
                    \vspace{0.2cm}\\
We first record several elementary composition estimates that will be used repeatedly in the nonlinear estimates below.
				\begin{lemma}\label{lemcom}
					Let $m\in\mathbb{N}$, $\alpha\in(0,1)$. Consider $g,g_1,g_2:\mathbb{R}^d\to\mathbb{R}$, and $h:\mathbb{R}\to \mathbb{R}$ satisfying 
					\begin{align*}
						\sum_{k=0}^{m+2}\|h^{(k)}\|_{L^\infty}\lesssim 1.
					\end{align*}
					Then 
					\begin{align*}
						\|\nabla^m(h\circ g)\|_{L^\infty}\lesssim  &\|g\|_{\dot C^1}^m+\| g\|_{\dot C^m},\\
						\|\nabla^m(h\circ g)\|_{\dot C^\alpha}\lesssim  &\| g\|_{\dot C^\alpha}^\frac{m+\alpha}{\alpha}+\|g\|_{\dot C^{m+\alpha}},\\
						\|\nabla^m(h\circ g_1-h\circ g_2)\|_{L^\infty}\lesssim&\sum_{n=0}^{m} \|g_1-g_2\|_{\dot C^n}(\|(g_1,g_2)\|_{\dot C^1}^{m-n}+\mathbf{1}_{m>n}\|(g_1,g_2)\|_{\dot C^{m-n}}),\\
						\|\nabla^m(h\circ g_1-h\circ g_2)\|_{\dot C^\alpha}\lesssim &
						\sum_{n=0}^{m}\left\{\|g_1-g_2\|_{\dot C^{n+\alpha}}(\|(g_1,g_2)\|_{\dot C^1}^{m-n}+\mathbf{1}_{m>n}\|(g_1,g_2)\|_{\dot C^{m-n}})\right.\\
						&\left.\quad\quad+\|g_1-g_2\|_{\dot C^n}(\|(g_1,g_2)\|_{\dot C^\alpha}^\frac{m-n+\alpha}{\alpha}+\|(g_1,g_2)\|_{\dot C^{m-n+\alpha}})\right\}.
					\end{align*}

				\end{lemma}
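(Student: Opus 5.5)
The proof will be a direct chain-rule computation, with the difference estimates obtained from the fundamental theorem of calculus. I will use interpolation of homogeneous Hölder seminorms on $\mathbb R^d$ throughout. Since derivatives of order $\ge1$ of $g$ do not see constants, the natural interpolation tool is
\[
\|g\|_{\dot C^{j}}\lesssim \|g\|_{\dot C^{1}}^{\frac{m-j}{m-1}}\|g\|_{\dot C^{m}}^{\frac{j-1}{m-1}},\qquad 1\le j\le m,
\]
together with its Hölder analogue
\[
\|g\|_{\dot C^{j+\beta}}\lesssim \|g\|_{\dot C^\alpha}^{\theta}\|g\|_{\dot C^{m+\alpha}}^{1-\theta},\qquad \alpha<j+\beta<m+\alpha,
\]
where $\theta$ is fixed by scaling. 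These are the Gagliardo–Nirenberg-type inequalities that the exponents in the statement encode.

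\textbf{First estimate.} Faà di Bruno gives
\[
\nabla^m(h\circ g)=\sum h^{(k)}(g)\prod_{i=1}^k\nabla^{j_i}g,\qquad \sum_i j_i=m,\ j_i\ge1,
\]
and $|h^{(k)}|\lesssim1$. Each factor is bounded using the first interpolation inequality. The exponents add up to $(m-k)/(m-1)$ on $\|g\|_{\dot C^m}$ and to $(k-1)/(m-1)$ on $\|g\|_{\dot C^1}^m$. Young's inequality then gives $\|g\|_{\dot C^1}^m+\|g\|_{\dot C^m}$.

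\textbf{Second estimate.} I take the $\dot C^\alpha$ seminorm of each Faà di Bruno term. When the difference falls on $h^{(k)}(g)$, it produces $\|g\|_{\dot C^\alpha}$. When it falls on a factor $\nabla^{j_i}g$, it produces $\|g\|_{\dot C^{j_i+\alpha}}$. In either case each term is bounded by a product of seminorms $\|g\|_{\dot C^{r_i}}$ with $\sum r_i=m+\alpha$ and every $r_i\ge\alpha$. I interpolate each factor between $\|g\|_{\dot C^\alpha}$ and $\|g\|_{\dot C^{m+\alpha}}$. The total homogeneity is one copy of $\|g\|_{\dot C^{m+\alpha}}$ or $(m+\alpha)/\alpha$ copies of $\|g\|_{\dot C^\alpha}$, so Young's inequality yields $\|g\|_{\dot C^\alpha}^{(m+\alpha)/\alpha}+\|g\|_{\dot C^{m+\alpha}}$. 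Strictly speaking this requires the full (inhomogeneous in order) Gagliardo–Nirenberg inequality for Hölder seminorms with low endpoint $\alpha<1$. That inequality holds on $\mathbb R^d$ by Littlewood–Paley, so I will quote it.

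\textbf{Differences.} I write
\[
h\circ g_1-h\circ g_2=(g_1-g_2)\int_0^1 h'\bigl(g_2+\theta(g_1-g_2)\bigr)\,d\theta=:(g_1-g_2)\,H,
\]
and apply Leibniz:
\[
\nabla^m\bigl[(g_1-g_2)H\bigr]=\sum_{n}\binom{m}{n}\nabla^n(g_1-g_2)\,\nabla^{m-n}H.
\]
The factor $\nabla^{m-n}H$ is estimated by the first two bounds, applied with $h'$ in place of $h$ and $g_\theta=g_2+\theta(g_1-g_2)$ in place of $g$, uniformly in $\theta$. This is why $m+2$ derivatives of $h$ are assumed. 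Since $\|g_\theta\|\le\|(g_1,g_2)\|$, this gives
\begin{align*}
\|\nabla^{m-n}H\|_{L^\infty}&\lesssim \|(g_1,g_2)\|_{\dot C^1}^{m-n}+\mathbf 1_{m>n}\|(g_1,g_2)\|_{\dot C^{m-n}},\\
\|\nabla^{m-n}H\|_{\dot C^\alpha}&\lesssim \|(g_1,g_2)\|_{\dot C^\alpha}^{\frac{m-n+\alpha}{\alpha}}+\|(g_1,g_2)\|_{\dot C^{m-n+\alpha}}.
\end{align*}
For $n=m$ the $L^\infty$ bound reduces to $|H|\lesssim1$. The $\dot C^\alpha$ difference estimate then follows from the product rule $[\,ab\,]_\alpha\le[a]_\alpha\|b\|_\infty+\|a\|_\infty[b]_\alpha$, which produces exactly the two groups of terms in the statement.

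\textbf{Main obstacle.} The main delicate point is bookkeeping rather than an idea: matching the exponents in the second and fourth bounds requires the Hölder Gagliardo–Nirenberg inequality with low endpoint $\dot C^\alpha$, $\alpha<1$. I would cite this as standard via Besov characterisations rather than reprove it.
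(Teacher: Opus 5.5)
Your proposal is correct and follows the paper's proof essentially step for step. You use Faà di Bruno with the $\dot C^1$--$\dot C^m$ interpolation and Young's inequality for the first bound, and the same expansion with the Hölder product rule and $\dot C^\alpha$--$\dot C^{m+\alpha}$ interpolation for the second. For the differences you use the factorization $h(g_1)-h(g_2)=(g_1-g_2)H$, $H=\int_0^1h'(g_2+\theta(g_1-g_2))\,d\theta$, with Leibniz' rule and the first two bounds applied to $H$; this is exactly what the paper does. You treat explicitly the case where the Hölder difference falls on $h^{(k)}(g)$, producing a factor $\|g\|_{\dot C^\alpha}$, and you explain why $m+2$ derivatives of $h$ are needed; this is slightly more careful bookkeeping than the paper's display, not a different argument.
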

				\begin{proof}
We only indicate the standard argument. By the Faà di Bruno formula, for any multi-index
$\beta$ with $|\beta|=m$,
\[
\partial^\beta (h\circ g)
=
\sum_{1\leq q\leq m} h^{(q)}(g)
\sum_{\substack{\beta_1+\cdots+\beta_q=\beta\\ |\beta_i|\geq 1}}
C_{\beta_1,\ldots,\beta_q}
\prod_{i=1}^q \partial^{\beta_i}g .
\]
Since the derivatives of $h$ are bounded, it follows that
\[
\|\nabla^m(h\circ g)\|_{L^\infty}
\lesssim
\sum_{q=1}^m
\sum_{\substack{k_1+\cdots+k_q=m\\ k_i\geq 1}}
\prod_{i=1}^q \|g\|_{\dot C^{k_i}} .
\]
Using the elementary interpolation inequality
\[
\|g\|_{\dot C^k}
\lesssim
\|g\|_{\dot C^1}^{\frac{m-k}{m-1}}
\|g\|_{\dot C^m}^{\frac{k-1}{m-1}},
\qquad 1\leq k\leq m,
\]
and Young's inequality, each product is bounded by
\[
C\bigl(\|g\|_{\dot C^1}^{m}+\|g\|_{\dot C^m}\bigr),
\]
which gives the first estimate.
For the Hölder estimate, applying the $\dot C^\alpha$ seminorm to the same expansion and using
\[
[f_1\cdots f_q]_{\dot C^\alpha}
\lesssim
\sum_{j=1}^q [f_j]_{\dot C^\alpha}
\prod_{i\neq j}\|f_i\|_{L^\infty},
\]
one obtains
\[
\|\nabla^m(h\circ g)\|_{\dot C^\alpha}
\lesssim
\sum
\|g\|_{\dot C^{k_j+\alpha}}
\prod_{i\neq j}\|g\|_{\dot C^{k_i}},
\qquad
k_1+\cdots+k_q=m, k_i\in \mathbb{N}.
\]
By interpolation between $\dot C^\alpha$ and $\dot C^{m+\alpha}$, namely
\[
\|g\|_{\dot C^{k}}
\lesssim
\|g\|_{\dot C^\alpha}^{\frac{m+\alpha-k}{m}}
\|g\|_{\dot C^{m+\alpha}}^{\frac{k-\alpha}{m}},
\qquad \alpha\leq k\leq m+\alpha,
\]
and Young's inequality, the right-hand side is bounded by
\[
C\bigl(
\|g\|_{\dot C^\alpha}^{\frac{m+\alpha}{\alpha}}
+
\|g\|_{\dot C^{m+\alpha}}
\bigr).
\]
This proves the second estimate.\\
We write
\[
h(g_1)-h(g_2)
=
(g_1-g_2) H,
\qquad
H(x):=\int_0^1 h'\bigl(g_2(x)+\theta(g_1(x)-g_2(x))\bigr)\,d\theta .
\]
Then, by Leibniz' rule, for every multi-index $\beta$ with $|\beta|=m$,
\[
\partial^\beta \bigl(h(g_1)-h(g_2)\bigr)
=
\sum_{\beta_1+\beta_2=\beta}
C_{\beta_1,\beta_2}
\partial^{\beta_1}(g_1-g_2)\,
\partial^{\beta_2}H .
\]
Equivalently, in schematic notation,
\[
\nabla^m\bigl(h(g_1)-h(g_2)\bigr)
=
\sum_{n=0}^m
\nabla^n(g_1-g_2)\,\nabla^{m-n}H ,
\]
where the products stand for finite sums of contractions of derivatives of the indicated orders.
The first two estimates already proved, applied to $H$, imply
\[
\|\nabla^{m-n}H\|_{L^\infty}
\lesssim
\|(g_1,g_2)\|_{\dot C^1}^{m-n}
+\mathbf{1}_{m>n}
\|(g_1,g_2)\|_{\dot C^{m-n}},
\]
and
\[
\|\nabla^{m-n}H\|_{\dot C^\alpha}
\lesssim
\|(g_1,g_2)\|_{\dot C^\alpha}^{\frac{m-n+\alpha}{\alpha}}
+
\|(g_1,g_2)\|_{\dot C^{m-n+\alpha}} .
\]
Here, in the case $m=n$, the first estimate is understood by using the trivial bound
\[
\|H\|_{L^\infty}
\lesssim \|h'\|_{L^\infty}
\lesssim 1
=
\|(g_1,g_2)\|_{\dot C^1}^{0}.
\]
Thus the term involving $\|(g_1,g_2)\|_{\dot C^{m-n}}$ is needed only when $m>n$.\\
Combining these bounds with the product estimate in $L^\infty$ gives
\[
\|\nabla^m(h\circ g_1-h\circ g_2)\|_{L^\infty}
\lesssim
\sum_{n=0}^m
\|g_1-g_2\|_{\dot C^n}
\Bigl(
\|(g_1,g_2)\|_{\dot C^1}^{m-n}
+\mathbf{1}_{m>n}
\|(g_1,g_2)\|_{\dot C^{m-n}}
\Bigr),
\]
while the Hölder product estimate gives
\[
\begin{aligned}
\|\nabla^m(h\circ g_1-h\circ g_2)\|_{\dot C^\alpha}
\lesssim
\sum_{n=0}^{m}
&\Bigl[
\|g_1-g_2\|_{\dot C^{n+\alpha}}
\Bigl(
\|(g_1,g_2)\|_{\dot C^1}^{m-n}
+\mathbf{1}_{m>n}
\|(g_1,g_2)\|_{\dot C^{m-n}}
\Bigr)
\\
&\quad+
\|g_1-g_2\|_{\dot C^n}
\Bigl(
\|(g_1,g_2)\|_{\dot C^\alpha}^{\frac{m-n+\alpha}{\alpha}}
+
\|(g_1,g_2)\|_{\dot C^{m-n+\alpha}}
\Bigr)
\Bigr].
\end{aligned}
\]
This completes the proof of the lemma.
\end{proof}\vspace{0.4cm}\\
We shall also use the following elementary estimates for second-order finite differences.
				\begin{lemma}
					\label{lemoalin}
					Denote $\mathcal{O}_\alpha g(x)=\frac{\delta_\alpha\delta_{-\alpha}g(x)}{|\alpha|}$. Then  for any $\nu\in(0,1)$, $\varkappa>1$,
					\begin{align}
						&\int_{\mathbb{R}^d}\|\mathcal{O}_\alpha g\|_{L^\infty}\frac{d\alpha}{|\alpha|^{d}}\lesssim \min\left\{\|g\|_{\dot C^{1+\nu}}^\frac{1}{2} \|g\|_{\dot C^{1-\nu}}^\frac{1}{2}, \|g\|_{ C^{1,\log^\varkappa}}\right\},\label{S2eq2}\\
                        &\int_{\mathbb{R}^d}\|\mathcal{O}_\alpha g\|_{L^2}\frac{d\alpha}{|\alpha|^{d}}\lesssim \|g\|_{\dot{B}^{1}_{2,1}}.\nonumber
					\end{align}
					Moreover, for any $\beta\in\mathbb{R}^d$, and $a\in(0,1)$,
					\begin{align}\label{S2eq3}
						\int_{\mathbb{R}^d}\|\delta_\beta\mathcal{O}_\alpha g\|_{L^\infty}\frac{d\alpha}{|\alpha|^{d}}\lesssim|\beta|^{a} \|g\|_{\dot C^{1+a}},\quad \int_{\mathbb{R}^d}\|\delta_\beta\mathcal{O}_\alpha g\|_{L^2}\frac{d\alpha}{|\alpha|^{d}}\lesssim|\beta|^{a} \|g\|_{\dot {\mathbf{H}}^{1+a}}.
					\end{align}
				\end{lemma}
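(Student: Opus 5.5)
The plan is to prove the three estimates of Lemma~\ref{lemoalin} by splitting the $\alpha$-integral into small and large scales. The key algebraic observation is
\[
\delta_\alpha\delta_{-\alpha}g(x)=2g(x)-g(x-\alpha)-g(x+\alpha)=-\int_0^1\bigl(\nabla g(x+\theta\alpha)-\nabla g(x-\theta\alpha)\bigr)\cdot\alpha\,d\theta .
\]
Consequently
\[
|\mathcal O_\alpha g(x)|\le \sup_{\theta\in[0,1]}|\delta_{2\theta\alpha}\nabla g(x+\theta\alpha)|,
\]
and, trivially, $|\mathcal O_\alpha g|\le 2\|\nabla g\|_{L^\infty}$.

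\textbf{Proof of \eqref{S2eq2}.} For the log-Hölder bound I use
\[
\|\mathcal O_\alpha g\|_{L^\infty}\lesssim \|\nabla g\|_{\dot C^{\log^\varkappa}}\log^{-\varkappa}(2+|\alpha|^{-1})
\]
for $|\alpha|<1$. The integral $\int_{|\alpha|<1}\log^{-\varkappa}(2+|\alpha|^{-1})\,|\alpha|^{-d}d\alpha$ converges because $\varkappa>1$; this is the integral recorded before Theorem~\ref{thmgm}. For $|\alpha|\ge1$ the trivial bound gives only $\|\nabla g\|_{L^\infty}\int_{|\alpha|>1}|\alpha|^{-d}d\alpha$, which diverges logarithmically. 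So at large scales I instead use $|\mathcal O_\alpha g|\le 4\|g\|_{L^\infty}/|\alpha|$, which is integrable against $|\alpha|^{-d}d\alpha$ on $|\alpha|>1$. This is where the inhomogeneous norm $C^{1,\log^\varkappa}$ (controlling $\|g\|_{L^\infty}$) is used.

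For the interpolation bound, fix $R>0$ and use two estimates:
\[
|\mathcal O_\alpha g|\lesssim|\alpha|^{\nu}\|g\|_{\dot C^{1+\nu}},\qquad |\mathcal O_\alpha g|\lesssim |\alpha|^{-\nu}\|g\|_{\dot C^{1-\nu}}.
\]
The first follows from the identity above. The second holds because the second difference of a $\dot C^{1-\nu}$ function is $O(|\alpha|^{1-\nu})$. Integrating the first bound over $|\alpha|<R$ and the second over $|\alpha|>R$ gives
\[
R^\nu\|g\|_{\dot C^{1+\nu}}+R^{-\nu}\|g\|_{\dot C^{1-\nu}}.
\]
Optimizing in $R$ yields the geometric mean.

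\textbf{The $L^2$ bound.} Taking $L^2$ norms, $\|\mathcal O_\alpha g\|_{L^2}$ is comparable to $\|\delta_\alpha\delta_{-\alpha}g\|_{L^2}/|\alpha|$. The integral $\int\|\delta_\alpha\delta_{-\alpha}g\|_{L^2}|\alpha|^{-d-1}d\alpha$ is the standard second-difference characterization of $\dot B^1_{2,1}$; second differences are needed since $s=1$ is an integer. To verify it, decompose $g=\sum_j\dot\Delta_j g$ with Littlewood--Paley blocks. Then
\[
\|\delta_\alpha\delta_{-\alpha}\dot\Delta_jg\|_{L^2}\lesssim\min\{2^{2j}|\alpha|^2,\,1\}\,\|\dot\Delta_jg\|_{L^2}.
\]
Integrating against $|\alpha|^{-d-1}$ gives a factor $\lesssim 2^j$, and summing over $j$ gives $\|g\|_{\dot B^1_{2,1}}$.

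\textbf{Proof of \eqref{S2eq3}.} Here $\delta_\beta$ commutes with $\mathcal O_\alpha$. I split at $|\alpha|=|\beta|$.
\begin{itemize}
\item For $|\alpha|<|\beta|$, drop $\delta_\beta$ (at cost $2$) and use $\|\mathcal O_\alpha g\|\lesssim|\alpha|^a\|g\|_{\dot C^{1+a}}$. This integrates to $\lesssim|\beta|^a\|g\|_{\dot C^{1+a}}$.
\item For $|\alpha|>|\beta|$, write $\delta_\beta\mathcal O_\alpha g=\mathcal O_\alpha\delta_\beta g$ and use $|\mathcal O_\alpha f|\lesssim\|f\|_{L^\infty}/|\alpha|$ with $f=\delta_\beta g$. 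But $\|\delta_\beta g\|_{L^\infty}$ is not controlled by $\dot C^{1+a}$ alone. So I instead use $\delta_\beta\mathcal O_\alpha g=\mathcal O_\alpha\delta_\beta g$ together with the gradient form of the identity: $|\mathcal O_\alpha f|\lesssim \|\nabla f\|_{L^\infty}$ with $f=\delta_\beta g$, giving $\|\delta_\beta\nabla g\|_\infty\lesssim |\beta|^a\|g\|_{\dot C^{1+a}}$. Integrating this against $|\alpha|^{-d}$ over $|\alpha|>|\beta|$ diverges logarithmically.
\end{itemize}

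\textbf{Main obstacle.} This large-$\alpha$ tail is the main obstacle. I would handle it by interpolating two bounds for $|\delta_\beta\mathcal O_\alpha g|$: the bound $\lesssim|\beta|^a$, and the bound $\lesssim|\alpha|^a$ obtained by putting all regularity on $\mathcal O_\alpha$. Taking the geometric mean $|\beta|^{a-\epsilon}|\alpha|^{\epsilon}$ does not help for large $\alpha$. So the honest route is a Littlewood--Paley decomposition. For each block,
\[
\|\delta_\beta\mathcal O_\alpha\dot\Delta_jg\|\lesssim\min\{1,2^j|\beta|\}\,\min\{2^j|\alpha|,\,(2^j|\alpha|)^{-1}\}\,2^{j}\|\dot\Delta_jg\|,
\]
using $|\mathcal O_\alpha f|\le 4\|f\|/|\alpha|$ for large $\alpha$ on each block. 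Integrating in $\alpha$ against $|\alpha|^{-d}$ gives an $O(1)$ factor per block. Then
\[
\sum_j\min\{1,2^j|\beta|\}\,2^{-ja}\,\|g\|_{\dot C^{1+a}}\lesssim|\beta|^a\|g\|_{\dot C^{1+a}},
\]
since $0<a<1$ makes both the low- and high-frequency geometric sums converge. The $L^2$ version is identical with $\dot{\mathbf H}^{1+a}=\dot B^{1+a}_{2,\infty}$ blocks.
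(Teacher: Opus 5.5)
Your proposal is correct. For \eqref{S2eq2} it is the paper's argument: the same pointwise bounds $|\alpha|^{\nu}\|g\|_{\dot C^{1+\nu}}$, $|\alpha|^{-\nu}\|g\|_{\dot C^{1-\nu}}$ and $\min\{\log^{-\varkappa}(2+|\alpha|^{-1}),|\alpha|^{-1}\}\|g\|_{C^{1,\log^\varkappa}}$. The only addition is that you write out the Littlewood--Paley proof of the $\dot B^1_{2,1}$ bound, which the paper simply cites from Bahouri--Chemin--Danchin.

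For \eqref{S2eq3} the region $|\alpha|\le|\beta|$ is also handled as in the paper. On $|\alpha|\ge|\beta|$ you switch to a Littlewood--Paley decomposition. Your block bound $\min\{1,2^j|\beta|\}\min\{2^j|\alpha|,(2^j|\alpha|)^{-1}\}2^j\|\dot\Delta_jg\|$ is right, the $\alpha$-integral is $O(1)$ per block, and the two geometric sums converge since $0<a<1$. So the argument closes.

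The large-$\alpha$ tail is not actually an obstacle, though. Your first attempt failed only because you bounded $\mathcal O_\alpha f$ by $\|f\|_{L^\infty}/|\alpha|$ rather than by $(\|\delta_\alpha f\|_{L^\infty}+\|\delta_{-\alpha}f\|_{L^\infty})/|\alpha|$. The paper uses $\delta_\alpha\delta_{-\alpha}g(x)=\delta_\alpha g(x)-\delta_\alpha g(x+\alpha)$ together with $\|\delta_\beta\delta_\alpha g\|_{L^\infty}\le|\beta|\,\|\delta_\alpha\nabla g\|_{L^\infty}\le|\beta||\alpha|^{a}\|g\|_{\dot C^{1+a}}$. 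This gives $\|\delta_\beta\mathcal O_\alpha g\|_{L^\infty}\lesssim|\beta||\alpha|^{a-1}\|g\|_{\dot C^{1+a}}$, which is integrable against $|\alpha|^{-d}$ on $\{|\alpha|\ge|\beta|\}$ because $a<1$, and it produces exactly $|\beta|^a$.

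The elementary route uses only the difference-quotient definitions of $\dot C^{1+a}$ and $\dot{\mathbf H}^{1+a}$, and it transfers verbatim to $L^2$ by Minkowski. Your route treats \eqref{S2eq2} and \eqref{S2eq3} in one framework, but it needs the identification $\dot C^{1+a}\simeq\dot B^{1+a}_{\infty,\infty}$. It also needs care in realizing $g=\sum_j\dot\Delta_j g$ for $g$ that may grow at infinity. This is harmless here only because $\mathcal O_\alpha$ annihilates affine functions, and you should say so explicitly.
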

                \begin{proof}
                    The first inequality in \eqref{S2eq2} follows from the fact that
                    \begin{align*}
                        |\mathcal{O}_\alpha g(x)|&=\left|\int_0^1 \frac{\alpha}{|\alpha|}\cdot\left(\nabla g(x+\lambda\alpha)-\nabla g(x-\lambda\alpha)\right)d\lambda\right|\\
                        &\lesssim \min\left\{|\alpha|^{\nu}\|g\|_{\dot C^{1+\nu}},\min\{\log^{-\varkappa}(2+|\alpha|^{-1}),|\alpha|^{-1}\}\|g\|_{C^{1,\log^\varkappa}}\right\},\\
                        |\mathcal{O}_\alpha g(x)|&\lesssim |\alpha|^{-1}\left(\left|\delta_\alpha g(x)\right|+\left|\delta_{-\alpha }g(x)\right|\right)\lesssim |\alpha|^{-\nu}\|g\|_{\dot C^{1-\nu}}.
                    \end{align*}
                    The second inequality in \eqref{S2eq2} comes from Lemma 2.37 of \cite{Fourierbook}.
                    
                    Then we consider \eqref{S2eq3}. Note that
                    \begin{align*}
                        &\int_{\mathbb{R}^d}\|\delta_\beta\mathcal{O}_\alpha g\|_{L^\infty}\frac{d\alpha}{|\alpha|^{d}}\lesssim \int_{|\alpha|\leq |\beta|}\|\delta_\beta\mathcal{O}_\alpha g\|_{L^\infty}\frac{d\alpha}{|\alpha|^{d}}+\int_{|\alpha|\geq |\beta|}\|\delta_\beta\mathcal{O}_\alpha g\|_{L^\infty}\frac{d\alpha}{|\alpha|^{d}}\\
                        &\quad\lesssim \int_{|\alpha|\leq |\beta|}\|\mathcal{O}_\alpha g\|_{L^\infty}\frac{d\alpha}{|\alpha|^{d}}+|\beta|\int_{|\alpha|\geq |\beta|}\frac{\|\delta_\beta\delta_\alpha g\|_{L^\infty}}{|\beta||\alpha|^{a}}\frac{d\alpha}{|\alpha|^{d+1-a}}\\
                        &\quad\lesssim \|g\|_{\dot C^{1+a}}\int_{|\alpha|\leq |\beta|}|\alpha|^{-d+a}d\alpha +|\beta| \|g\|_{\dot C^{1+a}}\int_{|\alpha|\geq |\beta|}|\alpha|^{-d+a-1}d\alpha \\
                        &\quad\lesssim |\beta|^a\|g\|_{\dot C^{1+a}}.
                    \end{align*}
                    The second inequality in \eqref{S2eq3} can be proved similarly by changing the $L^\infty$ norm to $L^2$ in the estimate above. Precisely,
                    \begin{align*}
                        &\int_{\mathbb{R}^d}\|\delta_\beta\mathcal{O}_\alpha g\|_{L^2}\frac{d\alpha}{|\alpha|^{d}}\lesssim \int_{|\alpha|\leq |\beta|}\|\delta_\beta\mathcal{O}_\alpha g\|_{L^2}\frac{d\alpha}{|\alpha|^{d}}+\int_{|\alpha|\geq |\beta|}\|\delta_\beta\mathcal{O}_\alpha g\|_{L^2}\frac{d\alpha}{|\alpha|^{d}}\\
                        &\quad\lesssim \int_{|\alpha|\leq |\beta|}\left\|\left(\int_0^1\frac{\alpha}{|\alpha|}\cdot\delta_\alpha\nabla g(x-\lambda\alpha)\right)d\lambda\right\|_{L^2}\frac{d\alpha}{|\alpha|^{d}}+|\beta|\int_{|\alpha|\geq |\beta|}\frac{\|\delta_\beta\delta_\alpha g\|_{L^2}}{|\beta||\alpha|^{a}}\frac{d\alpha}{|\alpha|^{d+1-a}}\\
                        &\quad\lesssim\|g\|_{\dot {\mathbf{H}}^{1+a}} \int_{|\alpha|\leq |\beta|}|\alpha|^{-d+a}d\alpha +|\beta| \|g\|_{\dot {\mathbf{H}}^{1+a}}\int_{|\alpha|\geq |\beta|}|\alpha|^{-d+a-1}d\alpha \\
                        &\quad\lesssim |\beta|^a\|g\|_{\dot {\mathbf{H}}^{1+a}}.
                    \end{align*}
                    This completes the proof of the lemma.
                \end{proof}
				\subsection{Estimates of $\H^{ex}[f]$}	

                We first estimate the explicit critical term $\H^{ex}[f]$, which arises from the freezing-coefficient decomposition of the principal surface-tension operator. Although explicit, this term has the same critical scaling as the leading operator and must be estimated sharply in the logarithmically corrected topology.\vspace{0.2cm}\\
We prove the following estimate.
				\begin{lemma}\label{lemH1}
                Let $\H^{ex}[f]$ be defined by \eqref{defHgm}, and let the norm
$\|\cdot\|_T$ be defined by \eqref{defnorgm}, with
$\varkappa,m,\kappa$ fixed as in \eqref{consgm}. Then, for every
$T\in(0,\frac12)$,
					\begin{align}
						\sup_{t\in[0,T]}&|\log t|^\varkappa(t^\frac{\kappa}{3}\|\H^{ex}[f](t)\|_{\dot C^{\kappa-2}\cap \HH^{\kappa-2}}+t^\frac{m+\kappa}{3}\|\nabla^m\H^{ex}[f](t)\|_{\dot C^{\kappa-2}\cap \HH^{\kappa-2}})\nonumber\\
						&\quad\quad\lesssim (\|f-\phi\|_T+T^\frac{1}{20}\mathfrak{M}_\phi)^2(1+\|(f,\phi)\|_T)^{2m+5}, \label{m}
					\end{align}
                    where $\mathfrak{M}_\phi=\|\phi\|_{H^{m+4}\cap C^{m+4}}$.
				\end{lemma}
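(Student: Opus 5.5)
We split $\H^{ex}[f]=\H^{ex}_1[f]+\H^{ex}_2[f]$ according to the two integrals in \eqref{defHgm}:
\[
\H^{ex}_1[f](x)=\int_{\mathbb R^d}\bigl(\A(\nabla f(x),\alpha)-\A(\nabla\phi(x),\alpha)\bigr):\delta_\alpha\nabla^2 f(x)\,\frac{d\alpha}{|\alpha|^{d+1}},
\]
and $\H^{ex}_2[f]$ is the commutator term carrying $\delta_\alpha B(\nabla f)$. The quadratic structure on the right of \eqref{m} has a different source in each term.

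In $\H^{ex}_1$, one factor is the coefficient difference $\A(\nabla f,\alpha)-\A(\nabla\phi,\alpha)$. Its $L^\infty$ norm is bounded by $\|\nabla(f-\phi)\|_{L^\infty}$, and its higher Hölder norms are bounded by Lemma~\ref{lemcom}, applied with $h$ a smooth function of the slope; the dependence on $\hat\alpha$ is smooth and harmless. The other factor is $\int\delta_\alpha\nabla^2 f\,|\alpha|^{-d-1}d\alpha$, which behaves like $\Lambda\nabla^2 f$ of order three. We write $f=(f-\phi)+\phi$. The $\phi$-part gives a contribution bounded by $\mathfrak M_\phi$ times the small coefficient. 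For the $(f-\phi)$-part, which is genuinely quadratic in $f-\phi$, we interpolate inside the time-weighted norm: at time $t$,
\[
\|\nabla f(t)\|_{\dot C^{\kappa}}\lesssim |\log t|^{-\varkappa}t^{-\kappa/3}\|f\|_T,
\]
by interpolating between $\dot C^{\log^\varkappa}$ and $\dot C^{m+\kappa}$ as in \eqref{intp11}. The time weights $t^{\kappa/3}$ and $t^{(m+\kappa)/3}$ in \eqref{m} are exactly matched. For the second factor (the coefficient difference) we must extract smallness. We bound $\|\nabla(f-\phi)\|_{L^\infty}$ by $\|f-\phi\|_T$ and use $\nabla\phi$ in the frozen coefficient. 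The $\phi$-contributions of order beyond critical acquire a factor $t^{1/20}$, since $\phi$ is smooth and the time weight $t^{\kappa/3}$ times an $O(1)$ bound on $\|\nabla\phi\|_{C^{m+3}}$ leaves a positive power of $t$. This explains the factor $T^{1/20}\mathfrak M_\phi$, in the spirit of \eqref{stsm}. For the $\nabla^m$ estimate we distribute derivatives by Leibniz: whenever many derivatives fall on the coefficient, we use Lemma~\ref{lemcom} and interpolation to trade them into the $\|\cdot\|_T$ norm, which produces the polynomial factor $(1+\|(f,\phi)\|_T)^{2m+5}$.

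For $\H^{ex}_2$, the commutator, no freezing gives smallness. Instead it is a genuinely bilinear expression: $\delta_\alpha B(\nabla f)(x)$ is of size $\min\{|\alpha|^{a}\|\nabla f\|_{\dot C^{a}},1\}$ and $\nabla^2 f(x-\alpha)$ carries two derivatives, so the kernel $|\alpha|^{-d-1}$ is integrable near $0$ at the cost of one derivative. We write
\[
\nabla^2f(x-\alpha)=\nabla^2f(x)-\delta_\alpha\nabla^2 f(x).
\]
The term $\nabla^2 f(x)\int\delta_\alpha B(\nabla f)\,|\alpha|^{-d-1}d\alpha$ is again a product of two high-order quantities. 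To see the quadratic smallness, expand $\nabla f=\nabla(f-\phi)+\nabla\phi$. Every resulting term either contains two factors of $f-\phi$, or contains $\phi$ with surplus regularity giving $T^{1/20}$. To bound the $\dot C^{\kappa-2}$ and $\HH^{\kappa-2}$ norms we use the second-difference bounds of Lemma~\ref{lemoalin} (through \eqref{S2eq2}--\eqref{S2eq3}) after symmetrising $\alpha\to-\alpha$, which is allowed because the weight $\langle\hat\alpha\cdot\nabla f(x)\rangle^{-d-1}$ is even. The $L^2$-based versions $\HH$ follow identically, using the $L^2$ statements in Lemma~\ref{lemoalin}, with the $L^\infty$ factor taken on the lower-order term.

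The main obstacle is the critical low-order piece. At time $t$ the product of the two factors has exactly the scaling $t^{-\kappa/3}$, with no spare power of $t$. The smallness must therefore come entirely from $\|f-\phi\|_T$ and from the logarithmic gain. In particular, obtaining the extra $|\log t|^{\varkappa}$ on the left of \eqref{m} requires that one factor be measured in $\dot C^{1,\log^\varkappa}$: the $\alpha$-integral near $0$ converges only thanks to $\varkappa>1$, by the bound recorded after \eqref{ctlog}. The other factor carries the weighted $|\log t|^{-\varkappa}t^{-\kappa/3}$ bound. We would first try splitting the $\alpha$-integral at $|\alpha|=t^{1/3}$. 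In the region $|\alpha|<t^{1/3}$ we use the high-norm bound and the Hölder gain in $\alpha$. In the region $|\alpha|>t^{1/3}$ we use the log-Dini bound $\int\|\delta_\alpha\nabla f\|_{L^\infty}|\alpha|^{-d}d\alpha\lesssim(\varkappa-1)^{-1}\|\nabla f\|_{\dot C^{\log^\varkappa}}$. Inequality \eqref{ine2} then converts $\log(2+|\alpha|^{-1})$ into $|\log t|$ on each region.
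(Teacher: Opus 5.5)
\textbf{There is a genuine gap in the step you call the main obstacle.} Your splitting $\H^{ex}=\H^{ex}_1+\H^{ex}_2$ is the paper's $\mathcal P+\mathcal W$. Your treatment of $\H^{ex}_1$ is essentially right: the coefficient difference $\A(\nabla f,\alpha)-\A(\nabla\phi,\alpha)$ is automatically linear in $f-\phi$, so it may sit at the critical level $\|\nabla(f-\phi)\|_{L^\infty}$. The symmetrized factor $\mathcal O_\alpha\nabla^2 f$ is measured at supercritical regularity, where its $\phi$-part gains a power of $T$.

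The problem is in $\H^{ex}_2$. There you put a factor built from the full slope, namely $\delta_\alpha B(\nabla f)$, at the endpoint, via $\int\|\delta_\alpha\nabla f\|_{L^\infty}|\alpha|^{-d}d\alpha\lesssim(\varkappa-1)^{-1}\|\nabla f\|_{\dot C^{\log^\varkappa}}$. That norm contains $\|\nabla\phi\|_{\dot C^{\log^\varkappa}}$, which is $O(1)$ for large data and carries no power of $T$. So after writing $\nabla f=\nabla(f-\phi)+\nabla\phi$, the cross term with $\phi$ in the endpoint slot and $f-\phi$ in the weighted slot has size $\|\nabla\phi\|_{\dot C^{\log^\varkappa}}\|f-\phi\|_T$.

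This is linear in the small quantity. It cannot be bounded by $(\|f-\phi\|_T+T^{1/20}\mathfrak M_\phi)^2(1+\|(f,\phi)\|_T)^{2m+5}$. It also contradicts your own claim that every term has either two factors of $f-\phi$ or a $\phi$ with surplus regularity. A bound of this linear type would be useless in Section~\ref{secpro}, where $\H^{ex}$ must be absorbed. Note also that on $|\alpha|>t^{1/3}$ one has $\log^{-\varkappa}(2+|\alpha|^{-1})\gtrsim|\log t|^{-\varkappa}$. So \eqref{ine2} cannot extract any $|\log t|^{-\varkappa}$ from the endpoint factor there; the logarithmic gain can only come from the other factor.

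\textbf{The premise behind this step is false.} No factor needs to be at the endpoint to produce the $|\log t|^{\varkappa}$ on the left of \eqref{m}. By \eqref{intp11}, every intermediate norm $\|\nabla f(t)\|_{\dot C^{\theta}}$ with $0<\theta\le m+\kappa$ already carries $|\log t|^{-\varkappa}t^{-\theta/3}$; you use this yourself for $\H^{ex}_1$.

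The paper therefore distributes the three derivatives in $\mathcal W$ so that both factors are strictly supercritical. After writing $\nabla^2f(x-\alpha)=\nabla^2f(x)-\delta_\alpha\nabla^2f(x)$:
\begin{itemize}
\item the first piece is bounded by $\int\|\mathcal O_\alpha B(\nabla f)\|_{L^\infty}\frac{d\alpha}{|\alpha|^d}\lesssim\|B(\nabla f)\|_{\dot C^{1/2}}^{1/2}\|B(\nabla f)\|_{\dot C^{3/2}}^{1/2}$ times $\|\nabla^2f\|_{L^\infty}$;
\item the second piece, $\int\|\delta_\alpha B(\nabla f)\|_{L^\infty}\|\delta_\alpha\nabla^2 f\|_{L^\infty}\frac{d\alpha}{|\alpha|^{d+1}}$, is handled by putting Hölder exponents on both factors.
\end{itemize}
In every case each factor is bounded by $|\log t|^{-\varkappa}t^{-\theta/3}\|f\|_{T,*}$, times powers of $1+\|f\|_T$, for some $\theta\ge 1/5$, and the exponents add up to the required total.

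This is exactly what the auxiliary non-endpoint norm \eqref{defts} records: its components sit at $\nabla f\in\dot C^\theta$ with $\theta\ge1/5$, or carry an explicit factor $T^{1/15}$. Then \eqref{stsm} turns each such factor into $\|f-\phi\|_T+T^{1/20}\mathfrak M_\phi$. This gives $\|f\|_{T,*}^2$ for $\mathcal W$ and $\|f-\phi\|_T\|f\|_{T,*}$ for $\mathcal P$.

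The only factor ever placed at the critical level is the coefficient difference in $\mathcal P$, which is harmless precisely because it vanishes at $f=\phi$. The log-Dini integral is not needed anywhere in this lemma.
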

\begin{proof}
				For brevity, we omit the harmless constant
$\frac{\tilde c_d}{\mu_++\mu_-}$ in the definition of $\H^{ex}$. We decompose
					\[
\H^{ex}[f]=\mathcal P[f]+\mathcal W[f],
\]
where
\[
\begin{aligned}
\mathcal P[f](x)
&=
\int_{\mathbb R^d}
(\A(\nabla f(x),\alpha)-\A(\nabla\phi(x),\alpha))
:\delta_\alpha\nabla^2 f(x)
\frac{d\alpha}{|\alpha|^{d+1}},\\
\mathcal W[f](x)
&=
\langle\nabla f(x)\rangle^2
\int_{\mathbb R^d}
\frac{\delta_\alpha B(\nabla f)(x):\nabla^2f(x-\alpha)}
{\langle\hat\alpha\cdot\nabla f(x)\rangle^{d+1}}
\frac{d\alpha}{|\alpha|^{d+1}} .
\end{aligned}
\]
The term $\mathcal P[f]$ measures the error produced by freezing the coefficient
at $\nabla\phi$, while $\mathcal W[f]$ is the explicit critical nonlinear term
coming from the variation of the curvature coefficient.

Set
\[
a=\kappa-2\in(0,1).
\]
It is enough to prove that, for every $0\le n\le m$,
\begin{equation}\label{difne}
\begin{aligned}
\sup_{t\in[0,T]}
t^{\frac{n+\kappa}{3}}|\log t|^\varkappa
\|\nabla^n\H^{ex}[f](t)\|_{\dot C^a\cap\HH^a}
\lesssim
(\|f-\phi\|_T+\|f\|_{T,*})\|f\|_{T,*}
(1+\|(f,\phi)\|_T)^{2m+5}.
\end{aligned}
\end{equation}
Indeed, the desired estimate \eqref{m} follows from \eqref{difne} together
with \eqref{stsm}.\vspace{0.1cm}\\
	Now we start to prove \eqref{difne}.\\  
					\textbf{Step 1:} Estimate $\mathcal{P}[f]$. By the symmetry
$\A(\cdot,\alpha)=\A(\cdot,-\alpha)$, the first-order difference cancels and
we may rewrite $\mathcal P[f]$ in terms of the second-order difference operator
$\mathcal O_\alpha$:
					\begin{align*}
						\mathcal{P}[f](x)=&\int _{\mathbb{R}^d}(\A(\nabla f(x),\alpha)-\A(\nabla\phi(x),\alpha)):\delta_\alpha \nabla^2 f(x)\frac{d\alpha}{|\alpha|^{d+1}}\\
						=&-\frac{1}{2}\int _{\mathbb{R}^d}(\A(\nabla f(x),\alpha)-\A(\nabla\phi(x),\alpha)):\mathcal{O}_\alpha \nabla^2 f(x)\frac{d\alpha}{|\alpha|^{d}},
					\end{align*}
					where  $\mathcal{O}_\alpha g(x)$ is defined in Lemma \ref{lemoalin}.\\
Applying $\delta_\beta\nabla^n$, with $\beta\in\mathbb R^d$ and $0\le n\le m$, we split the resulting terms according to whether the finite difference $\delta_\beta$ falls on the second-order difference of $f$ or on the coefficient. This gives
					\begin{align*}
						|\delta_\beta 	\nabla^n\mathcal{P}[f](x)|\lesssim& \sum_{n_1=0}^n\int _{\mathbb{R}^d}\left|\nabla ^{n_1}(\A(\nabla f(x),\alpha)-\A(\nabla\phi(x),\alpha)):\delta_\beta\mathcal{O}_\alpha  \nabla^{2+n-n_1} f(x)\right|\frac{d\alpha}{|\alpha|^{d}}\\ &+\sum_{n_1=0}^n\int _{\mathbb{R}^d}|\delta_\beta\nabla ^{n_1}(\A(\nabla f(\cdot),\alpha)-\A(\nabla\phi(\cdot),\alpha))(x)\mathcal{O}_\alpha \nabla^{2+n-n_1}f(x-\beta)|\frac{d\alpha}{|\alpha|^{d}}\\
						:=&\mathrm{I}_{1}+\mathrm{I}_2.
					\end{align*}
					We first consider $\mathrm{I}_{1}$.
					We have 
					\begin{align*}
						&\|\mathrm{I}_{1}\|_{L^\infty\cap L^2}\lesssim \sum_{n_1=0}^n	\|\nabla ^{n_1}(\A(\nabla f(x),\alpha)-\A(\nabla\phi(x),\alpha))\|_{L^\infty_{x,\alpha}\cap L^\infty_\alpha L^2_{x}}\int _{\mathbb{R}^d}\left\|\delta_\beta\mathcal{O}_\alpha  \nabla^{2+n-n_1}f(x)\right\|_{L^\infty}\frac{d\alpha}{|\alpha|^{d}}.
					\end{align*}
					It follows from  Lemma \ref{lemcom} that
					\begin{equation}\label{esdA}
						\begin{aligned}
							&\|\nabla ^{k}(\A(\nabla f(x),\alpha)-\A(\nabla\phi(x),\alpha))\|_{L^\infty_{x,\alpha}\cap L^\infty_{\alpha}L^2_x}\\
							&\quad\quad\quad\quad\quad\quad\quad\quad\quad\lesssim \sum_{l=0}^{k}\|\nabla (f-\phi)\|_{\dot C^{l}\cap \dot \HH^l}(\|(\nabla f,\nabla\phi)\|_{\dot C^1}^{k-l}+\|(\nabla f,\nabla\phi)\|_{\dot C^{k-l}})\\
							&\quad\quad\quad\quad\quad\quad\quad\quad\quad\lesssim t^{-\frac{k}{3}} \|f-\phi\|_T(1+\|(f,\phi)\|_T)^k,\ \ \ \ \forall\ 0\leq k\leq m+2.
						\end{aligned}
					\end{equation}
					Moreover, from Lemma \ref{lemoalin} we obtain
					\begin{align*}
						\int_{\mathbb{R}^d}	\|\delta_\beta\mathcal{O}_\alpha  \nabla^{2+n-n_1} f\|_{L^\infty}\frac{d\alpha}{|\alpha|^d}\lesssim  |\beta|^a|\log t|^{-\varkappa} t^{-\frac{2+n-n_1+a}{3}}\|f\|_{T,*}.
					\end{align*}
					Hence, we deduce that  
					\begin{equation}\label{i1b}
						\begin{aligned}
							\|\mathrm{I}_{1}\|_{L^\infty\cap L^2}	\lesssim &|\beta|^a|\log t|^{-\varkappa}t^{-\frac{2+n+a}{3}}\|f\|_{T,*} \|f-\phi\|_T(1+\|(f,\phi)\|_{T})^{n}.
						\end{aligned}
					\end{equation}
					For $\mathrm{I_2}$, from \eqref{esdA} we deduce that  
					\begin{align*}
						\|\delta_\beta\nabla ^{n_1}(\A(\nabla f(\cdot),\alpha)-\A(\nabla\phi(\cdot),\alpha))(x)\|_{L^\infty_{x,\alpha}}&\lesssim |\beta|^a|\log t|^{-\varkappa} t^{-\frac{n_1+a}{3}} \|f-\phi\|_T(1+\|(f,\phi)\|_T)^{n_1+1}.\end{align*}
					Applying Lemma \ref{lemoalin} again, we obtain
					\begin{align*}
						\int_{\mathbb{R}^d}	\|\mathcal{O}_\alpha \nabla^{2+n-n_1} f\|_{L^\infty\cap L^2}\frac{d\alpha}{|\alpha|^d}\lesssim  t^{-\frac{2+n-n_1}{3}}\| f\|_{T,*}.
					\end{align*}
					Hence 
					\begin{align*}
						\|	\mathrm{I}_2\|_{L^\infty\cap L^2}&\lesssim |\beta|^a|\log t|^{-\varkappa}t^{-\frac{2+n+a}{3}}\|f\|_{T,*} \|f-\phi\|_T(1+\|(f,\phi)\|_{T})^{n+1}.
					\end{align*}
					Combining this with \eqref{i1b}, we have
					\begin{align}\label{pbhol}
						\|\delta_\beta 	\nabla^n\mathcal{P}[f]\|_{L^\infty\cap L^2}\lesssim |\beta|^a|\log t|^{-\varkappa}t^{-\frac{2+n+a}{3}}\|f\|_{T,*}(1+\|(f,\phi)\|_{T})^{n+1}.
					\end{align}
                    We now turn to $\mathcal W[f]$. This term has critical order, but it contains
an additional finite difference of $B(\nabla f)$, which provides the gain needed
to close the estimate.\vspace{0.15cm}\\
				\textbf{Step 2:} Estimate $\mathcal{W}[f]$.\\
					Note that 
					\begin{align*}
						\mathcal{W}[f](x)&=	{\langle \nabla f(x)\rangle ^2}\int_{\mathbb{R}^d}\frac{\delta_\alpha B(\nabla f(x)):\nabla^2 f(x-\alpha)}{\langle\hat \alpha \cdot  \nabla f(x)\rangle ^{d+1}} \frac{d\alpha}{|\alpha|^{d+1}}\\
						&=	{\langle \nabla f(x)\rangle ^2}\int_{\mathbb{R}^d}\frac{\delta_\alpha B(\nabla f(x)):\nabla^2 f(x)}{\langle\hat \alpha \cdot  \nabla f(x)\rangle ^{d+1}} \frac{d\alpha}{|\alpha|^{d+1}}-	{\langle \nabla f(x)\rangle ^2}\int_{\mathbb{R}^d}\frac{\delta_\alpha B(\nabla f(x)):\delta_\alpha\nabla^2 f(x)}{\langle\hat \alpha \cdot  \nabla f(x)\rangle ^{d+1}} \frac{d\alpha}{|\alpha|^{d+1}}\\
						&:=\mathcal{W}_{1}[f](x)+\mathcal{W}_{2}[f](x).
					\end{align*}
					We claim that for any $k\in\mathbb{N}$, $k\leq m+1$,
					\begin{equation}\label{esA}
						\begin{aligned}
							&\sup_{t\in[0,T]} |\log t|^\varkappa t^{\frac{k+2}{3}}\|\nabla^k\mathcal{W}[f]\|_{L^\infty\cap L^2}\lesssim \|f\|_{T,*}^2(1+\|f\|_T)^{2k+5}.
						\end{aligned}
					\end{equation}
					By the symmetry $\A(\cdot,\alpha)=\A(\cdot,-\alpha)$, we obtain  
					\begin{align*}
						\mathcal{W}_{1}[f](x)&=\frac{1}{2}	{\langle \nabla f(x)\rangle ^2}\int_{\mathbb{R}^d}\frac{\mathcal{O}_{\alpha} B(\nabla f(x)):\nabla^2 f(x)}{\langle\hat \alpha \cdot  \nabla f(x)\rangle ^{d+1}} \frac{d\alpha}{|\alpha|^{d}}.
					\end{align*}
					For any $k\in\mathbb{N}$, $k\leq m+1$,	by Lemma \ref{lemoalin}, we have 
					\begin{align*}
						\|\nabla^k\mathcal{W}_{1}[f]\|_{L^\infty\cap L^2}&\lesssim \sum_{l=0}^k\sum_{j=0}^{k-l}\left( \int_{\mathbb{R}^d}\|\mathcal{O}_\alpha \nabla^l B(\nabla f)\|_{L^\infty}\frac{d\alpha}{|\alpha|^{d}} \sup_{\alpha'}\left\|\nabla^{k-l-j}\left(\frac{\langle \nabla f\rangle ^2}{\langle\hat \alpha' \cdot  \nabla f(x)\rangle ^{d+1}}\right)\right\|_{L^\infty}\|\nabla^{2+j} f\|_{L^\infty\cap L^2}\right)\\
						&\lesssim \sum_{l=0}^kt^{-\frac{k-l+2}{3}}\| B(\nabla f)\|_{\dot C^{l+\frac{1}{2}}} ^\frac{1}{2} \| B(\nabla f)\|_{\dot C^{l+\frac{3}{2}}} ^\frac{1}{2} \|f\|_{T,*}(1+\|f\|_T)^{k-l+2}.
					\end{align*}
					From  Lemma \ref{lemcom}, we obtain
					\begin{align*}
						\| B(\nabla f)\|_{\dot C^{j+\frac{1}{2}}} \lesssim \|\nabla f\|_{\dot C^\frac{1}{2}}^{2j+1}+\|\nabla f\|_{\dot C^{j+\frac{1}{2}}}\lesssim |\log t|^{-\varkappa}t^{-\frac{j}{3}-\frac{1}{6}}  \|f\|_{T,*}(1+\|f\|_T)^{2j+1}.
					\end{align*}
					This implies 
					\begin{align*}
						\|\nabla^k\mathcal{W}_{1}[f](t)\|_{L^\infty\cap L^2}
						&\lesssim |\log t|^{-\varkappa} t^{-\frac{k+2}{3}}\|f\|_{T,*}^2(1+\|f\|_T)^{2k+5}.
					\end{align*}
					Similarly, for $\mathcal{W}_2[f]$, we have 
					\begin{align*}
						\|\nabla ^k \mathcal{W}_2[f]\|_{L^\infty\cap L^2}&\lesssim \sum_{l=0}^k \sum_{j=0}^l\left(\int_{\mathbb{R}^d}\|\delta_\alpha \nabla^{j}B(\nabla f)\|_{L^\infty}\|\delta_\alpha\nabla^{2+l-j}f\|_{L^\infty\cap L^2}\frac{d\alpha}{|\alpha|^{d+1}}\sup_\alpha \left\|\nabla^{k-l}\left(\frac{\langle \nabla f\rangle^2}{\langle\hat \alpha \cdot \nabla f\rangle^{d+1}}\right)\right\|_{L^\infty}\right)\\
						&\lesssim|\log t|^{-\varkappa} t^{-\frac{k+2}{3}}\|f\|_{T,*}^2(1+\|f\|_T)^{2k+5}.
					\end{align*}
					Thus, we obtain \eqref{esA}.
					Then by interpolation, we obtain 
					\begin{equation*}
						\begin{aligned}
							& \sup_{t\in[0,T]}|\log t|^\varkappa t^{\frac{n+\kappa}{3}}   \|\delta_\beta\nabla^n	\mathcal{W}[f](t)\|_{L^\infty\cap L^2}\lesssim |\beta|^a \|f\|_{T,*}^2(1+\|f\|_T)^{2n+5}, \ \ \ \forall n\in\mathbb{N}, n\leq m. 
						\end{aligned}
					\end{equation*}
					Combining this with \eqref{pbhol} yields \eqref{difne}.
					This completes the proof of the lemma.
				\end{proof}
				
				\subsection{Estimates of $\H^{im}[f]$}
We now estimate the implicit term $\H^{im}[f]$ defined in \eqref{defHim}. This is the most delicate part of the nonlinear analysis, since $\H^{im}[f]$ depends on the correction $\omega^\pm_{\b}[f]$, which solves the elliptic transmission problem \eqref{elliw}. Unlike $\H^{ex}[f]$, this term is not given by an explicit singular integral of the interface alone.

Throughout this subsection, we assume that
\begin{align}\label{condfff}
\|f\|_{X_T}<\infty,
\qquad
\inf_{t\in[0,T]}\operatorname{dist}(f,\Gamma^\pm)\geq \mathbf r .
\end{align}
Now we start to estimate the H\"{o}lder norm of solution to system \eqref{elliw}. 
				Recalling \eqref{defwb}, we have 
				\begin{align}\label{S4eq4}
					v_f^\pm=\omega_\b^\pm[f]+\tilde v_\b^\pm[f].
				\end{align}
				The estimation of force terms in \eqref{elliw} requires estimation of $\tilde v_\b^\pm[f]$ which is defined in 
				\eqref{deftvb}:
				\begin{align}\label{fortvb}
					\tilde v_\b^\pm[f](x,z)=c^\pm\int_{\mathbb{R}^d}K_\b(x-y,z)\mathcal{G}[f](y)dy.
				\end{align}
				As a preparation, we first give some estimates of $\tilde v_\b^\pm[f]$.\vspace{0.2cm}\\
We begin with an explicit representation of the kernel $K_\b$ defined in \eqref{defKor}.
				\begin{lemma}\label{Z10} Let $\b\in \mathbb{R}^d$ and $	\lambda^\pm(\xi,\b)=\frac{i\b\cdot\xi\pm\sqrt{(1+|\b|^2)|\xi|^2-(\b\cdot \xi)^2}}{1+|\b|^2},$
					\begin{equation*}
						K_\b(x,z)=\begin{cases}
							\frac{1}{(2\pi)^\frac{d}{2}}\int_{\mathbb{R}^d}\exp\left(ix\cdot\xi+\lambda^-(\xi,\b)z\right)d\xi,& z\geq0,\\
							\frac{1}{(2\pi)^\frac{d}{2}}\int_{\mathbb{R}^d}\exp\left(ix\cdot\xi+\lambda^+(\xi,\b)z\right)d\xi,& z<0.
						\end{cases}	
					\end{equation*}
					Then
					\begin{equation}\label{defK}
						K_\b(x,z)=c_d\frac{|z|}{\left((x\cdot \b+z)^2+|x|^2\right)^{\frac{d+1}{2}}},\ \ \ \forall x\in\mathbb{R}^d,\ \ z\neq 0,
					\end{equation}
					where the constant $c_d=\frac{\Gamma(\frac{d+1}{2})}{\pi^\frac{d+1}{2}}$ ensures that $\int _{\mathbb{R}^d}K_\b(x,1)dx=1$. Moreover, when $z=0$, there holds $K_\b(x,0)=\delta(x)$.
				\end{lemma}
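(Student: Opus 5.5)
The plan is to identify $K_\b$ as the Poisson kernel of a tilted half-space, using the same change of variables that produced \eqref{elleqeta}. For the linear profile $\eta(x)=\b\cdot x$ we have $\nabla\eta\equiv\b$. By the computation leading to \eqref{elleqeta}, a function $v$ solves $\operatorname{div}_{x,z}(\M(\b)\nabla_{x,z}v)=0$ in $\{z>0\}$ if and only if $q(x,Z):=v(x,Z-\b\cdot x)$ is harmonic in the tilted half-space $H_\b=\{(x,Z):Z>\b\cdot x\}$. Under this map the boundary $\{z=0\}$ corresponds to the hyperplane $\partial H_\b=\{Z=\b\cdot x\}$.

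\textbf{Step 1: Poisson integral for $z>0$.} I will solve the Dirichlet problem in $H_\b$ with data $\mathcal G(y)$ placed at the boundary point $Q=(y,\b\cdot y)$, using the classical half-space Poisson kernel
\[
P(X,Q)=c_d\,\frac{\operatorname{dist}(X,\partial H_\b)}{|X-Q|^{d+1}},\qquad c_d=\frac{\Gamma(\frac{d+1}{2})}{\pi^{\frac{d+1}{2}}}.
\]
For $X=(x,z+\b\cdot x)$ the relevant quantities are
\[
\operatorname{dist}(X,\partial H_\b)=\frac{z}{\langle\b\rangle},\qquad |X-Q|^2=|x-y|^2+\bigl(z+\b\cdot(x-y)\bigr)^2,\qquad dS_Q=\langle\b\rangle\,dy .
\]
The two factors $\langle\b\rangle$ cancel. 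This gives exactly \eqref{defK} with $x$ replaced by $x-y$, for $z>0$.

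\textbf{Step 2: the case $z<0$.} This is identical, working in the complementary half-space $\{Z<\b\cdot x\}$. There the distance to the boundary is $|z|/\langle\b\rangle$, which explains the factor $|z|$ in \eqref{defK}.

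\textbf{Step 3: matching with the Fourier definition.} I need the explicit kernel to coincide with the Fourier-side definition \eqref{defKor}. Let $\widehat K(\xi,z)$ denote the Fourier transform in $x$ of the explicit kernel, for fixed $z>0$.
\begin{itemize}
\item $\widehat K(\xi,z)$ solves the ODE $(1+|\b|^2)\partial_z^2V-2i\b\cdot\xi\,\partial_zV-|\xi|^2V=0$ written before \eqref{deflanb}.
\item It is bounded in $z$, since the kernel has $L^1_x$ norm independent of $z$ after the scaling $x\mapsto zx$.
\item It tends to $1$ as $z\to0^+$ (see Step 4).
\end{itemize}
The characteristic roots are $\lambda^\pm$, and $\operatorname{Re}\lambda^->0$ fails only for $\lambda^+$: we have $\operatorname{Re}\lambda^-<0<\operatorname{Re}\lambda^+$ for $\xi\neq0$. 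Hence boundedness forces $\widehat K(\xi,z)=e^{\lambda^-(\xi,\b)z}$. Inverting the Fourier transform gives the first line of \eqref{defKor}, and the case $z<0$ gives $e^{\lambda^+z}$ symmetrically.

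\textbf{Step 4: normalization and the limit $z=0$.} Substituting $x=z\tilde x$ shows $K_\b(x,z)=z^{-d}K_\b(x/z,1)$. So the normalization $\int_{\mathbb R^d} K_\b(x,1)\,dx=1$ is the statement that the Poisson kernel integrates to one over the boundary hyperplane, which holds by Step 1. One can also check it directly: complete the square in the $\b$-direction as in the proof of Lemma~\ref{lemz0}, reducing to the isotropic integral $\int_{\mathbb R^d}(1+|x|^2)^{-\frac{d+1}{2}}\,dx=c_d^{-1}$. The family $K_\b(\cdot,z)$ is then a positive approximate identity with the decay $|x|^{-d-1}$, so $K_\b(\cdot,z)\to\delta$ as $z\to0$. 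This proves $K_\b(x,0)=\delta(x)$ and supplies the boundary value $1$ used in Step 3.

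The main obstacle I expect is Step 3, specifically justifying uniqueness on the Fourier side. Here I would rely on the root separation $\operatorname{Re}\lambda^-<0<\operatorname{Re}\lambda^+$ for $\xi\ne0$, together with the uniform $L^1_x$ bound on the kernel, to exclude the growing mode.
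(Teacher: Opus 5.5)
Your proof is correct, but it takes a different route from the paper's.

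\textbf{The paper's route.} The paper evaluates the oscillatory integral directly. For $d\ge2$ it applies the rotation identity \eqref{Z2} with $\theta_1=\hat x$ and $\theta_2=\hat{\mathbf b}$. It then rescales the $\xi$-variables orthogonal to the $\mathbf b$-direction by $\langle\mathbf b\rangle$, which turns $\sqrt{\langle\mathbf b\rangle^2|\xi|^2-(\mathbf b\cdot\xi)^2}$ into $|\xi|$. The answer is then read off from the classical formula \eqref{Z1} for the inverse Fourier transform of $e^{|\xi|z}$, with $d=1$ handled separately.

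\textbf{Your route.} You first identify the kernel geometrically. In the sheared variable $Z=z+\mathbf b\cdot x$, the frozen operator $\operatorname{div}_{x,z}(\M(\mathbf b)\nabla_{x,z}\cdot)$ becomes the Laplacian. Hence $K_{\mathbf b}$ should be the Poisson kernel of the tilted half-space, and your computation of the distance, $|X-Q|$ and $dS_Q$ is right. You then certify this by Fourier uniqueness:
\begin{itemize}
\item For $\xi\neq0$ the transform solves the constant-coefficient ODE with roots $\lambda^\pm$.
\item $\operatorname{Re}\lambda^-<0<\operatorname{Re}\lambda^+$, because $\langle\mathbf b\rangle^2|\xi|^2-(\mathbf b\cdot\xi)^2\ge|\xi|^2$.
\item The $z$-independent $L^1_x$ norm gives boundedness, which kills the growing mode.
\item The approximate-identity limit fixes the remaining constant.
\item Fourier inversion is legitimate since $|e^{\lambda^- z}|\le e^{-z|\xi|/\langle\mathbf b\rangle^2}$ is integrable.
\end{itemize}

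\textbf{What each route buys.} Your argument is dimension-uniform and explains the shape of \eqref{defK}. It also gives, at no extra cost, positivity, $\int K_{\mathbf b}(\cdot,z)\,dx=1$ (the paper's \eqref{remintk}, which it proves by a separate computation), and $K_{\mathbf b}(\cdot,z)\to\delta$. The price is the uniqueness step. There you must justify differentiating under the integral to get the ODE for the transform; this follows easily from the $|x|^{-d-1}$-type decay of $K_{\mathbf b}$ and its derivatives, locally uniformly for $z$ in compact subsets of $(0,\infty)$. The paper's route needs no uniqueness argument, but it rests on a chain of rotations and anisotropic rescalings, with a separate case for $d=1$.

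\textbf{Two small points.}
\begin{itemize}
\item The phrase ``$\operatorname{Re}\lambda^->0$ fails only for $\lambda^+$'' is garbled; you mean that only $\lambda^+$ has positive real part.
\item If you track constants, $\int K_{\mathbf b}(\cdot,z)\,dx=1$ (equivalently $K_{\mathbf b}(\cdot,0)=\delta$) forces the prefactor $(2\pi)^{-d}$ in the Fourier definition. This is what the paper's own proof uses, not the $(2\pi)^{-d/2}$ printed in the statement. Your boundary value $1$ in Step 3 implicitly uses the unnormalized transform, which is the consistent reading.
\end{itemize}
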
\begin{proof}
The case $z=0$ follows from the definition of the Fourier kernel, and the case
$\b=0$ reduces to the classical Poisson kernel. We therefore assume
$z\neq0$ and $\b\neq0$. We treat the case $z<0$; the case $z>0$ is identical,
with $\lambda^-$ replaced by $\lambda^+$. We first  recall the explicit formula of Poisson kernel (see \cite[Chapter \rm{III}, section 2.1]{Steinbook})
					\begin{equation}\label{Z1}
						\frac{1}{(2\pi)^\frac{d}{2}}\int_{\mathbb{R}^d}\exp\left(ix\cdot\xi+|\xi|z\right)d\xi=c_d\frac{-z}{(|x|^2+|z|^2)^{\frac{d+1}{2}}},~z<0.
					\end{equation}
					1) Case $d=1$, we have, 
					\begin{align*}
						K_\b(x,z)&=\frac{1}{2\pi}\int_{\mathbb{R}}\exp\left(ix\cdot\xi+\frac{i\b\xi+|\xi|}{\langle \b\rangle^2}z\right)d\xi\\&= \frac{1}{2\pi}\int_{\mathbb{R}}\exp\left(i\left(x+\frac{\b z}{\langle \b\rangle^2}\right)\cdot\xi+|\xi|\frac{z}{\langle \b\rangle^2}\right)d\xi.
					\end{align*}
					Applying \eqref{Z1} with $d=1$ to get 
					\begin{align*}
						K_\b(x,z)&=c_1\frac{-\frac{z}{\langle \b\rangle^2}}{\left(x+\frac{\b z}{\langle \b\rangle^2}\right)^2+\left(\frac{z}{\langle \b\rangle^2}\right)^2}=c_1\frac{-z }{\left( x\cdot \b+z\right)^2+|x|^2}.
					\end{align*}
					2) Case $d\geq 2$. 
				Using the transformation \eqref{Z2}, we obtain 
					\begin{align*}
						&	K_\b(x,z)=\frac{1}{(2\pi)^d}\int_{\mathbb{R}^d}\exp\left(ix\cdot\xi+\frac{i\b\cdot\xi+\sqrt{(1+|\b|^2)|\xi|^2-(
								\b\cdot \xi)^2}}{1+|
							\b|^2}z\right)d\xi\\&=\frac{1}{(2\pi)^d}\int_{\mathbb{R}^d}\exp\left(i|x|\left((1-(\frac{x\cdot \b}{|x||
							\b|})^2)^{\frac{1}{2}}\xi_1+\frac{x\cdot \b}{|x||\b|}\xi_2\right)+\frac{i|\b|\xi_2+\sqrt{(1+|\b|^2)|\xi|^2-|\b|^2\xi_2^2}}{1+|\b|^2}z\right)d\xi
						\\&=\frac{1}{(2\pi)^d}\int_{\mathbb{R}^d}\exp\left(i\frac{((|x||\b|)^2-(x\cdot \b)^2)^{\frac{1}{2}}}{|\b|}\xi_1+i\left(\frac{x\cdot \b}{|\b|}+\frac{|\b|z}{\langle \b\rangle^2}\right)\xi_2+\frac{\sqrt{(1+|\b|^2)|\xi|^2-|\b|^2\xi_2^2}}{1+|\b|^2}z\right)d\xi.
					\end{align*}
					By simple changes of variables, we obtain 
					\begin{align*}
						K_\b(x,z)=\frac{1}{(2\pi)^d}\int_{\mathbb{R}^d}\exp\left(i\frac{((|x||\b|)^2-(x\cdot \b)^2)^{\frac{1}{2}}}{|\b|\langle \b\rangle}\xi_1+i\left(\frac{x\cdot \b}{|\b|}+\frac{|\b|z}{\langle \b\rangle^2}\right)\xi_2+|\xi|\frac{z}{\langle \b\rangle^2}\right)\frac{d\xi}{\langle \b\rangle^{d-1}}.
					\end{align*}
					Finally, applying  \eqref{Z1} yields
					\begin{align*}
						K_\b(x,z)	&=\frac{c_d}{\langle \b\rangle^{d-1}}\frac{-\frac{z}{\langle \b\rangle^2}}{\left(\frac{(|x||\b|)^2-(x\cdot \b)^2}{|\b|^2\langle \b\rangle^2}+\left(\frac{x\cdot \b}{|\b|}+\frac{|\b|z}{\langle \b\rangle^2}\right)^2+\left(\frac{z}{\langle \b\rangle^2}\right)^2\right)^{\frac{d+1}{2}}}\\&=
						\frac{c_d}{\langle \b\rangle^{d-1}}\frac{-\frac{z}{\langle \b\rangle^2}}{\left(\frac{(x\cdot \b)^2+2zx\cdot \b +z^2}{\langle \b\rangle^2}+\frac{z^2}{\langle \b\rangle^2}\right)^{\frac{d+1}{2}}}
						=
						c_d\frac{-z}{\left((x\cdot \b+z)^2+|x|^2\right)^{\frac{d+1}{2}}}.
					\end{align*}
					This completes the proof of the lemma.
				\end{proof} 
                \begin{remark}
Since
\[
\int_{\mathbb R^d}K_\b(x,z)\,dx=1,
\qquad z\neq0,
\]
the family $K_\b(\cdot,z)$ forms an approximation of the identity. In
particular,
\[
K_\b(\cdot,z)\to\delta_0
\]
in the sense of distributions as $z\to0$.
\end{remark}
				We next collect several estimates for the frozen Poisson kernel \(K_{\b}\). These bounds will be used to control the frozen velocity \(\tilde v^\pm_{\b}[f]\) and its normal derivative near the interface. The main point is that \(x\)-derivatives can be estimated directly from the explicit kernel, while \(z\)-derivatives require the Fourier representation and the operator \(\mathfrak L_{\pm,\b}\) defined in \eqref{lemfdz0}.
				\begin{lemma}\label{esK11}
					We have,\\
					(1) For any $n\in\mathbb{N}$, $\b\in\mathbb{R}^d$,
					\begin{align}\label{Knxb}
						|\nabla_x^{n}K_\b(x,z)|\lesssim \frac{|z|\langle \b\rangle^{n}}{\left((x\cdot \b+z)^2+|x|^2\right)^{\frac{d+1+n}{2}}},\quad\forall x\in\mathbb{R}^d, z\neq 0.
					\end{align}
					(2)   For any $n_1,n_2\in\mathbb{N}$, $\b\in\mathbb{R}^d$,
					\begin{align}\label{deriK}
						|\nabla_x^{n_1}\partial_z^{n_2}K_\b(x,z)|\lesssim {\langle \b\rangle^{1+n_1+n_2}}{\left(|x|^2+\frac{|z|^2}{\langle \b\rangle^2}\right)^{-\frac{d+n_1+n_2}{2}}},\quad \quad\forall x\in\mathbb{R}^d, z\neq 0.
					\end{align}
					(3) For any $z\neq 0$, $\b\in\mathbb{R}^d$,
					\begin{align}\label{remintk}
						\int_{\mathbb{R}^d}K_\b(x,z)dx=C_d,
					\end{align}
					where $C_d$ is a constant independent of $z$ and $\b$.\\
					(4)	For any $a\in(0,1)$, 	 $\b\in\mathbb{R}^d$, $z,\beta\in\mathbb{R}^+$,
					\begin{equation}\label{esl1}
						\begin{aligned}
							&\int_{\mathbb{R}^d}|K_\b(y,z)-K_\b(y,\beta)||y|^ady \lesssim \langle \b\rangle^2|z-\beta|^a,\ \ \ \forall a\in(0,1).
						\end{aligned}
					\end{equation}
					
					\end{lemma}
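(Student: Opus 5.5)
The plan is to work entirely from the explicit formula \eqref{defK} of Lemma~\ref{Z10}, and to use the linear change of variables that maps the quadratic form $Q_\b(x,z):=(x\cdot\b+z)^2+|x|^2$ to a standard Euclidean one. The basic observation is that $Q_\b$ is a positive definite quadratic form in $(x,z)\in\mathbb R^{d+1}$. Its smallest eigenvalue is comparable to $\langle\b\rangle^{-2}$, and its largest to $\langle\b\rangle^2$. More precisely, $Q_\b(x,z)\ge |x|^2$ always holds. If $|x\cdot\b|\le |z|/2$, then $Q_\b\gtrsim z^2$. Otherwise $|z|\lesssim\langle\b\rangle|x|$. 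In both cases
\[
Q_\b(x,z)\gtrsim |x|^2+\frac{z^2}{\langle\b\rangle^2}.
\]
This lower bound is the only geometric input needed for parts (1) and (2).

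For (1), I will differentiate $K_\b=c_d|z|Q_\b^{-(d+1)/2}$ in $x$. We have $\nabla_x Q_\b=2(x\cdot\b+z)\b+2x$, which satisfies $|\nabla_xQ_\b|\lesssim\langle\b\rangle Q_\b^{1/2}$, and $\nabla_x^2Q_\b=2(\b\otimes\b+\mathrm{Id})$ is bounded by $\langle\b\rangle^2$. Induction via Faà di Bruno then shows that each $x$-derivative gains a factor $\langle\b\rangle Q_\b^{-1/2}$, which gives \eqref{Knxb}. For (2), I will treat $\partial_z$ in the same way, using $\partial_zQ_\b=2(x\cdot\b+z)$, $|\partial_zQ_\b|\lesssim Q_\b^{1/2}$, and noting that $\partial_z|z|$ is bounded. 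The $z$-derivative that falls on the prefactor $|z|$ kills it and produces the term $Q_\b^{-(d+1)/2}$. I will bound the leftover factor $|z|$ in the other terms by $\langle\b\rangle Q_\b^{1/2}$, which follows from the lower bound above. Every term is then at most $\langle\b\rangle^{1+n_1+n_2}Q_\b^{-(d+n_1+n_2)/2}$, and inserting the lower bound on $Q_\b$ yields \eqref{deriK}.

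For (3), I will substitute $x=zy$ for $z>0$, which reduces the integral to $\int K_\b(y,1)\,dy$. I will then compute this through the Fourier representation: the integral equals $(2\pi)^{d/2}\widehat{K_\b}(0,z)=e^{\lambda^\mp(0,\b)z}=1$, up to the Fourier normalization constant. This value is independent of $z$ and $\b$, and the case $z<0$ is symmetric.

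For (4), which I expect to be the main step, I will split the integral into the region $|y|\le 2|z-\beta|\langle\b\rangle$ and its complement. On the first region, the triangle inequality together with the scaling $\int K_\b(y,z)|y|^a\,dy=z^a\int K_\b(y,1)|y|^a\,dy$ bounds the contribution by $\langle\b\rangle^{2}|z-\beta|^a$. Here I use \eqref{deriK} with $n_1=n_2=0$ after substituting $y\mapsto y\,z/\langle\b\rangle$ to control $\int K_\b(y,1)|y|^a\,dy\lesssim\langle\b\rangle^{1+a}$. On the complement, I will use the mean value theorem in $z$ together with \eqref{deriK} with $n_2=1$ to bound $|\partial_zK_\b|\lesssim\langle\b\rangle^2|y|^{-d-1}$. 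The integral $\int_{|y|>2|z-\beta|\langle\b\rangle}|z-\beta|\,\langle\b\rangle^2|y|^{a-d-1}\,dy$ is then at most $\langle\b\rangle^{1+a}|z-\beta|^a$. The delicate point is tracking the power of $\langle\b\rangle$ in each region; the bounds above have the required form, with $\langle\b\rangle^{1+a}\le\langle\b\rangle^2$.
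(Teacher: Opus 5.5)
Parts (1)--(3) and the outer region of (4) are fine. The lower bound $Q_\b(x,z)\gtrsim |x|^2+z^2/\langle\b\rangle^2$ combines with a derivative count: each $x$- or $z$-derivative costs at most $\langle\b\rangle Q_\b^{-1/2}$, and $|z|\lesssim\langle\b\rangle Q_\b^{1/2}$. This is exactly the computation behind the paper's remark that \eqref{Knxb} and \eqref{deriK} ``follow directly from the formula''. For (3) you evaluate $\widehat{K_\b(\cdot,z)}$ at $\xi=0$, whereas the paper uses a rotation and an affine change of variables. Your route is legitimate: $K_\b(\cdot,z)\ge 0$ is integrable, and Lemma~\ref{Z10} identifies it with the inverse Fourier transform of $e^{\lambda^\mp(\xi,\b)z}$, which is continuous at $\xi=0$. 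On $\{|y|>2\langle\b\rangle|z-\beta|\}$, the mean value theorem with $|\partial_zK_\b(y,\zeta)|\lesssim\langle\b\rangle^2|y|^{-d-1}$ is valid because the segment between $z$ and $\beta$ lies in $(0,\infty)$. It indeed gives $\langle\b\rangle^{1+a}|z-\beta|^a$.

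The inner region of (4), however, does not work as you wrote it.
\begin{itemize}
\item Bounding $\int_{|y|\le R}(K_\b(y,z)+K_\b(y,\beta))|y|^a\,dy$ by the full moment and then scaling gives $(z^a+\beta^a)\int_{\mathbb R^d}K_\b(y,1)|y|^a\,dy$. The factor $z^a+\beta^a$ is not controlled by $|z-\beta|^a$. For $z=1$, $\beta=1+\epsilon$, the left side of \eqref{esl1} is $O(\epsilon^a)$, while your bound stays of order one. Once you drop the restriction $|y|\le R$, the smallness is lost.
\item Separately, \eqref{deriK} with $n_1=n_2=0$ only gives $K_\b(y,1)\lesssim\langle\b\rangle|y|^{-d}$. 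That is not integrable against $|y|^a$ at infinity, so it cannot show the moment $\int K_\b(y,1)|y|^a\,dy$ is finite. For that you need the extra factor $|z|$ in \eqref{Knxb} at $n=0$.
\end{itemize}
The repair is to keep the restriction and use the pointwise bound directly:
\[
\int_{|y|\le R}\bigl(K_\b(y,z)+K_\b(y,\beta)\bigr)|y|^a\,dy\lesssim\langle\b\rangle\int_{|y|\le R}|y|^{a-d}\,dy\lesssim \frac{\langle\b\rangle R^a}{a}\lesssim\langle\b\rangle^{1+a}|z-\beta|^a .
\]
This is precisely the paper's argument. It packages both regions into $|K_\b(y,z)-K_\b(y,\beta)|\lesssim\langle\b\rangle^2|y|^{-d}\min\{1,|z-\beta|/|y|\}$ and integrates against $|y|^a$, splitting at $|y|=|z-\beta|$. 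With your threshold $R=2\langle\b\rangle|z-\beta|$, the repaired argument gives the slightly better factor $\langle\b\rangle^{1+a}$ instead of $\langle\b\rangle^2$.
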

					\begin{proof} The point-wise estimates \eqref{Knxb} and \eqref{deriK} follow directly from the formula \eqref{defK}.\\
						It remains to prove \eqref{remintk} and \eqref{esl1}. Observing the scaling invariant property
						$$
						K_\b(x,z)=|z|^{-d}K_\b(|z|^{-1}x,|z|^{-1}z),
						$$
						we obtain that for any $\b\in\mathbb{R}^d$, $z\neq 0$,
						\begin{align*}
							\int_{\mathbb{R}^d}K_\b(x,z)dx=|z|^{-d}\int_{\mathbb{R}^d} K_\b(|z|^{-1}x,|z|^{-1}z)dx=\int_{\mathbb{R}^d}K_\b(x,\pm 1)dx,\ \ \ z\in\mathbb{R}^\pm.
						\end{align*}
						By a rotation transformation and changes of variables, we obtain
						\begin{align*}
							\int_{\mathbb{R}^d}K_\b(x,\pm 1)dx&=c_d \int_{\mathbb{R}^d}\frac{1}{\left((x\cdot \b\pm 1)^2+|x|^2\right)^{\frac{d+1}{2}}}dx=c_d\int_{\mathbb{R}^d}\frac{1}{\left((x_1|\b|\pm 1)^2+|x|^2\right)^{\frac{d+1}{2}}}dx\\
							&=c_d\langle\b\rangle^{-1}\int_{\mathbb{R}^d}\frac{1}{\left(|x|^2+\frac{1}{\langle\b\rangle^{2}}\right)^\frac{d+1}{2}}dx=c_d\int_{\mathbb{R}^d}\frac{1}{\langle|x|\rangle^{d+1}}dx=C.
						\end{align*}
						This completes the proof of \eqref{remintk}. Finally, for any $z,\beta>0$,  by \eqref{deriK} we have
						\begin{align*}
							&|K_\b(y,z)-K_\b(y,\beta)|\lesssim \frac{\langle \b\rangle^{2}}{|
								y|^d} \min\left\{1,\frac{|z-\beta|}{|y|}\right\}.
						\end{align*}
						This implies \eqref{esl1}. Then we complete the proof of the lemma.
					\end{proof} \vspace{0.2cm}
					
					In view of formula \eqref{fortvb}, when estimating $\nabla_x^n \tilde v_\b^\pm[f]$, one can freely interchange the $x$-derivatives between the kernel $K_\b$ and the function $\mathcal{G}[f]$. Thus, Lemma \ref{esK11} suffices to bound derivatives of $\tilde v_\b^\pm[f]$ with respect to $x$ to any order. However, in estimating $\nabla_x^n \partial_z \tilde v_\b^\pm[f]$, we encounter the term $\partial_z K_\b$, which becomes singular near the interface ${z=0}$. Indeed, from \eqref{deriK} we only obtain
					\begin{align*}
						\int_{\mathbb{R}^d}|\partial_z K_\b(x,z)|dx\lesssim \langle\b\rangle^3|z|^{-1}.
					\end{align*}
					To overcome this difficulty, we further analyze $\partial_zK_\b$ via its Fourier transform. Note that 
						\begin{align*}
							\mathcal{F}(\partial_z K_\b(\cdot,z))(\xi)=\lambda^\pm(\xi,\b) \exp(\lambda^\pm(\xi,\b)z),\ \ \ z\in\mathbb{R}^\mp.
						\end{align*}
						Hence we can write 
						\begin{align*}
							\partial_z(K_\b(\cdot,z)\ast g)(x)=(K_\b(\cdot,z)\ast( \mathfrak{L}_{\pm,\b} g)(\cdot))(x),
						\end{align*}
						where   \begin{align*}
                    \mathfrak{L}_{\pm,\b} f(x)=\mathcal{F}^{-1}\left(\lambda^\pm(\xi,\b)\hat f(\xi)\right)(x).
                    \end{align*}
                    We first prove the following estimates for 
                 $\mathfrak{L}_{\pm,\b} f$.
				\begin{lemma}\label{lemholL}
				For any $a\in (0,1)$, $m\in\mathbb{N}$,
						\begin{align}\label{LHol}
							\sup_{\b\in\mathbb{R}^d}	\|\mathfrak{L}_{\pm,\b} f\|_{\dot C^{m+a}_x}\lesssim \|f\|_{\dot C^{m+1+a}},
						\end{align}
						and \begin{align}\label{LLLinf}
							\sup_{\b\in\mathbb{R}^d}	\|\mathfrak{L}_{\pm,\b} f\|_{L^\infty}\lesssim \|f\|_{ C^{1,\log^\varkappa  }}.
						\end{align}
				\end{lemma}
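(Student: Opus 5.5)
We start from the physical-space representation in Lemma~\ref{lemfdz0}:
\[
\mathfrak L_{\pm,\b}f(x)=\frac{\b\cdot\nabla f(x)}{\langle\b\rangle^2}\pm\tilde c_d\,\mathrm{P.V.}\int_{\mathbb R^d}\frac{\delta_\alpha f(x)}{\langle\hat\alpha\cdot\b\rangle^{d+1}}\frac{d\alpha}{|\alpha|^{d+1}} .
\]
Both terms are treated uniformly in $\b$. The drift term is immediate, since $|\b|/\langle\b\rangle^2\le 1$. It gives $\|\b\cdot\nabla f\|_{\dot C^{m+a}}/\langle\b\rangle^2\le\|f\|_{\dot C^{m+1+a}}$ and $\|\b\cdot\nabla f\|_{L^\infty}/\langle\b\rangle^2\le\|\nabla f\|_{L^\infty}\le\|f\|_{C^{1,\log^\varkappa}}$.

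For the singular integral we use the evenness of the weight $w_\b(\hat\alpha)=\langle\hat\alpha\cdot\b\rangle^{-(d+1)}$ in $\alpha$ to symmetrize. This gives
\[
\mathrm{P.V.}\int\frac{\delta_\alpha f(x)}{\langle\hat\alpha\cdot\b\rangle^{d+1}}\frac{d\alpha}{|\alpha|^{d+1}}=\frac12\int_{\mathbb R^d}\frac{\mathcal O_\alpha f(x)}{\langle\hat\alpha\cdot\b\rangle^{d+1}}\frac{d\alpha}{|\alpha|^{d}},
\]
with $\mathcal O_\alpha$ as in Lemma~\ref{lemoalin}. Since $0<w_\b\le1$ for all $\b$, every bound below is uniform in $\b$.

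For \eqref{LLLinf}, the symmetrized form and \eqref{S2eq2} yield a bound by $\int\|\mathcal O_\alpha f\|_{L^\infty}|\alpha|^{-d}d\alpha\lesssim\|f\|_{C^{1,\log^\varkappa}}$. We split the integral at $|\alpha|=1$. For $|\alpha|<1$ we write $\mathcal O_\alpha f=\int_0^1\hat\alpha\cdot(\nabla f(x+\lambda\alpha)-\nabla f(x-\lambda\alpha))\,d\lambda$, which is bounded by $\log^{-\varkappa}(2+|\alpha|^{-1})\|\nabla f\|_{\dot C^{\log^\varkappa}}$. This piece is integrable against $|\alpha|^{-d}$ precisely because $\varkappa>1$, as in the estimate recorded after \eqref{ctlog}. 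For $|\alpha|\ge1$ we use $|\mathcal O_\alpha f|\le 4\|f\|_{L^\infty}/|\alpha|$, which is integrable against $|\alpha|^{-d}$; the inhomogeneous norm controls $\|f\|_{L^\infty}$.

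For \eqref{LHol}, we commute $\nabla^m$ and $\delta_\beta$ with the operator, since its kernel does not depend on $x$. We then apply \eqref{S2eq3} to $g=\nabla^m f$:
\[
\Big\|\delta_\beta\nabla^m\int\frac{\mathcal O_\alpha f}{\langle\hat\alpha\cdot\b\rangle^{d+1}}\frac{d\alpha}{|\alpha|^d}\Big\|_{L^\infty}\le\int\|\delta_\beta\mathcal O_\alpha\nabla^m f\|_{L^\infty}\frac{d\alpha}{|\alpha|^d}\lesssim|\beta|^a\|\nabla^m f\|_{\dot C^{1+a}} .
\]
This is exactly $\|f\|_{\dot C^{m+1+a}}$.

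The only delicate point is the logarithmic endpoint in \eqref{LLLinf}, where the log-Dini integrability from $\varkappa>1$ is essential. One must also check that the principal value converges uniformly in $\b$. The symmetrization takes care of this, because $w_\b$ is even and bounded by $1$, so the $\b$-dependence never enters the constants.
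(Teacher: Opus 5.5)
Your proposal is correct and follows the paper's proof. Both use the physical-space representation from Lemma~\ref{lemfdz0}, then use evenness of the weight to symmetrize the singular integral into $\frac12\int \mathcal O_\alpha f\,\langle\hat\alpha\cdot\b\rangle^{-(d+1)}|\alpha|^{-d}\,d\alpha$, and then apply Lemma~\ref{lemoalin} (with $g=\nabla^m f$ for the H\"older bound). The only addition is that you spell out the drift term $\b\cdot\nabla f/\langle\b\rangle^2$ and re-derive \eqref{S2eq2} by splitting at $|\alpha|=1$, both of which the paper leaves implicit.
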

                \begin{proof}
                    	Observe that 
						\begin{align*}
							\nabla_x^m\mathfrak{L}_{\pm,\b} f(x)= \mathfrak{L}_{\pm,\b}(\nabla_x^mf)(x).
						\end{align*}
						From Lemma \ref{lemfdz0}, to prove \eqref{LHol} and \eqref{LLLinf}, it suffices to prove that 	
						\begin{align}\label{gggg1}
							\sup_{\b\in\mathbb{R}^d}\left\|\int_{\mathbb{R}^d} \frac{\delta_\alpha f(x)}{\langle \hat\alpha\cdot \b\rangle^{d+1}}\frac{d\alpha}{|\alpha|^{d+1}}\right\|_{\dot C^a_x}\lesssim \|f\|_{\dot C^{1+a}},\quad\quad\quad 	\sup_{\b\in\mathbb{R}^d}\left\|\int_{\mathbb{R}^d} \frac{\delta_\alpha f(x)}{\langle \hat\alpha\cdot \b\rangle^{d+1}}\frac{d\alpha}{|\alpha|^{d+1}}\right\|_{L^\infty_x}\lesssim \|f\|_{C^{1,\log^\varkappa}}.
						\end{align}
						Indeed, by symmetry, we have 
						\begin{align*}
							\int_{\mathbb{R}^d} \frac{\delta_\alpha f(x)}{\langle \hat\alpha\cdot \b\rangle^{d+1}}\frac{d\alpha}{|\alpha|^{d+1}}=\frac{1}{2}\int_{\mathbb{R}^d} \frac{\mathcal{O}_\alpha f(x)}{\langle \hat\alpha\cdot \b\rangle^{d+1}}\frac{d\alpha}{|\alpha|^{d}},
						\end{align*}
						where $\mathcal{O}_\alpha$ is defined in Lemma \ref{lemoalin}. Then \eqref{gggg1} follows from Lemma \ref{lemoalin}.
                \end{proof}
					\begin{lemma}\label{lemKf}
						Consider $K_\b(x,z)$ defined in \eqref{defK}.	For any fixed $\b\in\mathbb{R}^d$, $m_1\in\{0,1\}$, $a\in(0,1)$, there hold\footnote{{Note that $\|\cdot \|_{\dot C^a_{x,z}(\mathbb{R}^{d+1}_+\cup \mathbb{R}^{d+1}_-)}=\|\cdot \|_{\dot C^a_{x,z}(\mathbb{R}^{d+1}_+)}+\|\cdot \|_{\dot C^a_{x,z}(\mathbb{R}^{d+1}_-)}$ is different from  $\|\cdot\|_{\dot C^a_{x,z}(\mathbb{R}^{d+1})}$.}}
						\begin{align}
							&\left\|(\partial_z^{m_1}K_\b(\cdot,z)\ast g)(x)\right\|_{\dot C^a_{x,z}(\mathbb{R}^{d+1}_+\cup \mathbb{R}^{d+1}_-)}\lesssim \langle \b\rangle^2 \|g\|_{\dot C^{m_1+a}},\label{hhhkf}\\
							&\left\|(\partial_z K_\b(\cdot,z)\ast g)(x)\right\|_{L^\infty_{x,z}(\mathbb{R}^{d+1}_+\cup \mathbb{R}^{d+1}_-)}\lesssim  \|g\|_{C^{1,\log^\varkappa}}.\label{lllkf}
						\end{align}
					\end{lemma}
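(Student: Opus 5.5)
The plan is to treat the two estimates separately, using the Fourier identity
\[
\partial_z\bigl(K_\b(\cdot,z)\ast g\bigr)=K_\b(\cdot,z)\ast\mathfrak L_{\pm,\b}g,
\]
which trades the $z$-derivative for the operator $\mathfrak L_{\pm,\b}$. That operator is controlled by Lemma~\ref{lemholL}.

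\textbf{The $L^\infty$ bound \eqref{lllkf}.} By Lemma~\ref{esK11}(3) we have $\int|K_\b(y,z)|\,dy=C_d$ uniformly in $z$ and $\b$, since $K_\b\ge0$ for $z>0$ and is of fixed sign for $z<0$. Hence
\[
\|\partial_z K_\b(\cdot,z)\ast g\|_{L^\infty}\le C_d\|\mathfrak L_{\pm,\b}g\|_{L^\infty}\lesssim\|g\|_{C^{1,\log^\varkappa}},
\]
where the last inequality is \eqref{LLLinf}.

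\textbf{The H\"older bound \eqref{hhhkf}.} Set $G=g$ when $m_1=0$ and $G=\mathfrak L_{\pm,\b}g$ when $m_1=1$. In both cases the function to estimate is $u(x,z)=(K_\b(\cdot,z)\ast G)(x)$, and by \eqref{LHol} we have $\|G\|_{\dot C^a}\lesssim\|g\|_{\dot C^{m_1+a}}$. It therefore suffices to show $\|u\|_{\dot C^a_{x,z}(\mathbb R^{d+1}_\pm)}\lesssim\langle\b\rangle^2\|G\|_{\dot C^a}$, and I treat the two variables in turn.

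Tangential increments cost nothing: $\delta_h u(\cdot,z)=K_\b(\cdot,z)\ast\delta_h G$, so the $L^1$ bound on $K_\b$ gives $|\delta_h u|\lesssim|h|^a\|G\|_{\dot C^a}$.

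Normal increments need the mean-value property. For $z,\beta$ of the same sign, \eqref{remintk} gives $\int\bigl(K_\b(y,z)-K_\b(y,\beta)\bigr)\,dy=0$, so
\[
u(x,z)-u(x,\beta)=\int\bigl(K_\b(y,z)-K_\b(y,\beta)\bigr)\bigl(G(x-y)-G(x)\bigr)\,dy .
\]
This is bounded by $\|G\|_{\dot C^a}\int|K_\b(y,z)-K_\b(y,\beta)||y|^a\,dy\lesssim\langle\b\rangle^2|z-\beta|^a\|G\|_{\dot C^a}$, using \eqref{esl1}. For $z,\beta<0$, \eqref{esl1} is applied with the obvious modification, because the kernel formula \eqref{defK} has the same structure for negative $z$.

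Adding the tangential and normal increments on each half-space then gives \eqref{hhhkf}.

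\textbf{Expected main obstacle.} The delicate point is \eqref{esl1} near $z,\beta\to0$, since there $\partial_zK_\b$ is not uniformly integrable. The pointwise bound $|K_\b(y,z)-K_\b(y,\beta)|\lesssim\langle\b\rangle^2|y|^{-d}\min\{1,|z-\beta|/|y|\}$ from \eqref{deriK}, integrated against $|y|^a$, is what absorbs this. I would double-check that it is valid uniformly even when $|y|\lesssim|z|$; if it is not, I would split the integral into $|y|<2\max(|z|,|\beta|)$, where each kernel is estimated separately with $\int K_\b|y|^a\lesssim|z|^a$ by scaling, and the complementary region, where the derivative bound holds.
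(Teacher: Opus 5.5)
Your proposal is correct and follows the paper's proof essentially line by line. Like the paper, you use the identity $\partial_z(K_\b(\cdot,z)\ast g)=K_\b(\cdot,z)\ast\mathfrak L_{\pm,\b}g$, then \eqref{LLLinf} with \eqref{remintk} for the $L^\infty$ bound, \eqref{LHol} for the tangential H\"older seminorm, and the zero-mean identity together with \eqref{esl1} for the normal increments. Your worry about \eqref{esl1} near $z=0$ is unnecessary: \eqref{deriK} gives $|\partial_w K_\b(y,w)|\lesssim\langle\b\rangle^2|y|^{-d-1}$ uniformly in $w$, which yields the pointwise bound directly and is exactly how the paper proves \eqref{esl1}.
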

					\begin{proof}
						Note that 
						\begin{align*}
							\mathcal{F}(\partial_z^{m_1}K_\b(\cdot,z))(\xi)=\left(\lambda^\pm(\xi,\b)\right)^{m_1}\exp(\lambda^\pm(\xi,\b)z),\ \ \ z\in\mathbb{R}^\mp.
						\end{align*}
						Hence we can write 
						\begin{align*}
							\partial_z^{m_1}(K_\b(\cdot,z)\ast g)(x)=(K_\b(\cdot,z)\ast( \mathfrak{L}^{m_1}_{\pm,\b} g)(\cdot))(x),
						\end{align*}
						where the operator $\mathfrak{L}_{\pm,\b}$ is defined in \eqref{opL}.
						By \eqref{LLLinf} and \eqref{remintk}, we obtain \eqref{lllkf}.\\
						By \eqref{LHol}, we obtain 
						\begin{align}\label{holx}
							\|	\partial_z^{m_1}(K_\b(\cdot,z)\ast g)(x)	\|_{L^\infty_z(\mathbb{R})\dot C^a_{x}(\mathbb{R}^d)}\lesssim\|\mathfrak{L}^{m_1}_{\pm,\b} g\|_{\dot C^a}\lesssim \|g
							\|_{\dot C^{m_1+a}}.
						\end{align}
						Then we estimate the  H\"{o}lder norm in $z$-variable. For simplicity, we only consider $z\in\mathbb{R}^+$. Then for any  $\beta\in \mathbb{R}^+$,
						\begin{align*}
							&\partial_z^{m_1}(K_\b(\cdot,z)\ast g)(x)-	\partial_z^{m_1}(K_\b(\cdot,\beta)\ast g)(x)\\
							&\quad\quad=\int_{\mathbb{R}^{d}}(K_\b(y,z)-K_\b(y,\beta))( \mathfrak{L}_{-,\b}^{m_1}g)(x-y)dy\\
							&\quad\quad=\int_{\mathbb{R}^{d}}(K_\b(y,z)-K_\b(y,\beta))\left(( \mathfrak{L}_{-,\b}^{m_1}g)(x-y)-( \mathfrak{L}_{-,\b}^{m_1}g)(x)\right)dy.
						\end{align*}
                        where in the last identity we used \eqref{remintk} in Lemma \ref{esK11}  to get
						\begin{align*}
					\int_{\mathbb{R}^{d}}\big(K_\b(y,z)-K_\b(y,\beta)\big)dy	=0.
						\end{align*}
						Then, by \eqref{esl1} and \eqref{LHol}, we obtain that 
						\begin{align*}
							&|((\partial_z^{m_1}K_\b)(\cdot,z)\ast g)(x)-	((\partial_z^{m_1}K_\b)(\cdot,\beta)\ast g)(x)|\\
							&\quad\quad\lesssim \int_{\mathbb{R}^{d}}|K_\b(y,z)-K_\b(y,\beta)||y|^ady \ \| \mathfrak{L}_{\pm,\b}^{m_1}g\|_{\dot C^a_x}\\
							&\quad\quad\lesssim \langle \b\rangle^2|z-\beta|^a\ \|g
							\|_{\dot C^{m_1+a}},
						\end{align*}
						which implies that 
						\begin{align*}
							\|\partial_z^{m_1}(K_\b(\cdot,z)\ast g)(x)\|_{L^\infty(\mathbb{R}^d,\dot C^a_z(\mathbb{R}^+))}\lesssim \langle \b\rangle^2\|g
							\|_{\dot C^{m_1+a}}.
						\end{align*}
						Combining this with \eqref{holx}, we get 
						\begin{align*}
							\sup_{z>0}\|\partial_z^{m_1}(K_\b(\cdot,z)\ast g)(x)\|_{\dot C^a(\mathbb{R}^{d+1}_+)}\lesssim \langle \b\rangle^2\|g
							\|_{\dot C^{m_1+a}}.
						\end{align*}
						The estimate in $\mathbb{R}^{d+1}_-$ follows by the same argument.		This completes the proof of the lemma.
					\end{proof}\vspace{0.2cm}\\
                   We now derive estimates for the frozen velocity $\tilde v^\pm_{\b}[f]$.

					\begin{lemma}\label{lemtvbhol}
						For any $\b\in\mathbb{R}^d$ and $\tilde v_\b^\pm[f]$ defined in \eqref{fortvb}, and any $n\in\mathbb{N}$,  it holds
						\begin{align}
							&\sum_{+,-}	\|\nabla_x^n\nabla_{x,z}\tilde v_\b^\pm[f]\|_{L^\infty(\mathbb{R}^{d+1}_\pm)}\lesssim \|\nabla_x^n\mathcal{G}[f]\|_{ C^{1,\log^\varkappa}},\ \ \varkappa>1,\label{Linftv}\\
							&\sum_{+,-}	\|\nabla_x^n\nabla_{x,z}\tilde v_\b^\pm[f]\|_{\dot C^a(\mathbb{R}^{d+1}_\pm)}\lesssim \langle\b \rangle^2\|\nabla_x^n\mathcal{G}[f]\|_{\dot C^{1+a}},\ \ \ a\in (0,1),\label{Holtv}\\
							&\sum_{+,-}	\|\nabla_x^n\nabla_{x,z}\tilde v_\b^\pm[f]\|_{L^2(\mathbb{R}^{d+1}_\pm)}\lesssim \langle\b \rangle^2\|\nabla_x^n\mathcal{G}[f]\|_{\dot H^{\frac{1}{2}}}.\label{L2tv}
						\end{align}
					\end{lemma}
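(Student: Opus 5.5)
Since $\tilde v_\b^\pm[f]=c^\pm K_\b(\cdot,z)\ast\mathcal G[f]$ by \eqref{fortvb}, and $x$-derivatives commute with the convolution, we may replace $\mathcal G[f]$ by $g:=\nabla_x^n\mathcal G[f]$ and prove all three bounds with $n=0$. The plan is to write every component of $\nabla_{x,z}\tilde v_\b^\pm$ as the frozen Poisson kernel acting on a first-order quantity in $g$:
\[
\nabla_x\tilde v_\b^\pm=c^\pm K_\b(\cdot,z)\ast\nabla g,\qquad
\partial_z\tilde v_\b^\pm=c^\pm K_\b(\cdot,z)\ast\mathfrak L_{\mp,\b}g,
\]
as in the Fourier representation \eqref{pztilvb}. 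The three estimates are then proved one at a time.

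\textbf{The $L^\infty$ bound \eqref{Linftv}.} By \eqref{remintk}, $K_\b(\cdot,z)$ has constant mass; since it is also nonnegative by \eqref{defK}, its $L^1$ norm is $C_d$, uniformly in $\b$ and $z$. Hence
\[
\|\nabla_x\tilde v_\b^\pm\|_{L^\infty}\lesssim\|\nabla g\|_{L^\infty},\qquad
\|\partial_z\tilde v_\b^\pm\|_{L^\infty}\lesssim\sup_\b\|\mathfrak L_{\mp,\b}g\|_{L^\infty}\lesssim\|g\|_{C^{1,\log^\varkappa}}.
\]
The last inequality is \eqref{LLLinf}, equivalently \eqref{lllkf} with $m_1=1$. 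Both right-hand sides are bounded by $\|g\|_{C^{1,\log^\varkappa}}$.

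\textbf{The H\"older bound \eqref{Holtv}.} This is Lemma~\ref{lemKf}. For the $z$-derivative, apply \eqref{hhhkf} with $m_1=1$, which gives $\langle\b\rangle^2\|g\|_{\dot C^{1+a}}$. For the $x$-derivatives, apply \eqref{hhhkf} with $m_1=0$ to $\nabla g$, which gives $\langle\b\rangle^2\|\nabla g\|_{\dot C^a}=\langle\b\rangle^2\|g\|_{\dot C^{1+a}}$. Both bounds hold separately on $\mathbb R^{d+1}_+$ and $\mathbb R^{d+1}_-$, which is exactly the piecewise norm in the statement.

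\textbf{The $L^2$ bound \eqref{L2tv}, and the main step.} Here I use Plancherel in $x$ and integrate in $z$, working on $z>0$ (the other half-space is symmetric). The Fourier transform of $\nabla_{x,z}\tilde v_\b^+(\cdot,z)$ is $c^+(i\xi,\lambda^-)e^{\lambda^- z}\hat g(\xi)$. We have $|\lambda^-|\lesssim|\xi|$, and
\[
\mathrm{Re}\,\lambda^-=-\frac{\sqrt{\langle\b\rangle^2|\xi|^2-(\b\cdot\xi)^2}}{\langle\b\rangle^2}\le-\frac{|\xi|}{\langle\b\rangle^2},
\]
using $\langle\b\rangle^2|\xi|^2-(\b\cdot\xi)^2\ge|\xi|^2$. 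Therefore
\[
\int_0^\infty\!\!\int|\xi|^2e^{2\mathrm{Re}\lambda^- z}|\hat g|^2\,d\xi\,dz
\le\int|\xi|^2\frac{\langle\b\rangle^2}{2|\xi|}|\hat g|^2\,d\xi
\lesssim\langle\b\rangle^2\|g\|_{\dot H^{1/2}}^2 .
\]
This yields the $L^2$ estimate with a factor $\langle\b\rangle$ after taking square roots, which is better than the claimed $\langle\b\rangle^2$. The key point to verify is this uniform-in-$\b$ lower bound on the decay rate $-\mathrm{Re}\,\lambda^-$; once it holds, no further obstacle remains, since all other steps are direct applications of Lemmas~\ref{esK11}, \ref{lemholL} and \ref{lemKf}.
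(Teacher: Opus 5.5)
Your proposal is correct and follows the paper's proof. As in the paper, the $L^\infty$ and H\"older bounds come from Lemma~\ref{lemKf} (you additionally make explicit the uniform $L^1$ bound on $K_\b$ needed for the tangential derivatives), and the $L^2$ bound comes from Plancherel in $x$ together with the decay $-\mathrm{Re}\,\lambda^\mp\ge|\xi|/\langle\b\rangle^2$; your remark that this yields the factor $\langle\b\rangle$ rather than $\langle\b\rangle^2$ agrees with the paper's computation, which bounds the \emph{squared} norm by $\langle\b\rangle^2\|\nabla_x^n\mathcal G[f]\|_{\dot H^{1/2}}^2$.
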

					\begin{proof}
The estimates \eqref{Linftv} and \eqref{Holtv} follow directly from
Lemma~\ref{lemKf}, applied to $g=\nabla_x^n\mathcal G[f]$.
It remains to prove \eqref{L2tv}.
					By \eqref{foutv} and Parseval's Theorem, we obtain 
						\begin{align*}
							\|\nabla_x^n\nabla_{x,z}\tilde v_\b^+[f]\|_{L^2(\mathbb{R}^{d+1}_+)}^2&\lesssim \int_0^\infty \int_{\mathbb{R}^d}|\xi|^{2n}(|\lambda^-(\xi,\b)|+|\xi|)^2 |e^{\lambda^-(\xi,\b)z}|^2 |\hat {\mathcal{G}}[f](\xi)|^2 d\xi dz\\
							&\lesssim \langle \b\rangle^2\int_{\mathbb{R}^d}|\xi|^{2n+1} |\hat {\mathcal{G}}[f](\xi)|^2 d\xi \\
							&\lesssim \langle \b\rangle^2 \|\nabla_x^n\mathcal{G}[f]\|_{\dot H^\frac{1}{2}}^2.
						\end{align*}
						This completes the proof of the lemma.
					\end{proof}\vspace{0.2cm}\\
                    We next need a basic energy estimate for the full transmission solution
$v_f^\pm$. This estimate is subcritical and will be used only to control the
$L^2$ part of the forcing terms in the equation for $\omega_\b^\pm[f]$.
					
					\begin{lemma}\label{wL2}
						Let $v^\pm=v_f^\pm$ be the weak solution to system \eqref{elleqeta}, then 
						\begin{align}
							\label{vl2}	&	\sum_{+,-}		\|\nabla _{x,z}v^\pm\|_{L^2(\tilde \Omega_f^\pm)}
							\lesssim (1+\|\nabla f\|_{L^\infty})^8\|f\|_{H^{5/2}}.
						\end{align}	
					\end{lemma}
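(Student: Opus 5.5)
The estimate \eqref{vl2} will come from the variational (energy) structure of \eqref{elleqeta}, after the jump has been lifted by an explicit extension. Set $g_f:=\mathcal G[f]+\varrho_0 f$, the prescribed jump. Fix a cutoff $\chi\in C_c^\infty(\mathbb R)$ with $\chi=1$ near $0$ and support in $(-\mathbf r/2,\mathbf r/2)$. By the separation condition \eqref{condfff}, $\{|z|<\mathbf r/2\}$ stays away from $\tilde\Gamma_f^\pm$. Define the lift
\[
G(x,z):=\chi(z)\,(P_{|z|}\ast g_f)(x)\,\mathbf 1_{z>0},
\]
where $P_z$ is the classical Poisson kernel. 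The function $w^+:=v^++G$, $w^-:=v^-$ then has no jump across $\{z=0\}$.

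The glued function $w$ solves a divergence-form problem in $\tilde\Omega_f$ with the piecewise coefficient $\overline\M(\nabla f)$, as in the Remark after \eqref{inievo}. Its source is $\operatorname{div}(\mu_+^{-1}\M(\nabla f)\nabla G\,\mathbf 1_{z>0})$, with no extra interface term: the flux condition in \eqref{elleqeta} is exactly what absorbs $e_{d+1}\cdot\M\nabla G$ in the weak formulation. The boundary conditions on $\tilde\Gamma_f^\pm$ are homogeneous, because $G$ vanishes near $\tilde\Gamma^+_f$. Testing the weak formulation with $w$ itself, which is admissible since only $\nabla w$ enters and solutions are determined up to constants, gives
\[
\int \overline\M\nabla w\cdot\nabla w=\int_{z>0}\mu_+^{-1}\M\nabla G\cdot\nabla w .
\]
The coefficient bounds $\M(\nabla f)\ge c\langle\nabla f\rangle^{-2}\mathrm{Id}$ and $|\M(\nabla f)|\lesssim\langle\nabla f\rangle^2$ then yield
\[
\|\nabla w\|_{L^2}\lesssim\langle\|\nabla f\|_{L^\infty}\rangle^4\|\nabla G\|_{L^2}.
\]
Finally $\nabla v^\pm=\nabla w^\pm-\nabla G$, so the same bound holds for $\nabla v^\pm$.

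It remains to bound $\nabla G$. The harmonic extension satisfies $\|\nabla_{x,z}(P_z\ast g)\|_{L^2(\mathbb R^{d+1}_+)}\lesssim\|g\|_{\dot H^{1/2}}$, and the cutoff adds a term $\|\chi'\|_{L^\infty}\|P_z\ast g\|_{L^2}\lesssim_{\mathbf r}\|g\|_{L^2}$. Hence
\[
\|\nabla G\|_{L^2}\lesssim\|g_f\|_{H^{1/2}}\lesssim\|\mathcal G[f]\|_{H^{1/2}}+\|f\|_{H^{1/2}}.
\]
Write $\mathcal G[f]=-\operatorname{div}(\nabla f/\langle\nabla f\rangle)$. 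It suffices to bound $\|\nabla f/\langle\nabla f\rangle\|_{H^{3/2}}$, which is a composition of a smooth bounded map with $\nabla f$. A fractional chain rule (Kato--Ponce / Moser type) would control it by a polynomial in $\|\nabla f\|_{L^\infty}$ times $\|\nabla f\|_{H^{3/2}}$. Combined with the factor $\langle\|\nabla f\|_{L^\infty}\rangle^4$ this gives the power $8$ and completes \eqref{vl2}.

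The main obstacle is that fractional composition step. A naive Moser estimate at order $3/2$ needs control beyond $\|\nabla f\|_{L^\infty}$, for instance of $\nabla^2 f$ in $L^\infty$ or $L^4$. To stay within $W^{1,\infty}\cap H^{5/2}$, I would split $\nabla(\nabla f/\langle\nabla f\rangle)=B(\nabla f)\nabla^2f$ and estimate its $H^{1/2}$ norm via finite differences: $\delta_\alpha(B(\nabla f)\nabla^2 f)=\delta_\alpha B\,\nabla^2f(\cdot-\alpha)+B\,\delta_\alpha\nabla^2 f$. The second piece is bounded by $\langle\nabla f\rangle^{-1}\|\nabla^2 f\|_{\dot H^{1/2}}$. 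The first piece is a commutator whose pointwise bound is $|\delta_\alpha\nabla f|\,|\nabla^2f|$, and controlling it in $L^2$ may still require an interpolation or Gagliardo--Nirenberg type argument. If that step cannot be closed with $\|\nabla f\|_{L^\infty}$ alone, I would state \eqref{vl2} with $\|f\|_{H^{5/2}}$ replaced by the corresponding finite-difference quantity $\|\nabla f\|_{\HH^{3/2}}$-type norm and accept a weaker power.
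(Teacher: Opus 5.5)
Your argument is essentially the paper's. The paper tests \eqref{elleqeta} with $v^\pm$ and rewrites the interface term $\mu_+^{-1}\int e_{d+1}\cdot(\M(\nabla f)\nabla_{x,z}v^+)(\mathcal G[f]+\varrho_0 f)\,dx$ as a bulk integral, using an $H^1$ extension of the jump and Green's formula; this is the same computation as your lift $w=v+G\mathbf 1_{z>0}$, and both reduce the lemma to bounding $\|\mathcal G[f]+\varrho_0 f\|_{H^{1/2}}$. Your cutoff $\chi$ is unnecessary: the conormal conditions on $\tilde\Gamma_f^\pm$ are natural and $G$ enters only through a volume integral, so the cutoff only adds a spurious dependence on $\mathbf r$.

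The obstacle you flag at the end is not real, and you do not need to weaken \eqref{vl2}. The classical Moser estimate $\|F(u)\|_{\dot H^{s}}\le C(\|u\|_{L^\infty})\|u\|_{\dot H^{s}}$ holds for every $s>0$ and every smooth $F$ with $F(0)=0$, assuming only $u\in L^\infty$ (it follows from a Littlewood--Paley paralinearization argument). For $F(p)=p/\langle p\rangle$, whose derivatives are all globally bounded, the constant is polynomial in $\|u\|_{L^\infty}$. Taking $u=\nabla f$ and $s=\tfrac32$ therefore gives $\|\mathcal G[f]\|_{H^{1/2}}\lesssim(1+\|\nabla f\|_{L^\infty})^{N}\|f\|_{H^{5/2}}$. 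The paper simply asserts this step without comment.
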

					\begin{proof}
						Recall that $v^\pm=v^\pm_f$ is the weak solution to the elliptic system 
						\begin{equation}\label{eqmap}
							\begin{aligned}
								&\operatorname{div}_{x,z}(\M(\nabla f)\nabla_{x,z} v^\pm )=0,\ \ \text{in}\ \tilde \Omega^\pm_f,\\
								&v^--v^+=\mathcal{G}[f]+\varrho_0f, \quad \text { on } \{z=0\},\\
								&e_{d+1}\cdot (\M(\nabla f)(\mu_+^{-1}\nabla_{x,z}v^+-\mu_-^{-1}\nabla_{x,z}v^-))=0, \quad \text { on } \{z=0\},\\
								&\tilde \nu^\pm\cdot (\M(\nabla f)\nabla_{x,z} v^\pm) =0,\ \ \ \ \ \ \ \ \ \text{on} \ \tilde \Gamma^\pm_f.
							\end{aligned}
						\end{equation}
						Using $v^\pm$ as a test function to \eqref{eqmap}, we obtain  
						\begin{align*}
							0=&\mu_+^{-1}\int_{\tilde \Omega_f^+}\operatorname{div}_{x,z}(\M(\nabla f)\nabla_{x,z} v^+ ) v^+dxdz+\mu_-^{-1}\int_{\tilde \Omega_f^-}\operatorname{div}_{x,z}(\M(\nabla f)\nabla_{x,z} v^-) v^-dxdz\\
							=&-\mu_+^{-1}\int_{\tilde \Omega_f^+}(\nabla_{x,z} v^+)^\top   \M(\nabla f)\nabla_{x,z} v^+dxdz-\mu_-^{-1}\int_{\tilde \Omega_f^-}(\nabla_{x,z} v^-)^\top   \M(\nabla f)\nabla_{x,z} v^-dxdz\\
							&-\mu_+^{-1}\int_{\mathbb{R}^d}e_{d+1}\cdot (\M(\nabla f)\nabla_{x,z}v^+(x,0))v^+(x,0)dx+\mu_-^{-1}\int_{\mathbb{R}^d}e_{d+1}\cdot (\M(\nabla f)\nabla_{x,z}v^-(x,0))v^-(x,0)dx.
						\end{align*}
						By the ellipticity of $\M(\nabla f)$, we have 
						\begin{align*}
							&	(1+\|\nabla f\|_{L^\infty})^{-2}\left(\|\nabla _{x,z}v^+\|_{L^2(\tilde \Omega_f^+)}^2+\|\nabla _{x,z}v^-\|_{L^2(\tilde \Omega_f^-)}^2\right)\\
							&\quad\leq \int_{\tilde \Omega_f^+}(\nabla_{x,z} v^+)^\top   \M(\nabla f)\nabla_{x,z} v^+dxdz+\int_{\tilde \Omega_f^-}(\nabla_{x,z} v^-)^\top   \M(\nabla f)\nabla_{x,z} v^-dxdz.
						\end{align*}
						On the other hand, with the boundary conditions in \eqref{eqmap}, we obtain that 
						\begin{align*}
							&-\mu_+^{-1}\int_{\mathbb{R}^d}e_{d+1}\cdot (\M(\nabla f)\nabla_{x,z}v^+(x,0))v^+(x,0)dx+\mu_-^{-1}\int_{\mathbb{R}^d}e_{d+1}\cdot (\M(\nabla f)\nabla_{x,z}v^-(x,0))v^-(x,0)dx\\
							&\quad=\mu_+^{-1}\int_{\mathbb{R}^d}e_{d+1}\cdot (\M(\nabla f)\nabla_{x,z}v^+(x,0))\left(\mathcal{G}[f]+\varrho_0f\right)dx.
						\end{align*}
                        By the trace theorem, there exists an extension $\mathbf f$ in $\tilde\Omega_f^+$
with trace $\mathcal G[f]+\varrho_0 f$ on $\{z=0\}$ and
						\begin{align*}
							\|\mathbf{f}\|_{ H^1(\mathbb{R}^{d+1}_+)}\lesssim \left\|\mathcal{G}[f]+\varrho_0f\right\|_{  H^\frac{1}{2}},
						\end{align*}
						then applying Green's formula and \eqref{eqmap} yields
						\begin{align*}
							\int_{\mathbb{R}^d}e_{d+1}\cdot (\M(\nabla f)\nabla_{x,z}v^+(x,0))\left(\mathcal{G}[f]+\varrho_0f\right)(x)dx&= \int_{\tilde \Omega_f^+}\operatorname{div}_{x,z}\left(\M(\nabla f)\nabla_{x,z}v^+(x,z)\mathbf{f}(x,z)\right)dx\\
							&=\int_{\tilde \Omega_f^+} (\nabla _{x,z}\mathbf{f}(x,z))^\top  \M(\nabla f)\nabla_{x,z}v^+(x,z)dxdz.
						\end{align*}
						Hence, it follows from  H\"{o}lder's inequality 
						\begin{align*}
								\|\nabla _{x,z}v^+\|_{L^2(\tilde \Omega_f^+)}^2&+\|\nabla _{x,z}v^-\|_{L^2(\tilde \Omega_f^-)}^2\\
							&\leq C(1+||\nabla f||_{L^\infty})^4\|\nabla_{x,z}v^+\|_{L^2(\tilde \Omega_f^+)}\|\nabla _{x,z}\mathbf{f}\|_{L^2(\mathbb{R}^{d+1})}\\&\leq \frac{1}{10}\|\nabla_{x,z}v^+\|_{L^2(\tilde \Omega_f^+)}^2+C(1+||\nabla f||_{L^\infty})^8\left\|\mathcal{G}[f]+\varrho_0f\right\|_{  H^\frac{1}{2}}^2.
						\end{align*}
						Then we obtain 
						\begin{align*}
							\|\nabla _{x,z}v^+\|_{L^2(\tilde \Omega_f^+)}+\|\nabla _{x,z}v^-\|_{L^2(\tilde \Omega_f^-)}&\lesssim (1+||\nabla f||_{L^\infty})^8(\|\mathcal{G}[f]\|_{H^\frac{1}{2}}+\|f\|_{H^\frac{1}{2}}).
						\end{align*}
						Then we obtain \eqref{vl2} and  complete the proof of the lemma.
					\end{proof}\vspace{0.3cm}\\	
                    The next lemma shows that, away from the interface $\{z=0\}$, the frozen velocity contributes only lower-order terms. This follows from the decay of the Poisson kernel in the normal variable.
					\begin{lemma}\label{estilv1low}
						For any $\b\in\mathbb{R}^d$, let $\tilde v_\b^\pm[f]$ be as defined in \eqref{fortvb}. Then for any $\sigma>0$, $n\in\mathbb{N}$, and any $a\in(0,1)$,
						\begin{align*}
							\|\nabla_x^n\nabla_{x,z}\tilde v_\b^\pm[f]\|_{L^\infty( \mathbb{R}^{d+1}_\pm\cap\{|z|\geq \sigma\})}&+		\|\nabla_x^n\nabla_{x,z}\tilde v_\b^\pm[f]\|_{\dot C^a( \mathbb{R}^{d+1}_\pm\cap\{|z|\geq \sigma\})}\\
							&\lesssim (1+\sigma)^{-(n+2)}\langle\b\rangle^{2n+8}(1+\|\nabla f\|_{L^\infty})^{5}\|\nabla f\|_{\dot C^1}.
						\end{align*}
					\end{lemma}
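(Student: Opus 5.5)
The plan is to write the frozen velocity as a convolution of the Poisson-type kernel $K_\b$ with the curvature datum. Away from $\{z=0\}$ every derivative can then be placed on the kernel, where the explicit formula \eqref{defK} and the pointwise bounds \eqref{Knxb}–\eqref{deriK} of Lemma~\ref{esK11} give integrable, $|z|$-decaying kernels. By \eqref{fortvb}, for $\pm z\ge\sigma$,
\[
\nabla_x^n\nabla_{x,z}\tilde v^\pm_\b[f](x,z)=c^\pm\int_{\mathbb R^d}\nabla_x^n\nabla_{x,z}K_\b(x-y,z)\,\mathcal G[f](y)\,dy .
\]
By \eqref{ndefG} and Lemma~\ref{lemcom},
\[
\|\mathcal G[f]\|_{L^\infty}\lesssim(1+\|\nabla f\|_{L^\infty})^{5}\|\nabla f\|_{\dot C^1},
\]
which is exactly the data factor appearing on the right-hand side. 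Integrating \eqref{deriK} in $x$ with $n_1+n_2=n+1$ gives
\[
\int_{\mathbb R^d}|\nabla_x^{n_1}\partial_z^{n_2}K_\b(x,z)|\,dx\lesssim\langle\b\rangle^{2+n_1+n_2}\,|z|^{-(n_1+n_2)} .
\]
Hence the $L^\infty$ part is bounded by $\langle\b\rangle^{n+3}|z|^{-(n+1)}(1+\|\nabla f\|_{L^\infty})^5\|\nabla f\|_{\dot C^1}$.

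To upgrade the decay to the exponent $n+2$ claimed in the statement, I would use the divergence structure
\[
\mathcal G[f]=-\operatorname{div}\Big(\frac{\nabla f}{\langle\nabla f\rangle}\Big).
\]
Moving one further $x$-derivative onto the kernel produces a kernel whose $L^1_x$ norm is $\lesssim\langle\b\rangle^{n+4}|z|^{-(n+2)}$, acting on the field $\nabla f/\langle\nabla f\rangle$. The trouble is that this field is controlled by $\|\nabla f\|_{L^\infty}$ rather than by $\|\nabla f\|_{\dot C^1}$. To keep the factor $\|\nabla f\|_{\dot C^1}$ I would instead exploit the cancellation $\int\nabla_x K_\b(x,z)\,dx=0$, which follows from \eqref{remintk}, and subtract $\mathcal G[f](x)$ inside the convolution. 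The resulting integral is of the form
\[
\int|\nabla_x^{n+1}\nabla_{x,z}K_\b(y,z)|\,|\delta_y\mathcal G|\,dy ,
\]
and I expect to close it by interpolating the two kernel bounds at the scale $|y|\sim|z|$. This step is where the extra powers of $\langle\b\rangle$, up to $\langle\b\rangle^{2n+8}$, get absorbed.

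The Hölder part is handled the same way. In $x$, differences $\delta_h$ are split at $|h|\sim|z|$: for small $h$ an additional derivative falls on the kernel, while for large $h$ the $L^\infty$ bound is used twice. In $z$, I would use the estimate \eqref{esl1} with a derivative on the kernel, as in the proof of Lemma~\ref{lemKf}. Both cases give the same decay with one more power of $|z|^{-1}$ traded for $|h|^{a}$. Since $(1+\sigma)^{-(n+2)}\sim\sigma^{-(n+2)}$ for $\sigma\ge1$, this settles the regime $\sigma\ge1$.

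The main obstacle is uniformity as $\sigma\to0$, where the claimed bound does not blow up. The convolution bounds above degenerate like $\sigma^{-(n+1)}$, and with merely bounded $\mathcal G[f]$ one expects at least a logarithmic divergence of $\partial_z\tilde v$ near $z=0$. My first attempt for $\sigma<1$ is to use the Fourier representation $\partial_z(K_\b*g)=K_\b*\mathfrak L_{\pm,\b}g$ from Lemma~\ref{lemKf}. This shifts the normal derivative into $\mathfrak L_{\pm,\b}$ and leaves an $x$-derivative count that is balanced against the explicit structure $\mathcal G[f]=-B(\nabla f):\nabla^2f$.

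If the purely $\|\nabla f\|_{\dot C^1}$-based bound fails in that regime, I would check whether the lemma is only ever applied with $\sigma$ comparable to the separation scale $\mathbf r$ of \eqref{condfff}. In that case the small-$\sigma$ regime would be absorbed into the constant, and I would flag this explicitly, since it is exactly the point where uniformity in $\sigma$ is in doubt.
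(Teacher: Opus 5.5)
Your first step is exactly the paper's proof. You convolve $\nabla_x^n\nabla_{x,z}K_\b(\cdot,z)$ against $\mathcal G[f]$, integrate \eqref{deriK} in $x$, and use $\|\mathcal G[f]\|_{L^\infty}\lesssim(1+\|\nabla f\|_{L^\infty})^5\|\nabla f\|_{\dot C^1}$. Your kernel bound $\|\nabla_x^{n_1}\partial_z^{n_2}K_\b(\cdot,z)\|_{L^1_x}\lesssim|z|^{-(n_1+n_2)}$ is the correct one. (The power of $\langle\b\rangle$ that \eqref{deriK} actually gives is $1+2(n_1+n_2)$, which is harmless.) The paper asserts $(1+\sigma)^{-(1+n_1+n_2)}$ at this point instead, and that does not follow. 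Since $K_\b(\lambda x,\lambda z)=\lambda^{-d}K_\b(x,z)$, the $L^1_x$ norm of $\nabla_x^{n_1}\partial_z^{n_2}K_\b(\cdot,z)$ equals $c(\b)|z|^{-(n_1+n_2)}$ exactly. For $n_1=n_2=0$ it is the constant of \eqref{remintk}.

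The genuine gap is your second step: the upgrade to $(1+\sigma)^{-(n+2)}$ for $\sigma\ge1$, and the $\mathfrak L_{\pm,\b}$ route for $\sigma<1$. Neither can be completed, because the stated $\sigma$-dependence is false with a $\sigma$-independent constant. Take $f_\lambda(x)=\lambda f(x/\lambda)$. Then $\mathcal G[f_\lambda](x)=\lambda^{-1}\mathcal G[f](x/\lambda)$, so $\tilde v^\pm_\b[f_\lambda](x,z)=\lambda^{-1}\tilde v^\pm_\b[f](x/\lambda,z/\lambda)$. Meanwhile $\|\nabla f_\lambda\|_{L^\infty}=\|\nabla f\|_{L^\infty}$ and $\|\nabla f_\lambda\|_{\dot C^1}=\lambda^{-1}\|\nabla f\|_{\dot C^1}$. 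The claimed bound for $f_\lambda$ with $\sigma=\lambda s$ therefore becomes
\[
\sup_{\pm z\ge s}\bigl|\nabla_x^n\nabla_{x,z}\tilde v^\pm_\b[f]\bigr|\lesssim\lambda^{n+1}(1+\lambda s)^{-(n+2)}\,\langle\b\rangle^{2n+8}(1+\|\nabla f\|_{L^\infty})^5\|\nabla f\|_{\dot C^1}.
\]
The right side tends to $0$ both as $\lambda\to0$ and as $\lambda\to\infty$, which is a contradiction for any $f$ with $\mathcal G[f]\not\equiv0$. In your cancellation argument the failure is visible directly. After subtracting $\mathcal G[f](x)$, the extra factor $|z|^{-1}$ can only be paid for by smallness of $\delta_y\mathcal G[f]$, i.e. by $\nabla\mathcal G[f]$ (third derivatives of $f$), which the right-hand side does not contain. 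The same obstruction blocks the Fourier route near $z=0$.

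What your first step does prove is
\[
\|\nabla_x^n\nabla_{x,z}\tilde v^\pm_\b[f]\|_{L^\infty(\mathbb R^{d+1}_\pm\cap\{|z|\ge\sigma\})}\lesssim\sigma^{-(n+1)}\langle\b\rangle^{2n+3}\|\mathcal G[f]\|_{L^\infty}.
\]
For the Hölder part, split increments at $|h|\sim\sigma$ and use the mean value theorem on the convex set $\mathbb R^{d+1}_\pm\cap\{|z|\ge\sigma\}$, with one more derivative on the kernel. This gives a $\dot C^a$ seminorm $\lesssim\sigma^{-(n+1+a)}\langle\b\rangle^{2n+5}\|\mathcal G[f]\|_{L^\infty}$. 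This is the correct form of the lemma, and your fallback is the right resolution. The bound is needed only at a fixed distance $\sigma\sim\mathbf r$ from the interface (the rigid-boundary data $F^\pm_{3,\b}$ on $\tilde\Gamma^\pm_\eta$), where $\sigma^{-(n+1+a)}$ is a harmless constant. You should state and prove that version in both regimes rather than attempt the $(1+\sigma)^{-(n+2)}$ decay.
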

					\begin{proof}
						It follows from \eqref{deriK} that 
						\begin{align*}
							\sup_{|z|\geq \sigma}\|\nabla_x^{n_1}\partial_z^{n_2}K_\b(x,z)\|_{L_x^1}\lesssim (1+\sigma)^{-(1+n_1+n_2)}\langle\b\rangle^{3+2(n_1+n_2)},\ \ \ \forall n_1,n_2\in\mathbb{N}.
						\end{align*}
						Then by the formula of $\tilde v_\b^\pm[f]$ in \eqref{fortvb}, one has 
						\begin{align*}
							\sup_{|z|\geq \sigma}|\nabla_x^n\nabla_{x,z}\tilde v_\b^\pm[f]|\lesssim 	\sup_{|z|\geq \sigma}\|\nabla_x^n\nabla_{x,z}K_\b(x,z)\|_{L^1_x} \|\mathcal{G}[f]\|_{L^\infty}\lesssim (1+\sigma)^{-(n+2)}\langle\b\rangle^{2n+5}\|\mathcal{G}[f]\|_{L^\infty}.
						\end{align*}
						Moreover,
\[
\|\mathcal G[f]\|_{L^\infty}
\lesssim
(1+\|\nabla f\|_{L^\infty})^5
\|\nabla f\|_{\dot C^1}.
\]
The Hölder estimate is obtained similarly. Using the $L^1$ bounds for one
additional derivative of the kernel and interpolation, we get
\[
\|\nabla_x^n\nabla_{x,z}\tilde v_\b^\pm[f]\|_{\dot C^a(\{|z|\geq\sigma\})}
\lesssim
(1+\sigma)^{-(n+2)}
\langle\b\rangle^{2n+8}
\|\mathcal G[f]\|_{L^\infty}.
\]
Combining the last two estimates gives the desired result.
					\end{proof}
				~\vspace{0.3cm}\\
                With Lemmas~\ref{lemtvbhol} and \ref{estilv1low} available, we now turn to the
elliptic correction $\omega^\pm_{\b}[f]$ and estimate the boundary quantity
\[
\mathcal Q[f](x)
=
\left.(\nabla_{x,z}\omega^+_{\b}[f])(x,z)\right|_{\b=\nabla f(x),\,z=0}.
\]
This is the key estimate for the implicit nonlinear term $\H^{im}[f]$. The main
difficulty is that $\omega^\pm_{\b}[f]$ is not explicit; it solves the
transmission problem \eqref{elliw}. We combine the endpoint transmission
estimate, the bounds for the frozen velocity, and the separation condition
\eqref{condfff}.
					\begin{lemma}\label{lemH2w}Consider  $\|\cdot\|_{X_T}$ defined in \eqref{defnorgm}, 	
						then for any $T\in(0,\frac{1}{2})$, and any function $f$ satisfying \eqref{condfff}, we have 
						\begin{align}\label{omeb0}
							& \sup_{t\in[0,T]}t^\frac{2}{3}\|\mathcal{Q}[f](t)\|_{L^\infty}\lesssim (\|f-\phi\|_{X_T}+T^\frac{1}{20}\mathfrak{M}_\phi)(1+\|f\|_{X_T})^{10},\\
							&\sup_{t\in[0,T]} t^\frac{m+\kappa}{3} (|\log t|^\varkappa\|\nabla^m\mathcal{Q}[f](t)\|_{\dot C^{\kappa-2}}+\|\nabla^m\mathcal{Q}[f](t)\|_{ \HH^{\kappa-2}}	)\lesssim ( \| f-\phi\|_{X_T}+T^\frac{1}{20}\mathfrak{M}_\phi)^2(1+\| f\|_{X_T})^{2m+10},\label{omeba}
						\end{align}
                          where $\mathfrak{M}_\phi=\|\phi\|_{H^{m+4}\cap C^{m+4}}$.
					\end{lemma}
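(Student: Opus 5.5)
The plan is to treat $\omega^\pm_\b=\omega^\pm_\b[f]$, for each fixed frozen slope $\b$, as the solution of the transmission system \eqref{elliw}. We apply the endpoint transmission estimate (the log-Dini Lipschitz bound of Lemma~\ref{lemLip} announced in the introduction, together with its Hölder analogue) locally near the point $(x_0,0)$. We then set $\b=\nabla f(x_0)$ and evaluate at $x=x_0$. The point is that every forcing term in \eqref{elliw} carries a small factor at $x_0$.

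\emph{Bulk forcing.} Here $F^\pm_{1,\b}=(\M(\b)-\M(\nabla f))\nabla_{x,z}v^\pm_f$, and $v_f^\pm=\omega_\b^\pm+\tilde v^\pm_\b$. The coefficient difference vanishes at $x_0$ and is bounded by $|\nabla f(x)-\nabla f(x_0)|$. On a ball of radius $r\sim t^{1/3}$ it is therefore small, controlled by $\|f\|_{T,*}$ through \eqref{stsm}. The piece containing $\nabla\tilde v_\b$ is estimated by Lemma~\ref{lemtvbhol}, namely \eqref{Linftv} and \eqref{Holtv} with $\mathcal G[f]\sim\nabla^2 f$. This gives factors $t^{-2/3}$ times the critical norms of $f$. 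The piece containing $\nabla\omega_\b$ is absorbed, because its coefficient is small on the localized region.

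\emph{Jump and boundary terms.} The jump $F_2=\varrho_0 f$ is lower order and contributes a harmless $\|f\|_{C^{1,\log^\varkappa}}$, with no $t^{-2/3}$ loss. The terms $F_{3,\b}^\pm$ live on $\tilde\Gamma^\pm_f$, which by \eqref{condfff} sits at distance $\ge \mathbf r$ from $\{z=0\}$. By Lemma~\ref{estilv1low}, $\nabla\tilde v_\b$ is smooth and bounded there. So these terms, together with the global energy terms $\|\nabla v_f\|_{L^2}$ (Lemma~\ref{wL2}), $\|\nabla\tilde v_\b\|_{L^2}$ (\eqref{L2tv}) and $\|F_1\|_{L^2}$, enter only through the lower-order $L^2$ terms on the right-hand side of the local estimate. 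This yields the $L^\infty$ bound \eqref{omeb0}. The $X_T$-norm of $f$ controls $H^{5/2}$-type quantities after the $t$-weights, and I would write $f=\phi+(f-\phi)$ to extract the factor $\|f-\phi\|_{X_T}+T^{1/20}\mathfrak M_\phi$.

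\emph{Higher norm \eqref{omeba}.} I would differentiate \eqref{elliw} tangentially $m$ times; the coefficients depend only on $x$ and the interface is flat. Then apply the $C^{\kappa-2}$ Schauder version of the transmission estimate to $\nabla_x^m\omega_\b$. The commutators, where derivatives fall on $\M(\nabla f)$, are handled by Lemma~\ref{lemcom} as in the proof of Lemma~\ref{lemH1}. The quadratic structure comes from pairing the small coefficient difference with a factor of $\nabla^2 f$, each carrying one copy of $\|f\|_{T,*}$. Then $\HH^{\kappa-2}$ follows by the same argument with $L^2$-based difference quotients. One must also account for the $x_0$-dependence of $\b=\nabla f(x_0)$ when taking differences in $x$. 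I would handle it by the chain rule $\partial_{x_0}\b=\nabla^2 f(x_0)$, estimating $\partial_\b\omega_\b$ through the system it satisfies, which has the same structure with forcing $\partial_\b\M(\b)\nabla\omega_\b$.

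The main obstacle is this last point together with the log-critical $L^\infty$ bound. Differentiating in $\b$ and the freezing produce terms only borderline controlled, and the $|\log t|^\varkappa$ weight must be recovered using $\varkappa>1$, as in the proof of Lemma~\ref{lemlog}.
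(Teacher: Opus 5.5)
There is a genuine gap in the step your $L^\infty$ bound rests on: absorbing the $\nabla\omega_\b$ part of $F_{1,\b}$. Lemma \ref{lemLip} takes the bulk forcing in $C^{\log^\varkappa}$ and returns $\nabla u$ only in $L^\infty$. The term $(\M(\b)-\M(\nabla f))\nabla\omega_\b$ would therefore have to be measured through the logarithmic modulus of $\nabla\omega_\b$, which the left-hand side does not control. This is exactly the mismatch noted after \eqref{gaes}. Moreover, for fixed $\b=\nabla f(x_0)$ the coefficient difference is small only near $x_0$. The quantity you want, $\sup_{x_0}|\nabla\omega_{\nabla f(x_0)}(x_0,0)|$, involves a different function $\omega_\b$ for each $x_0$, so the absorption can only close after everything is rewritten in terms of a single $\b$-independent function. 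Localizing at $r\sim t^{1/3}$ also makes the constant $C(r)$ in front of the energy terms depend badly on $t$, which degrades the weight $t^{2/3}$. In the paper $r$ is fixed, and smallness comes from $c(r)\|A\|_{\dot C^{\log^\varkappa}}\le\frac12$.

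The missing idea is that $v_f=\omega_\b+\tilde v_\b$ does not depend on $\b$. The paper applies Lemma \ref{lemLip} directly to $v_f$, with the variable log-Dini coefficient $A=\M(\nabla f)$, $g=0$ and $h=\mathcal G[f]+\varrho_0 f$, and uses \eqref{vl2} for the energy term. This gives \eqref{vlinfty}. Then \eqref{Linftv} bounds $\nabla\tilde v_\b$ uniformly in $\b$, and \eqref{omeb0} follows.

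The same observation is missing in your treatment of \eqref{omeba}. Since $v_f$ is $\b$-independent, one has $\partial_\b\omega_\b=-\partial_\b\tilde v_\b$ exactly. If you differentiate \eqref{elliw} in $\b$ correctly, remembering that $F_{1,\b}$ itself depends on $\b$, the forcing is $\partial_\b\M(\b)\nabla\tilde v_\b$, not $\partial_\b\M(\b)\nabla\omega_\b$. With your forcing you would again need a log-modulus of $\nabla\omega_\b$, which is circular. The paper splits $\|\mathcal Q[f]\|_{\dot C^a}$ into $\mathbf Q_1$ (difference in $x$ with $\b$ frozen) and $\mathbf Q_2$ (difference in $\b$ at fixed $x$). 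The term $\mathbf Q_2$ is read off explicitly from \eqref{dx0} and \eqref{z0}.

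For $\mathbf Q_1$, a black-box Schauder estimate at fixed $\b$ would need $[\nabla v_f]_{\dot C^a}$ multiplied by a coefficient difference that is not small globally. It would also lose the cancellation and give at best a linear bound. The paper instead writes $\omega_\b$ through the frozen representation of Lemma \ref{leminterf}, with the constant matrix $\M(\b)$, and applies Lemma \ref{lemsgit}. The seminorm there evaluates the forcing at the parameter $\b=\nabla f(x+u)$ with $|u|\le|(y,w)|$. Hence both $F_{1,\b}(x,z)$ and $F_{1,\b}(x-y,z-w)$ are individually $O(|(y,w)|^a\|\nabla f\|_{\dot C^a}\|\nabla v_f\|_{L^\infty})$. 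Only the $L^\infty$ bound \eqref{vlinfty} is needed, no Hölder norm of $\nabla v_f$, and the quadratic factor appears automatically. The remaining pieces are handled separately: the artificial-boundary term at the selected parameter $\b=\nabla f(x)$ by Lemma \ref{lem:param-selection} together with energy bounds, and the $F_2$ contribution by Lemma \ref{remF2}.
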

					\begin{proof} We split the proof into two parts. First, we obtain the zeroth-order
$L^\infty$ bound \eqref{omeb0} by applying the endpoint Lipschitz estimate to
the original transmission solution $v_f^\pm$. This avoids differentiating the
equation for $\omega^\pm_{\b}$ directly. Second, we prove the critical
$\dot C^a\cap\HH^a$ estimate, where $a=\kappa-2$, by applying the frozen
transmission representation to \eqref{elliw}.\vspace{0.2cm}\\
                    We denote \begin{align*}
					    &\mathbb{R}^{d+1}_\tau=\mathbb{R}^{d}\times (0,\tau)\ \text{if}\ \tau>0,\ \quad\quad \mathbb{R}^{d+1}_\tau=\mathbb{R}^{d}\times (\tau,0)\ \text{if}\ \tau<0.
					\end{align*}
						We first prove \eqref{omeb0}.  Note that $$\|\mathcal{Q}[f]\|_{L^\infty}\leq \sup_\b\|\nabla_{x,z} \omega_\b^+[f]\|_{L^\infty(\mathbb{R}^{d+1}_{r/2})},$$
                           where the parameter $r<\frac{\mathbf{r}}{10}$ will be fixed later. 
						To avoid dealing with the remainder term and the issue of losing derivatives, we go back to the equation of $v_f^\pm$ rather than directly estimating $\omega_\b^\pm$. From \eqref{elleqeta}, we have 
						\begin{align*}
							&\operatorname{div}_{x,z}(\M(\nabla f)\nabla_{x,z} v^\pm)=0,\ \ \text{in}\ \mathbb{R}^{d+1}_{-4r}\cup \mathbb{R}^{d+1}_{4r},\\
							&v^--v^+=\mathcal{G}[f]+\varrho_0f, \quad \text { on } \{z=0\},\\
							&e_{d+1}\cdot (\M(\nabla f)\nabla_{x,z} (\mu_+^{-1}v^+-\mu_-^{-1}v^-))=0, \quad \text { on } \{z=0\}.
						\end{align*}
						By Lemma \ref{lemLip}, we can take $r$ sufficiently small such that 
						\begin{align*}
							\|\nabla_{x,z} v\|_{L^\infty(\mathbb{R}^{d+1}_{-r/2}\cup\mathbb{R}^{d+1}_{r/2})}\lesssim &\|\mathcal{G}[f]\|_{\dot C^{1,\log^\varkappa}}+\|f\|_{\dot C^{1,\log^\varkappa}}
							+C(r)\|\nabla_{x,z}v\|_{ L^2(\mathbb{R}^{d+1}_{-3r}\cup\mathbb{R}^{d+1}_{3r})}.
						\end{align*}
						Combining this with \eqref{vl2}, we deduce that 
						\begin{equation}\label{vlinfty}
							\begin{aligned}
								\|\nabla_{x,z} v\|_{L^\infty(\mathbb{R}^{d+1}_{-r/2}\cup\mathbb{R}^{d+1}_{r/2})}&\lesssim \|\mathcal{G}[f]\|_{\dot C^{1,\log^\varkappa}}+\|f\|_{\dot C^{1,\log^\varkappa}}+(1+\|\nabla f\|_{L^\infty})^{7}\|f\|_{H^\frac{5}{2}}\\
								&\lesssim(\|\nabla f\|_{\dot C^{2,\log^\varkappa}\cap \dot C^{\log^\varkappa}}+\|f\|_{H^\frac{5}{2}})(1+\|\nabla f\|_{L^\infty})^{7}\\
								&\lesssim t^{-\frac{2}{3}}(\|f\|_{T,*}+T^\frac{1}{2}\|f\|_{X_T})(1+\|f\|_{T})^7,
							\end{aligned}
						\end{equation}
                        	where $\|\cdot\|_{T,*}$ is defined in \eqref{defts}. 
						Combining this with Lemma \ref{lemtvbhol} and the definition $\omega_\b^\pm=v^\pm-\tilde v_\b^\pm$ in \eqref{S4eq4}, we obtain 
						\begin{align*}
							\sup_{\b}\|\nabla_{x,z} \omega_\b^\pm[f]\|_{L^\infty(\mathbb{R}^{d+1}_{\pm r/2})}\lesssim  t^{-\frac{2}{3}}(\|f\|_{T,*}+T^\frac{1}{2}\|f\|_{X_T})(1+\|f\|_{T})^7,
						\end{align*}
						which yields  \eqref{omeb0} as a result of \eqref{stsm}.

                        Then we prove \eqref{omeba}. For simplicity, we only consider the case
$m=0$, namely the estimate of $\|\mathcal Q[f]\|_{\dot C^a\cap\HH^a}$.
The general case follows by applying horizontal derivatives $\nabla_x^m$ to
the equation and repeating the same argument.
						Note that 
						\begin{equation}\label{eqQ1}
							\begin{aligned}
								\|\mathcal{Q}[f]\|_{\dot C^a}\lesssim &\sup_{\beta}\frac{\|\delta_\beta^x(\nabla_{x,z}\omega_\b^+)(x,0)|_{\b=\nabla f(x)}\|_{L^\infty}}{|\beta|^a}\\
								&+\sup_{\beta}\frac{\|(\nabla_{x,z}\omega_\b^+)(x,0)|_{\b=\nabla f(x)}-(\nabla_{x,z}\omega_\b^+)(x,0)|_{\b=\nabla f(x-\beta)}\|_{L^\infty}}{|\beta|^a}\\
								:=&\mathbf{Q}_1+\mathbf{Q}_2,
							\end{aligned}
						\end{equation}
						and 
							\begin{equation}\label{eqQ11}
							\begin{aligned}
								\|\mathcal{Q}[f]\|_{\HH^a}\lesssim &\sup_{\beta}\frac{\|\delta_\beta^x(\nabla_{x,z}\omega_\b^+)(x,0)|_{\b=\nabla f(x)}\|_{ L^2}}{|\beta|^a}\\
								&+\sup_{\beta}\frac{\|(\nabla_{x,z}\omega_\b^+)(x,0)|_{\b=\nabla f(x)}-(\nabla_{x,z}\omega_\b^+)(x,0)|_{\b=\nabla f(x-\beta)}\|_{ L^2}}{|\beta|^a}\\
								:=&\mathbf{Q}_1'+\mathbf{Q}_2'.
							\end{aligned}
						\end{equation}
						We estimate the right hand side of \eqref{eqQ1} term by term. 
						Applying Lemma \ref{leminterf} to \eqref{elliw} with $A(x)=\M(\b)$, $(g,h)=(F_{1,\b},F_2)$, we obtain for any $(x,z)\in\mathbb{R}^{d+1}_{r}\cup\mathbb{R}^{d+1}_{-r}$, 
						\begin{align*}
							\omega_\b(x,z)=\T_{1,\b}F_{1,\b}(x,z)+ \T_{2,\b} F_2(x,z)+\mathrm{LO}(\omega_\b, F_{1,\b})(x,z),
						\end{align*}
						with
						\begin{align*}
							&\T_{1,\b}F_{1,\b}(x,z)=-\frac{\mu_+\mathbf{1}_{z\in(0,r)}+\mu_-\mathbf{1}_{z\in(-r,0)}}{\mu_++\mu_-}\frac{1}{r}\int_{r}^{2r}\int_{\mathbb{R}^{d+1}_{\tau}\cup \mathbb{R}^{d+1}_{-\tau}} F_{1,\b}(y,w)\cdot\nabla_{y,w}\G_{\b}^D(x,z,y,w)dydwd\tau\\
							&\quad\quad\quad-\frac{\mu_+\mu_-}{\mu_++\mu_-}\sum_{+,-}\left(\frac{1}{r\mu_\pm}\int_{r}^{2r}\int_{\mathbb{R}^{d+1}_{\pm\tau}} F_{1,\b}(y,w)\cdot\nabla_{y,w}\G_{\b}^N(x,z,y,w)dydwd\tau\right),\\
							&\T_{2,\b}F_{2}(x,z)=\frac{\mu_+\mathbf{1}_{z\in(0,r)}+\mu_-\mathbf{1}_{z\in(-r,0)}}{\mu_++\mu_-}\int_{\mathbb{R}^d}e_{d+1}\cdot(\M(\b)\nabla_{y,w}\G_{\b}^D(x,z,y,0)F_2(y))dy,
						\end{align*}
					where  the kernels $\G_\b^N $, $\G_\b^D$ are defined by \eqref{defGtiG} with $Q_{x_0}$ replaced by $\M(\b)$. From Lemma \ref{lem:param-selection}, we know that the lower order term $\mathrm{LO}(\omega_\b,F_{1,\b})$ satisfies
						\begin{align*}
							&\|\nabla_x^n\nabla_{x,z}\mathrm{LO}(\omega_\b,F_{1,\b})(x,0^+)|_{\b=\nabla f(x)}\|_{L^\infty\cap L^2(\mathbb{R}^{d})}\\
                            &\quad\quad\lesssim\sup_{\b}\|\nabla_\b^l\nabla_x^n\nabla_{x,z}\mathrm{LO}(\omega_\b,F_{1,\b})(x,0^+)\|_{L^\infty\cap L^2(\mathbb{R}^{d})}\\
							&\quad\quad\lesssim C(r,n) \sup_\b(\|\nabla_{x,z}\omega_\b\|_{L^2(\mathbb{R}^{d+1}_{2r}\cup \mathbb{R}^{d+1}_{-2r})}+\| F_{1,\b}\|_{L^2(\mathbb{R}^{d+1}_{2r}\cup\mathbb{R}^{d+1}_{-2r})}),\ \ \forall n\in\mathbb{N},
						\end{align*}
                        where $l\geq \frac{d}{2}+1$. 
						Combining this with Lemma \ref{wL2} yields
						\begin{align*}
							\|\nabla_x^n\nabla_{x,z}\mathrm{LO}(\omega_\b,F_{1,\b})(x,0^+)|_{\b=\nabla f(x)}\|_{L^\infty\cap L^2(\mathbb{R}^{d})}\lesssim C(r,n)(1+\|\nabla f\|_{L^\infty})^{10}\|f\|_{H^\frac{5}{2}},\ \ \forall n\in\mathbb{N}.
						\end{align*}
						Denote 
						$$
						\L_{1}(x,\b)=(\nabla_{x,z}\T_{1,\b}F_{1,\b}(x,z))|_{z=0^+},\ \ \ \ \L_{2}(x,\b)=(\nabla_{x,z}\T_{2,\b}F_{2}(x,z))|_{z=0^+}.
						$$
						Then it follows that 
						\begin{align*}
							\mathcal{Q}[f](x)=\L_1(x,\nabla f(x))+\L_2(x,\nabla f(x))+R(x),
						\end{align*}
						where $R(x)$ satisfies 
						\begin{align}\label{rereLO}
							\|R\|_{\dot C^{a}\cap \HH^{a}}\lesssim (1+\|\nabla f\|_{L^\infty})^{10}\|f\|_{H^\frac{5}{2}}\lesssim t^{-\frac{1}{2}} (\|f\|_{T,*}+T^\frac{1}{2}\|f\|_{X_T})(1+\|f\|_T)^{10}.
						\end{align}
						By Lemma \ref{remF2} and \eqref{hhhkf} in Lemma \ref{lemKf}, we have 
						\begin{align}\label{rereF2}
							\|\L_2(x,\nabla f(x))\|_{\dot C^a\cap\HH^a}&\lesssim (1+\|\nabla f\|_{L^\infty})^2\|F_2\|_{\dot C^{1+a}}\lesssim (1+\|\nabla f\|_{L^\infty})^2\|\nabla f\|_{\dot C^{a}\cap \HH^{a}}\\
							&\lesssim t^{-\frac{a}{3}}\|f\|_{T,*}(1+\|f\|_T)^{10}.\nonumber
						\end{align}
						To control $\L_1$, it suffices to consider  
						\begin{align*}
							&\L^i_1(x,\b)=\int_{\mathbb{R}^{d+1}}K_i(x,z,y,w)\tilde F_{1,\b}(y,w)dydw\Big|_{z=0^+},\ \ i=1,2,
						\end{align*}
						with $\tilde F_{1,\b}(y,w)=F_{1,\b}(y,w)\mathbf{1}_{|w|\leq \tau}$ for $\tau\in(r,2r)$, and 
						\begin{align*}
							&K_1(x,z,y,w)=(\nabla_{x,z}\nabla_{y,w}\G_{\b}^D)(x,z,y,w),\\
							&K_2(x,z,y,w)=(\nabla_{x,z}\nabla_{y,w} \G_{\b}^N)(x,z,y,w).
						\end{align*}
						By Lemma \ref{lemsgit}, we obtain for any $p\in\{2,\infty\}$,
						\begin{equation}\label{Tf1}
							\begin{aligned}
								&\sum_{i=1,2}\sup_{
									\beta}\frac{\|\L^i_{1}(x,\nabla f(x))-\L^i_{1}(x-\beta,\nabla f(x))\|_{L^p_x}}{|\beta|^a}\\
									&\quad\quad\quad \lesssim 	\sup_{\substack{y,u\in\mathbb{R}^d, z,w\in\mathbb{R}\\
										|u|\leq |(y,w)| }}\frac{\|(\tilde F_{1,\b}(x,z)-\tilde F_{1,\b}(x-y,z-w))|_{\b=\nabla f(x+u)}\|_{L_x^p}}{|(y,w)|^a}.
							\end{aligned}
						\end{equation}
						Recalling the definition of $F_{1,\b}$ in \eqref{fortems}, we have 
						\begin{align*}
					&	|	\tilde F_{1,\b}(x,z)|_{\b=\nabla f(x+u)}|+	|	\tilde F_{1,\b}(x-y,z-w)|_{\b=\nabla f(x+u)}|\\
						&\quad\quad\quad\leq( |\M(\nabla f(x))-\M(\nabla f(x+u))|+ |\M(\nabla f(x-y))-\M(\nabla f(x+u))|)\|\nabla_{x,z} v_f\|_{L^\infty(\mathbb{R}^{d+1}_{2r}\cup\mathbb{R}^{d+1}_{-2r})}. 
						\end{align*}
						Note that 
						\begin{align*}
							\|\M(\nabla f(x))-\M(\nabla f(x'))\|_{L^\infty\cap L^2}
							&\lesssim |x-x'|^a\|\nabla f\|_{\dot C^a\cap \HH^a}(1+\|\nabla f\|_{L^\infty}).
						\end{align*}
				By \eqref{vlinfty},  the right hand side of \eqref{Tf1} is bounded by
						\begin{align*}
							 \|\nabla f\|_{\dot C^a}\|\nabla_{x,z}v_f\|_{L^\infty(\mathbb{R}^{d+1}_{2r}\cup\mathbb{R}^{d+1}_{-2r})}&(1+\|\nabla f\|_{L^\infty})\\
                            &\lesssim |\log t|^{-\varkappa}t^{-\frac{2+a}{3}} \|f\|_{T,*}(\|f\|_{T,*}+T^\frac{1}{2}\|f\|_{X_T})(1+\|f\|_T)^{10},\ \ \ p=\infty,\\
							\|\nabla f\|_{\HH^a}\|\nabla_{x,z}v_f\|_{L^\infty(\mathbb{R}^{d+1}_{2r}\cup\mathbb{R}^{d+1}_{-2r})}&(1+\|\nabla f\|_{L^\infty})\\
                        &\lesssim t^{-\frac{2+a}{3}} \|f\|_{T,*}(\|f\|_{T,*}+T^\frac{1}{2}\|f\|_{X_T})(1+\|f\|_T)^{10},\ \ \ p=2.
						\end{align*}
						Combining this with   \eqref{rereLO}, \eqref{rereF2}, and the definitions \eqref{eqQ1}, \eqref{eqQ11}, we obtain 
						\begin{equation}\label{q1}
							\begin{aligned}
								\mathbf{Q}_1&\lesssim \sup_{|\beta|\leq r}\frac{\|\L_1(x,\nabla f(x))-\L_1(x-\beta,\nabla f(x))\|_{L^\infty}}{|\beta|^a}+\|\L_2(x,\nabla f(x))\|_{\dot C^a}+\|R\|_{\dot C^a}\\
								&\lesssim |\log t|^{-\varkappa}t^{-\frac{2+a}{3}} \|f\|_{T,*}(\|f\|_{T,*}+T^\frac{1}{20}(1+\|f\|_{X_T}))(1+\|f\|_{T})^{10},
							\end{aligned}
						\end{equation}
						and 
							\begin{equation}\label{q1'}
							\begin{aligned}
								\mathbf{Q}_1'&\lesssim \sup_{|\beta|\leq r}\frac{\|\L_1(x,\nabla f(x))-\L_1(x-\beta,\nabla f(x))\|_{L^2}}{|\beta|^a}+\|\L_2(x,\nabla f(x))\|_{\HH^a}+\|R\|_{\HH^a}\\
								&\lesssim t^{-\frac{2+a}{3}} \|f\|_{T,*}(\|f\|_{T,*}+T^\frac{1}{20}(1+\|f\|_{X_T}))(1+\|f\|_{T})^{10}.
							\end{aligned}
						\end{equation}
					Then we estimate  $\mathbf{Q}_2$ and $\mathbf{Q}_2'$. Using the property that 
						$\tilde v_\b+\omega_\b$ is independent of $\b$, and applying \eqref{dx0}, \eqref{z0}, we obtain 
						\begin{align*}
							&\left|(\nabla_{x,z}\omega_\b)(x,0)|_{\b=\nabla f(x)}-(\nabla_{x,z}\omega_\b)(x,0)|_{\b=\nabla f(x-\beta)}\right|\\
							&=\left|(\nabla_{x,z}\tilde v_\b)(x,0)|_{\b=\nabla f(x)}-(\nabla_{x,z}\tilde v_\b)(x,0)|_{\b=\nabla f(x-\beta)}\right|\\
							&\lesssim \left|\int_{\mathbb{R}^d}\delta_\alpha \mathcal{G}(x)\left( \frac{1}{\langle \hat \alpha\cdot \nabla f(x)\rangle^{d+1}}-\frac{1}{\langle \hat \alpha\cdot \nabla f(x-\beta)\rangle^{d+1}}\right)\frac{d\alpha}{|\alpha|^{d+1}}\right|\\
							&\quad\quad+\left|\frac{\nabla f(x)}{\langle \nabla f(x)\rangle^2}-\frac{\nabla f(x-\beta)}{\langle \nabla f(x-\beta)\rangle^2}\right| \|\nabla\mathcal{G}\|_{L^\infty}\\
							&:=I_1+I_2.
						\end{align*}
						We have 
						\begin{align*}
						\|	I_1\|_{L^\infty\cap L^2}&\lesssim |\beta|^\frac{a}{2}\|\nabla f\|_{\dot C^\frac{a}{2}\cap \HH^\frac{a}{2}}\int_{|\alpha|\leq |\beta|} \|\delta_\alpha\delta_{-\alpha} \mathcal{G}\|_{L^\infty}\frac{d\alpha}{|\alpha|^{d+1}}+|\beta|\|\nabla f\|_{\dot C^1\cap \HH^1}\int_{|\alpha|\geq |\beta|} \|\delta_\alpha \mathcal{G}\|_{L^\infty}\frac{d\alpha}{|\alpha|^{d+1}}\\
							&\lesssim |\beta|^a(\|\nabla f\|_{\dot C^\frac{a}{2}\cap \HH^\frac{a}{2}}\|\mathcal{G}\|_{\dot C^{1+\frac{a}{2}}}+\|\nabla f\|_{\dot C^1\cap \HH^1}\|\mathcal{G}\|_{\dot C^a}).
						\end{align*}
						And 
						\begin{align*}
						\|	I_2\|_{L^\infty\cap L^2}\lesssim |\beta|^a\|\nabla f\|_{\dot C^a\cap \HH^a}\|\nabla \mathcal{G}\|_{L^\infty}.
						\end{align*}
						Combining this with \eqref{ndefG} yields
						\begin{align*}
							\|	I_1\|_{L^\infty\cap L^2}+	\|	I_2\|_{L^\infty\cap L^2}	\lesssim |\beta|^a |\log t|^{-\varkappa}t^{-\frac{2+a}{3}} \|f\|_{T,*}^2(1+\|f\|_{X_T})^{10},
						\end{align*}
						which implies 
						\begin{align}\label{q3}
							\mathbf{Q}_2+	\mathbf{Q}_2'\lesssim |\log t|^{-\varkappa}t^{-\frac{2+a}{3}} \|f\|_{T,*}^2(1+\|f\|_{X_T})^{10}.
						\end{align}
						Then we conclude from \eqref{eqQ1}, \eqref{eqQ11}, \eqref{q1}, \eqref{q1'} and \eqref{q3} that 
						\begin{align*}
						&	\|\mathcal{Q}[f]\|_{\dot C^a}\lesssim |\log t|^{-\varkappa}t^{-\frac{2+a}{3}} \|f\|_{T,*}(\|f\|_{T,*}+T^\frac{1}{20}(1+\|f\|_{X_T}))(1+\|f\|_{T})^{10},\\
							&	\|\mathcal{Q}[f]\|_{\HH^a}\lesssim t^{-\frac{2+a}{3}} \|f\|_{T,*}(\|f\|_{T,*}+T^\frac{1}{20}(1+\|f\|_{X_T}))(1+\|f\|_{T})^{10}.
						\end{align*}
						Combining this with \eqref{stsm}, we deduce \eqref{omeba} and complete the proof of the lemma.
					\end{proof}\vspace{0.25cm}\\
                    
            We now translate the estimates for the boundary quantity $\mathcal Q[f]$ into estimates for the implicit nonlinear term $\H^{im}[f]$.
					\begin{lemma}
						\label{lemh2}
Let $\H^{im}[f]$ be defined by \eqref{defHim}, and let the norm
$\|\cdot\|_{X_T}$ be defined by \eqref{defnorgm}, with
$\varkappa,m,\kappa$ fixed as in \eqref{consgm}. Then there exist
$\sigma,T>0$, depending only on
$\|\nabla\phi\|_{C^{m+4}}$, $\|\nabla\underline b^\pm\|_{C^1}$, and the
separation constant $\mathbf r$, such that for every
$f\in\mathcal X^\sigma_{T,\phi}$,
						\begin{align}
							&\sup_{t\in[0,T]}|\log t|^\varkappa(t^\frac{\kappa}{3}\|\H^{im}[f](t)\|_{\dot C^{\kappa-2}}+t^\frac{m+\kappa}{3}\|\nabla_x^m\H^{im}[f](t)\|_{\dot C^{\kappa-2}})\nonumber\\
                            &\quad\quad\quad+\sup_{t\in[0,T]}(t^\frac{\kappa}{3}\|\H^{im}[f](t)\|_{\HH^{\kappa-2}}+t^\frac{m+\kappa}{3}\|\nabla_x^m\H^{im}[f](t)\|_{\HH^{\kappa-2}})\nonumber\\
							&\quad\quad\quad\quad\quad\quad\quad\quad\quad\lesssim (\|f-\phi\|_{X_T}+T^\frac{1}{20}\mathfrak{M}_\phi)^2(1+\|f\|_{X_T})^{2(m+10)}. \label{m2}
						\end{align}
					\end{lemma}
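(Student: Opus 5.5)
The plan is to reduce Lemma~\ref{lemh2} to Lemma~\ref{lemH2w} through the algebraic identity \eqref{defHim}:
\[
\H^{im}[f](x)=-\frac{1}{\mu_+}e_{d+1}\cdot\bigl(\M(\nabla f(x))\,\mathcal Q[f](x)\bigr).
\]
So $\H^{im}[f]$ is a product of the smooth matrix function $\M(\nabla f)$ with the boundary quantity $\mathcal Q[f]$. I will first fix $\sigma$ and $T$ so that $f\in\mathcal X^\sigma_{T,\phi}$ satisfies the hypotheses \eqref{condfff} of the previous subsection. The separation $\operatorname{dist}(f,\Gamma^\pm)\ge\mathbf r$ should follow from $\|f-\phi\|_{X_T}\le\sigma$ together with the separation of $\phi$. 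This lets me take $r<\mathbf r/10$ in Lemma~\ref{lemH2w}, and then $r$, $\sigma$, $T$ depend only on $\|\nabla\phi\|_{C^{m+4}}$, $\|\nabla\underline b^\pm\|_{C^1}$ and $\mathbf r$.

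Next I expand $\nabla^m_x\H^{im}[f]$ by the Leibniz rule into terms $\nabla^{m-j}\M(\nabla f)\,\nabla^j\mathcal Q[f]$, $0\le j\le m$. The case $m=0$ is included, which gives the $t^{\kappa/3}$ part of \eqref{m2}. To each term I apply the product rule for $\dot C^a$ and $\HH^a$ with $a=\kappa-2$: the difference $\delta_\beta$ falls either on the coefficient or on $\mathcal Q$, and the other factor is put in $L^\infty$.

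\begin{itemize}
\item \textbf{Terms where all derivatives and the difference fall on $\mathcal Q$} ($j=m$). These are bounded directly by \eqref{omeba}, with $\|\M(\nabla f)\|_{L^\infty}\lesssim(1+\|f\|_{X_T})^2$. This already produces the quadratic factor $(\|f-\phi\|_{X_T}+T^{1/20}\mathfrak M_\phi)^2$.
\item \textbf{Terms where derivatives or the difference land on $\M(\nabla f)$.} I bound $\nabla^{k}\M(\nabla f)$ in $L^\infty$ and in $\dot C^a\cap\HH^a$ using Lemma~\ref{lemcom}. Its growth is $t^{-k/3}$ times $(1+\|f\|_T)^{k+1}$, with an extra $|\log t|^{-\varkappa}$ in the Hölder case and with the smallness $\|f\|_{T,*}$.
\item \textbf{Lower-order norms of $\mathcal Q$ in these mixed terms.} The needed norms $\|\nabla^j\mathcal Q\|_{L^\infty}$ for $j<m$ come from interpolating between \eqref{omeb0} ($t^{-2/3}$ in $L^\infty$) and \eqref{omeba} ($t^{-(m+\kappa)/3}$ at the top order).
\end{itemize}

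Time weights add up correctly: $k/3+(2+j+a)/3=(m+\kappa)/3$ when $k+j=m$. Quadratic smallness comes from one factor of $\|f\|_{T,*}\lesssim\|f-\phi\|_T+T^{1/20}\mathfrak M_\phi$ (by \eqref{stsm}) from the coefficient, times the linear smallness in \eqref{omeb0}.

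The main obstacle is the $L^2$-based $\HH^a$ part of the mixed terms. Here $\nabla^k\M(\nabla f)$ is not in $L^2$, only its difference from a constant is. The plan is to place $\mathcal Q$ (or its derivatives) in $L^2$ via the $\HH$ components of \eqref{omeba}, and the coefficient differences in $L^2$ via $\|\nabla f\|_{\HH^{\cdot}}$. For that I need an $L^2$ analogue of \eqref{omeb0}. I expect to obtain it from the same argument as \eqref{omeb0}, replacing the Lipschitz bound by Lemma~\ref{wL2} and the $L^2$ bound \eqref{L2tv} from Lemma~\ref{lemtvbhol}.

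A second concern is the logarithmic weight. In the Hölder norms the factor $|\log t|^{\varkappa}$ must be absorbed by exactly one factor carrying $|\log t|^{-\varkappa}$. This is consistent with the structure of \eqref{omeba} and Lemma~\ref{lemcom}. The $\HH^a$ part carries no log weight, matching the statement. Finally, collecting the powers $(1+\|f\|_{X_T})^{2m+10}$ from \eqref{omeba} and at most $(1+\|f\|_{X_T})^{m+2}$ from the coefficients gives the exponent $2(m+10)$.
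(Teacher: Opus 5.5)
Your proposal is correct and is essentially the paper's argument: Leibniz plus the H\"older/Besov product rule applied to $\M(\nabla f)\mathcal Q[f]$, Lemma~\ref{lemcom} for the coefficient, and Lemma~\ref{lemH2w} (with interpolation for intermediate derivatives) for $\mathcal Q[f]$. The $L^2$ analogue of \eqref{omeb0} that you anticipate is not needed: the product rule $\|FG\|_{\HH^a}\lesssim \|F\|_{L^\infty}\|G\|_{\HH^a}+\|F\|_{\HH^a}\|G\|_{L^\infty}$ lets the paper put $\M(\nabla f)$ in $\HH^{j_1+\kappa-2}$ and $\nabla^{j_2}\mathcal Q[f]$ in $L^\infty$, so only \eqref{omeb0} and \eqref{omeba} are used.
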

					\begin{proof}Recall that
\[
\H^{im}[f]
=
-\frac1{\mu_+}e_{d+1}\cdot
\bigl(\M(\nabla f)\mathcal Q[f]\bigr).
\]
Thus, by the product estimates in Hölder and Besov spaces, for $j=0,m$,
\begin{align*}\|\nabla_x^j\H ^{im}[f]\|_{\dot C^{\kappa-2}}\lesssim \sum_{j_1+j_2=j}\left(\|\M(\nabla f)\|_{\dot C^{j_1}}\|\nabla_x^{j_2}\mathcal{Q}[f]\|_{\dot C^{\kappa-2}}+\|\M(\nabla f)\|_{\dot C^{j_1+\kappa-2}}\|\nabla_x^{j_2}\mathcal{Q}[f]\|_{L^\infty}\right),
						\end{align*}
and similarly,
\begin{align*}
\|\nabla_x^j\H ^{im}[f]\|_{\HH^{\kappa-2}}\lesssim \sum_{j_1+j_2=j}\left(\|\M(\nabla f)\|_{\dot C^{j_1}}\|\nabla_x^{j_2}\mathcal{Q}[f]\|_{\HH^{\kappa-2}}+\|\M(\nabla f)\|_{\HH^{j_1+\kappa-2}}\|\nabla_x^{j_2}\mathcal{Q}[f]\|_{L^\infty}\right).
						\end{align*}
						Applying Lemma \ref{lemcom}, we  obtain 
						\begin{align*}
							&\|\M(\nabla f)\|_{\dot C^{j_1}}\lesssim t^{-\frac{j_1}{3}}(1+\|f\|_T)^{j_1+2},\\
							&|\log t|^{\varkappa}\|\M(\nabla f)\|_{\dot C^{j_1+\kappa-2}}+\|\M(\nabla f)\|_{\HH^{j_1+\kappa-2}}\lesssim  t^{-\frac{j_1+\kappa-2}{3}}\|f\|_{T,*}(1+\|f\|_T)^{j_1+2}.
						\end{align*}
						Combining this with the estimates of $\mathcal{Q}[f]$ in Lemma \ref{lemH2w}, we conclude that 
						\begin{align*}
							\sup_{t\in[0,T]} t^\frac{j+\kappa}{3}&(|\log t|^\varkappa\|\nabla_x^j\H ^{im}[f](t)\|_{\dot C^{\kappa-2}}+\|\nabla_x^j\H ^{im}[f](t)\|_{\HH^{\kappa-2}})\\
                            &\quad\quad\quad\quad\quad\quad\lesssim (\|f-\phi\|_{X_T}+T^\frac{1}{20}\mathfrak{M}_\phi)^2(1+\|f\|_{X_T})^{2(m+10)}.
						\end{align*}
						This completes the proof of the lemma.
					\end{proof}\vspace{0.1cm}\\
					\subsection{Contraction estimates}
We now establish contraction estimates for the nonlinear terms in \eqref{evoeta}. These estimates are used in the uniqueness argument and in the stability of the approximation scheme. The explicit term $\H^{ex}$ is handled by the same commutator estimates as in Lemma~\ref{lemH1}, while the implicit term $\H^{im}$ requires comparing the corresponding elliptic transmission problems.
                   
                    \begin{lemma}\label{lemH1con}
                	Let $\H^{ex}[f]$ be as defined in \eqref{defHgm}, the norm $\|\cdot\|_{T}$ be as defined in \eqref{defnorgm} with constants $\varkappa,m,\kappa$ fixed in \eqref{consgm}. Then for any $T\in(0,\frac{1}{2})$, there holds
					\begin{align*}
						\sup_{t\in[0,T]}&|\log t|^\varkappa(t^\frac{\kappa}{3}\|(\H^{ex}[f]-\H^{ex}[g])(t)\|_{\dot C^{\kappa-2}\cap \HH^{\kappa-2}}+t^\frac{m+\kappa}{3}\|\nabla^m(\H^{ex}[f]-\H^{ex}[g])(t)\|_{\dot C^{\kappa-2}\cap \HH^{\kappa-2}})\\
						&\quad\quad\lesssim \|f-g\|_T(\|(f-\phi,g-\phi)\|_T+T^\frac{1}{20}\mathfrak{M}_\phi)(1+\|(f,g,\phi)\|_T)^{2m+5}, 
					\end{align*}
                    where $\mathfrak{M}_\phi=\|\phi\|_{H^{m+4}\cap C^{m+4}}$.
                    \end{lemma}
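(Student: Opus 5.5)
The plan is to redo the proof of Lemma~\ref{lemH1} at the level of differences, using the decomposition $\H^{ex}=\mathcal P+\mathcal W$ from that proof and writing every difference $\mathcal P[f]-\mathcal P[g]$, $\mathcal W[f]-\mathcal W[g]$ as a sum of terms in which exactly one factor carries $f-g$. As in Lemma~\ref{lemH1}, it suffices to prove, for $0\le n\le m$ and $a=\kappa-2$,
\[
\sup_{t\in[0,T]}t^{\frac{n+\kappa}{3}}|\log t|^\varkappa\|\nabla^n(\H^{ex}[f]-\H^{ex}[g])(t)\|_{\dot C^a\cap\HH^a}
\lesssim \|f-g\|_T\bigl(\|(f-\phi,g-\phi)\|_T+\|(f,g)\|_{T,*}\bigr)(1+\|(f,g,\phi)\|_T)^{2m+5}.
\]
The statement then follows from \eqref{stsm}. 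A subtlety is that $\|f-g\|_{T}$ must play the role of the endpoint factor, while the second factor is non-endpoint. In every term, the factor with $\log$ or time weight should come from whichever of $f-g$ or $(f,g)$ is measured in $\|\cdot\|_{T,*}$. Since $\|h\|_{T,*}$ is controlled by a $T$-dependent quantity, I would also use the trivial bound $\|f-g\|_{T,*}\lesssim\|f-g\|_T$ for time-weighted Hölder norms above the endpoint, via interpolation \eqref{intp11}.

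For $\mathcal P$, write
\[
\mathcal P[f]-\mathcal P[g]=-\tfrac12\int(\A(\nabla f,\alpha)-\A(\nabla g,\alpha)):\mathcal O_\alpha\nabla^2 f\,\tfrac{d\alpha}{|\alpha|^d}-\tfrac12\int(\A(\nabla g,\alpha)-\A(\nabla\phi,\alpha)):\mathcal O_\alpha\nabla^2(f-g)\,\tfrac{d\alpha}{|\alpha|^d}.
\]
For the first term, the coefficient difference is estimated by the difference part of Lemma~\ref{lemcom}, exactly as in \eqref{esdA} with $\phi$ replaced by $g$, giving $t^{-k/3}\|f-g\|_T(1+\cdots)^k$. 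The $\mathcal O_\alpha$ integral is handled by Lemma~\ref{lemoalin} with $\|f\|_{T,*}$. For the second term, the coefficient gives $\|g-\phi\|_T$ by \eqref{esdA}. The $\delta_\beta\mathcal O_\alpha$ integral on $f-g$ is bounded by \eqref{S2eq3} and $t^{\frac{2+n+a}{3}}|\log t|^\varkappa\|\nabla^{2+n}(f-g)\|_{\dot C^{a}}\lesssim\|f-g\|_T$ (note $2+n+a\le m+\kappa$, so this is inside the $\|\cdot\|_T$ norm after interpolation with the $\dot C^{\log^\varkappa}$ part). Distributing $\delta_\beta\nabla^n$ by Leibniz gives the terms $\mathrm I_1$ and $\mathrm I_2$ as before.

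For $\mathcal W=\mathcal W_1+\mathcal W_2$, each term is trilinear-type: a finite difference of $B(\nabla f)$, a second derivative of $f$, and a smooth coefficient $\langle\nabla f\rangle^2\langle\hat\alpha\cdot\nabla f\rangle^{-d-1}$. I would take differences factor by factor. When the difference hits $B(\nabla\cdot)$, I would use the Hölder difference bound of Lemma~\ref{lemcom} with $\alpha=\tfrac12$ together with the $\mathcal O_\alpha$ interpolation bound of Lemma~\ref{lemoalin}. When it hits $\nabla^2$, the factor $\mathcal O_\alpha B(\nabla g)$ supplies the $\|g\|_{T,*}$ smallness. When it hits the coefficient, the factor $|\nabla f-\nabla g|\le\|f-g\|_T$ is harmless. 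This yields the analogue of \eqref{esA}:
\[
|\log t|^\varkappa t^{\frac{k+2}{3}}\|\nabla^k(\mathcal W[f]-\mathcal W[g])\|_{L^\infty\cap L^2}\lesssim\|f-g\|_T\|(f,g)\|_{T,*}(1+\|(f,g)\|_T)^{2k+5}.
\]
Interpolation between $k=n$ and $k=n+1$ then gives the $\dot C^a\cap\HH^a$ bound.

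I expect the main obstacle to be the terms in which the difference $f-g$ falls on the factor needing the logarithmic gain. An example is $\int\|\mathcal O_\alpha\nabla^l(B(\nabla f)-B(\nabla g))\|_{L^\infty}\frac{d\alpha}{|\alpha|^d}$, which must be bounded by the geometric mean $\|\cdot\|_{\dot C^{l+1/2}}^{1/2}\|\cdot\|_{\dot C^{l+3/2}}^{1/2}$ of the difference. Here the time weights must combine to produce exactly $|\log t|^{-\varkappa}t^{-\frac{k+2}{3}}$. I would first try placing the $|\log t|^{-\varkappa}$ factor on the $\|f-g\|_T$ piece through the $|\log t|^\varkappa t^{\frac{m+\kappa}{3}}\dot C^{m+\kappa}$ part of $\|\cdot\|_T$. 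If that fails, I would instead put it on the companion factor $\|g\|_{T,*}$, which carries $|\log t|^\varkappa$ in its $\dot C^{1/5}$ component.
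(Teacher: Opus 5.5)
Your proposal is correct and is essentially the paper's argument. The paper proves Lemma~\ref{lemH1con} by rerunning the proof of Lemma~\ref{lemH1} with one factor in each term of $\mathcal P$ and $\mathcal W$ replaced by $f-g$; this is exactly your decomposition, with the remaining small factor supplied by $\|(f-\phi,g-\phi)\|_T$ or $\|(f,g)\|_{T,*}$ and then converted via \eqref{stsm}. (There is a minor index slip: the $\delta_\beta\mathcal O_\alpha$ term needs $\|\nabla^{2+n}(f-g)\|_{\dot C^{1+a}}=\|\nabla(f-g)\|_{\dot C^{2+n+a}}$, which is what your time weight $t^{\frac{2+n+a}{3}}$ actually corresponds to.)
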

					The proof is identical to that of Lemma~\ref{lemH1}, with one factor replaced by the difference $f-g$.\vspace{0.2cm}\\

                    Now we consider the implicit term $\H^{im}[f]-\H^{im}[g]$. As preparation, we first renormalize the 
					transmission elliptic system. Suppose that $f,g$ satisfy \eqref{condfff}, and $v^\pm_f,v^\pm_g$ are solutions to \eqref{elleqeta} with interfaces $f$ and $g$, respectively.  Define $a^\pm(x)=\frac{\underline{b}^\pm(x)-f(x)}{\underline{b}^\pm(x)-g(x)}$ and $\bar{v}^\pm_g(x,z)=v^\pm_g(x,a^\pm(x)z)$, which are defined in the same domain as  $v_f^\pm$. Then we have 
                \begin{equation}\label{ellenew}
					\begin{aligned}
						&\operatorname{div}_{x,z}(\tilde {\M}^\pm(x,z)\nabla_{x,z} \bar{v}_g^\pm )=0,\ \ \text{in}\ \tilde \Omega_f^\pm,\\
						&\bar{v}_g^--\bar{v}_g^+=\mathcal{G}[g]+\varrho_0g, \quad \text { on } \{z=0\},\\
						& e_{d+1}\cdot ((\mu_+^{-1}\tilde {\M}^+(x,z)\nabla_{x,z}\bar{v}_g^+-\mu_-^{-1}\tilde {\M}^-(x,z)\nabla_{x,z}\bar{v}_g^-))=0, \quad \text { on } \{z=0\},\\
						&\tilde \nu_f^\pm\cdot (\tilde {\M}^\pm(x,z)\nabla_{x,z} \bar{v}_g^\pm) =0,\ \ \ \ \ \ \ \ \ \text{on} \ \tilde \Gamma^\pm_f,
					\end{aligned}
				\end{equation}
					where 
                    \begin{align}\label{S4eq48}
                    \tilde {\M}^\pm(x,z)=a^\pm(x)J^\pm(x,z)\M(\nabla g(x))(J^\pm(x,z))^\top,\ \ \ \quad\quad J^\pm(x,z)=\left(\begin{array}{cc}\mathrm{Id} & 0 \\ -\frac{z}{a^\pm(x)} \nabla a^\pm(x)^{\top} & \frac{1}{a^\pm(x)}\end{array}\right).
                    \end{align}
Note that 
\begin{align}
    \label{difco}
    \|\tilde {\M}^\pm(x,z)-\M(\nabla f(x))\|_{L^\infty(\tilde \Omega_f^\pm)}\lesssim \|f-g\|_{W^{1,\infty}}(1+\|g\|_{W^{1,\infty}})^2\lesssim \|f-g\|_{X_T}(1+\|g\|_{X_T})^2.
\end{align}
Denote $\mathbf{v}^\pm=v^\pm_f-\bar{v}_g^\pm$. Then 
      \begin{equation}\label{ellenew1}
					\begin{aligned}
						&\operatorname{div}_{x,z}({\M}(\nabla f)\nabla_{x,z} \mathbf{v}^\pm )=\operatorname{div}_{x,z}(\mathbf{F}^\pm),\ \ \text{in}\ \tilde \Omega_f^\pm,\\
						&\mathbf{v}^--\mathbf{v}^+=\mathcal{G}[f]-\mathcal{G}[g]+\varrho_0(f-g), \quad \text { on } \{z=0\},\\
						&e_{d+1}\cdot ({\M}(\nabla f)(\mu_+^{-1}\nabla_{x,z}\mathbf{v}^+-\mu_-^{-1}\nabla_{x,z}\mathbf{v}^-))=e_{d+1}\cdot (\mu_+^{-1}\mathbf{F}^+-\mu_-^{-1}\mathbf{F}^-), \quad \text { on } \{z=0\},\\
						&\tilde \nu_f^\pm\cdot ({\M}(\nabla f)\nabla_{x,z} \mathbf{v}^\pm) =\tilde \nu_f^\pm\cdot \mathbf{F}^\pm,\ \ \ \ \ \ \ \ \ \text{on} \ \tilde \Gamma^\pm_f,
					\end{aligned}
				\end{equation}
                where $\mathbf{F}^\pm=(\tilde {\M}^\pm(x,z)-\M(\nabla f))\nabla_{x,z} \bar{v}_g^\pm$.
We have the following estimates of $\mathbf{v}^\pm$.
\begin{lemma}\label{lemdif}
For any $T\in(0,\frac{1}{2})$, and $f,g$ satisfy \eqref{condfff}, it holds
\begin{align*}
 \sup_{t\in[0,T]}\left(  t^\frac{1}{2} \sum_{+,-}\|\nabla_{x,z}\mathbf{v}^\pm\|_{L^2(\tilde \Omega_f^\pm)}+ t^\frac{2}{3}\sum_{+,-}\|\nabla_{x,z}\mathbf{v}^\pm|_{z=0}\|_{L^\infty(\mathbb{R}^d)}\right)\lesssim \|f-g\|_{X_T}(1+\|(f,g)\|_{X_T})^{10}.
\end{align*}
\end{lemma}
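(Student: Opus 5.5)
The plan is to follow the two-step structure of Lemma~\ref{wL2} and of the first half of the proof of Lemma~\ref{lemH2w}, now applied to the difference system \eqref{ellenew1}. First comes an energy estimate for $\mathbf v^\pm$. Then comes the endpoint Lipschitz estimate of Lemma~\ref{lemLip} in the strip $\mathbb R^{d+1}_{r/2}\cup\mathbb R^{d+1}_{-r/2}$, which converts the $L^2$ bound plus the data into the pointwise bound at $z=0$. Throughout, the factors $t^{1/2}$ and $t^{2/3}$ absorb the loss of regularity in the jump datum $\mathcal G[f]-\mathcal G[g]$. This datum is of order three in $f-g$, and its $H^{1/2}$ and $\dot C^{1,\log^\varkappa}$ norms are controlled by the time-weighted parts of $\|f-g\|_{X_T}$.

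\textbf{Step 1: energy estimate.} We test \eqref{ellenew1} with $\mathbf v^\pm$, weighted by $\mu_\pm^{-1}$. The interface terms combine exactly as in Lemma~\ref{wL2}, because the flux condition has the right-hand side $e_{d+1}\cdot(\mu_+^{-1}\mathbf F^+-\mu_-^{-1}\mathbf F^-)$ and the boundary condition on $\tilde\Gamma_f^\pm$ has the right-hand side $\tilde\nu_f^\pm\cdot\mathbf F^\pm$. Both are precisely the boundary terms produced by integrating $\operatorname{div}\mathbf F^\pm\,\mathbf v^\pm$ by parts. What remains is
\[
\sum_\pm\mu_\pm^{-1}\int(\nabla\mathbf v^\pm)^\top\M(\nabla f)\nabla\mathbf v^\pm=\sum_\pm\mu_\pm^{-1}\int\mathbf F^\pm\cdot\nabla\mathbf v^\pm+\mu_+^{-1}\int_{\{z=0\}}e_{d+1}\cdot(\M(\nabla f)\nabla\mathbf v^+-\mathbf F^+)\,\Theta\,dx ,
\]
where $\Theta=\mathcal G[f]-\mathcal G[g]+\varrho_0(f-g)$. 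We handle the last term by a trace extension of $\Theta$, as in Lemma~\ref{wL2}. This gives
\[
\|\nabla\mathbf v\|_{L^2}\lesssim(1+\|\nabla f\|_{L^\infty})^{8}\bigl(\|\mathbf F\|_{L^2}+\|\Theta\|_{H^{1/2}}\bigr).
\]
For $\mathbf F$, we combine \eqref{difco} with Lemma~\ref{wL2} applied to $g$; the change of variables $z\mapsto a^\pm z$ is bi-Lipschitz by the separation condition. For $\Theta$, we use Lemma~\ref{lemcom} (difference form) on $\mathcal G[f]-\mathcal G[g]=-(B(\nabla f)-B(\nabla g)):\nabla^2 f-B(\nabla g):\nabla^2(f-g)$, together with interpolation of the $H^{5/2}$ norm between $L^2$ and $\HH^{m+\kappa+1}$. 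This yields $t^{-1/2}\|f-g\|_{X_T}(1+\|(f,g)\|_{X_T})^{10}$.

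\textbf{Step 2: pointwise bound at the interface.} Since $\M(\nabla f)$ is tangential-only and $C^{\log^\varkappa}$, Lemma~\ref{lemLip} applies with $h=\Theta$ and $g=\mathbf F$, with $r<\mathbf r/10$. It gives
\[
\|\nabla\mathbf v\|_{L^\infty(\text{strip})}\lesssim\|\Theta\|_{C^{1,\log^\varkappa}}+\|\mathbf F\|_{C^{\log^\varkappa}}+\|\nabla\mathbf v\|_{L^2}+\|\mathbf F\|_{L^2}.
\]
The term $\|\Theta\|_{C^{1,\log^\varkappa}}$ involves $\nabla^3(f-g)$ in $C^{\log^\varkappa}$. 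We bound it by interpolation in the $\|\cdot\|_T$ norm, with weight $t^{-2/3}$, mirroring \eqref{vlinfty}.

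\textbf{Main obstacle.} The hard part is $\|\mathbf F\|_{C^{\log^\varkappa}}$ near $z=0$. It needs a log-Dini modulus for $\nabla\bar v_g$, not only the $L^\infty$ bound, and $\tilde\M^\pm-\M(\nabla f)$ contains $z\nabla a^\pm$, which involves $\nabla(f-g)$. I would first bound the modulus of $\nabla\bar v_g$ in the strip by the Schauder-type bound underlying Lemma~\ref{lemH2w}: split $\bar v_g=\omega+\tilde v$, estimate $\tilde v$ by Lemma~\ref{lemtvbhol}, and use the $C^a$ estimate for the correction. This costs $t^{-(2+a)/3}$, and it must be balanced against the $|x-x'|^a$ smallness of $\tilde\M-\M(\nabla f)$ to keep the total weight $t^{-2/3}$. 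If this loses too much, the fallback is to move the coefficient difference into a divergence-free reformulation and use only $\|\nabla\bar v_g\|_{L^\infty}$ from \eqref{vlinfty}.
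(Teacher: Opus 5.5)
Your plan has the same two steps as the paper: the energy identity for \eqref{ellenew1} as in Lemma~\ref{wL2}, then the endpoint Lipschitz estimate with jump datum $\Theta=\mathcal G[f]-\mathcal G[g]+\varrho_0(f-g)$. Step~1 is fine. The gap is the point you call the main obstacle, which your proposal leaves open.

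Lemma~\ref{lemLip} applied to \eqref{ellenew1} produces the term $\|\mathbf F^\pm\|_{C^{\log^\varkappa}}$ on the strip. The paper's own proof writes the bound without any $\mathbf F$-contribution, so it gives you no way around this. Neither of your fixes controls this term.

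Your first route borrows the mechanism of Lemma~\ref{lemH2w}. There, $F_{1,\b}=(\M(\b)-\M(\nabla f))\nabla v_f$ with $\b=\nabla f(x+u)$ vanishes at the frozen point, so only $\|\nabla v_f\|_{L^\infty}$ is needed. Here $D^\pm:=\tilde\M^\pm-\M(\nabla f)$ is merely small in size. Hence $\|D^\pm\nabla\bar v_g^\pm\|_{C^{\log^\varkappa}}$ contains $\|D^\pm\|_{L^\infty}\|\nabla\bar v_g^\pm\|_{C^{\log^\varkappa}}$, with no $|x-x'|^a$ gain to balance against.

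What you actually need is a log-Dini modulus of $\nabla v_g$ on the whole strip up to $z=0$, at cost exactly $t^{-2/3}$. Lemma~\ref{lemH2w} only concerns $\mathcal Q$ on $\{z=0\}$. Interpolating \eqref{vlinfty} against any $C^a$ bound gives only $t^{-2/3}|\log t|^{\varkappa}$, because at scales $\delta\gtrsim t^{1/3}$ the oscillation is controlled by $2\|\nabla v_g\|_{L^\infty}$ alone. That loss is incompatible with the weight $t^{2/3}$ in the statement.

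Your fallback, which uses only $\|\nabla\bar v_g\|_{L^\infty}$, cannot work either. A divergence-form forcing with merely bounded $\mathbf F$ only yields $\nabla\mathbf v\in\mathrm{BMO}$, not $L^\infty$.

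There is also a structural obstruction to taking $g=\mathbf F$ in Lemma~\ref{lemLip}. The matrix $\tilde\M^\pm$ contains $z\nabla a^\pm$, and $\nabla a^\pm$ contains $(f-g)\nabla\underline b^\pm/(\underline b^\pm-g)^2$. Under the standing assumption $\underline b^\pm\in W^{1,\infty}$, this has no modulus of continuity in $x$. So $\mathbf F^\pm$ need not lie in $C^{\log^\varkappa}$ on the strip at all.

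To close the argument you would first replace $z\mapsto a^\pm(x)z$ by a vertical map that is the identity for $|z|\lesssim\mathbf r$. This is available by the separation condition when $\|f-g\|_{L^\infty}\ll\mathbf r$, and it gives $D^\pm=\M(\nabla g)-\M(\nabla f)$ near the interface. You would then still need a weighted log-Schauder bound $\|\nabla v_g\|_{C^{\log^\varkappa}}\lesssim t^{-2/3}(1+\|g\|_{X_T})^{C}$ for the transmission problem on the strip. Neither your proposal nor the paper supplies this bound, and without it the $t^{2/3}$-weighted trace estimate is not established.
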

\begin{proof}
Repeating the energy estimate used in Lemma \ref{wL2}, we obtain 
\begin{align*}
  \sum_{+,-}\|\nabla_{x,z}\mathbf{v}^\pm\|_{L^2(\tilde \Omega_f^\pm)}&\lesssim \sum_{+,-}\|(\tilde {\M}(x,z)-\M(\nabla f))\nabla_{x,z} \bar{v}_g^\pm\|_{L^2(\tilde \Omega_f^\pm)}+\|\mathcal{G}[f]-\mathcal{G}[g]+\varrho_0(f-g)\|_{H^\frac{1}{2}}\\
  &\lesssim t^{-\frac{1}{2}}\|f-g\|_{X_T}(1+\|(f,g)\|_{X_T})^{10},
\end{align*}
where the last inequality follows from \eqref{vl2}, \eqref{difco} and interpolation. 

To control the $L^\infty$ norm of the trace $\nabla_{x,z}\mathbf{v}^\pm|_{z=0}$, we apply Lemma \ref{leminterf}, which yields
\begin{align*}
\|\nabla_{x,z}\mathbf{v}^\pm\|_{L^\infty(\mathbb{R}^{d+1}_{2r})}&\lesssim \|\mathcal{G}[f]-\mathcal{G}[g]\|_{\dot C^{1,\log^\varkappa}}+\|f-g\|_{\dot C^{1,\log^\varkappa}}+\|\nabla_{x,z}\mathbf{v}^\pm\|_{L^2(\mathbb{R}^{d+1}_{4r})}\\
&\lesssim t^{-\frac{2}{3}}\|f-g\|_{X_T}(1+\|(f,g)\|_{X_T})^{10},
\end{align*}
where $0<r<\frac{\mathbf{r}}{10}$. 
This completes the proof of the lemma.
\end{proof}

We have the following contraction estimate for the operator $\mathcal{Q}[f]$, which is defined in \eqref{defwb}.
						\begin{align*}
							& \sup_{t\in[0,T]}t^\frac{2}{3}\|(\mathcal{Q}[f]-\mathcal{Q}[g])(t)\|_{L^\infty}\lesssim \|f-g\|_{X_T}(1+\|(f,g)\|_{X_T})^{10},\\
							&\sup_{t\in[0,T]} t^\frac{m+\kappa}{3} (|\log t|^\varkappa\|\nabla^m(\mathcal{Q}[f]-\mathcal{Q}[g])(t)\|_{\dot C^{\kappa-2}}+\|\nabla^m(\mathcal{Q}[f]-\mathcal{Q}[g])(t)\|_{ \HH^{\kappa-2}}	)\nonumber\\
                            &\quad\quad\quad\quad\lesssim \|f-g\|_{X_T}( \|( f-\phi,g-\phi)\|_{X_T}+T^\frac{1}{20}\mathfrak{M}_\phi)(1+\| (f,g)\|_{X_T})^{2m+10}.
						\end{align*}
       The $L^\infty$ estimate follows from Lemma  \ref{lemdif} and Lemma \ref{lemtvbhol}, and the H\"{o}lder estimate can be obtained similarly as \eqref{omeba}.   This implies the following estimate of 
       $$\H^{im}[f]-\H^{im}[g]=-\frac{1}{\mu_+}e_{d+1}\cdot \left((\M(\nabla f)-\M(\nabla g))\mathcal{Q}[f]+\M(\nabla g)(\mathcal{Q}[f]-\mathcal{Q}[g])\right).$$
       \begin{lemma}\label{lemH2con}
            	Let $\H^{im}[f]$ be as defined in \eqref{defHgm}, the norm $\|\cdot\|_{X_T}$ be as defined in \eqref{defnorgm} with constants $\varkappa,m,\kappa$ fixed in \eqref{consgm}. Then for any $T\in(0,\frac{1}{2})$, and any $f,g$ satisfy \eqref{condfff}, the following estimates hold.
					\begin{align*}
						&\sup_{t\in[0,T]}|\log t|^\varkappa(t^\frac{\kappa}{3}\|(\H^{im}[f]-\H^{im}[g])(t)\|_{\dot C^{\kappa-2}}+t^\frac{m+\kappa}{3}\|\nabla_x^m(\H^{im}[f]-\H^{im}[g])(t)\|_{\dot C^{\kappa-2}})\nonumber\\
                            &+\sup_{t\in[0,T]}(t^\frac{\kappa}{3}\|(\H^{im}[f]-\H^{im}[g])(t)\|_{\HH^{\kappa-2}}+t^\frac{m+\kappa}{3}\|\nabla_x^m(\H^{im}[f]-\H^{im}[g])(t)\|_{\HH^{\kappa-2}})\nonumber\\
							&\quad\quad\quad\lesssim \|f-g\|_{X_T}(\|(f-\phi,g-\phi)\|_{X_T}+T^\frac{1}{20}\mathfrak{M}_\phi)(1+\|(f,g)\|_{X_T})^{2(m+10)}.
					\end{align*}
                    where $\mathfrak{M}_\phi=\|\phi\|_{H^{m+ 4}\cap C^{m+4}}$.
       \end{lemma}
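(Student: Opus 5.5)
The plan is to follow the proof of Lemma~\ref{lemh2} line by line, with one factor replaced by a difference. The starting point is the algebraic identity recorded just before the statement,
\[
\H^{im}[f]-\H^{im}[g]=-\frac{1}{\mu_+}e_{d+1}\cdot\Bigl((\M(\nabla f)-\M(\nabla g))\mathcal Q[f]+\M(\nabla g)\bigl(\mathcal Q[f]-\mathcal Q[g]\bigr)\Bigr).
\]
For $j\in\{0,m\}$ we apply the Hölder and Besov product estimates exactly as in Lemma~\ref{lemh2}. The $\nabla_x^j$ derivatives are distributed over the two factors, and the $\dot C^{\kappa-2}$ (respectively $\HH^{\kappa-2}$) seminorm is placed on one factor while the other factor is measured in $\dot C^{j_1}$ or $L^\infty$.

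For the first product we use the difference part of Lemma~\ref{lemcom}, with $h$ the smooth map $\b\mapsto\M(\b)$. This gives
\[
\|\nabla^{j_1}(\M(\nabla f)-\M(\nabla g))\|_{L^\infty}\lesssim t^{-\frac{j_1}{3}}\|f-g\|_T(1+\|(f,g)\|_T)^{j_1+2}.
\]
The corresponding $\dot C^{j_1+\kappa-2}$ and $\HH^{j_1+\kappa-2}$ seminorms are bounded by $|\log t|^{-\varkappa}t^{-\frac{j_1+\kappa-2}{3}}\|f-g\|_T(1+\|(f,g)\|_T)^{j_1+2}$, where the logarithm is present only in the Hölder case. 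These bounds are paired with the estimates for $\mathcal Q[f]$ from Lemma~\ref{lemH2w}.

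There is one subtlety here. Lemma~\ref{lemH2w} gives $t^{2/3}\|\mathcal Q[f]\|_{L^\infty}$ with only a single small factor $(\|f-\phi\|_{X_T}+T^{1/20}\mathfrak M_\phi)$. That single small factor is exactly what the target bound requires, since the difference contributes the factor $\|f-g\|_{X_T}$. The time weights then combine correctly: $t^{-\frac{j_1+\kappa-2}{3}}\cdot t^{-\frac{2}{3}}\cdot t^{-\frac{j_2}{3}}=t^{-\frac{j+\kappa}{3}}$. In the terms where the $\dot C^{\kappa-2}$ seminorm falls on $\mathcal Q[f]$, estimate \eqref{omeba} is used instead, and it is even quadratically small.

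For the second product, $\M(\nabla g)$ is handled by the non-difference part of Lemma~\ref{lemcom}, exactly as in Lemma~\ref{lemh2}. The factor $\mathcal Q[f]-\mathcal Q[g]$ is handled by the two contraction bounds for $\mathcal Q$ stated immediately before the lemma. The $L^\infty$ bound $t^{2/3}\|\mathcal Q[f]-\mathcal Q[g]\|_{L^\infty}\lesssim\|f-g\|_{X_T}(1+\|(f,g)\|_{X_T})^{10}$ is not itself small. This is harmless, because in every term where it appears it is multiplied by $\|\M(\nabla g)\|_{\dot C^{j_1+\kappa-2}\cap\HH^{j_1+\kappa-2}}$. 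That quantity is bounded by $t^{-\frac{j_1+\kappa-2}{3}}\|g\|_{T,*}(1+\|g\|_T)^{j_1+2}$, and by \eqref{stsm} $\|g\|_{T,*}$ is controlled by $\|g-\phi\|_T+T^{1/20}\mathfrak M_\phi$, which supplies the needed small factor. The terms carrying the Hölder or Besov seminorm of $\mathcal Q[f]-\mathcal Q[g]$ already contain the factor $\|f-g\|_{X_T}(\|(f-\phi,g-\phi)\|_{X_T}+T^{1/20}\mathfrak M_\phi)$.

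The main obstacle is the Hölder contraction estimate for $\mathcal Q[f]-\mathcal Q[g]$, which is asserted to follow ``similarly to \eqref{omeba}''. If it has to be justified, I would redo the proof of Lemma~\ref{lemH2w} for $\mathbf v^\pm$ solving \eqref{ellenew1}, using the frozen representation from Lemma~\ref{leminterf} with forcing $\mathbf F^\pm+F_{1,\b}$-type terms. The $L^\infty$ and $L^2$ bounds of Lemma~\ref{lemdif} would replace \eqref{vlinfty} and \eqref{vl2}. The $\mathbf Q_2$-type term would be treated by differencing the explicit frozen velocity formulas \eqref{dx0} and \eqref{z0} in both $\mathcal G$ and $\b$.

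The delicate point is the coefficient $\tilde{\M}^\pm$ in \eqref{S4eq48}. It depends on $z$ through $J^\pm$, so its Hölder regularity must be checked near $z=0$. Since $z\nabla a^\pm$ is small in the strip $|z|<2r$, I expect it to be controlled by $\|f-g\|_{X_T}$ in the required norms. Once these pieces are in place, summing all contributions and absorbing the powers gives the exponent $2(m+10)$.
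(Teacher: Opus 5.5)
Your proposal is correct and follows the paper's own route. You split $\H^{im}[f]-\H^{im}[g]$ into $(\M(\nabla f)-\M(\nabla g))\mathcal Q[f]$ and $\M(\nabla g)(\mathcal Q[f]-\mathcal Q[g])$, apply the product estimates of Lemma~\ref{lemh2}, and invoke the $L^\infty$ and H\"older contraction bounds for $\mathcal Q$ recorded just before the statement; as you note, the paper only asserts the H\"older one ``similarly as \eqref{omeba}''. Your tracking of the small factor, namely \eqref{omeb0} in the first product and $\|g\|_{T,*}$ through \eqref{stsm} in the second, is exactly what the paper leaves implicit when it writes ``This implies''.
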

					\section{Proof of Theorem \ref{thmgm}}\label{secpro}
We first prove a slightly more precise statement, in which the initial interface
is assumed to be close to a smooth reference profile.                    \begin{theorem}\label{thmgm'} Let $\kappa$ be such that $0<3-\kappa\ll 1$, and $m\in\mathbb{N}^+$.
			Take $\eta_0\in H^1(\mathbb{R}^d)\cap\dot C^{1,\log^\varkappa  }(\mathbb{R}^d)$ with $\varkappa>1$ such that \begin{align*}
				\operatorname{dist}(\eta_0,\Gamma^\pm)>2\mathbf{r}>0.
			\end{align*}
			There exists $0<\varepsilon_0\ll 1$ such that if $\eta_0$ further satisfies 
			\begin{align}
				\label{gmcon}
				\|\eta_0-\eta_0\ast \rho_{\eps_1}\|_{H^1\cap \dot C^{1,\log^\varkappa  }}\leq \varepsilon_0,
			\end{align}
			for some $\eps_1>0$, then there exist $T>0$, depending on $\|\eta_0\|_{H^1\cap \dot C^{1,\log^\varkappa  }}, \|\eta_0-\eta_0\ast \rho_{\varepsilon_1}\|_{H^1\cap \dot C^{1,\log^\varkappa  }}, \mathbf{r}, \varepsilon_0,\eps_1$, $m$,  and a unique solution $\eta$ to the system \eqref{sysq}-\eqref{evo} in $[0,T]$ satisfying 
			\begin{align}
				&	\sup_{t\in[0,T]}(\|\eta(t)-\eta_0\ast \rho_{\eps_1}\|_{H^1\cap \dot C^{1,\log^\varkappa  }}+t^\frac{m+\kappa}{3}|\log t|^\varkappa   \|\nabla (\eta(t)-\eta_0\ast \rho_{\eps_1})\|_{\dot C^{m+\kappa}})\leq C\|\eta_0-\eta_0\ast \rho_{\eps_1}\|_{H^1\cap \dot C^{1,\log^\varkappa  }},\label{fes1'}\\
				&\inf_{t\in[0,T]}\operatorname{dist}(\eta(t),\Gamma^\pm)>\mathbf{r}.
			\end{align}
		\end{theorem}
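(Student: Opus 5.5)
We take the reference profile $\phi:=\eta_0\ast\rho_{\eps_1}$, which is smooth with $\mathfrak M_\phi=\|\phi\|_{H^{m+4}\cap C^{m+4}}\lesssim_{\eps_1}\|\eta_0\|_{H^1\cap\dot C^{1,\log^\varkappa}}$. We then write the unknown as $\eta=\phi+h$ and work with the contour formulation \eqref{evoeta}. Since $\phi$ is time independent, $h$ solves
\[
\partial_t h-\int_{\mathbb R^d}\A(\nabla\phi(x),\alpha):\delta_\alpha\nabla^2 h(x)\frac{d\alpha}{|\alpha|^{d+1}}
=\H[\phi+h]+\int_{\mathbb R^d}\A(\nabla\phi(x),\alpha):\delta_\alpha\nabla^2\phi(x)\frac{d\alpha}{|\alpha|^{d+1}},
\qquad h|_{t=0}=\eta_0-\phi .
\]
By the remark following \eqref{defHim}, the left-hand side is of the form \eqref{eqpara} with $s=3$ and constants $c_0,c_1$ depending only on $\|\nabla\phi\|_{C^{d+m+4}}$. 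We apply Lemma~\ref{lemlog} to $u=\nabla h$, differentiating the equation once. Part i) is used in the $\dot C^{\log^\varkappa}$ scale and part ii) with $p=2$. We take $\mathcal P_\gamma$ to be a derivative ($\gamma=1$) acting on $f=\H[\phi+h]$, with $\kappa_0=\kappa-2$, and put the smooth $\phi$-forcing into $g$. Its $\int_0^T\|g\|_{\dot C^{\log^\varkappa}}$ and time-weighted parts are $O(T\,\mathfrak M_\phi)$. The $L^2$ part of $h$ itself is handled by a direct $L^2$ energy/Duhamel bound. This gives, on the ball $\mathcal X^\sigma_{T,\phi}=\{h:\|h\|_{X_T}\le\sigma,\ \operatorname{dist}(\phi+h,\Gamma^\pm)\ge\mathbf r\}$,
\[
\|h\|_{X_T}\lesssim \|\eta_0-\phi\|_{H^1\cap\dot C^{1,\log^\varkappa}}+T^{1/20}\mathfrak M_\phi+\da_T(\H[\phi+h],0).
\]

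The forcing is controlled by Lemma~\ref{lemH1} and Lemma~\ref{lemh2}, which give $\da_T\lesssim(\|h\|_{X_T}+T^{1/20}\mathfrak M_\phi)^2(1+\|h\|_{X_T}+\|\phi\|_{X_T})^{2m+20}$. The structure of these bounds is quadratic in the small quantity $\|f-\phi\|_{X_T}+T^{1/20}\mathfrak M_\phi$. So if $\varepsilon_0$ in \eqref{gmcon} is small, we may choose $\sigma\sim C\varepsilon_0$ and then $T$ small depending on $\mathfrak M_\phi,\mathbf r,m$. With these choices the right-hand side is $\le\sigma$ and the map $h\mapsto$ (solution of the linear problem with forcing $\H[\phi+h]$) sends $\mathcal X^\sigma_{T,\phi}$ into itself.

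The separation \eqref{dist*} follows from $\|h(t)-h(0)\|_{L^\infty}\lesssim\int_0^t\|\partial_th\|_{L^\infty}\lesssim t^{1/3}\cdot(\dots)$, using the weighted norms. Alternatively, $\|h(t)\|_{L^\infty}$ is small by interpolating $H^1$ with $\dot C^1$, which gives $\lesssim\sigma$. Either way, $\operatorname{dist}>2\mathbf r-o(1)>\mathbf r$.

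For the contraction, Lemmas~\ref{lemH1con} and~\ref{lemH2con} give a Lipschitz constant $\lesssim(\sigma+T^{1/20}\mathfrak M_\phi)(1+\|\phi\|)^{C}<\tfrac12$ in $\|\cdot\|_{X_T}$. In practice we run this on mollified data $\eta_0\ast\rho_\epsilon$, for which all elliptic quantities are classical. We get uniform bounds \eqref{fes1'} and pass to the limit by compactness, using the uniform $t^{(m+\kappa)/3}$ bounds and Arzel\`a--Ascoli on $[\delta,T]$, together with $C([0,T];H^1\cap\dot C^1)$ continuity from the $\log$-gain at $t=0$. Uniqueness comes from the same contraction estimate applied to two solutions with the same $\phi$.

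The main obstacle is verifying that Lemma~\ref{lemlog} actually applies with the time weights of Lemma~\ref{lemh2}. That lemma provides $|\log t|^\varkappa$ only in the Hölder part, not in the $\HH$ part, so the $L^2$ and Hölder closures must be done separately via i) and ii). There is also a small subtlety at $t\to0$, where $\H[\phi+h]$ blows up like $t^{-\kappa/3}$ and one needs $\kappa_0<\gamma$. A secondary point is making the constant in front of $\|\eta_0-\phi\|$ independent of $\phi$ except through $T$. I would secure this by noting that in Lemma~\ref{lemlog} only $T$ and $c_0,c_1$ depend on $\phi$.
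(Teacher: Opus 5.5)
Your outline is correct. It uses the same analytic core as the paper:

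1. freezing at $\phi=\eta_0\ast\rho_{\eps_1}$;
2. Lemma~\ref{lemlog} (part i) and part ii) with $p=2$), applied to $\nabla(\eta-\phi)$ with $\mathcal P_1=\nabla$ and $\kappa_0=\kappa-2$;
3. forcing bounds from Lemmas~\ref{lemH1} and~\ref{lemh2};
4. a separate $L^2$ bound;
5. uniqueness from Lemmas~\ref{lemH1con} and~\ref{lemH2con}.

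Where you differ is in how the solution is produced. The paper never iterates. It takes the classical solutions of \cite{1HuyNguyen2020} for data $\eta_0\ast\rho_\vartheta$ and shows they satisfy the closed inequality \eqref{ineeeq1}. It then runs a continuity argument on $T_\vartheta$ from \eqref{defT} to get a lifespan independent of $\vartheta$, and passes to the limit by Arzel\`a--Ascoli. The difference estimates enter only in the stability step.

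You obtain the solution as the fixed point of $h\mapsto\tilde h$ on $\mathcal X^\sigma_{T,\phi}$. This gives existence, uniqueness and Lipschitz dependence in one argument. It also removes the need for the external smooth theory and for a continuation criterion for those solutions. Your extra mollification/compactness layer is then unnecessary, since for $t>0$ every interface in the ball is already $C^{m+3}$.

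The price is the $L^2$ component. Lemma~\ref{mapl2} relies on $\partial_t(\eta-\phi)$ being the true normal velocity, which is false for the linear iterate. Write
\[
\mathcal L_\phi h=\int_{\mathbb R^d}\A(\nabla\phi(x),\alpha):\delta_\alpha\nabla^2h(x)\frac{d\alpha}{|\alpha|^{d+1}},\qquad V[\eta]=-\mu_+^{-1}e_{d+1}\cdot\M(\nabla\eta)\nabla_{x,z}v^+_\eta|_{z=0}.
\]
The iterate's forcing is then $V[\phi+h]-\mathcal L_\phi h$. You must pair it with $\tilde h$ directly: use the Green-formula $H^{1/2}$ bound \eqref{v21} for the $V$ term and interpolation in the $X_T$ norm for the $\mathcal L_\phi$ terms. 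Differences are handled the same way, using Lemma~\ref{lemdif}.

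One term is also missing from your linear bound. Differentiating the equation produces the commutator
\[
\int_{\mathbb R^d}\partial_{x_i}\bigl(\A(\nabla\phi(x),\alpha)\bigr):\delta_\alpha\nabla^2h(x)\frac{d\alpha}{|\alpha|^{d+1}},
\]
which is the first part of $R_i$ in \eqref{defR}. It belongs in $g$. By Lemma~\ref{lemR} it costs $T^{1/6}(1+\mathfrak M_\phi)^{m+3}\|h\|_{X_T}$, which is absorbed for small $T$.
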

   We begin by deriving Theorem~\ref{thmgm} from Theorem~\ref{thmgm'}.
 Since
\[
\eta_0\in H^1(\mathbb R^d)\cap
\dot C^{1,\log^{2\varkappa-1}}(\mathbb R^d),
\]
and $2\varkappa-1>\varkappa$, we have
\begin{equation}\label{mollconv}
\lim_{\varepsilon\to0}
\|\eta_0-\eta_0*\rho_\varepsilon\|_{H^1\cap\dot C^{1,\log^\varkappa}}
=0.
\end{equation}
Hence, choosing $\varepsilon_1>0$ sufficiently small, condition \eqref{gmcon}
is satisfied. Applying Theorem~\ref{thmgm'}, we obtain a unique solution $\eta$
on some interval $[0,T]$ satisfying \eqref{fes1'} and \eqref{dist*}.

It remains only to recover the estimate \eqref{fes2} in Theorem~\ref{thmgm}. Since
\[
\|\eta_0*\rho_{\varepsilon_1}\|_{H^1\cap\dot C^{1,\log^\varkappa}}
\lesssim
\|\eta_0\|_{H^1\cap\dot C^{1,\log^\varkappa}},
\]
and, for every $0<T'\leq T$,
\begin{align*}
\sup_{t\in[0,T']}
t^{\frac{m+\kappa}{3}}|\log t|^\varkappa
\|\nabla(\eta_0*\rho_{\varepsilon_1})\|_{\dot C^{m+\kappa}}
&\lesssim
T'^{\frac{m+\kappa}{3}}|\log T'|^\varkappa
\varepsilon_1^{-m-1}
\|\eta_0\|_{H^1\cap\dot C^{1,\log^\varkappa}}\\&
\leq \|\eta_0\|_{H^1\cap\dot C^{1,\log^\varkappa}},
\end{align*}
after decreasing $T'$ if necessary. This implies \eqref{fes2} and 
Theorem~\ref{thmgm} follows.\vspace{0.5cm}\\
We now prove Theorem~\ref{thmgm'}. The proof consists of three steps: a priori
estimates, construction by smooth approximation, and stability.\vspace{0.2cm}\\
					{\bf Step 1: A priori Estimates}\\
                   We first take
				\begin{align}\label{defphigm}
					\phi=\eta_0\ast \rho_{\eps_1},
				\end{align}
                and consider the system \eqref{evoeta}.
				By the condition \eqref{gmcon}, we have $\phi\in C^\infty(\mathbb{R}^d)\cap L^2(\mathbb{R}^d)$, and 
				\begin{align}\label{gmcon1}
					\|\eta_0-\phi\|_{H^1\cap \dot C^{1,\log^\varkappa  }}\leq \varepsilon_0.
				\end{align}
					We first establish a priori estimates for the solution to \eqref{evoeta}. Let $T\in(0,\frac{1}{2})$, and  $\eta\in C([0,\bar T];L^2)\cap L^\infty((0,\bar T];W^{m,\infty})$ be a solution to \eqref{evoeta} with initial data $\eta_0$. Suppose \eqref{condfff} holds. 

                    Let $0<\varepsilon_0\ll 1$ be a parameter that will be fixed later in the proof. And denote $\phi=\eta_0\ast \rho_{\varepsilon_1}$ such that \begin{align*}
                 \|\eta_0-\phi\|_{in}:=   \|\eta_0-\phi\|_{H^1\cap \dot C^{1,\log^\varkappa}}\leq \varepsilon_0.
                    \end{align*}

					By adding some terms on both sides of \eqref{evoeta}, and taking one spatial derivative $\partial_i=\partial_{x_i}$, $i=1,2,\cdots,d$, we obtain 
					\begin{equation}\label{infaeq11}
						\begin{aligned}
							&\partial_t \partial_i (\eta-\phi)(x)-\int_{\mathbb{R}^d}\A(\nabla \phi(x),\alpha):{\delta_\alpha \nabla ^2\partial_i}(\eta-\phi)(x)\frac{d\alpha}{|\alpha|^{d+1}}=\partial_i \H[\eta](x)+R_i[\eta](x),\\
							&\partial_i (\eta-\phi)|_{t=0}=\partial_i(\eta_0-\phi),
						\end{aligned}  
					\end{equation}
					where
					\begin{align}\label{defR}
						&R_i[\eta](x)=\int _{\mathbb{R}^d}\partial_{x_i}\left(\A(\nabla \phi(x),\alpha)\right):\delta_\alpha \nabla^2 (\eta-\phi)(x)\frac{d\alpha}{|\alpha|^{d+1}}+\partial_i\left(\int_{\mathbb{R}^d}\A(\nabla \phi(x),\alpha):{\delta_\alpha \nabla ^2}\phi(x)\frac{d\alpha}{|\alpha|^{d+1}}\right).
					\end{align}\vspace{0.1cm}\\
					The term $R[g]$ appears when we differentiate the frozen-coefficient equation
for $g-\phi$. Since the coefficients are frozen at the smooth profile $\phi$,
this term contains only derivatives of $\phi$ and is therefore lower order. The
following lemma shows that its contribution is small on short time intervals.\vspace{0.1cm}\\
					\begin{lemma}\label{lemR}
						Let $\|\cdot\|_{T}$ be as defined in \eqref{defnorgm} with constants $\varkappa,m,\kappa$ fixed in \eqref{consgm}. Then for any $T\in(0,\frac{1}{2})$, there holds
						\begin{align}
							&\int_0^T\|R[g](\tau)\|_{\dot C^{\log^\varkappa}\cap L^2}d\tau+\sup_{t\in[0,T]}t^{\frac{m}{3}+1}\left(|\log t|^\varkappa  \|\nabla^m R[g](t)\|_{L^\infty}+\|\nabla^m R[g](t)\|_{L^2}\right)		\nonumber\\
							&\hspace{5cm}\quad\quad	\lesssim T^\frac{1}{6}(1+\mathfrak{M}_\phi)^{m+3}(\|g-\phi\|_{T}+1),\label{esR}
						\end{align}
                        where $\mathfrak{M}_\phi=\|\phi\|_{H^{m+4}\cap C^{m+4}}$.
					\end{lemma}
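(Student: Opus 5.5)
We split $R[g]=R^{(1)}[g]+R^{(2)}$ following \eqref{defR}, with
\[
R^{(1)}_i[g](x)=\int_{\mathbb R^d}\partial_{x_i}\bigl(\A(\nabla\phi(x),\alpha)\bigr):\delta_\alpha\nabla^2(g-\phi)(x)\frac{d\alpha}{|\alpha|^{d+1}},
\qquad
R^{(2)}_i=\partial_i\Bigl(\int_{\mathbb R^d}\A(\nabla\phi(x),\alpha):\delta_\alpha\nabla^2\phi(x)\frac{d\alpha}{|\alpha|^{d+1}}\Bigr).
\]
The second term involves only $\phi$, so it is time independent and smooth. The first term carries the unknown, but it is a third-order operator acting on $g-\phi$ whose coefficient contains one derivative of $\nabla\phi$ and is therefore bounded. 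In both pieces we first use the symmetry $\A(\mathbf b,\alpha)=\A(\mathbf b,-\alpha)$ to replace $\delta_\alpha$ by $-\tfrac12\mathcal O_\alpha$ against $d\alpha/|\alpha|^d$, exactly as in Step 1 of Lemma~\ref{lemH1}. This lets us apply Lemma~\ref{lemoalin} throughout.

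\textbf{The term $R^{(2)}$.} By Lemma~\ref{lemcom}, $\|\nabla^k\A(\nabla\phi,\alpha)\|_{L^\infty_{x,\alpha}}\lesssim(1+\mathfrak M_\phi)^{k+1}$ for $k\le m+2$. The integral $\int\|\mathcal O_\alpha\nabla^{j}\phi\|_{L^\infty\cap L^2}\,d\alpha/|\alpha|^d$ is bounded by $\|\phi\|_{C^{j+2}\cap H^{j+2}}$ for $j\le m+3$, by \eqref{S2eq2}. Combining these with the Leibniz rule gives $\|\nabla^kR^{(2)}\|_{L^\infty\cap L^2}\lesssim(1+\mathfrak M_\phi)^{m+3}$ for $k\le m$, and likewise for the $\dot C^{\log^\varkappa}$ seminorm, controlled by $\|\nabla R^{(2)}\|_{L^\infty}+\|R^{(2)}\|_{L^\infty}$. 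Hence $\int_0^T\|R^{(2)}\|\,d\tau\lesssim T(1+\mathfrak M_\phi)^{m+3}$. The weighted part is bounded by $t^{m/3+1}|\log t|^\varkappa(1+\mathfrak M_\phi)^{m+3}\lesssim T^{1/6}(1+\mathfrak M_\phi)^{m+3}$, since $t^{1/2}|\log t|^\varkappa$ is bounded.

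\textbf{The term $R^{(1)}$.} For $k\le m$, distribute $\nabla^k$. Derivatives falling on $\partial_i\A(\nabla\phi,\alpha)$ cost at most $(1+\mathfrak M_\phi)^{m+2}$. The remaining factor is of the form $\int\|\mathcal O_\alpha\nabla^{2+j}(g-\phi)\|_{L^\infty\cap L^2}\,d\alpha/|\alpha|^d$ with $j\le k$. By \eqref{S2eq2} with $\nu$ small, this is bounded by $\|\nabla(g-\phi)\|_{\dot C^{j+1+\nu}}^{1/2}\|\nabla(g-\phi)\|_{\dot C^{j+1-\nu}}^{1/2}$, with the analogous $\HH$ version for $L^2$. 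By the interpolation \eqref{intp11}, applied between the $\dot C^{\log^\varkappa}\cap L^2$ and $\dot C^{m+\kappa}\cap\HH^{m+\kappa}$ parts of $\|\cdot\|_T$, this is $\lesssim|\log t|^{-\varkappa}t^{-(j+1)/3}\|g-\phi\|_T$.

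Taking $k=m$ then yields
\[
t^{m/3+1}|\log t|^\varkappa\|\nabla^mR^{(1)}\|_{L^\infty}\lesssim t^{2/3}(1+\mathfrak M_\phi)^{m+3}\|g-\phi\|_T,
\]
which is acceptable. For the low-order integral we estimate $\|R^{(1)}\|_{\dot C^{\log^\varkappa}}\lesssim\|R^{(1)}\|_{L^\infty}+\|\nabla R^{(1)}\|_{L^\infty}$. This gives an integrand of size $t^{-2/3}$, so the time integral is $\lesssim T^{1/3}(1+\mathfrak M_\phi)^{m+3}\|g-\phi\|_T$. The $L^2$ part is handled the same way, using the $\HH$ and $L^2$ components of $\|\cdot\|_T$.

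\textbf{Main obstacle.} The delicate point is the $\dot C^{\log^\varkappa}$ estimate of $R^{(1)}$ at small times. There $R^{(1)}$ genuinely involves three derivatives of $g-\phi$, controlled only with the singular weight $t^{-(2+a)/3}$. We must check that the cruder bound through $\nabla R^{(1)}$ still gives an integrable power $t^{-1+\epsilon}$. If the $t^{-2/3}$ count fails to be integrable with room to spare, we instead use the finite-difference form, estimating $\delta_\beta R^{(1)}$ via \eqref{S2eq3} with a small Hölder exponent $a'$. This gives $|\beta|^{a'}t^{-(2+a')/3}$, which is integrable and dominates $\log^{-\varkappa}(2+|\beta|^{-1})$. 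Collecting both pieces, the total is $\lesssim T^{1/6}(1+\mathfrak M_\phi)^{m+3}(\|g-\phi\|_T+1)$, which is \eqref{esR}.
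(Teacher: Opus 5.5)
Your strategy matches the paper's proof: symmetrize both pieces of $R[g]$ into the $\mathcal O_\alpha$ form, apply Lemma~\ref{lemoalin} with the interpolation \eqref{intp11}, then integrate in time. But your derivative count for $R^{(1)}$ is off by one, and this changes which of your two routes works. Since $\nabla^{2+j}(g-\phi)=\nabla^{1+j}\nabla(g-\phi)$, estimate \eqref{S2eq2} gives
\[
\int_{\mathbb R^d}\|\mathcal O_\alpha\nabla^{2+j}(g-\phi)\|_{L^\infty}\frac{d\alpha}{|\alpha|^d}\lesssim\|\nabla(g-\phi)\|_{\dot C^{j+2+\nu}}^{1/2}\|\nabla(g-\phi)\|_{\dot C^{j+2-\nu}}^{1/2}\lesssim|\log t|^{-\varkappa}t^{-\frac{j+2}{3}}\|g-\phi\|_T ,
\]
not $t^{-(j+1)/3}$. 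This is consistent with $R^{(1)}$ being third order in $g-\phi$, as you say yourself in your last paragraph.

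The weighted top-order bound still closes, giving $t^{1/3}$ instead of $t^{2/3}$. Your primary route for the time integral does not. Bounding $\|R^{(1)}\|_{\dot C^{\log^\varkappa}}$ by $\|R^{(1)}\|_{L^\infty}+\|\nabla R^{(1)}\|_{L^\infty}$ only gives $|\log t|^{-\varkappa}t^{-1}$, and
\[
\int_0^T\frac{dt}{t\,|\log t|^{\varkappa}}=\frac{|\log T|^{1-\varkappa}}{\varkappa-1},
\]
which is finite because $\varkappa>1$ but is not $\lesssim T^{1/6}$. So the finite-difference argument you list as a contingency is required, not optional.

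That argument does close. Estimate $\delta_\beta R^{(1)}$ by \eqref{S2eq3} with a small exponent $a'$; the term where $\delta_\beta$ hits $\partial_i\A(\nabla\phi,\alpha)$ is trivially bounded by $\min\{1,|\beta|\}\,t^{-2/3}$. For $|\beta|\le1$ use $|\beta|^{a'}\lesssim_{a',\varkappa}\log^{-\varkappa}(2+|\beta|^{-1})$, and for $|\beta|\ge1$ use the $L^\infty$ bound. This gives $\|R^{(1)}\|_{\dot C^{\log^\varkappa}}\lesssim t^{-(2+a')/3}$, whose time integral is $\lesssim T^{(1-a')/3}\le T^{1/6}$ for $a'\le\frac12$. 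Splitting at $|\beta|\sim t^{1/3}$ instead recovers exactly the paper's bound $\|R[g](t)\|_{\dot C^{\log^\varkappa}}\lesssim t^{-2/3}$. The $L^2$ part and $R^{(2)}$ need no change. With this replacement your argument coincides with the paper's.
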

					\begin{proof}
						By symmetry,	we have 
						\begin{align*}
							R[g](x)=&	\frac{1}{2}\int_{\mathbb{R}^d} \nabla_x\left( \A(\nabla \phi(x),\alpha)\right):\mathcal{O}_\alpha \nabla^2 (g-\phi)(x)\frac{d\alpha}{|\alpha|^{d}}+\frac{1}{2}\nabla\left(\int_{\mathbb{R}^d}\A(\nabla \phi(x),\alpha):\mathcal{O}_\alpha \nabla ^2\phi(x)\frac{d\alpha}{|\alpha|^{d}}\right).
						\end{align*}
						Then by Lemma \ref{lemoalin}, it is easy to check that 
						\begin{align*}
							\|R[g](t)\|_{\dot C^{\log^\varkappa}}&\lesssim (1+\mathfrak{M}_\phi)^3t^{-\frac{2}{3}}(\|g-\phi\|_{T}+1),\\
							\|\nabla^m R[g](t)\|_{L^\infty\cap L^2}&\lesssim (1+\mathfrak{M}_\phi)^{m+3}|\log t|^{-\varkappa}t^{-\frac{m+2}{3}}(\|g-\phi\|_{T}+1).
						\end{align*}
						Hence, we obtain  \eqref{esR}. 
						The proof is complete. 
					\end{proof}\vspace{0.2cm}\\
					We next estimate the lower-order $L^2$ component in the $X_T$ norm.
					\begin{lemma}\label{mapl2}
						Let  $T\in(0,\frac{1}{2})$,  and $f$ be a solution to the Cauchy problem \eqref{evoeta} satisfying \eqref{condfff}. Then 
						\begin{align}\label{L2}
							\sup_{t\in[0,T]}\|f(t)-\phi\|_{L^2}\leq  \|\eta_0-\phi\|_{L^2}+CT^\frac{1}{10}\|f\|_{X_T}(1+\|f\|_{X_T})^2.
						\end{align}
					\end{lemma}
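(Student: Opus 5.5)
We estimate $f-\phi$ in $L^2$ directly from the equation \eqref{evoeta}, without using any smoothing. Since $\phi$ does not depend on time, $\partial_t(f-\phi)=\partial_t f$. Integrating in time gives
\[
f(t)-\phi=\eta_0-\phi+\int_0^t \partial_t f(\tau)\,d\tau ,
\]
and hence
\[
\|f(t)-\phi\|_{L^2}\le \|\eta_0-\phi\|_{L^2}+\int_0^t\|\partial_t f(\tau)\|_{L^2}\,d\tau .
\]
So it is enough to prove the bound
\[
\|\partial_t f(\tau)\|_{L^2}\lesssim \tau^{-\theta}\|f\|_{X_T}(1+\|f\|_{X_T})^2
\]
for some $\theta\le 9/10$. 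Integrating this over $[0,t]$ then produces the factor $T^{1/10}$.

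For this bound we use the original form of the equation rather than the frozen decomposition: by \eqref{evo}/\eqref{inievo},
\[
\partial_t f=\tfrac{1}{\mu_+}\bigl(\nabla f\cdot\nabla_x v^+-\langle\nabla f\rangle^2\partial_z v^+\bigr)\big|_{z=0}.
\]
This is the Dirichlet-to-Neumann-type flux of the transmission problem \eqref{elleqeta}, so we need an $L^2$ bound on the trace of $\nabla_{x,z}v^+$ at $\{z=0\}$. We obtain it in three steps.

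First, Lemma~\ref{wL2} controls the energy $\|\nabla_{x,z}v^\pm\|_{L^2}$ by $\|\mathcal G[f]+\varrho_0 f\|_{H^{1/2}}$.

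Second, we upgrade this to a trace bound. We split $v^+=\tilde v^+_{\b}[f]+\omega^+_{\b}$. The frozen part $\tilde v^+_{\b}[f]$ has an explicit trace by \eqref{dx0} and \eqref{z0}, which is bounded in $L^2$ by $\|\mathcal G[f]\|_{\dot H^1}$. The correction $\omega^+_{\b}$ is handled by the $L^2$ ($p=2$) versions of the transmission estimates used in Lemma~\ref{lemH2w}; there $\mathcal Q[f]$ was bounded in $\HH^{\kappa-2}$, and its $L^2$ part comes from the same representation. Altogether this gives
\[
\|\partial_t f\|_{L^2}\lesssim (1+\|\nabla f\|_{L^\infty})^{C}\bigl(\|\nabla f\|_{\dot H^{2}}+\|f\|_{H^1}+\text{lower-order terms}\bigr).
\]

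Third, we interpolate using the time weights in \eqref{defnorgm}. The norm controls $\|\nabla f\|_{L^2}$ uniformly in time and $t^{(m+\kappa)/3}\|\nabla f\|_{\HH^{m+\kappa}}$. Interpolating between these yields
\[
\|\nabla f(t)\|_{\dot H^2}\lesssim t^{-2/3}\|f\|_T .
\]
The exponent $2/3$ is below $9/10$, so integrating in time gives the required factor $T^{1/3}\le T^{1/10}$ (since $T<\tfrac12$). The nonlinear coefficients, namely $\langle\nabla f\rangle$ and the factors $(1+\|\nabla f\|_{L^\infty})^{C}$, are bounded by powers of $(1+\|f\|_{X_T})$. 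To keep the stated power at $2$, we would bound $\|\partial_t f\|_{L^2}$ by the $H^{1/2}$-energy of the flux rather than a trace $L^\infty$ norm, and then absorb the remaining polynomial factors.

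The main obstacle is the second step: bounding the interface flux of $v^+$ in $L^2$, and not just in $L^\infty$ or in the Hölder norms used in Lemma~\ref{lemH2w}. The rigid boundaries and the viscosity jump prevent an explicit Fourier formula for this flux. We would attack it first by the frozen split above, and if that fails to close, by the duality argument in Lemma~\ref{wL2}: test the flux against a function in $L^2$ and use Green's formula to bound it by $\|\nabla v\|_{L^2}$ times an $H^{1/2}$ extension. That route only gives an $H^{-1/2}$ estimate, so the half derivative would have to be recovered from the higher-order $\HH^{m+\kappa}$ weight.
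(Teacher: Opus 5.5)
\textbf{There is a genuine gap in your second step.} Reducing to $\int_0^t\|\partial_t f\|_{L^2}\,d\tau$ makes everything depend on an $L^2(\mathbb R^d)$ bound for the interface flux $e_{d+1}\cdot(\M(\nabla f)\nabla_{x,z}v_f^+)|_{z=0}$, and that bound is never established.

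\begin{itemize}
\item The frozen part $\tilde v^+_{\b}[f]$ is fine.
\item The correction $\omega^+_{\b}$ is not covered by what you cite. Lemma~\ref{lemH2w} controls $\mathcal Q[f]$ only in $L^\infty$ and in the homogeneous seminorms $\dot C^{\kappa-2}$ and $\HH^{\kappa-2}$. A $\dot B^{\kappa-2}_{2,\infty}$ seminorm does not control the $L^2$ norm, so the $L^2$ part does not simply ``come from the same representation.'' Getting it would mean redoing the singular-integral bounds for the representation of Lemma~\ref{leminterf} in $L^2$, with $\b=\nabla f(x)$ selected pointwise. You do not do this.
\item Your fallback, the duality argument of Lemma~\ref{wL2}, gives only $\|\partial_t f\|_{H^{-1/2}}\lesssim(1+\|\nabla f\|_{L^\infty})^2\|\nabla_{x,z}v_f^+\|_{L^2}$. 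Extra regularity of $f$ from the time weights does not turn this into an $L^2$ bound. That would require a higher-order, tangentially differentiated energy estimate for the transmission problem in $\tilde\Omega_f^\pm$, which neither you nor the paper provides.
\end{itemize}

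So your route does not close as written.

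\textbf{The missing idea is to avoid estimating $\partial_t f$ in $L^2$ at all, and to pair the equation with $f-\phi$ instead.} Since $\phi$ is time-independent,
\[
\tfrac12\tfrac{d}{dt}\|f-\phi\|_{L^2}^2=-\mu_+^{-1}\int_{\mathbb R^d}e_{d+1}\cdot\bigl(\M(\nabla f)\nabla_{x,z}v_f^+\bigr)\big|_{z=0}\,(f-\phi)\,dx .
\]
Take an extension $\bar f$ of $f-\phi$ with $\|\bar f\|_{H^1}\lesssim\|f-\phi\|_{H^{1/2}}$. Apply Green's formula in $\tilde\Omega_f^+$, using $\operatorname{div}_{x,z}(\M(\nabla f)\nabla_{x,z}v_f^+)=0$ and the no-flux condition on $\tilde\Gamma_f^+$. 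This bounds the right-hand side by $(1+\|\nabla f\|_{L^\infty})^2\|f-\phi\|_{H^{1/2}}\|\nabla_{x,z}v_f^+\|_{L^2}$.

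This is exactly the $H^{-1/2}$--$H^{1/2}$ pairing your duality argument produces, and in this form it suffices:
\begin{itemize}
\item $\|f-\phi\|_{H^{1/2}}$ is controlled by $\|f-\phi\|_{X_T}$.
\item Lemma~\ref{wL2}, together with interpolation of the time weights in \eqref{defnorgm}, bounds $\|\nabla_{x,z}v_f^+\|_{L^2}$ by $t^{-\theta}$ for some $\theta<1$, times polynomial factors in $\|f\|_{X_T}$.
\end{itemize}
Integrating in time then yields \eqref{L2}. This is the paper's argument, and it uses nothing beyond the energy estimate of Lemma~\ref{wL2}.
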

					\begin{proof}
						We have 
						\begin{align*}
							&\partial_t (f-\phi)(x)= -\frac{1}{\mu_+}e_{d+1}\cdot  (\M(\nabla f)\nabla  _{x,z}v^+_f(x,z)|_{z=0}).
						\end{align*}
						Using $f-\phi$ as a test function to the above equation, we obtain
						\begin{align*}
							\frac{1}{2}\partial_t \big(\|f(t)-\phi\|_{L^2}^2\big)&=-\frac{1}{\mu_+}\int_{\mathbb{R}^d}e_{d+1}\cdot  (\M(\nabla f)\nabla  _{x,z}v^+_f(x,z)|_{z=0})(f-\phi)(x)dx.
						\end{align*}
						By the trace extension theorem, there exists $\bar f:\mathbb{R}^{d+1}_-\to \mathbb{R}$ such that 
						\begin{align*}
							\|\bar f\|_{H^1(\mathbb{R}^{d+1}_+)}\lesssim \|f-\phi\|_{H^\frac{1}{2}(\mathbb{R}^d)}, \ \ \ \ \ \  \bar f(x,0)=f(x)-\phi(x).
						\end{align*}
						Note that $\operatorname{div}_{x,z}  \left(\M(\nabla f)\nabla_{x,z} v^+_f(x,z)\right)=0$. Hence, by H\"{o}lder's inequality and \eqref{vl2},
						\begin{equation}\label{v21}
							\begin{aligned}
								&\left|\int_{\mathbb{R}^d}e_{d+1}\cdot  (\M(\nabla f)\nabla  _{x,z}v^+_f(x,z)|_{z=0})(f-\phi)(x)dx\right|\\&=\frac{1}{\mu_+}\left|\int_{\tilde \Omega_f^+}\operatorname{div}_{x,z}  \left(\M(\nabla f)\nabla  _{x,z}v^+_f(x,z)\bar{f}(x,z)\right)dxdz\right| \\
								&\lesssim \int_{\tilde \Omega_f^+}\left|(\nabla _{x,z}\bar{f})^\top  \M(\nabla f)\nabla  _{x,z}v^+_f(x,z)\right|dxdz\lesssim (1+\|\nabla f\|_{L^\infty})^2\|\nabla_{x,z}\bar f\|_{L^2(\mathbb{R}^{d+1}_+)}\|\nabla_{x,z}v_f^+\|_{L^2(\mathbb{R}^{d+1}_+)}\\
								&\lesssim (1+\|\nabla f\|_{L^\infty})^2\|f-\phi\|_{H^\frac{1}{2}(\mathbb{R}^d)}\|\nabla_{x,z}v_f^+\|_{L^2(\mathbb{R}^{d+1}_+)}\lesssim  t^{-\frac{2}{3}}(1+\|f\|_{X_T})^2\|f-\phi\|_{X_T}\|f\|_{X_T}.
							\end{aligned}
						\end{equation}
						Integrating  in time yields \eqref{L2}.
						This completes the proof of the lemma.
					\end{proof}
					
					\vspace{0.2cm}
					We now combine the estimates above to obtain the a priori bound for
$\|\eta-\phi\|_{X_T}$. The Schauder estimate controls the logarithmic Hölder part
of the norm, while Lemma~\ref{mapl2} gives the lower-order $L^2$ bound.  Applying Lemma \ref{lemlog}, we obtain that there exists $T=T(\mathfrak{M}_\phi)>0$ such that
					\begin{equation}\label{mapes}
						\begin{aligned}
							\|\eta-\phi\|_T\lesssim &\|\eta_0-\phi\|_{\dot  C^{1,\log^\varkappa}  \cap H^1}+\da_T(\H[\eta],R[\eta])+\da_T^2(\H[\eta],R[\eta]),
						\end{aligned}
					\end{equation}
                    where $\da_T,\da_T^p$ are defined in \eqref{defda1} and \eqref{defda2}.
					By Lemma \ref{lemH1}, Lemma \ref{lemh2} and Lemma \ref{lemR}, we have 
					\begin{equation}\label{H11}
						\begin{aligned}
						\da_T(\H[\eta],R[\eta])
							\lesssim &(\|\eta-\phi\|_{X_T}+T^\frac{1}{20}\mathfrak{M}_\phi)^2(1+\|(\eta,\phi)\|_{T})^{2(m+10)}\\
                            &\quad+T^\frac{1}{6}(1+\mathfrak{M}_\phi)^{m+3}(\|\eta-\phi\|_{X_T}+1).
						\end{aligned}
					\end{equation}
			Hence, we obtain 
					\begin{equation}\label{gpht}
						\begin{aligned}
							\|\eta-\phi\|_T
							&\leq C_1\|\eta_0-\phi\|_{\dot  C^{1,\log^\varkappa}  \cap H^1}+C_1(\|\eta-\phi\|_{X_T}+T^\frac{1}{20}\mathfrak{M}_\phi)^2(1+\|(\eta,\phi)\|_{T})^{2(m+10)}\\
                            &\quad+C_1T^\frac{1}{6}(1+\mathfrak{M}_\phi)^{m+3}(\|\eta-\phi\|_{X_T}+1).
						\end{aligned}
					\end{equation}
			On the other hand, the $L^2$ norm is bounded by  Lemma \ref{mapl2}:
					\begin{align}\label{gphl2}
						\|\eta-\phi\|_{L^\infty_TL^2}\leq  \|\eta_0-\phi\|_{L^2}+C_1T^\frac{1}{10}(1+\|\eta\|_{X_T})^2.
					\end{align}
					Then we obtain from \eqref{gpht} and \eqref{gphl2} that 
					\begin{equation}\label{ineeeq1}
						\begin{aligned}
							\|\eta-\phi\|_{X_T}&\leq C_1\|\eta_0-\phi\|_{\dot  C^{1,\log^\varkappa}  \cap H^1}+C_1(\|\eta-\phi\|_{X_T}+T^\frac{1}{20}\mathfrak{M}_\phi)^2(1+\|(\eta,\phi)\|_{T})^{2(m+10)}\\
                            &\quad+C_1T^\frac{1}{6}(1+\mathfrak{M}_\phi+\|\eta-\phi\|_{X_T})^{m+5}.
						\end{aligned}
					\end{equation}
                    We also record the propagation of the separation condition. Since the interface
moves only by the amount $|\eta(t)-\eta_0|$, and we have
					\begin{align*}
						\|\eta(t)-\eta_0\|_{L^\infty_TL^\infty}\leq \|\eta_0-\phi\|_{L^\infty}+\|\eta-\phi\|_{L^\infty_TL^\infty}\leq \|\eta_0-\phi\|_{H^1\cap \dot C^{1,\log^\varkappa  }}+\|\eta-\phi\|_{X_T}.
					\end{align*}
					Then we obtain 
				\begin{equation}\label{dist2}
				\begin{aligned}
				\inf_{t\in[0,T]}\operatorname{dist}(\eta(t),\Gamma^\pm)&\geq 	\operatorname{dist}(\eta_0,\Gamma^\pm)-\|\eta-\eta_0\|_{L^\infty_TL^\infty}\\
                &\geq 2\mathbf{r}-\|\eta_0-\phi\|_{H^1\cap \dot C^{1,\log^\varkappa  }}-\|\eta-\phi\|_{X_T}.
				\end{aligned}
				\end{equation}
					{\bf  Step 2: Existence}\\
                    We construct solutions by smooth approximation and use the a priori estimates
from Step 1 to obtain a lifespan independent of the smoothing parameter.
					For $0<\vartheta\ll 1$, denote 
					\begin{align*}
						\eta_{0\vartheta}=\eta_0\ast \rho_\vartheta.
					\end{align*}
					Applying the well-posedness theory in \cite{1HuyNguyen2020}, there exists $T_\vartheta>0$ and a unique solution $\eta_\vartheta\in C([0,T_\vartheta];H^{d+1}(\mathbb{R}^d))$ to the system \eqref{sysq}-\eqref{evo}. Without loss of generality, let 
                    \begin{align}\label{defT}
                    T_\vartheta=\sup\{T>0:\|\eta_\vartheta-\phi\|_{X_T}\leq 10 C_1 \|\eta_0-\phi\|_{in},\ \inf_{t\in[0,T]}\operatorname{dist}(\eta_\vartheta(t),\Gamma^\pm)\geq  \mathbf{r}\}.
                    \end{align}
                    Note that we can take $\vartheta$ small enough such that $\|\eta_{0\vartheta}-\phi\|_{H^1\cap \dot C^{1,\log^\varkappa}}\leq 2  \|\eta_0-\phi\|_{in}\leq 2\varepsilon_0$.
                    We claim that $T_{\vartheta}$ has a positive lower bound that is independent of $\vartheta$:
                    \begin{align}\label{Tlb}
                    T_\vartheta\geq T':=\left(\frac{\|\eta_0-\phi\|_{in}}{100+C_1+\mathfrak{M}_\phi}\right)^{100m}.
                    \end{align}
                    We prove \eqref{Tlb} by contradiction. If $ T_\vartheta<T'$, by \eqref{ineeeq1} and \eqref{dist2},
                    we know that 
                    \begin{align*}
                    \|\eta_\vartheta-\phi\|_{X_{T_\vartheta}}\leq &C_1  \|\eta_0-\phi\|_{in}+C_1(\|\eta_\vartheta-\phi\|_{X_{T_\vartheta}}+{T_\vartheta}^\frac{1}{20}\mathfrak{M}_\phi)^2(1+\|(\eta_\vartheta,\phi)\|_{T_\vartheta})^{2(m+10)}\\
                            &\quad+C_1{T_\vartheta}^\frac{1}{6}(1+\mathfrak{M}_\phi+\|\eta_\vartheta-\phi\|_{X_{T_\vartheta}})^{m+5}\\
                            \leq & 2C_1  \|\eta_0-\phi\|_{in}+C_1(10C_1  \|\eta_0-\phi\|_{in}+{T_\vartheta}^\frac{1}{20}\mathfrak{M}_\phi)^2(1+10C_1  \|\eta_0-\phi\|_{in}+  {T_\vartheta}^\frac{1}{20}\mathfrak{M}_\phi),
                    \end{align*}
                    and 
                    	\begin{equation*}
				\begin{aligned}
				\inf_{t\in[0,T_\vartheta]}\operatorname{dist}(\eta_\vartheta(t),\Gamma^\pm)
                &\geq 2\mathbf{r}-\|\eta_0-\phi\|_{in}-\|\eta_\vartheta-\phi\|_{X_{T_\vartheta}}.
				\end{aligned}
				\end{equation*}
                    By taking $\|\eta_0-\phi\|_{in}\leq \varepsilon_0:=(100+C_1+\mathbf{r})^{-100}$, we obtain 
                    \begin{align*}
                    &   C_1(10C_1  \|\eta_0-\phi\|_{in}+{T_\vartheta}^\frac{1}{20}\mathfrak{M}_\phi)^2(1+10C_1  \|\eta_0-\phi\|_{in}+  {T_\vartheta}^\frac{1}{20}\mathfrak{M}_\phi)\leq 3\|\eta_0-\phi\|_{in},\\
                 &   \|\eta_0-\phi\|_{in}+\|\eta-\phi\|_{X_{T_\vartheta}}\leq \frac{\mathbf{r}}{2}.
                    \end{align*}
              This implies 
               \begin{align*}
                    \|\eta_\vartheta-\phi\|_{X_{T_\vartheta}}\leq 5C_1 \|\eta_0-\phi\|_{in},\quad\quad\quad \inf_{t\in[0,T_\vartheta]}\operatorname{dist}(\eta(t),\Gamma^\pm)\geq \frac{3\mathbf{r}}{2}.
                    \end{align*}
                    This contradicts the definition of $T_\vartheta$ in \eqref{defT}. Hence, we obtain \eqref{Tlb}.

                    On the other hand, by the equation \eqref{evoeta}, we derive that 
                    \begin{align*}
                   \sup_{t\in[0,T_\vartheta]}t^\frac{2}{3} \|\partial_t \eta_\vartheta\|_{L^\infty}\lesssim \|\eta_\vartheta\|_{X_{T_\vartheta}}(1+\|\eta_\vartheta\|_{X_{T_\vartheta}})^{10}<\infty.
                    \end{align*}
                    Hence by Arzel\`a-Ascoli lemma,  $\{\eta_\vartheta\}$  has a subsequence that converges to a function $\eta$ on $[0,T']$, which is a solution to \eqref{evoeta} with initial data $\eta_0$, and satisfies the  estimate
                   \begin{align}
                    \|\eta-\phi\|_{X_T}\leq 10 C_1  \|\eta_0-\phi\|_{in},\ \quad\quad \operatorname{dist}(\eta(t),\Gamma^\pm)\geq  \mathbf{r}.
                \end{align}
                    \vspace{0.3cm}\\
                 		{\bf  Step 3: Stability}\\
                        We finally prove stability with respect to the initial data, which in particular
implies uniqueness.
					Let $f,g$ be the solutions to the Cauchy problem \eqref{evoeta} associated with initial data $f_0,g_0$, respectively. Then 
                    \begin{align*}
                   & \partial_t (f-g)(x)-\int_{\mathbb{R}^d}\A(\nabla\phi(x),\alpha):\delta_\alpha\nabla^2(f-g)(x)\frac{d\alpha}{|\alpha|^{d+1}}=(\H[f]-\H[g])(x),\\
                    &(f-g)|_{t=0}=f_0-g_0.
                    \end{align*}
                    Applying Lemma \ref{lemlog} to the equation of $\nabla(f-g)$ yields
                    \begin{align*}
                    \|f-g\|_T\lesssim \|f_0-g_0\|_{H^1\cap \dot C^{1,\log^\varkappa}}+\da_T(\H[f]-\H[g],\tilde R[f,g]),
                    \end{align*}
                    where 
                    \begin{align*}
                    \tilde R[f,g]=\int_{\mathbb{R}^d}\nabla (\A(\nabla\phi(x),\alpha)):\delta_\alpha \nabla^2(f-g)(x)\frac{d\alpha}{|\alpha|^{d+1}}.
                    \end{align*}
                    From Lemma \ref{lemR}, we know that 
                    \begin{align*}
                    \da_T(0,\tilde R[f,g])\lesssim T^\frac{1}{6} (1+\mathfrak{M}_\phi)\|f-g\|_{X_T}.
                    \end{align*}
					By Lemma \ref{lemH1con} and Lemma \ref{lemH2con}, we deduce that 
                    \begin{align*}
                    \da_T(\H[f]-\H[g],0)\lesssim \|f-g\|_{X_T}(\|(f-\phi,g-\phi)\|_{X_T}+T^\frac{1}{20}\mathfrak{M}_\phi)(1+\|(f,g)\|_{X_T})^{2(m+10)}.
                    \end{align*}
                    Finally, we estimate the $L^2 $ norm of $f-g$. 
                    Note that 
                    \begin{align*}
                    \partial_t (f-g)=-\frac{1}{\mu_+}e_{d+1}\cdot ((\M(\nabla f)-\tilde {\M}(x,z))\nabla_{x,z}\bar v_g^+|_{z=0})-\frac{1}{\mu_+}e_{d+1}\cdot (\M(\nabla f)\nabla_{x,z}\mathbf{v}^+|_{z=0}),
                    \end{align*}
                    where $\tilde M$, $\bar v_g^\pm$ and $\mathbf{v}^\pm$ are defined in \eqref{S4eq48}, \eqref{ellenew} and \eqref{ellenew1} respectively.
                    Similar to Lemma \ref{mapl2}, we obtain 
                    \begin{align*}
                    \|(f-g)(t)\|_{L^2}^2\lesssim  \|f_0-g_0\|_{L^2}^2+\int_0^t \|f-g\|_{H^\frac{1}{2}}(\|\nabla (f-g)\|_{L^\infty}\|\nabla_{x,z}\bar v^-_g\|_{L^2(\Omega_f^-)}+\|\nabla_{x,z}\mathbf{v}^-\|_{L^2(\Omega_f^-)})d\tau.
                    \end{align*}
                    Combining this with Lemma \ref{wL2} and  Lemma \ref{lemdif} yields
                      \begin{align*}
                    \|(f-g)(t)\|_{L^2}^2\lesssim  \|f_0-g_0\|_{L^2}^2+t^\frac{1}{3}\|f-g\|_{X_T}^2(1+\|(f,g)\|_{X_T})^{10}.
                    \end{align*}
				Hence, we obtain 
					\begin{align*}
						\|f-g\|_{X_T}&\leq \|f-g\|_{T}+\sup_{t\in[0,T]}\|(f-g)(t)\|_{L^2}\\
                        &\leq C_2 \|f_0-g_0\|_{H^1\cap \dot C^{1,\log^\varkappa}}+C_2\|f-g\|_{X_T}(\|(f-\phi,g-\phi)\|_{X_T}+T^\frac{1}{20}\mathfrak{M}_\phi)(1+\|(f,g)\|_{X_T})^{2(m+10)}\\
                        &\leq 2C_2 \|f_0-g_0\|_{H^1\cap \dot C^{1,\log^\varkappa}},
					\end{align*}
                    provided 
                    \begin{align*}
                    (\|(f-\phi,g-\phi)\|_{X_T}+T^\frac{1}{20}\mathfrak{M}_\phi)(1+\|(f,g)\|_{X_T})^{2(m+10)}\leq (100C_2)^{-1}.
                    \end{align*}
This completes the proof of Theorem~\ref{thmgm'}.					
					\section{Appendix}
                    In this appendix we prove the endpoint elliptic estimates used in the paper. We consider second-order divergence-form elliptic equations with coefficients in the log-Dini class $C^{\log^\varkappa}$, $\varkappa>1$. The proof is based on a freezing-coefficient argument together with elementary estimates for the frozen half-space kernels. Related endpoint estimates for elliptic equations can be found in \cite{Don}. \vspace{0.1cm}\\
					We start from the  Laplace's equation, for which the fundamental solution satisfies 
					\begin{align*}
						-\Delta \Phi(x)=\delta(x)~~\text{in}~~\mathbb{R}^d. 
					\end{align*}
                    where $\delta_0$ is the Dirac mass at the origin.\\
					The fundamental solution has an explicit formula
					\begin{equation}\label{defPhi}
						\Phi(x):=\begin{cases}
							&-\frac{1}{2\pi}\log|x|,\ \ \ d=2\\
							&\frac{1}{d(d-2)V(d)}\frac{1}{|x|^{d-2}},\ \ \ d\geq 3.
						\end{cases}\ \ \ \forall \ x\in\mathbb{R}^d\backslash\{0\}.
					\end{equation}
					Here $V(d)$ is the volume of the unit ball in $\mathbb{R}^d$. \vspace{0.2cm}\\
					We will sometimes slightly abuse notation and write $\Phi(x)=\Phi(|x|)$ to emphasize that the fundamental solution is radial.  We have the estimates
					\begin{align}\label{esfdphi}
						&	|\nabla^k \Phi(x)|\lesssim \frac{1}{|x|^{d-2+k}},\ \ \ \forall k\in\mathbb{N}^+,\\
						&	|\delta_\alpha \nabla^k \Phi(x)|\lesssim \left(\frac{1}{|x|^{d-2+k}}+\frac{1}{|x-\alpha|^{d-2+k}}\right)\min\left\{1,\frac{|\alpha|}{|x|}\right\},\quad\quad\quad \forall \alpha\in\mathbb{R}^d.\label{delphi}
					\end{align}
					Moreover, the fundamental solution has the following  cancellation property 
					\begin{align}\label{S6eq1}
						\int_{|x|=r}\partial_i\partial_j\Phi(x)dS(x)=0,\ \ \ \forall \ r>0,\ \ \forall \ i,j=1,\cdots,d.
					\end{align}
					For the Poisson equation 
					\begin{align}\label{poisson}
						\Delta u=\operatorname{div}f.
					\end{align}
					The solution reads $u(x)=-\int_{\mathbb{R}^d}\Phi(x-y)\operatorname{div}f(y)dy$. 
					By the classical Schauder theory, for $0<\alpha<1$,
					\begin{align*}
						\|\nabla u\|_{\dot C^\alpha}\lesssim\frac{1}{\alpha(1-\alpha)}\|f\|_{\dot C^\alpha}.
					\end{align*} 
					However, the $L^\infty\to L^\infty$ estimate does not hold. To control the $L^\infty$ norm of $\nabla u$, we need the $\dot C^{\log^\varkappa  }$ norm of $f$. In fact, by \eqref{S6eq1} and integrating by parts, we can obtain 
					\begin{align*}
						\nabla u(x)&=\int_{\mathbb{R}^d}\nabla ^2\Phi(x-y)\cdot f(y)dy\\
						&=\int_{|x-y|\leq 1}\nabla ^2\Phi(x-y)\cdot(f(y)-f(x))dy+\int_{|x-y|\geq 1}\nabla ^2\Phi(x-y)\cdot f(y)dy\\
						:&=I_1+I_2.
					\end{align*} 
					It is easy to check that 
					\begin{align*}
						|I_1|\lesssim \int_{|x|\leq 1} \frac{1}{|x|^d\log^\varkappa  (|x|^{-1}+10)}dx\|f\|_{\dot C^{\log ^\varkappa}}\lesssim \|f\|_{\dot C^{\log ^\varkappa}},
					\end{align*}
					provided $\varkappa>1$. For lower frequency, we have 
					\begin{align*}
						|I_2|\lesssim \int_{|x-y|\geq 1}\frac{1}{|x-y|^d}|f(y)|dy\lesssim \|f\|_{L^2}.
					\end{align*}
					Hence we obtain the Lipschitz estimate to the Poisson equation \eqref{poisson}
					\begin{align}\label{gaes}
						\|\nabla u\|_{L^\infty}\lesssim  \|f\|_{\dot C^{\log ^\varkappa}}+\|f\|_{L^2}.
					\end{align}
					We remark that,  when considering elliptic equation with variable coefficients, the classical method to estimate equations with non-constant coefficient is to combine the standard freezing coefficient method together with  the estimate of constant coefficient equations. But this fails when we estimate the endpoint $L^\infty$ norm. Because the norm $\|\cdot\|_{\dot C^{\log^\varkappa  }}$ on the right hand side and the $\|\cdot\|_{L^\infty}$ norm on the left hand side of \eqref{gaes} do not match. Hence the remainder term for freezing coefficient cannot be absorbed by the main term on the left hand side. To obtain the $L^\infty$ estimate of solution to \eqref{poisson}, we need to find a suitable way to freeze the coefficient and estimate carefully. 
					\vspace{0.1cm}\\
                    We next derive a representation formula for solutions to an elliptic
transmission problem with variable coefficients. For clarity, we first introduce
the notation
					\begin{align*}
                &\mathbb{R}^{d+1}_\tau=\mathbb{R}^{d}\times (0,\tau)\ \text{if}\ \tau>0,\ \ \mathbb{R}^{d+1}_\tau=\mathbb{R}^{d}\times (\tau,0)\ \text{if}\ \tau<0.
					\end{align*}
					Consider the system
					\begin{equation}\label{ellinte}
						\begin{aligned}
							&\operatorname{div}_{x,z}(A(x)\nabla_{x,z} u(x,z))=\operatorname{div}_{x,z}g(x,z),\ \ \text{in}\ \mathbb{R}^{d+1}_{2r}\cup \mathbb{R}^{d+1}_{-2r},\\
							&u(x,0^-)-u(x,0^+)=h(x),\ \ \text{on} \ \{z=0\},\\
							&e_{d+1}\cdot(\mu_+^{-1}(A(x)\nabla_{x,z} u(x,0^+)-g(x,0^+))-\mu_-^{-1}(A(x)\nabla_{x,z} u(x,0^-)-g(x,0^-)))=0,\ \ \text{on} \ \{z=0\}.
						\end{aligned}
					\end{equation}
					Suppose the variable coefficient $A(x)\in  C^{\log^\varkappa}(\mathbb{R}^d,\mathbb{R}^{(d+1)\times(d+1)}_{sym})$ is uniformly elliptic:
					$$
					c_0	\mathrm{Id}\leq A(x)\leq c_0^{-1} \mathrm{Id},\ \ \forall x\in\mathbb{R}^d.
					$$
Fix \(x_0\in \mathbb R^d\). Choose an invertible matrix
\(Q_{x_0}\in \mathbb R^{(d+1)\times (d+1)}\) such that
\begin{align}\label{defQx0}
Q_{x_0}Q_{x_0}^{\top}=A(x_0),
\qquad
(Q_{x_0})_{d+1,i}=0,\quad i=1,\dots,d.
\end{align}
Such a matrix $Q_{x_0}$ can be chosen as follows. Starting from any square root
of $A(x_0)$, we multiply it on the left by a suitable orthogonal matrix so that
the last row is parallel to $e_{d+1}$. Set
\[
R:=\operatorname{diag}(I_d,-1).
\]
For \(X=(x,z)\in \mathbb R^d\times \mathbb R\) and
\(Y=(y,w)\in \mathbb R^d\times \mathbb R\), define the reflected point
\[
Y^{*,x_0}:=Q_{x_0}R Q_{x_0}^{-1}Y.
\]
Equivalently, if
\[
Q_{x_0}=
\begin{pmatrix}
	B_{x_0} & b_{x_0}\\
	0 & q_{x_0}
\end{pmatrix},
\]
then
\begin{align}\label{refle}
    Y^{*,x_0}
=
\left(y-2\frac{b_{x_0}}{q_{x_0}}w,\,-w\right).
\end{align}
We define the frozen Dirichlet and Neumann Green functions associated with
the constant matrix $A(x_0)$ by
\begin{equation}
\label{defGtiG}
\begin{aligned}
	\G_{x_0}^D(X,Y)
	&:=
	|\det Q_{x_0}|^{-1}
	\left[
	\Phi \left(Q_{x_0}^{-1}(X-Y)\right)
	-
	\Phi\left(Q_{x_0}^{-1}(X-Y^{*,x_0})\right)
	\right],
	\\
	\G_{x_0}^N(X,Y)
	&:=
	|\det Q_{x_0}|^{-1}
	\left[
	\Phi\left(Q_{x_0}^{-1}(X-Y)\right)
	+
	\Phi\left(Q_{x_0}^{-1}(X-Y^{*,x_0})\right)
	\right].
\end{aligned}
\end{equation}
Since $Q_{x_0}^{-1}Y^{*,x_0}=RQ_{x_0}^{-1}Y$, these kernels can equivalently be written as
\begin{align*}
	\G_{x_0}^D(X,Y)
	&=
	|\det Q_{x_0}|^{-1}
	\left[
	\Phi\left(Q_{x_0}^{-1}X-Q_{x_0}^{-1}Y\right)
	-
	\Phi\left(Q_{x_0}^{-1}X-RQ_{x_0}^{-1}Y\right)
	\right],
	\\
	\G_{x_0}^N(X,Y)
	&=
	|\det Q_{x_0}|^{-1}
	\left[
	\Phi\left(Q_{x_0}^{-1}X-Q_{x_0}^{-1}Y\right)
	+
	\Phi\left(Q_{x_0}^{-1}X-RQ_{x_0}^{-1}Y\right)
	\right].
\end{align*}
By construction, these kernels satisfy the boundary conditions on $\{w=0\}$:
\begin{equation}\label{Gbdy}
\begin{aligned}
&\G_{x_0}^D(X,Y)\big|_{w=0}=0,\\
&e_{d+1}\cdot A(x_0)\nabla_Y \G_{x_0}^N(X,Y)\big|_{w=0}=0.
\end{aligned}
\end{equation}
Moreover, we have 
\begin{align}
\label{eqtilG}
\operatorname{div}_{y,w}\bigl(A(x_0)\nabla_{y,w} \G_{x_0}^D(x,z,y,w)\bigr)
=
-\delta_{(x,z)}(y,w)
+
\delta_{(x,z)^{*,x_0}}(y,w),
\end{align}
and
\begin{align*}
    \operatorname{div}_{y,w}\bigl(A(x_0)\nabla_{y,w} \G_{x_0}^N(x,z,y,w)\bigr)
=
-\delta_{(x,z)}(y,w)
-
\delta_{(x,z)^{*,x_0}}(y,w),
\end{align*}
where the identities are understood in the sense of distributions.\vspace{0.2cm}\\

			We now state the representation formula obtained by freezing the coefficient
matrix at a fixed tangential point $x_0$. The formula separates three
contributions: the bulk forcing, the jump data on the interface, and an
artificial-boundary term coming from the localization in the normal variable.
The latter is lower order and is controlled by energy norms.
				\begin{lemma}\label{leminterf}
	Fix \(x_0\in\mathbb R^d\).
	Let \(\G_{x_0}^D\) and \(\G_{x_0}^N\) be the frozen Dirichlet and
	Neumann kernels defined in \eqref{defGtiG}, and let \(X^{*,x_0}\) be the corresponding
	reflected point.
	Assume that \(u,g,h\) are smooth and decay sufficiently fast in the
	tangential variables, and that \(u\) solves \eqref{ellinte}. Then
	for every \(X=(x,z)\in\mathbb{R}_r^{d+1}\cup\mathbb{R}_{-r}^{d+1}\), one has
	\begin{equation}\label{foruintf-correct-full}
		\begin{aligned}
			u(X)
			=&
			\frac{\mu_+\mathbf 1_{\{z>0\}}+\mu_-\mathbf 1_{\{z<0\}}}
			{\mu_++\mu_-}
			\left[
			\frac1r\int_r^{2r}
			\int_{\mathbb{R}^{d+1}_\tau\cup\mathbb{R}^{d+1}_{-\tau}}
			\mathcal H_{x_0}(Y)\cdot
			\nabla_Y\G^D_{x_0}(X,Y)\,dY\,d\tau
			\right.
			\\
			&\hspace{5.5cm}
			\left.
			-
			\int_{\mathbb R^d}
			e_{d+1}\cdot
			A(x_0)\nabla_Y\G^D_{x_0}(X,(y,0))h(y)\,dy
			\right]
			\\
			&+
			\frac{\mu_+\mu_-}{\mu_++\mu_-}
			\sum_{\pm}
			\frac{1}{r\mu_\pm}
			\int_r^{2r}
			\int_{\mathbb{R}_{\pm\tau}^{d+1}}
			\mathcal H_{x_0}(Y)\cdot
			\nabla_Y\G^N_{x_0}(X,Y)\,dY\,d\tau
			\\
			&+\mathrm{LO}(u,g)(X),
		\end{aligned}
	\end{equation}
	where
	\[
	\mathcal H_{x_0}(Y):=
	g(Y)+\bigl(A(x_0)-A(y)\bigr)\nabla_Yu(Y).
	\]
	The term \(\mathrm{LO}(u,g)\) is the averaged artificial-boundary
	contribution which satisfies
	\begin{equation}\label{LO-estimate-interf}
		\left\|
		\nabla_X^n\mathrm{LO}(u,g)
		\right\|_{L^\infty(\mathbb{R}_{r/2}^{d+1}\cup\mathbb{R}_{-r/2}^{d+1})}
		\le
		C(r,n,c_0,d)
		\left(
		\|\nabla u\|_{L^2(\mathbb{R}_{2r}^{d+1}\cup\mathbb{R}_{-2r}^{d+1})}
		+
		\|g\|_{L^2(\mathbb{R}_{2r}^{d+1}\cup\mathbb{R}_{-2r}^{d+1})}
		\right),\quad\quad \forall \ n\in\mathbb N_+.
	\end{equation}
\end{lemma}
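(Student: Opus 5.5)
The plan is to derive \eqref{foruintf-correct-full} from Green's identities for the frozen operator $\operatorname{div}(A(x_0)\nabla\,\cdot)$, applied separately in the two layers $\mathbb{R}^{d+1}_{\pm\tau}$. The transmission conditions are then used to glue the two layers, and the result is averaged in $\tau\in(r,2r)$ so that the artificial boundaries $\{w=\pm\tau\}$ produce only smooth terms.

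First I rewrite the equation in frozen form:
\[
\operatorname{div}_Y\bigl(A(x_0)\nabla_Y u\bigr)=\operatorname{div}_Y\mathcal H_{x_0}
\]
in each layer. The flux condition at $w=0$ reads $e_{d+1}\cdot[\mu^{-1}(A(x_0)\nabla u-\mathcal H_{x_0})]=0$, since $A(x_0)\nabla u-\mathcal H_{x_0}=A(y)\nabla u-g$.

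Fix $X=(x,z)$ with $0<z<r$. Multiply the frozen equation by $\G^D_{x_0}(X,\cdot)$ and by $\G^N_{x_0}(X,\cdot)$ in the upper layer $\mathbb{R}^{d+1}_{\tau}$. The reflected point $X^{*,x_0}$ has negative normal coordinate, so it lies outside this layer, and \eqref{eqtilG} produces only $u(X)$. Integrating by parts twice gives two identities:
\begin{align*}
u(X)=&\int \mathcal H_{x_0}\cdot\nabla_Y\G^D - \int_{w=0} u(y,0^+)\,e_{d+1}\cdot A(x_0)\nabla_Y\G^D\,dy+(\text{terms on } \{w=\tau\}),\\
u(X)=&\int \mathcal H_{x_0}\cdot\nabla_Y\G^N + \int_{w=0} \G^N\, e_{d+1}\cdot(A(x_0)\nabla u-\mathcal H_{x_0})(y,0^+)\,dy+(\text{terms on } \{w=\tau\}),
\end{align*}
where \eqref{Gbdy} has killed the other boundary term in each identity. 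Carry out the same computation in the lower layer, using the same $X$ but with the point now exterior to that layer. There the left-hand side is $0$, up to the sign conventions.

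Next I combine the four identities. I take the Dirichlet identities with weights $\mu_+/(\mu_++\mu_-)$ and the Neumann identities with weights $\mu_+\mu_-/((\mu_++\mu_-)\mu_\pm)$. For the Dirichlet pair, on $w=0$ the difference $u(y,0^-)-u(y,0^+)=h$ appears, which gives the $h$-term. For the Neumann pair, the traces of $\G^N$ from the two sides coincide, because $\G^N(X,\cdot)$ is continuous across $w=0$ ($\Phi$ is smooth away from the pole). The fluxes weighted by $\mu_\pm^{-1}$ then cancel by the transmission condition, and this is exactly why the weights $\mu_\pm^{-1}$ appear. The remaining Dirichlet flux trace $\int u\, e_{d+1}\cdot A\nabla\G^D$ needs one more step: I pair it against the Neumann identity written for the mirrored region, or equivalently I note that $u(y,0^\pm)$ can be expressed through $h$ and the symmetric part. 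I expect that the coefficient bookkeeping here, which has to recover precisely the prefactors in \eqref{foruintf-correct-full}, is the main obstacle. I would verify it first on the constant-coefficient flat case, where $u$ is explicit via Fourier as in \eqref{deffour}, and then fix the signs.

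Finally, I average in $\tau\in(r,2r)$. The terms on $\{w=\pm\tau\}$ involve $u,\nabla u,g$ there multiplied by kernels evaluated at distance $\ge r/2$ from $X\in\mathbb{R}^{d+1}_{r/2}$. After averaging in $\tau$, these become volume integrals over $r<|w|<2r$ of $u$, $\nabla u$, $g$ against kernels that are smooth in $X$. I define $\mathrm{LO}(u,g)$ to be this averaged term. Because the estimate \eqref{LO-estimate-interf} asks for $n\ge1$, the terms containing $u$ itself can be handled by subtracting the layer mean of $u$ (Poincaré); the kernel constant is annihilated by $\nabla_X$ or controlled through $\int\nabla_Y\G$. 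Differentiating under the integral and using \eqref{esfdphi} with Cauchy--Schwarz, together with the integrability of $|\nabla^k\Phi|^2$ over $\{|X-Y|\ge r/2\}$ (the $L^2$ tail decays like $|Y|^{-2(d-1+k)}$), gives \eqref{LO-estimate-interf}.
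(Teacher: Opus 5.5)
The gap is in your lower-layer identities. You claim that for $X=(x,z)$ with $0<z<r$ the left-hand side in $\mathbb{R}^{d+1}_{-\tau}$ is $0$, because $X$ is exterior to that layer. But $\G^D_{x_0}(X,\cdot)$ and $\G^N_{x_0}(X,\cdot)$ each have two poles. By \eqref{eqtilG} the second pole sits at the reflected point $X^{*,x_0}=\bigl(x-2\frac{b_{x_0}}{q_{x_0}}z,\,-z\bigr)$, which lies in $\mathbb{R}^{d+1}_{-\tau}$ since $0<z<r<\tau$. This is the same point you correctly noted is absent from the upper layer. So the lower-layer Dirichlet identity produces $-u(X^{*,x_0})$ and the Neumann identity produces $+u(X^{*,x_0})$. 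After the interface terms are handled, what you actually obtain is the system
\[
\mu_+^{-1}u(X)+\mu_-^{-1}u(X^{*,x_0})=\mathcal B_1,\qquad u(X)-u(X^{*,x_0})=\mathcal B_2,
\]
where $\mathcal B_1$ collects the $\mu_\pm^{-1}$-weighted Neumann volume integrals and $\mathcal B_2$ is the Dirichlet volume integral minus the $h$-term, each plus artificial-boundary terms. The prefactors in \eqref{foruintf-correct-full} are simply the solution $u(X)=\frac{\mu_+\mu_-}{\mu_++\mu_-}\mathcal B_1+\frac{\mu_+}{\mu_++\mu_-}\mathcal B_2$ of this $2\times2$ system; this is the missing idea. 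The weights you wrote down are in fact correct, but only because they are the unique weights giving coefficient $1$ on $u(X)$ and $0$ on $u(X^{*,x_0})$. Under your claim that the lower layer contributes $0$, any weights giving total coefficient $1$ on $u(X)$ would seem to work, so your derivation does not determine the formula. This also explains why the bookkeeping looks like an obstacle: there is no ``remaining Dirichlet flux trace'' to pair with a mirrored Neumann identity. Since $\nabla_Y\G^D_{x_0}(X,\cdot)$ is continuous across $\{w=0\}$, the two Dirichlet interface terms combine exactly into the $h$-term. The Neumann interface terms cancel exactly by the transmission condition. The only additional unknown is $u(X^{*,x_0})$, and no Fourier check is needed. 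Your interface signs are also reversed: with outward normal $-e_{d+1}$ on $\{w=0\}$ for the upper layer, the Dirichlet trace term enters with a plus sign and the Neumann flux term with a minus sign; this is what fixes the sign of the $h$-term.

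A second, smaller gap concerns the artificial-boundary terms containing $u$ itself. Subtracting a ``layer mean'' and invoking Poincar\'e fails on the unbounded slab $\mathbb{R}^d\times(r,2r)$. There is no Poincar\'e inequality there, and a function with $\nabla u\in L^2$ need not have a mean. Your observation that $\int_{\mathbb{R}^d}\partial_X^\alpha\bigl(e_{d+1}\cdot A_0\nabla_Y\G^{\mathcal S}_{x_0}(X,(y,\pm\tau))\bigr)\,dy=0$ for $|\alpha|\ge1$ is correct, but it must be combined with the decay of the kernel. The paper writes $\partial_X^\alpha\bigl(e_{d+1}\cdot A_0\nabla_Y\G^{\mathcal S}_{x_0}\bigr)=\sum_{i\le d}\partial_{y_i}L_{\alpha,i}$, with $L_{\alpha,i}$ bounded in $L^2_y$ uniformly in $X$ and $\tau$. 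This works because tangential $X$-derivatives are constant combinations of tangential $y$-derivatives for both the direct and the reflected part, and $\partial_{ww}$ is eliminated using $\operatorname{div}_Y(A_0\nabla_Y\G^{\mathcal S}_{x_0})=0$ near the artificial boundaries. It then integrates by parts in $y$ so that only $\nabla_y u$ appears. A localized dyadic Poincar\'e argument, with an $X$-dependent constant and the $|Y-X|^{-(d+1)}$ decay of the differentiated kernel, would also work; a global mean does not.
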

\begin{proof}
	We write $
	A_0:=A(x_0)$ and 
	\[
	\mathcal E_{x_0}(Y):=\bigl(A_0-A(y)\bigr)\nabla_Yu(Y).
	\]
	Then
	\[
	A(y)\nabla_Yu
	=
	A_0\nabla_Yu-\mathcal E_{x_0},
	\qquad
	\mathcal H_{x_0}
	=
	g+\mathcal E_{x_0}.
	\]
	All integrations by parts below are first justified with a tangential
	cutoff and by excising small balls around the poles of the frozen
	kernels. The cutoff is then removed and the radii of the excised balls are
	sent to zero. We suppress these standard limiting steps in the notation.
	
	Fix \(\tau\in(r,2r)\). We first record the artificial-boundary terms.
	 For
	\(\mathcal S\in\{D,N\}\), define
	\begin{equation}\label{def-LO-tau-pm}
		\begin{aligned}
			\mathrm{LO}_{\tau,\pm}^{\mathcal S}(X)
			:=
			&\pm\int_{\mathbb R^d}
		e_{d+1}\cdot
			\bigl(A(y)\nabla_Yu-g\bigr)(y,\pm\tau)\,
			\G_{x_0}^{\mathcal S}(X,(y,\pm\tau))\,dy
			\\
			&\mp
			\int_{\mathbb R^d}
			u(y,\pm\tau)\,
			e_{d+1}\cdot A_0\nabla_Y
			\G_{x_0}^{\mathcal S}(X,(y,\pm\tau))\,dy .
		\end{aligned}
	\end{equation}
		Using the Neumann kernel in \(\mathbb{R}^{d+1}_\tau\), we obtain
	\begin{equation}\label{N-plus}
		\begin{aligned}
			&\mathbf 1_{\mathbb{R}^{d+1}_\tau}(X)u(X)
			+
			\mathbf 1_{\mathbb{R}^{d+1}_\tau}(X^{*,x_0})u(X^{*,x_0})
			\\
			&\quad =
			\int_{\mathbb{R}^{d+1}_\tau}
			\mathcal H_{x_0}(Y)\cdot\nabla_Y\G^N_{x_0}(X,Y)\,dY
			-
			\int_{\mathbb R^d}
			e_{d+1}\cdot(A\nabla_Y u-g)(y,0^+)
			\G^N_{x_0}(X,(y,0))\,dy
			+
			\mathrm{LO}_{\tau,+}^N(X).
		\end{aligned}
	\end{equation}
	The minus sign in the interface term comes from the fact that the outward
	normal of \(\mathbb{R}^{d+1}_\tau\) on \(\{w=0\}\) is \(-e_{d+1}\).
	
	Similarly, using the Neumann kernel in \(\mathbb{R}^{d+1}_{-\tau}\), whose outward
	normal on \(\{w=0\}\) is \(e_{d+1}\), we get
	\begin{equation}\label{N-minus}
		\begin{aligned}
			&\mathbf 1_{\mathbb{R}^{d+1}_{-\tau}}(X)u(X)
			+
			\mathbf 1_{\mathbb{R}^{d+1}_{-\tau}}(X^{*,x_0})u(X^{*,x_0})
			\\
			&\quad =
			\int_{\mathbb{R}^{d+1}_{-\tau}}
			\mathcal H_{x_0}(Y)\cdot\nabla_Y\G^N_{x_0}(X,Y)\,dY
			+
			\int_{\mathbb R^d}
			e_{d+1}\cdot(A\nabla_Y u-g)(y,0^-)
			\G^N_{x_0}(X,(y,0))\,dy
			+
			\mathrm{LO}_{\tau,-}^N(X).
		\end{aligned}
	\end{equation}
	Multiplying \eqref{N-plus} by \(\mu_+^{-1}\), multiplying
	\eqref{N-minus} by \(\mu_-^{-1}\), and adding the two formulas, the
	interface flux terms cancel by the transmission condition in
	\eqref{ellinte}. Therefore
	\begin{equation}\label{B1-eq}
		\begin{aligned}
			&\mu_+^{-1}
			\left[
			\mathbf 1_{\mathbb{R}^{d+1}_\tau}(X)u(X)
			+
			\mathbf 1_{\mathbb{R}^{d+1}_\tau}(X^{*,x_0})u(X^{*,x_0})
			\right]
			\\
			&\quad+
			\mu_-^{-1}
			\left[
			\mathbf 1_{\mathbb{R}^{d+1}_{-\tau}}(X)u(X)
			+
			\mathbf 1_{\mathbb{R}^{d+1}_{-\tau}}(X^{*,x_0})u(X^{*,x_0})
			\right]
			\\
			&=
			\sum_{\pm}\mu_\pm^{-1}
			\int_{\mathbb{R}_{\pm\tau}^{d+1}}
			\mathcal H_{x_0}(Y)\cdot\nabla_Y\G^N_{x_0}(X,Y)\,dY
			+
			\mathrm{LO}_\tau^N(X),
		\end{aligned}
	\end{equation}
	where
	\begin{equation*}
		\mathrm{LO}_\tau^N(X)
		:=
		\mu_+^{-1}\mathrm{LO}_{\tau,+}^N(X)
		+
		\mu_-^{-1}\mathrm{LO}_{\tau,-}^N(X).
	\end{equation*}
	
	Next we use the Dirichlet kernel. Since
	\[
	\G^D_{x_0}(X,(y,0))=0,
	\]
	the interface terms containing the conormal flux vanish. In
	\(\mathbb{R}^{d+1}_\tau\), integration by parts gives
	\begin{equation}\label{D-plus}
		\begin{aligned}
			&\mathbf 1_{\mathbb{R}^{d+1}_\tau}(X)u(X)
			-
			\mathbf 1_{\mathbb{R}^{d+1}_\tau}(X^{*,x_0})u(X^{*,x_0})
			\\
			&\quad =
			\int_{\mathbb{R}^{d+1}_\tau}
			\mathcal H_{x_0}(Y)\cdot\nabla_Y\G^D_{x_0}(X,Y)\,dY
			+
			\int_{\mathbb R^d}
			u(y,0^+)
			e_{d+1}\cdot A_0\nabla_Y\G^D_{x_0}(X,(y,0))\,dy
			+
			\mathrm{LO}_{\tau,+}^D(X).
		\end{aligned}
	\end{equation}
	Similarly, in \(\mathbb{R}^{d+1}_{-\tau}\),
	\begin{equation}\label{D-minus}
		\begin{aligned}
			&\mathbf 1_{\mathbb{R}^{d+1}_{-\tau}}(X)u(X)
			-
			\mathbf 1_{\mathbb{R}^{d+1}_{-\tau}}(X^{*,x_0})u(X^{*,x_0})
			\\
			&\quad =
			\int_{\mathbb{R}^{d+1}_{-\tau}}
			\mathcal H_{x_0}(Y)\cdot\nabla_Y\G^D_{x_0}(X,Y)\,dY
			-
			\int_{\mathbb R^d}
			u(y,0^-)
			e_{d+1}\cdot A_0\nabla_Y\G^D_{x_0}(X,(y,0))\,dy
			+
			\mathrm{LO}_{\tau,-}^D(X).
		\end{aligned}
	\end{equation}
	Adding \eqref{D-plus} and \eqref{D-minus}, and using
	\[
	u(y,0^-)-u(y,0^+)=h(y),
	\]
	we deduce that
	\begin{equation}\label{B2-eq}
		\begin{aligned}
			u(X)-u(X^{*,x_0})
			&=
			\int_{\mathbb{R}^{d+1}_\tau\cup\mathbb{R}^{d+1}_{-\tau}}
			\mathcal H_{x_0}(Y)\cdot\nabla_Y\G^D_{x_0}(X,Y)\,dY
			\\
			&\quad
			-
			\int_{\mathbb R^d}
			e_{d+1}\cdot A_0\nabla_Y\G^D_{x_0}(X,(y,0))h(y)\,dy
			+
			\mathrm{LO}_\tau^D(X),
		\end{aligned}
	\end{equation}
	where
	\begin{equation}\label{def-LO-tau-D}
		\mathrm{LO}_\tau^D(X)
		:=
		\mathrm{LO}_{\tau,+}^D(X)
		+
		\mathrm{LO}_{\tau,-}^D(X).
	\end{equation}
	Here, for \(X\in\mathbb{R}^{d+1}_\tau\cup\mathbb{R}^{d+1}_{-\tau}\), exactly one of \(X\)
	and \(X^{*,x_0}\) lies in the upper half-domain and the other lies in the
	lower half-domain.
	
	If \(X\in\mathbb{R}_r^{d+1}\), then \eqref{B1-eq} and \eqref{B2-eq} give
	\[
	\mu_+^{-1}u(X)+\mu_-^{-1}u(X^{*,x_0})=\mathcal B_1(X,\tau),
	\qquad
	u(X)-u(X^{*,x_0})=\mathcal B_2(X,\tau),
	\]
	where
	\[
	\mathcal B_1(X,\tau)
	:=
	\sum_{\pm}\mu_\pm^{-1}
	\int_{\mathbb{R}_{\pm\tau}^{d+1}}
	\mathcal H_{x_0}(Y)\cdot\nabla_Y\G^N_{x_0}(X,Y)\,dY
	+
	\mathrm{LO}_\tau^N(X)
	\]
	and
	\[
	\mathcal B_2(X,\tau)
	:=
	\int_{\mathbb{R}^{d+1}_\tau\cup\mathbb{R}^{d+1}_{-\tau}}
	\mathcal H_{x_0}(Y)\cdot\nabla_Y\G^D_{x_0}(X,Y)\,dY
	-
	\int_{\mathbb R^d}
	e_{d+1}\cdot A_0\nabla_Y\G^D_{x_0}(X,(y,0))h(y)\,dy
	+
	\mathrm{LO}_\tau^D(X).
	\]
	Solving the system yields
	\[
	u(X)
	=
	\frac{\mu_+\mu_-}{\mu_++\mu_-}\mathcal B_1(X,\tau)
	+
	\frac{\mu_+}{\mu_++\mu_-}\mathcal B_2(X,\tau).
	\]
	If \(X\in\mathbb{R}_{-r}^{d+1}\), the same identities give directly
	\[
	\mu_-^{-1}u(X)+\mu_+^{-1}u(X^{*,x_0})=\mathcal B_1(X,\tau),
	\qquad
	u(X)-u(X^{*,x_0})=\mathcal B_2(X,\tau),
	\]
	and hence
	\[
	u(X)
	=
	\frac{\mu_+\mu_-}{\mu_++\mu_-}\mathcal B_1(X,\tau)
	+
	\frac{\mu_-}{\mu_++\mu_-}\mathcal B_2(X,\tau).
	\]
	Thus, for \(X=(x,z)\in\mathbb{R}_r^{d+1}\cup\mathbb{R}_{-r}^{d+1}\),
	\[
	u(X)
	=
	\frac{\mu_+\mu_-}{\mu_++\mu_-}\mathcal B_1(X,\tau)
	+
	\frac{
		\mu_+\mathbf 1_{\{z>0\}}
		+
		\mu_-\mathbf 1_{\{z<0\}}
	}
	{\mu_++\mu_-}
	\mathcal B_2(X,\tau).
	\]
	Averaging this identity in \(\tau\in[r,2r]\) gives
	\eqref{foruintf-correct-full}, with
	\begin{equation}\label{def-LO-final}
		\begin{aligned}
			\mathrm{LO}(u,g)(X)
			:=
			&\frac{\mu_+\mu_-}{\mu_++\mu_-}
			\frac1r\int_r^{2r}\mathrm{LO}_\tau^N(X)\,d\tau
			\\
			&+
			\frac{
				\mu_+\mathbf 1_{\{z>0\}}
				+
				\mu_-\mathbf 1_{\{z<0\}}
			}
			{\mu_++\mu_-}
			\frac1r\int_r^{2r}\mathrm{LO}_\tau^D(X)\,d\tau .
		\end{aligned}
	\end{equation}
	
	It remains to prove \eqref{LO-estimate-interf}. For weak solutions, the
	averaged artificial-boundary contributions are understood as integrals
	over the slabs
	\[
	\mathbb R^d\times(r,2r),
	\qquad
	\mathbb R^d\times(-2r,-r).
	\]
	
Let $\alpha$ be a multi-index with $|\alpha|=n\ge1$. By \eqref{def-LO-tau-pm}, after applying
	\(\partial_X^\alpha\), the artificial-boundary terms containing
	\(\nabla u\) or \(g\) have the form
	\[
	\frac1r\int_r^{2r}\int_{\mathbb R^d}
	K_\alpha(X,y,\pm\tau)
	\bigl(\nabla u(y,\pm\tau),g(y,\pm\tau)\bigr)\,dy\,d\tau.
	\]
    Here $K_\alpha$ denotes a generic kernel obtained by applying $\partial_X^\alpha$ to the frozen Green kernels appearing in the artificial-boundary representation.
	Since \(X\in\mathbb{R}_{r/2}^{d+1}\cup\mathbb{R}_{-r/2}^{d+1}\) and
	\(\tau\in[r,2r]\), we have $|X-(y,\pm\tau)|\geq \frac{1}{2}r$. Moreover,
	the standard decay of the frozen Green kernels and their derivatives gives
	\[
	\sup_{X\in\mathbb{R}_{r/2}^{d+1}\cup\mathbb{R}_{-r/2}^{d+1}}
	\left(
	\frac1r\int_r^{2r}
	\|K_\alpha(X,\cdot,\pm\tau)\|_{L^2_y}^2\,d\tau
	\right)^{1/2}
	\le C(\alpha,c_0,d)r^{-\alpha-\frac{d}{2}+1}.
	\]
	By Cauchy's inequality in \((y,\tau)\), these terms are bounded by
	\[
	C(r,\alpha,c_0,d)
	\left(
	\|\nabla u\|_{L^2(\mathbb{R}_{2r}^{d+1}\cup\mathbb{R}_{-2r}^{d+1})}
	+
	\|g\|_{L^2(\mathbb{R}_{2r}^{d+1}\cup\mathbb{R}_{-2r}^{d+1})}
	\right).
	\]
	It remains to estimate the artificial-boundary terms containing \(u\).
	They have the form
	\[
	I_\alpha(X)
	=
	\frac1r\int_r^{2r}\int_{\mathbb R^d}
	\partial_X^\alpha(e_{d+1}\cdot A_0\nabla_Y
			\G_{x_0}^{\mathcal S}(X,(y,\pm\tau)) )\,
	u(y,\pm\tau)\,dy\,d\tau,
	\qquad |\alpha|\ge1,\quad \mathcal{S}\in\{D,N\}.
	\]
	We claim that
	\begin{equation}\label{tangential-divergence-kernel}
		\partial_X^\alpha (e_{d+1}\cdot A_0\nabla_Y
			\G_{x_0}^{\mathcal S}(X,(y,\pm\tau)) )
		=
		\sum_{i=1}^d\partial_{y_i}L_{\alpha,i}(X,y,\pm\tau),
	\end{equation}
    where $L_{\alpha,i}$ satisfies 
 	\begin{equation}\label{S6eq29}
	\sup_{X\in\mathbb{R}_{r/2}^{d+1}\cup\mathbb{R}_{-r/2}^{d+1}}
	\left(
	\frac1r\int_r^{2r}
	\|L_{\alpha,i}(X,\cdot,\pm\tau)\|_{L^2_y}^2\,d\tau
	\right)^{1/2}
	\le C(\alpha,c_0,d)r^{-\alpha-\frac{d}{2}+1}.
	\end{equation}
	Indeed, for the direct part of the frozen kernels, tangential
	\(X\)-derivatives are equal to minus tangential \(y\)-derivatives. For
	the reflected part, the same statement holds after passing to the
	\(Q_{x_0}\)-coordinates; equivalently, in the original variables,
	tangential \(X\)-derivatives are constant linear combinations of
	tangential \(y\)-derivatives. Thus tangential \(X\)-derivatives already
	give the required tangential divergence structure.
	
	It remains to consider normal \(X\)-derivatives. The only terms
	which are not already tangential divergences are the pure normal second
	derivatives. Since \(X\in\mathbb{R}_{r/2}^{d+1}\cup\mathbb{R}_{-r/2}^{d+1}\) and
	\(\tau\in[r,2r]\), the points \((y,\pm\tau)\) remain away from both
	poles \(X\) and \(X^{*,x_0}\). Therefore the frozen kernels are
	\(A_0\)-harmonic in a neighborhood of the artificial boundaries:
	\[
	\operatorname{div}_Y(A_0\nabla_YG)=0.
	\]
	Writing \(A_0=(a_{ij})_{1\le i,j\le d+1}\), this gives
	\[
	a_{d+1,d+1}\partial_{ww}G
	+
	2\sum_{i=1}^d a_{i,d+1}\partial_{y_iw}G
	+
	\sum_{i,j=1}^d a_{ij}\partial_{y_iy_j}G
	=
	0.
	\]
	Since \(a_{d+1,d+1}\ge c_0>0\), every occurrence of
	\(\partial_{ww}G\) can be rewritten as a tangential divergence. Since the pole is excluded, the kernel is smooth and \eqref{S6eq29} holds. Commuting
	this identity with further \(Y\)-derivatives yields the same conclusion
	for higher-order differentiated kernels. Therefore
	\eqref{tangential-divergence-kernel} holds for all \(|\alpha|\ge1\).

With the expression \eqref{tangential-divergence-kernel}, we use integration by parts in \(y\)
to obtain
	\[
	I_\alpha(X)
	=
	-\frac1r\int_r^{2r}\int_{\mathbb R^d}
	L_{\alpha}(X,y,\pm\tau)\cdot
	\nabla_yu(y,\pm\tau)\,dy\,d\tau .
	\]
 Consequently,
	\[
	|I_\alpha(X)|
	\le
	C(r,\alpha,c_0,d)
	\|\nabla u\|_{L^2(\mathbb{R}_{2r}^{d+1}\cup\mathbb{R}_{-2r}^{d+1})}.
	\]
	Combining the estimates for the \(u\)-terms, the \(\nabla u\)-terms, and
	the \(g\)-terms proves \eqref{LO-estimate-interf} for every
	\(n\in\mathbb N_+\).
\end{proof}
				
                    For later use in the estimate of the boundary term generated by the jump data, we record that the same Poisson-kernel structure remains valid for any uniformly elliptic constant matrix.
\begin{lemma}\label{remF2}
Let $A_*$ be a constant symmetric matrix satisfying$$
c_0 \mathrm{Id}\le A_*\le c_0^{-1}\mathrm{Id} .
$$
Let $\G^D_{A_*}$ denote the Dirichlet Green function in the upper or lower half-space associated with the constant-coefficient operator
$
\operatorname{div}_{X}(A_* \nabla_X).
$
For $X=(x,z),Y=(y,w)\in\mathbb R^{d+1}_\pm$, define the corresponding boundary kernel
$$
\mathcal P_{A_*}(X,y)
:=
e_{d+1}\cdot (A_*\nabla_Y\G^D_{A_*}(X,Y))\big|_{w=0}.
$$ 
For $h\in C_c^\infty(\mathbb R^d)$, set
$$
T_{A_*}[h](X)
:=
\int_{\mathbb R^d}
\mathcal P_{A_*}(X,y)h(y) dy .
$$
Then for $\varkappa>1$,
\begin{align*}
	\|\nabla_XT_{A_*}[h]\|_{L^\infty}\lesssim \|h\|_{C^{1,\log^\varkappa}}.
\end{align*}
					\end{lemma}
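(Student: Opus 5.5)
The plan is to reduce the constant-coefficient case $A_*$ to the explicit frozen Poisson kernel $K_\b$ of Lemma~\ref{Z10}, and then invoke Lemma~\ref{lemKf}. Set $u=T_{A_*}[h]$. It is the solution of the Dirichlet problem $\operatorname{div}_X(A_*\nabla_X u)=0$ in $\mathbb R^{d+1}_\pm$ with $u(\cdot,0)=\pm h$; the sign is fixed by the orientation in the Green identity. Uniqueness holds among solutions bounded together with their gradients, so it suffices to identify this solution in Fourier variables.

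First, I would normalize $A_*$ by a linear change of variables that fixes the hyperplane $\{z=0\}$ pointwise. Write $A_*=\begin{pmatrix}A'&a\\a^\top&a_{d+1}\end{pmatrix}$ with $a_{d+1}\ge c_0$. Choose a tangential linear map $x\mapsto Px$ with $PA'P^\top$ suitably normalized, and a rescaling $z\mapsto z/\sqrt{\cdot}$ of the normal variable. After this change, the operator takes the form $\operatorname{div}(\M(\b)\nabla)$, up to a positive constant factor, for some $\b\in\mathbb R^d$ with $|\b|\lesssim_{c_0}1$.

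Concretely, $\operatorname{div}(\M(\b)\nabla)$ is exactly the pullback of $\Delta$ under the shear $(x,z)\mapsto(x,z+\b\cdot x)$. Every constant symmetric positive matrix whose restriction to the hyperplane is a multiple of the identity, after tangential normalization, has this shear form after scaling $z$. The constants in these transformations depend only on $c_0$.

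In the new variables, the Dirichlet solution with boundary datum $\tilde h=h\circ P^{-1}$ is $K_\b(\cdot,z)*\tilde h$. Indeed, its Fourier transform $e^{\lambda^\mp z}\hat{\tilde h}$ is the unique decaying solution of the ODE displayed before \eqref{deflanb}, and $K_\b(\cdot,0)=\delta$ by Lemma~\ref{Z10}.

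Second, the gradient estimate. Tangential derivatives satisfy $\nabla_x(K_\b(\cdot,z)*\tilde h)=K_\b(\cdot,z)*\nabla\tilde h$. Since $K_\b\ge0$ and $\int K_\b(x,z)\,dx=C_d$ by \eqref{remintk}, this gives $\|\nabla_x u\|_{L^\infty}\lesssim\|\nabla h\|_{L^\infty}$. The normal derivative is exactly \eqref{lllkf} of Lemma~\ref{lemKf}: write $\partial_z(K_\b*\tilde h)=K_\b*\mathfrak L_{\mp,\b}\tilde h$, and use \eqref{LLLinf}, which bounds $\|\mathfrak L_{\mp,\b}\tilde h\|_{L^\infty}$ by $\|\tilde h\|_{C^{1,\log^\varkappa}}$. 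That bound in turn comes from the $\mathcal O_\alpha$ symmetrization and the second bound in \eqref{S2eq2}, which is where $\varkappa>1$ enters through the integrability of $\log^{-\varkappa}(2+|\alpha|^{-1})|\alpha|^{-d}$ near $0$. The far region $|\alpha|\ge1$ is controlled by $\|\nabla h\|_{L^\infty}$, which is part of the $C^{1,\log^\varkappa}$ norm.

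Finally, I would undo the linear change of variables. Because $P$ and the normal scaling have norms and inverse norms bounded in terms of $c_0$, the composition $h\circ P^{-1}$ has $C^{1,\log^\varkappa}$ norm comparable to that of $h$, up to the harmless change $\log(2+1/|Px|)\sim\log(2+1/|x|)$. The chain rule then gives the claimed bound.

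The main obstacle I expect is the first step. One must check that an arbitrary $A_*$ really reduces to the sheared form $\M(\b)$ while keeping the boundary fixed, so that the Dirichlet datum stays $h\circ P^{-1}$. One must also check that the boundary kernel $\mathcal P_{A_*}$, defined through the reflected Green function \eqref{defGtiG}, coincides with the Poisson kernel (up to sign and the Jacobian constant). If the shear reduction is awkward, I would instead compute directly in Fourier variables. The characteristic roots are $\lambda=(-i a\cdot\xi\pm\sqrt{a_{d+1}\,\xi^\top A'\xi-(a\cdot\xi)^2})/a_{d+1}$, and the same $\mathcal O_\alpha$ argument applies to the resulting homogeneous-of-degree-one even part, whose kernel is $|\alpha|^{-d-1}$ times a bounded angular factor obtained as in Lemma~\ref{lemz0}.
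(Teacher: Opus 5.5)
Your proposal is correct and follows essentially the paper's route. A linear change of variables preserving $\{z=0\}$ turns $\mathcal P_{A_*}$ into an explicit Poisson kernel, and then \eqref{lllkf} (that is, \eqref{LLLinf} together with \eqref{remintk}) plus the invariance of $C^{1,\log^\varkappa}$ under $h\mapsto h\circ B$ give the bound. The only difference is in the reduction. The paper uses the upper-triangular $Q_*$ of \eqref{defQx0}, which also removes the off-diagonal block by a horizontal shear, so it lands at $\b=0$ and reads off $\mathcal P_{A_*}(X,y)=|\det B_*|^{-1}\mathcal P_{\mathrm{Id}}(Q_*^{-1}X,B_*^{-1}y)$ directly from \eqref{defGtiG}. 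You stop at $\M(\b)$ and use the $\b$-uniformity of \eqref{LLLinf}; this works with $|\b|\lesssim_{c_0}1$ because the Schur complement satisfies $a_{d+1}-a^\top(A')^{-1}a\ge c_0$.

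One small correction to your commentary: the far region $|\alpha|\ge1$ in \eqref{LLLinf} is controlled by $\|h\|_{L^\infty}$ through $|\mathcal O_\alpha h|\le 4|\alpha|^{-1}\|h\|_{L^\infty}$, not by $\|\nabla h\|_{L^\infty}$. A gradient bound alone would leave the divergent integral $\int_{|\alpha|\ge1}|\alpha|^{-d}\,d\alpha$. This is harmless, since the inhomogeneous $C^{1,\log^\varkappa}$ norm contains $\|h\|_{L^\infty}$.
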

                    \begin{proof}
                        Note that $\mathcal P_{A_*}$ is an anisotropic Poisson kernel. 
          Indeed, 
          let \[
Q_{*}=
\begin{pmatrix}
	B_{*} & b_{*}\\
	0 & q_{*}
\end{pmatrix},
\] be the matrix defined in \eqref{defQx0} associated to the matrix $A_*$. 
          By the definition of the Dirichlet Green function in \eqref{defGtiG}, it is easy to check that 
          \begin{align*}
            \mathcal{P}_{A_*}(X,y)=|\det B_*|^{-1}\mathcal{P}_{\mathrm{Id}}(Q_*^{-1}X,B_*^{-1}y).
          \end{align*}
          As a result, we obtain 
          \begin{align*}
            \nabla_XT_{A_*}[h](X)= Q_*^{-\top}(\nabla_XT_{\mathrm{Id}}[h\circ B_*])(Q_*^{-1}X).
          \end{align*}
      On the other hand, we have  $$\mathcal{P}_{\mathrm{Id}}(x,z,y)=\operatorname{sgn}(z)K_\b(x-y,z)|_{\b=0},$$ where $K_\b$ is defined in \eqref{defK}.  From Lemma \ref{lemKf}, we obtain that 
      \begin{align*}
        \|\nabla_XT_{\mathrm{Id}}[h]\|_{L^\infty}\lesssim  \|\nabla_{x,z}(K_\b(\cdot,z)\ast h)(x)|_{\b=0}\|_{L^\infty}\lesssim \|h\|_{C^{1,\log^{\varkappa}}},
      \end{align*}
      with $\varkappa>1$. Then it follows that 
      \begin{align*}
        \|  \nabla_XT_{A_*}[h]\|_{L^\infty}\lesssim\|h\circ B_*\|_{C^{1,\log^{\varkappa}}}\lesssim \|h\|_{C^{1,\log^{\varkappa}}},
      \end{align*}
where  the implicit constant depends only on \(d,\varkappa\) and the ellipticity constant \(c_0\).    The proof is complete.
                
                    \end{proof}

					We have the following lemma considering the Lipschitz estimate of the solution in Lemma \ref{leminterf}.
\begin{lemma}\label{lemLip}
	Let  \(\varkappa>1\).
	Assume that
	\[
	A\in C^{\log^\varkappa}(\mathbb R^d;\mathbb R^{(d+1)\times(d+1)}_{\rm sym}),
	\qquad
	c_0\mathrm{Id}\le A(y)\le c_0^{-1}\mathrm{Id}.
	\]
There exists \(r=r(d,c_0,\mu^\pm,\varkappa,\|A\|_{\dot C^{\log^\varkappa}})\in(0,1)\) such that, if    \(u\in \dot W^{1,\infty}(\mathbb{R}_{3r}^{d+1}\cup\mathbb{R}_{-3r}^{d+1})\) is a weak solution of \eqref{ellinte} in
	\(\mathbb{R}_{3r}^{d+1}\cup\mathbb{R}_{-3r}^{d+1}\), then
	\begin{equation}\label{Lip-estimate-interface}
		\begin{aligned}
			\|\nabla_{x,z}u\|_{L^\infty(\mathbb{R}_{r/2}^{d+1}\cup\mathbb{R}_{-r/2}^{d+1})}
			\le\;&
			C\|h\|_{C^{1,\log^\varkappa}(\mathbb R^d)}
			+
			C\|g\|_{C^{\log^\varkappa} (\mathbb{R}_{3r}^{d+1}\cup\mathbb{R}_{-3r}^{d+1})}
			\\
			&+
			C(r)
			\left(
			\|\nabla_{x,z}u\|_{L^2(\mathbb{R}_{3r}^{d+1}\cup\mathbb{R}_{-3r}^{d+1})}
			+
			\|g\|_{L^2(\mathbb{R}_{3r}^{d+1}\cup\mathbb{R}_{-3r}^{d+1})}
			\right),
		\end{aligned}
	\end{equation}
	where
	$
	C=C(d,c_0,\mu_\pm,\varkappa),
	C(r)=C(d,c_0,\mu_\pm,\varkappa,r),
	$
	and
	\[
	\|g\|_{C^{\log^\varkappa} (\mathbb{R}_{3r}^{d+1}\cup\mathbb{R}_{-3r}^{d+1})}
	:=
	\|g\|_{C^{\log^\varkappa}(\mathbb{R}_{3 r}^{d+1})}+	\|g\|_{C^{\log^\varkappa}(\mathbb{R}_{-3 r}^{d+1})}
	\]
	is understood phase by phase. 
\end{lemma}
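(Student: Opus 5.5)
We will prove the estimate pointwise: fix an arbitrary $X=(x_0,z_0)$ with $0<|z_0|<r/2$, freeze the coefficient at the tangential point $x_0$, and differentiate the representation formula \eqref{foruintf-correct-full} of Lemma~\ref{leminterf} at $X$. The key choice is to freeze at the \emph{same} point $x_0$ at which we evaluate. Then the forcing $\mathcal H_{x_0}(Y)=g(Y)+(A(x_0)-A(y))\nabla_Y u(Y)$ satisfies
\[
|A(x_0)-A(y)|\lesssim \log^{-\varkappa}\bigl(2+|x_0-y|^{-1}\bigr)\|A\|_{\dot C^{\log^\varkappa}},
\]
and this decay is what compensates for the lack of an $L^\infty\to L^\infty$ bound for the second derivatives of the Green kernel.

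The differentiated formula has three pieces.

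\textbf{Step 1 (jump data $h$).} The boundary term $\int e_{d+1}\cdot A(x_0)\nabla_Y\G^D_{x_0}(X,(y,0))h(y)\,dy$ is exactly $T_{A(x_0)}[h]$. By Lemma~\ref{remF2}, $|\nabla_X T_{A(x_0)}[h]|\lesssim\|h\|_{C^{1,\log^\varkappa}}$, uniformly in $x_0$ because the constant depends only on $c_0$.

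\textbf{Step 2 (lower-order term).} By \eqref{LO-estimate-interf} with $n=1$, $\nabla\mathrm{LO}(u,g)$ is bounded by $C(r)\bigl(\|\nabla u\|_{L^2}+\|g\|_{L^2}\bigr)$.

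\textbf{Step 3 (bulk terms).} These are the terms $\int\mathcal H_{x_0}(Y)\cdot\nabla_X\nabla_Y\G^{D/N}_{x_0}(X,Y)\,dY$ over the slabs. The kernel $\nabla_X\nabla_Y\G$ is Calder\'on--Zygmund of size $|X-Y|^{-d-1}$ from the direct part and $|X-Y^{*,x_0}|^{-d-1}$ from the reflected part; the latter is comparable to the former up to constants depending on $c_0$.
\begin{itemize}
\item For the $g$-part, proceed as in the Poisson computation leading to \eqref{gaes}. Subtract $g(X)$ phase by phase. The direct kernel has the cancellation \eqref{S6eq1} on spheres truncated to the slab, and the reflected part has the analogous cancellation after the change of variables $Y\mapsto Q_{x_0}RQ_{x_0}^{-1}Y$. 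What remains is bounded by $\int_{|Y-X|<1}|X-Y|^{-d-1}\log^{-\varkappa}(2+|X-Y|^{-1})\,dY\,\|g\|_{C^{\log^\varkappa}}$, which is finite since $\varkappa>1$, plus far-field and boundary-of-slab terms. The latter are controlled by $\|g\|_{L^2}$ and $\|g\|_{L^\infty}\le\|g\|_{C^{\log^\varkappa}}$. The phase-wise H\"older seminorm suffices, because the kernel cancellations are taken within each half-slab; the interface contribution is a bounded boundary integral of $g$.
\item For the commutator part $(A(x_0)-A(y))\nabla u$, split at $|x_0-y|=\rho$ with $\rho\le r$ small. On $\{|X-Y|<\rho\}$ there is no cancellation to exploit, but we gain from the coefficient:
\[
\int_{|X-Y|<\rho}|X-Y|^{-d-1}\log^{-\varkappa}\bigl(2+|X-Y|^{-1}\bigr)\,dY\lesssim\log^{1-\varkappa}(1/\rho).
\]
This term is therefore bounded by $\epsilon(\rho)\|A\|_{\dot C^{\log^\varkappa}}\|\nabla u\|_{L^\infty(\mathbb R^{d+1}_{3r}\cup\mathbb R^{d+1}_{-3r})}$ with $\epsilon(\rho)\to0$. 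The region $|X-Y|\ge\rho$ is estimated by Cauchy--Schwarz against $\|\nabla u\|_{L^2}$ with constant $C(\rho)$.
\end{itemize}

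\textbf{Main obstacle: absorption.} This gives
\[
\|\nabla u\|_{L^\infty(\mathbb R^{d+1}_{r/2}\cup\mathbb R^{d+1}_{-r/2})}\le \text{data}+\epsilon(\rho)\,\|\nabla u\|_{L^\infty(\mathbb R^{d+1}_{3r}\cup\mathbb R^{d+1}_{-3r})},
\]
and the two domains do not match, so the error cannot be absorbed directly. I would resolve this by a scaling and covering argument.
\begin{enumerate}
\item Run the same argument with slab heights $s<t\le 3r$. The artificial-boundary term then carries a constant $C(t-s)$, polynomial in $(t-s)^{-1}$.
\item Apply a standard iteration lemma of Giaquinta type to the function $\sigma\mapsto\|\nabla u\|_{L^\infty(\mathbb R^{d+1}_\sigma\cup\mathbb R^{d+1}_{-\sigma})}$, which is finite by the assumption $u\in\dot W^{1,\infty}$.
\item Alternatively, exploit that the near-field region already lies inside $\{|w|<|z_0|+\rho\}$, and take $\rho$ much smaller than $r$ so that the near field stays inside $\mathbb R^{d+1}_{r}\cup\mathbb R^{d+1}_{-r}$.
\end{enumerate}
Choosing $r$ (hence $\rho$) small in terms of $\|A\|_{\dot C^{\log^\varkappa}}$, so that $C\epsilon<1/2$, closes the estimate. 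The bookkeeping in this absorption step, namely the polynomial dependence of $C(r)$ and the treatment of points near the artificial slab boundaries, is where I expect the real work to lie.
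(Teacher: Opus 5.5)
Most of your plan coincides with the paper's proof. Both freeze at the tangential coordinate of the evaluation point, use Lemma~\ref{remF2} for the jump, use \eqref{LO-estimate-interf} with $n=1$, subtract constants and use the log-Dini modulus for $g$, and use log-Dini smallness near the poles plus Cauchy--Schwarz away from them for the commutator.

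\textbf{The gap.} The kernel claim your commutator bound rests on is false as stated. The reflected kernel is dominated by the direct one only for $Y$ in the same phase as $X$, because there $|Q_{x_0}^{-1}X-RQ_{x_0}^{-1}Y|\ge|Q_{x_0}^{-1}X-Q_{x_0}^{-1}Y|$. For $Y$ in the opposite phase the inequality reverses. Taken separately, each of $\G^D_{x_0}$ and $\G^N_{x_0}$ then has an interior pole in that phase at
\[
X^{*,x_0}=\bigl(x_0-2b_{x_0}z_0/q_{x_0},\,-z_0\bigr).
\]
This pole sits at tangential distance of order $|z_0|$ from $x_0$ whenever $b_{x_0}\neq0$, which is the case for $A=\M(\b)$ with $\b\neq0$. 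Consequently $|A(x_0)-A(y)|$ gives no decay at scales $|Y-X^{*,x_0}|\ll|z_0|$. What remains is a constant matrix times a Calder\'on--Zygmund integral of $\nabla u$ over a ball around $X^{*,x_0}$, and $\|\nabla u\|_{L^\infty}$ does not control that. So your $\epsilon(\rho)$ bound fails unless you use the transmission structure.

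\textbf{The fix.} Set $\theta_\pm=\mu_\pm/(\mu_++\mu_-)$ and take $z_0>0$. In the $\mu$-weighted combination \eqref{foruintf-correct-full}, the opposite-phase kernel is $\theta_+\G^D_{x_0}+\theta_+\G^N_{x_0}$. Its reflected parts cancel, leaving $2\theta_+$ times the direct kernel. This is the paper's identity $\mathfrak K^-_{x_*}=2\theta_+\mathcal K_{x_*}$, and it is exactly what makes \eqref{pole-tangential-distance} hold for every relevant pole. You must invoke it explicitly. The same structure affects the $g$-part: the opposite phase then contains only the exterior direct pole, so the constant you subtract there must be $g$ at a point of that phase within distance $O(|z_0|)$ of $X$, not $g(X)$.

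\textbf{Absorption.} With that repaired, your absorption step is a valid route that differs from the paper's, provided you commit to options (1)--(2). Option (3) alone does not close: shrinking $\rho$ only replaces $\mathbb R^{d+1}_{3r}\cup\mathbb R^{d+1}_{-3r}$ by $\mathbb R^{d+1}_{r/2+\rho}\cup\mathbb R^{d+1}_{-r/2-\rho}$, which is still larger than the left-hand domain.

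To run (1)--(2), proceed as follows.
\begin{itemize}
\item For $r/2\le s<t\le 3r$, place the artificial boundaries in the middle third of $[s,t]$, rederiving Lemma~\ref{leminterf} with that averaging window.
\item Take $\rho=\min\{\rho_0,(t-s)/3\}$, with $\rho_0$ fixed so that $C\epsilon(\rho_0)\|A\|_{\dot C^{\log^\varkappa}}\le\frac12$.
\item With $\phi(\sigma)=\|\nabla u\|_{L^\infty(\mathbb R^{d+1}_\sigma\cup\mathbb R^{d+1}_{-\sigma})}$, this gives $\phi(s)\le\frac12\phi(t)+M(t-s)^{-\beta}+N$. Here $M$ collects the $L^2$ norms and $N=C\|h\|_{C^{1,\log^\varkappa}}+C\|g\|_{C^{\log^\varkappa}}$.
\item Since $\phi(3r)<\infty$ by the $\dot W^{1,\infty}$ assumption, the iteration lemma concludes.
\end{itemize}

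The paper never varies the slab. It bounds the error by $c(r)\|A\|_{\dot C^{\log^\varkappa}}\|\nabla u\|_{L^\infty(\mathbb R^{d+1}_{2r}\cup\mathbb R^{d+1}_{-2r})}$. It then splits off $\mathcal A_r=\mathbb R^d\times([r/2,2r]\cup[-2r,-r/2])$, which lies away from the interface, and controls it by the standard interior Lipschitz estimate for Dini-continuous coefficients. The remainder is absorbed by choosing $r$ small.

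The paper's route is shorter but imports an external interior regularity theorem. Yours is self-contained, since it effectively re-proves that interior estimate by the same freezing argument. It also places the smallness condition on $\rho_0$ rather than on $r$. Finally, it keeps the coefficient of $\|g\|_{C^{\log^\varkappa}}$ independent of $r$, whereas the paper's annulus estimate introduces a $C(r)$ in front of it. The price is redoing the representation formula with movable artificial boundaries and tracking the polynomial dependence of its constants on $(t-s)^{-1}$.
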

\begin{proof}
	It suffices to prove the estimate for smooth solutions with sufficient
	decay in the tangential variables. The estimate for weak solutions follows
	by the approximation argument in Lemma \ref{leminterf}, together with the
	differentiated lower-order estimate \eqref{LO-estimate-interf}.
	
	For \(\tau>0\), recall that
	\begin{equation*}
		\mathbb R^{d+1}_{\tau}:=\mathbb R^d\times(0,\tau),
		\qquad
		\mathbb R^{d+1}_{-\tau}:=\mathbb R^d\times(-\tau,0).
	\end{equation*}
	For \(\diamond \in\{+,-\}\), we also use the shorthand notation
	\begin{equation*}
		\mathbb R^{d+1}_{\diamond \tau}
		:=
		\begin{cases}
			\mathbb R^{d+1}_{\tau}, & \diamond =+,\\
			\mathbb R^{d+1}_{-\tau}, & \diamond =-.
		\end{cases}
	\end{equation*}
	All \(C^{\log^\varkappa}\)-norms on
	\(\mathbb R^{d+1}_{\tau}\cup\mathbb R^{d+1}_{-\tau}\) are understood
	phase by phase.
	
	We first prove the estimate in the upper phase. The lower-phase estimate is
	obtained by repeating the same argument with the roles of \(+\) and \(-\)
	interchanged. Fix
	\[
	X_*=(x_*,z_*)\in \mathbb R^{d+1}_{r/2}.
	\]
	In the representation formula \eqref{foruintf-correct-full}, choose
	\[
	x_0=x_*,
	\qquad
	A_0=A(x_*),
	\]
	and keep this frozen point fixed when differentiating with respect to
	\(X=(x,z)\). We then evaluate the differentiated identity at \(X=X_*\).
	
	Set
	\begin{equation*}
		\theta_+:=\frac{\mu_+}{\mu_++\mu_-},
		\qquad
		\theta_-:=\frac{\mu_-}{\mu_++\mu_-}.
	\end{equation*}
	We define the frozen transmission kernel by
	\begin{equation*}
		\mathfrak K_{x_*}^{\diamond}(X,Y)
		:=
		\theta_+ \G_{x_*}^D(X,Y)
		+
		\theta_{-\diamond}\G_{x_*}^N(X,Y),\quad\quad \diamond\in\{+,-\}.
	\end{equation*}
	Then the body terms in \eqref{foruintf-correct-full} can be written as
	\begin{equation*}
		\sum_{\sigma=\pm}
		\frac1r\int_r^{2r}
		\int_{\mathbb R^{d+1}_{\diamond\tau}}
		\mathcal H_{x_*}(Y)\cdot
		\nabla_Y\mathfrak K_{x_*}^{\diamond}(X,Y)\,dY\,d\tau,
	\end{equation*}
	where, for \(Y=(y,w)\),
	\begin{equation*}
		\mathcal H_{x_*}(Y)
		:=
		g(Y)+\bigl(A(x_*)-A(y)\bigr)\nabla_Yu(Y).
	\end{equation*}
	The boundary jump term is
	\begin{equation*}
		-\theta_+
		\int_{\mathbb R^d}
		e_{d+1}\cdot A(x_*)\nabla_Y\G_{x_*}^D(X,(y,0))h(y)\,dy,
	\end{equation*}
	and the remaining term is denoted by \(\mathrm{LO}(u,g)(X)\).
	
	We next record the kernel structure used below. Write
	\begin{equation*}
		\G_{x_*}^D=\mathcal K_{x_*}-\mathcal K_{x_*}^{\rm ref},
		\qquad
		\G_{x_*}^N=\mathcal K_{x_*}+\mathcal K_{x_*}^{\rm ref},
	\end{equation*}
	where \(\mathcal K_{x_*}\) is the direct frozen fundamental solution and
	\(\mathcal K_{x_*}^{\rm ref}\) is the reflected part. Hence
	\begin{equation*}
		\mathfrak K_{x_*}^{\diamond}
		=
		(\theta_++\theta_{-\diamond})\mathcal K_{x_*}
		+
		(-\theta_++\theta_{-\diamond})\mathcal K_{x_*}^{\rm ref}.
	\end{equation*}
	Since \(X_*\in \mathbb R^{d+1}_{r/2}\), the reflected pole is absent from
	the opposite phase. More precisely,
	\begin{equation*}
		\mathfrak K_{x_*}^{-}
		=
		2\theta_+\,\mathcal K_{x_*}.
	\end{equation*}
	The remaining direct pole may still approach the interface as \(z_*\to0\),
	so the opposite-phase contribution is treated as a near-boundary singular
	term, not as a uniformly smooth error.
	
	For each \(\diamond \in\{+,-\}\) and \(\tau\in[r,2r]\), let
	\(\mathcal P_{\diamond ,\tau}\) denote the finite set of relevant poles of
	\[
	\nabla_X\nabla_Y\mathfrak K_{x_*}^{\diamond }(X_*,Y)
	\]
	with respect to the phase \(\mathbb R^{d+1}_{\diamond \tau}\). These are the
	poles which either lie in \(\mathbb R^{d+1}_{\diamond \tau}\), or lie outside
	\(\mathbb R^{d+1}_{\diamond \tau}\) but have distance \(\lesssim r\) from this
	phase. All other poles are uniformly separated and will be included in the
	regular part. For \(P=(p,s)\in\mathcal P_{\diamond ,\tau}\), set
	\begin{equation*}
		a_P:=
		\operatorname{dist}\bigl(P,\mathbb R^{d+1}_{\diamond \tau}\bigr).
	\end{equation*}
	Thus \(a_P=0\) if \(P\in\mathbb R^{d+1}_{\diamond \tau}\), while \(a_P>0\) if
	\(P\) is an exterior pole. For every such pole,
	\begin{equation}\label{pole-tangential-distance}
		|p-x_*|\le C a_P .
	\end{equation}
	For the direct pole this is immediate. For the reflected pole it follows
	from the explicit frozen reflection formula and from the fact that its
	normal distance to the relevant phase is comparable to \(|z_*|\).
	
	For \(P\in\mathcal P_{\diamond ,\tau}\), define
	\begin{equation*}
		\mathcal N_P
		:=
		\left\{
		Y\in\mathbb R^{d+1}_{\diamond \tau}:\ |Y-P|\le cr
		\right\}.
	\end{equation*}
	On \(\mathcal N_P\), one has \(|Y-P|\ge a_P\), and the frozen kernel satisfies
	\begin{equation}\label{kernel-singular-bound}
		\left|
		\nabla_X\nabla_Y\mathfrak K_{x_*}^{\diamond }(X_*,Y)
		\right|
		\le C|Y-P|^{-(d+1)}.
	\end{equation}
	Here the distance may be taken either in the Euclidean metric or in the
	equivalent frozen metric. Away from the union of the neighborhoods
	\(\mathcal N_P\), the differentiated kernels have \(L^2\)-norm bounded by
	\(C(r)\), uniformly in
	\(X_*\in\mathbb R^{d+1}_{r/2}\cup\mathbb R^{d+1}_{-r/2}\) and
	\(\tau\in[r,2r]\).

On $\mathcal{N}_P$, the singular part of $\nabla_X\nabla_Y\mathfrak{K}^\diamond_{x_\ast}(X_\ast,Y)$ generated by the pole $P$ will be denoted by $\mathcal K_P(Y)$. Thus, near the relevant poles,
\begin{equation*}
	\nabla_X\nabla_Y\mathfrak{K}^\diamond_{x_\ast}(X_\ast,Y)
=
	\sum_{P\in\mathcal P_{\diamond,\tau}}\mathcal K_P(Y)
	+
	\mathcal R_{\diamond,\tau}(Y),
\end{equation*}
where $\mathcal R_{\diamond,\tau}$ contains the pole contributions separated from the phase and the regular remainder.

We record the cancellation of $\mathcal K_P$. Define the frozen distance from $P$ by
\begin{equation*}
	\rho_P(Y):=\left|Q_{x_\ast}^{-1}(Y-P)\right|.
\end{equation*}
After the affine change $Z=Q_{x_\ast}^{-1}(Y-P)$, the kernel $\mathcal K_P$ is a constant matrix multiple of $\nabla^2\Phi(Z)$. Since $\nabla^2\Phi$ is homogeneous of degree $-(d+1)$, we may write
\begin{equation*}
	\mathcal K_P(Y)
=
	\rho_P(Y)^{-(d+1)}
	\Omega_P\left(
	\frac{Q_{x_\ast}^{-1}(Y-P)}{\rho_P(Y)}
	\right),
\end{equation*}
where $\Omega_P$ is smooth on $\mathbb S^d$. From the cancellation property of $\Phi$ in \eqref{S6eq1},   for every $0<\varepsilon<R$,
\begin{equation}\label{cancelP}
	\int_{\varepsilon<\rho_P(Y)<R}\mathcal K_P(Y) dY=0.
\end{equation}
Thus the cancellation holds on the $A_\ast(x_*)$-deformed annuli centered at $P$.
    
	We now estimate the four terms in the differentiated representation formula.
	
	First, consider the jump term
	\begin{equation*}
		T_h(X)
		:=
		\int_{\mathbb R^d}
		e_{d+1}\cdot A(x_*)\nabla_Y\G_{x_*}^D(X,(y,0))h(y)\,dy.
	\end{equation*}
	By Lemma \ref{remF2}, we have
	\begin{equation}\label{Th-est}
		|\nabla_XT_h(X_*)|
		\lesssim
		\|h\|_{C^{1,\log^\varkappa}(\mathbb R^d)}.
	\end{equation}
	
	Next consider the \(g\)-part:
	\begin{equation*}
		T_g(X)
		:=
		\sum_{\diamond =\pm}
		\frac1r\int_r^{2r}
		\int_{\mathbb R^{d+1}_{\diamond \tau}}
		g(Y)\cdot
		\nabla_Y\mathfrak K_{x_*}^{\diamond }(X,Y)\,dY\,d\tau .
	\end{equation*}
	We decompose the kernel into singular pieces near the poles
	\(P\in\mathcal P_{\diamond ,\tau}\) and a regular piece. On each singular
	neighborhood \(\mathcal N_P\), we subtract a phasewise constant \(g_P\).
	If \(P\in\mathbb R^{d+1}_{\diamond \tau}\), we take \(g_P=g(P)\). If
	\(P\notin\mathbb R^{d+1}_{\diamond \tau}\), we choose
	\(P^\sharp\in\mathbb R^{d+1}_{\diamond \tau}\) such that
	\begin{equation*}
		|P^\sharp-P|\le 2a_P,\qquad |Y-P^\sharp|\le |P-P^\sharp|+|Y-P|\le 3|Y-P|,
	\end{equation*}
	and set \(g_P=g(P^\sharp)\).
	
	The constant part is treated by inserting and subtracting the corresponding
	principal value integral over the full frozen phase. This full singular
	integral vanishes by the cancellation of the frozen Calderon--Zygmund
	kernel, see \eqref{cancelP}. The error produced by the localization is supported in an annular
	region separated from the pole and is therefore included in the regular
	part. Hence the singular contribution is reduced to the integral with
	\(g-g_P\). By the phasewise logarithmic modulus of \(g\),
	\begin{equation}\label{g-minus-gP}
		|g(Y)-g_P|
		\le
		C\|g\|_{C^{\log^\varkappa}
			(\mathbb R^{d+1}_{2r}\cup\mathbb R^{d+1}_{-2r})}
		\log^{-\varkappa}(2+|Y-P|^{-1}).
	\end{equation}
	Combining \eqref{g-minus-gP} with \eqref{kernel-singular-bound}, we obtain
	\begin{equation*}
		\int_{\mathcal N_P}
		\left|
		\nabla_X\nabla_Y\mathfrak K_{x_*}^{\sigma}(X_*,Y)
		\right|
		|g(Y)-g_P|\,dY
		\lesssim
		\|g\|_{C^{\log^\varkappa}}
		\int_0^{Cr}
		\frac{d\ell}{\ell\log^\varkappa(2+\ell^{-1})}.
	\end{equation*}
	Since \(\varkappa>1\), the last integral is finite (note that $r< 1$). The regular pieces are
	estimated by Cauchy's inequality and the \(L^2\)-bounds of the differentiated
	kernels. Therefore
	\begin{equation}\label{Tg-est}
		|\nabla_XT_g(X_*)|
		\le
		C\|g\|_{C^{\log^\varkappa}
			(\mathbb R^{d+1}_{2r}\cup\mathbb R^{d+1}_{-2r})}
		+
		C(r)\|g\|_{L^2
			(\mathbb R^{d+1}_{2r}\cup\mathbb R^{d+1}_{-2r})}.
	\end{equation}
		We now estimate the coefficient-freezing error
	\begin{equation*}
		T_A(X)
		:=
		\sum_{\diamond=\pm}
		\frac1r\int_r^{2r}
		\int_{\mathbb R^{d+1}_{\diamond \tau}}
		\bigl(A(x_*)-A(y)\bigr)\nabla_Yu(Y)\cdot
		\nabla_Y\mathfrak K_{x_*}^{\diamond }(X,Y)\,dY\,d\tau .
	\end{equation*}
	For each \(\diamond \) and \(\tau\), choose
	a smooth cutoff \(\chi_{\tau}\) such that
	\begin{equation*}
		\begin{aligned}
		    &0\le \chi_{\diamond,\tau}\le1,\qquad
		\operatorname{supp}\chi_{\diamond,\tau}\subset\bigcup_{P\in\mathcal P_{\diamond ,\tau}}\mathcal N_P,\\
        &\chi_{\diamond,\tau}\equiv1
		\quad\text{on}\quad
		\left\{
		Y\in\mathbb R^{d+1}_{\diamond \tau}:\ \operatorname{dist}(Y,\cup_{P\in\mathcal P_{\diamond ,\tau}}\mathcal N_P)\le cr/2
		\right\}.
		\end{aligned}
	\end{equation*}
	Set
	\begin{equation*}
		\chi_{0,\diamond ,\tau}
		:=
		1-\chi_{\diamond,\tau}.
	\end{equation*}
	We decompose
	\begin{equation*}
		T_A=T_A^{\rm sing}+T_A^{\rm reg},
	\end{equation*}
	where
	\begin{align}
		T_A^{\rm sing}(X)
		:=
		\sum_{\diamond =\pm}
		\frac1r\int_r^{2r}
		\int_{\mathbb R^{d+1}_{\diamond \tau}}
		&\chi_{\diamond,\tau}(Y)
		\bigl(A(x_*)-A(y)\bigr)\nabla_Yu(Y)
		\nonumber\\
		&\cdot
		\nabla_Y\mathfrak K_{x_*}^{\diamond }(X,Y)\,dY\,d\tau ,
		\nonumber
	\end{align}
	and
	\begin{align}
		T_A^{\rm reg}(X)
		:=
		\sum_{\diamond =\pm}
		\frac1r\int_r^{2r}
		\int_{\mathbb R^{d+1}_{\diamond \tau}}
		&\chi_{0,\diamond ,\tau}(Y)
		\bigl(A(x_*)-A(y)\bigr)\nabla_Yu(Y)
		\nonumber\\
		&\cdot
		\nabla_Y\mathfrak K_{x_*}^{\diamond }(X,Y)\,dY\,d\tau .
		\nonumber
	\end{align}
	We first estimate the singular part. Let
	\(P=(p,s)\in\mathcal P_{\diamond ,\tau}\). For
	\(Y=(y,w)\in\mathcal N_P\), using \eqref{pole-tangential-distance} and
	\(|Y-P|\ge a_P\), we get
	\begin{equation*}
		|y-x_*|
		\le |y-p|+|p-x_*|
		\le C(|Y-P|+a_P)
		\le C|Y-P|.
	\end{equation*}
	Therefore, by the logarithmic modulus of \(A\),
	\begin{equation}\label{A-log-modulus}
		|A(x_*)-A(y)|
		\le
		C\|A\|_{\dot C^{\log^\varkappa}(\mathbb R^d)}
		\log^{-\varkappa}(2+|Y-P|^{-1}),
	\end{equation}
	where
	\[
	\|A\|_{\dot C^{\log^\varkappa}(\mathbb R^d)}
	:=
	\sup_{x\ne y}
	\frac{|A(x)-A(y)|}
	{\log^{-\varkappa}(2+|x-y|^{-1})}.
	\]
	Combining \eqref{A-log-modulus} with \eqref{kernel-singular-bound}, we obtain
	\begin{align}
		|\nabla_XT_A^{\rm sing}(X_*)|
		&\le
		C\|A\|_{\dot C^{\log^\varkappa}(\mathbb R^d)}
		\|\nabla u\|_{L^\infty
			(\mathbb R^{d+1}_{2r}\cup\mathbb R^{d+1}_{-2r})}
		\nonumber\\
		&\qquad\qquad\times
		\frac1r\int_r^{2r}
		\sum_{\diamond =\pm}
		\sum_{P\in\mathcal P_{\diamond ,\tau}}
		\int_0^{Cr}
		\frac{d\ell}{\ell\log^\varkappa(2+\ell^{-1})}
		\,d\tau
		\nonumber\\
		&\le
		c(r)\|A\|_{\dot C^{\log^\varkappa}(\mathbb R^d)}
		\|\nabla u\|_{L^\infty
			(\mathbb R^{d+1}_{2r}\cup\mathbb R^{d+1}_{-2r})},
		\nonumber
	\end{align}
	where
	\begin{equation*}
		c(r)
		:=
		C\int_0^{Cr}
		\frac{d\ell}{\ell\log^\varkappa(2+\ell^{-1})}
		\longrightarrow0
		\qquad\text{as } r\to0.
	\end{equation*}
	This is the only place where the log-Dini condition \(\varkappa>1\) is used
	in an essential way.
	
	We next estimate the regular part. On the support of
	\(\chi_{0,\diamond,\tau}\), the kernel is separated from all relevant interior
	and exterior poles. Hence
	\begin{equation*}
		\sup_{X_*\in\mathbb R^{d+1}_{r/2}}
		\left(
		\frac1r\int_r^{2r}
		\left\|
		\chi_{0,\diamond,\tau}
		\nabla_X\nabla_Y\mathfrak K_{x_*}^{\diamond }
		(X_*,\cdot)
		\right\|_{L^2(\mathbb R^{d+1}_{\diamond \tau})}^2
		\,d\tau
		\right)^{1/2}
		\le C(r).
	\end{equation*}
	Since \(A\) is bounded, Cauchy's inequality gives
	\begin{equation*}
		|\nabla_XT_A^{\rm reg}(X_*)|
		\le
		C(r)\|\nabla u\|_{L^2
			(\mathbb R^{d+1}_{2r}\cup\mathbb R^{d+1}_{-2r})}.
	\end{equation*}
	Consequently,
	\begin{equation}\label{TA-est}
		|\nabla_XT_A(X_*)|
		\le
		c(r)\|A\|_{\dot C^{\log^\varkappa}(\mathbb R^d)}
		\|\nabla u\|_{L^\infty
			(\mathbb R^{d+1}_{2r}\cup\mathbb R^{d+1}_{-2r})}
		+
		C(r)\|\nabla u\|_{L^2
			(\mathbb R^{d+1}_{2r}\cup\mathbb R^{d+1}_{-2r})}.
	\end{equation}
	Finally, applying the lower-order estimate in Lemma \ref{leminterf} with
	\(n=1\), we obtain
	\begin{align}
		&\|\nabla_X\mathrm{LO}(u,g)\|_{L^\infty
			(\mathbb R^{d+1}_{r/2}\cup\mathbb R^{d+1}_{-r/2})}
	\le
		C(r)
		\left(
		\|\nabla u\|_{L^2
			(\mathbb R^{d+1}_{2r}\cup\mathbb R^{d+1}_{-2r})}
		+
		\|g\|_{L^2
			(\mathbb R^{d+1}_{2r}\cup\mathbb R^{d+1}_{-2r})}
		\right).
		\label{LO-Lip-est}
	\end{align}
	Combining \eqref{Th-est}, \eqref{Tg-est}, \eqref{TA-est}, and
	\eqref{LO-Lip-est}, and then repeating the same argument in the lower phase $\mathbb{R}^{d+1}_{-r/2}$,
	we get
	\begin{align}
		\|\nabla_{x,z}u\|_{L^\infty
			(\mathbb R^{d+1}_{r/2}\cup\mathbb R^{d+1}_{-r/2})}
		&\le
		C\|h\|_{C^{1,\log^\varkappa}(\mathbb R^d)}
		+
		C\|g\|_{C^{\log^\varkappa}
			(\mathbb R^{d+1}_{2r}\cup\mathbb R^{d+1}_{-2r})}
		\nonumber\\
		&\qquad\quad+
		c(r)\|A\|_{\dot C^{\log^\varkappa}(\mathbb R^d)}
		\|\nabla_{x,z}u\|_{L^\infty
			(\mathbb R^{d+1}_{2r}\cup\mathbb R^{d+1}_{-2r})}
		\nonumber\\
		&\qquad\quad+
		C(r)
		\left(
		\|\nabla_{x,z}u\|_{L^2
			(\mathbb R^{d+1}_{2r}\cup\mathbb R^{d+1}_{-2r})}
		+
		\|g\|_{L^2
			(\mathbb R^{d+1}_{2r}\cup\mathbb R^{d+1}_{-2r})}
		\right).
		\label{lipofu}
	\end{align}
	It remains to remove the larger \(L^\infty\)-norm on the right-hand side.
	Let
	\begin{equation*}
		\mathcal A_r
		:=
		\mathbb R^d\times
		\bigl([r/2,2r]\cup[-2r,-r/2]\bigr).
	\end{equation*}
	On \(\mathcal A_r\), the equation is separated from the interface. Applying
	the standard interior Lipschitz estimate for divergence-form elliptic
	equations with Dini-continuous coefficients separately in
	\(\mathbb R^d\times[r/2,2r]\) and
	\(\mathbb R^d\times[-2r,-r/2]\), we obtain
	\begin{align}
		\|\nabla_{x,z}u\|_{L^\infty(\mathcal A_r)}
		\le
		C(r)
		\left(
		\|\nabla_{x,z}u\|_{L^2
			(\mathbb R^{d+1}_{3r}\cup\mathbb R^{d+1}_{-3r})}
		+
		\|g\|_{C^{\log^\varkappa}
			(\mathbb R^{d+1}_{3r}\cup\mathbb R^{d+1}_{-3r})}
		\right).
		\label{interior-annulus-est}
	\end{align}
	Since
	\begin{equation*}
		\mathbb R^{d+1}_{2r}\cup\mathbb R^{d+1}_{-2r}
		=
		\left(
		\mathbb R^{d+1}_{r/2}\cup\mathbb R^{d+1}_{-r/2}
		\right)
		\cup
		\mathcal A_r,
	\end{equation*}
	we have
	\begin{align}
		&\|\nabla_{x,z} u\|_{L^\infty
			(\mathbb R^{d+1}_{2r}\cup\mathbb R^{d+1}_{-2r})}
\le
		\|\nabla_{x,z} u\|_{L^\infty
			(\mathbb R^{d+1}_{r/2}\cup\mathbb R^{d+1}_{-r/2})}
		+
		\|\nabla_{x,z} u\|_{L^\infty(\mathcal A_r)}.
		\label{large-strip-split}
	\end{align}
	Substituting \eqref{interior-annulus-est} and \eqref{large-strip-split}
	into \eqref{lipofu}, 
	we obtain
	\begin{align}
		\|\nabla_{x,z}u\|_{L^\infty
			(\mathbb R^{d+1}_{r/2}\cup\mathbb R^{d+1}_{-r/2})}
		&\le
		C\|h\|_{C^{1,\log^\varkappa}(\mathbb R^d)}
		+
		C\|g\|_{C^{\log^\varkappa}
			(\mathbb R^{d+1}_{3r}\cup\mathbb R^{d+1}_{-3r})}
		\nonumber\\
		&\qquad\quad+
		c(r)\|A\|_{\dot C^{\log^\varkappa}(\mathbb R^d)}
		\|\nabla_{x,z}u\|_{L^\infty
			(\mathbb R^{d+1}_{r/2}\cup\mathbb R^{d+1}_{-r/2})}
		\nonumber\\
		&\qquad\quad+
		C(r)
		\left(
		\|\nabla_{x,z}u\|_{L^2
			(\mathbb R^{d+1}_{3r}\cup\mathbb R^{d+1}_{-3r})}
		+
		\|g\|_{L^2
			(\mathbb R^{d+1}_{3r}\cup\mathbb R^{d+1}_{-3r})}
		\right).
		\label{pre-absorption}
	\end{align}
	Choose \(r\in(0,1)\) sufficiently small, depending on
	\(\|A\|_{\dot C^{\log^\varkappa}(\mathbb R^d)}\), so that
	\begin{equation*}
		c(r)\|A\|_{\dot C^{\log^\varkappa}(\mathbb R^d)}
		\le \frac12.
	\end{equation*}
	Absorbing the corresponding term into the left-hand side of
	\eqref{pre-absorption}, we obtain \eqref{Lip-estimate-interface}. The proof is complete. 
\end{proof}
             ~~~\vspace{0.2cm}\\       
			We conclude the appendix with two elementary singular-integral estimates used in
the proof. The first one is a Hölder estimate for a Calderón--Zygmund type
operator whose kernel depends on the base point.
					Let $\mathcal{K}:\mathbb{R}^d\times\mathbb{R}^d\to \mathbb{R}$ be such that 
					\begin{equation}\label{defczop}
						\begin{cases}
							&|\mathcal{K}(x,y)|\leq C|x-y|^{-d},\ \ \ \forall x,y\in\mathbb{R}^d, x\neq y,\\
							&\int_{|x-y|\geq 10 |\alpha|}|\delta_\alpha^x \mathcal{K}(x,y)||x-y|^a dy\lesssim |\alpha|^a,\ \ \ \forall \alpha\in \mathbb{R}^d, a\in(0,1).
						\end{cases}
					\end{equation}
					For any $h:\mathbb{R}^d\times \mathbb{R}^d\to \mathbb{R}$, define the integral operator $\T $ as following.
					\begin{align}
						\T h(x,x'):=\mathrm{P.V.}\int_{\mathbb{R}^d}\mathcal{K}(x,y) h(y,x') dy.
					\end{align}
					We have the following results.
					\begin{lemma}\label{lemsgit}
						Let $K$ satisfy \eqref{defczop} and 
						\begin{align}\label{kcanc}
							\sup_{0<r_1<r_2<\infty} \left|\int_{r_1<|x-y|<r_2}\mathcal{K}(x,y)dy\right|\leq c_1,
						\end{align}
						for a constant $c_1\geq 0$.
						Then for $h:\mathbb{R}^d\times \mathbb{R}^d\to \mathbb{R}$, $a\in(0,1)$, 
						\begin{align}
							&   \sup_{x,\alpha} \frac{|\T h(x,x)-\T h(x-\alpha,x)|}{|\alpha|^a}\lesssim \frac{1}{a(1-a)}\lceil h\rfloor_{a},\label{holdTf}
						\end{align}
						where 
						$$
						\lceil h\rfloor_{a}:=    \sup_{x,\alpha}\sup_{|z|\leq |\alpha|}\frac{| h(x,x+z)-h(x-\alpha,x+z)|}{|\alpha|^a}.
						$$
					More generally, for $\mathbf{p}=(p_i)_{i=1}^d\in[1,\infty]^d$, define the anisotropic Lebesgue space $L^{\mathbf{p}}_x=L^{p_1}_{x_1}\cdots L^{p_d}_{x_d}$, it holds
						\begin{align}\label{L2Tf}
							\sup_{\alpha}\frac{\|\T h(x,x)-\T h(x-\alpha,x)\|_{L^{\mathbf{p}}_x}}{|\alpha|^a}\lesssim \sup_{\alpha}\sup_{|z|\leq |\alpha|}\frac{\|h(x,x+z)-h(x-\alpha,x+z)\|_{L^{\mathbf{p}}_x}}{|\alpha|^a}.
						\end{align}
					\end{lemma}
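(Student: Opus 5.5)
The plan is the standard Calderón--Zygmund Hölder argument, carried out with the second variable of $h$ frozen at the evaluation point. Fix $x$ and $\alpha\neq0$, and write $\ell=|\alpha|$. We want to bound
\[
\T h(x,x)-\T h(x-\alpha,x)=\mathrm{P.V.}\!\int\mathcal K(x,y)h(y,x)\,dy-\mathrm{P.V.}\!\int\mathcal K(x-\alpha,y)h(y,x)\,dy .
\]
Both integrals use the same slice $g(\cdot):=h(\cdot,x)$, so this is a Hölder difference of a fixed function under a variable kernel. The quantity $\lceil h\rfloor_a$ controls $g$ only at the scale $|\alpha|$, since $|z|\le|\alpha|$ is required. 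The first step is therefore to express every increment of $g$ that we use as an increment of size comparable to $\ell$, based near $x$, of the form $h(x',x'+z)-h(x'-\beta,x'+z)$ with $|z|\le|\beta|$.

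We split the $y$-integration into the near region $|x-y|<10\ell$ and the far region $|x-y|\ge10\ell$.

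\emph{Near region.} Subtract constants using the cancellation \eqref{kcanc}. Write $\int_{|x-y|<10\ell}\mathcal K(x,y)\bigl(g(y)-g(x)\bigr)dy$ plus $g(x)\int_{|x-y|<10\ell}\mathcal K(x,y)\,dy$, and treat the second kernel the same way with centre $x-\alpha$ and constant $g(x-\alpha)$. The constant pieces would be bounded by $c_1|g|$, which is not allowed. To avoid this, we pair them: the difference $g(x)\int_{B(x,10\ell)}\mathcal K(x,\cdot)-g(x-\alpha)\int_{B(x,10\ell)}\mathcal K(x-\alpha,\cdot)$ must be handled together with the far region. I would instead subtract the single constant $g(x)$ from both integrals globally. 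The far-region kernel difference then integrates against $g(y)-g(x)$, while the leftover $g(x)\bigl[\int_{|x-y|<R}\mathcal K(x,y)\,dy-\int_{|x-y|<R}\mathcal K(x-\alpha,y)\,dy\bigr]$ vanishes in the P.V. limit by symmetric truncation. The near integrals are then bounded by
\[
\int_{|x-y|<10\ell}|x-y|^{-d}|g(y)-g(x)|\,dy\lesssim \lceil h\rfloor_a\int_0^{10\ell}\rho^{a-1}d\rho\sim a^{-1}\ell^a\lceil h\rfloor_a .
\]
This requires $|g(y)-g(x)|\lesssim|x-y|^a\lceil h\rfloor_a$ for $|y-x|\le 11\ell$. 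Taking $\beta=x-y$ and $z=0$ gives exactly $|h(x,x)-h(y,x)|$, which is admissible. For the $x-\alpha$ term the second argument is $x=(x-\alpha)+\alpha$ with $|\alpha|\lesssim|x-\alpha-y|$ on most of the ball; near the singularity $y\approx x-\alpha$ a chain through $x$ keeps $|z|\le|\beta|$.

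\emph{Far region.} Use the smoothness condition in \eqref{defczop}:
\[
\int_{|x-y|\ge10\ell}|\delta_\alpha^x\mathcal K(x,y)|\,|g(y)-g(x)|\,dy .
\]
This is the main obstacle. For $|y-x|\gg\ell$, the increment $g(y)-g(x)=h(y,x)-h(x,x)$ has step $|x-y|$, which is far larger than $\ell$. The admissibility $|z|\le|\beta|$ holds with $z=0$, giving the bound $|x-y|^a\lceil h\rfloor_a$. Here the sup in the definition of $\lceil h\rfloor_a$ is taken over all $\alpha$, so large steps are allowed. Then \eqref{defczop} with weight $|x-y|^a$ yields $\lesssim\ell^a\lceil h\rfloor_a$, with the constant $(1-a)^{-1}$ appearing from the dyadic summation $\sum_k 2^{-k(1-a)}$ when the condition is verified dyadically.

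For \eqref{L2Tf}, repeat the same decomposition. The pointwise bounds become $L^{\mathbf p}_x$ bounds via Minkowski's integral inequality, using translation invariance of the anisotropic norm: after substituting $y=x-\beta$, each increment is exactly a term of the form $\|h(x,x+z)-h(x-\beta,x+z)\|_{L^{\mathbf p}_x}$.
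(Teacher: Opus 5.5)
Your skeleton is the paper's. You split at $10|\alpha|$, subtract constants via \eqref{kcanc}, and in the far region combine the integrated smoothness condition of \eqref{defczop} with $|h(y,x)-h(x,x)|\le|x-y|^a\lceil h\rfloor_a$, which is an admissible ($z=0$) increment at every scale. The near region around the pole $x$ is also fine. Two steps fail, however, and they are exactly the two the paper's proof passes over without comment.

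\textbf{First gap: the constant term.} Condition \eqref{kcanc} only bounds truncated integrals of $\mathcal K(x,\cdot)$ by $c_1$. It does not make $\mathrm{P.V.}\!\int\mathcal K(x,y)\,dy$ vanish or be independent of $x$, and ``symmetric truncation'' does not help because $\mathcal K$ is not assumed odd. So your leftover $g(x)\bigl[\mathrm{P.V.}\!\int\mathcal K(x,\cdot)-\mathrm{P.V.}\!\int\mathcal K(x-\alpha,\cdot)\bigr]$ is only $O(c_1|g(x)|)$, which $\lceil h\rfloor_a$ does not control. This cannot be repaired under the stated hypotheses. Take $\mathcal K(x,y)=c(x)\phi(x-y)$ with $0\le\phi\in C_c^\infty$, $\int\phi=1$, and $c$ bounded, Lipschitz and nonconstant. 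It satisfies \eqref{defczop} and \eqref{kcanc}, and $h\equiv1$ has $\lceil h\rfloor_a=0$, yet $\T h(x,x)-\T h(x-\alpha,x)=c(x)-c(x-\alpha)\neq0$. What is needed is exact cancellation, $\int_{r_1<|x-y|<r_2}\mathcal K(x,y)\,dy=0$, which holds for $\nabla^2\Phi$ by \eqref{S6eq1}. The paper's bound $2c_1|h(x_1,x_1)-h(x_2,x_1)|$ for the constant terms tacitly uses it.

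\textbf{Second gap: the near region of the second kernel.} This one is more serious. After subtracting $g(x)$, the integrand $\mathcal K(x-\alpha,y)(g(y)-g(x))$ contains $\mathcal K(x-\alpha,y)\,(g(x-\alpha)-g(x))$, which is not integrable at the pole $y=x-\alpha$. Bounding the kernel by $|x-y|^{-d}$ puts the singularity in the wrong place. You must recenter at $g(x-\alpha)$, as the paper does; the resulting constant is harmless because $g(x)-g(x-\alpha)$ is admissible with $z=0$. But then you need $|g(y)-g(x-\alpha)|\lesssim|y-(x-\alpha)|^a$ for $|y-(x-\alpha)|\ll|\alpha|$. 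Your chain through $x$ gives only $\lesssim|\alpha|^a\lceil h\rfloor_a$, which does not decay at the pole and leaves a logarithmically divergent integral. The needed bound is not an admissible increment: the second argument $x$ sits at distance $\approx|\alpha|$ from both points while the step is much smaller. The paper's near-region bound for $j=2$ uses it without justification, and it is genuinely unavailable.

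Here is a counterexample. Take $d=1$ and $\mathcal K=(x-y)^{-1}$, so $c_1=0$. Let $h(y,w)=g_\epsilon(y)\chi(w)$, where $\chi\in C_c^\infty(\mathbb R)$ is supported in $[0.99,1.01]$ with $0\le\chi\le1$ and $\chi(1)=1$. Let $g_\epsilon$ be odd and Lipschitz, with values in $[0,1]$ on $(0,\infty)$, supported in $[-\frac12,\frac12]$, and equal to $\operatorname{sgn}y$ for $\epsilon\le|y|\le\frac14$. Every nonzero admissible increment has $|\alpha|>\frac15$, so $\lceil h\rfloor_a\le10$ uniformly in $\epsilon$. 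On the other hand, $|\T h(1,1)|\le2$ while $-\T h(0,1)=\int g_\epsilon(y)\,\frac{dy}{y}\ge2\log\frac1{4\epsilon}$. Hence \eqref{holdTf}, and likewise \eqref{L2Tf}, fails as stated.

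If you replace $\lceil h\rfloor_a$ by $\sup_w\|h(\cdot,w)\|_{\dot C^a}$ (respectively its $L^{\mathbf p}$ analogue) and assume exact cancellation, your decomposition, recentered at each pole, does close.
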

					\begin{proof}
                    Let $x_1,x_2\in\mathbb R^d$ and set $\delta=|x_1-x_2|$. By the cancellation
assumption \eqref{kcanc}, we may subtract constants in the principal value
integrals. Hence 
						\begin{align*}
							&	|\T h(x_1,x_1)-\T h(x_2,x_1)|\\
							&\leq \sum_{j=1,2}\left|\int_{|x_j-y|\leq 10\delta}\mathcal{K}(x_j,y)(h(y,x_1)-h(x_j,x_1))dy\right|+ 2c_1|h(x_1,x_1)-h(x_2,x_1)|\\
							&+\int_{\mathbb{R}^d} \left|\mathbf{1}_{|x_1-y|>10\delta}\mathcal{K}(x_1,y)-\mathbf{1}_{|x_2-y|>10\delta}\mathcal{K}(x_2,y)\right||h(y,x_1)-h(x_1,x_1)|dy\\
							&\lesssim \left(\int_{|y|\leq 10\delta}\frac{|y|^a}{|y|^d}dy+\delta^a+\int_{|x_1-y|>10\delta}|\mathcal{K}(x_1,y)-\mathcal{K}(x_2,y)||y-x_1|^ady\right.\\
							&\quad\left.\quad\quad+\int_{9\delta\leq |x_1-y|,|x_2-y|\leq 10\delta} (|\mathcal{K}(x_1,y)|+|\mathcal{K}(x_2,y)|)|y-x_1|^ady\right)\lceil h\rfloor_{a}\\
							&\lesssim \frac{\delta^a\lceil h\rfloor_{a}}{a(1-a)},
						\end{align*}
						which leads to \eqref{holdTf}.  The estimate \eqref{L2Tf} follows similarly by setting $x_1=x, x_2=x-\alpha$ and taking the $L^{\mathbf{p}}$ norm in $x$. This completes the proof of the lemma.
					\end{proof}\vspace{0.2cm}\\

The next lemma will be used to pass from estimates with a frozen parameter
to estimates in which the frozen parameter is chosen depending on the
spatial point.  In our application the frozen parameter is
\[
\b=\nabla f(x).
\]
Thus, even if a quantity \(g(\b,\cdot)\) is controlled in \(L^2_x\)
uniformly for each fixed \(\b\), we still need a mechanism to control
\[
g(\nabla f(x),x)
\]
as a function of \(x\).  The following parameter-selection estimate gives
such a mechanism.  It only uses the boundedness of the range of
\(\nabla f\); no regularity of the selecting map \(x\mapsto\nabla f(x)\) is
needed.  The price is Sobolev regularity of \(g\) in the parameter variable,
above the threshold \(d/2\).
\begin{lemma}\label{lem:param-selection}
Let \(k,d\ge1\), let \(U\subset\mathbb R^k\) be open, and let
\(K\Subset U\). Let \(h:\mathbb R^d\to K\) be measurable. Let
\(l\in\mathbb N\) satisfy
$
l>\frac{k}{2}.
$
Then there exists an open set \(U_0\Subset U\), with \(K\Subset U_0\), such
that for every function \(g(\beta,x):U\times\mathbb R^d\to\mathbb R^N\) with
\[
\nabla_\beta^\alpha g\in L^2_\beta(U_0;L^2_x(\mathbb R^d)),
\qquad |\alpha|\le l,
\]
one has
\begin{equation}\label{param-selection-L2}
\|g(h(\cdot),\cdot)\|_{L^2_x(\mathbb R^d)}
\lesssim_{K,U_0,l}
\sum_{|\alpha|\le l}
\|\nabla_\beta^\alpha g\|_{L^2_\beta(U_0;L^2_x(\mathbb R^d))}.
\end{equation}
In particular,
\begin{equation}\label{param-selection-L2-sup}
\|g(h(\cdot),\cdot)\|_{L^2_x(\mathbb R^d)}
\lesssim_{K,U_0,l}
\sum_{|\alpha|\le l}
\sup_{\beta\in U_0}
\|\nabla_\beta^\alpha g(\beta,\cdot)\|_{L^2_x(\mathbb R^d)}.
\end{equation}
The constant is independent of the measurable selector \(h\) and of \(g\).
\end{lemma}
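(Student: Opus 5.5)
The plan is a Sobolev embedding in the parameter variable, applied pointwise in $x$ and then integrated. Fix an open set $U_0$ with $K\Subset U_0\Subset U$, and choose $\delta>0$ with $\operatorname{dist}(K,\partial U_0)>2\delta$. Pick a cutoff $\chi\in C_c^\infty(U_0)$ with $\chi\equiv1$ on the $\delta$-neighbourhood of $K$. For almost every $x\in\mathbb R^d$, consider the function $\beta\mapsto \chi(\beta)g(\beta,x)$. It is compactly supported in $U_0$, so we may regard it as a function on all of $\mathbb R^k$. Since $l>k/2$, the Sobolev embedding $H^l(\mathbb R^k)\hookrightarrow C_b(\mathbb R^k)$ gives
\[
\sup_{\beta\in K}|g(\beta,x)|\le \sup_{\beta}|\chi(\beta)g(\beta,x)|\lesssim_{l}\|\chi g(\cdot,x)\|_{H^l(\mathbb R^k)}\lesssim_{\chi,l}\sum_{|\alpha|\le l}\|\nabla_\beta^\alpha g(\cdot,x)\|_{L^2_\beta(U_0)},
\]
where the last step uses the Leibniz rule and the bounds on the derivatives of $\chi$.

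Because $h(x)\in K$ for every $x$, it follows that $|g(h(x),x)|\le\sup_{\beta\in K}|g(\beta,x)|$. Squaring the previous bound, integrating in $x$, and using Fubini to exchange $\int dx$ and $\int d\beta$ then yields \eqref{param-selection-L2}. The constant depends only on $\chi$ (hence on $K$ and $U_0$), on $l$ and on $k$; it does not depend on $h$ or $g$. The second estimate \eqref{param-selection-L2-sup} follows immediately from $\|F\|_{L^2_\beta(U_0)}\le|U_0|^{1/2}\sup_{U_0}|F|$, since $U_0$ is bounded.

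The main obstacle is measurability and the pointwise meaning of $g(h(x),x)$, since $g$ is a priori only an $L^2$ object. I would handle it as follows.

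First, by Fubini, for a.e. $x$ the function $\chi g(\cdot,x)$ lies in $H^l(\mathbb R^k)$ and thus has a continuous representative. We interpret $g(h(x),x)$ through this representative.

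Second, this interpretation gives a jointly measurable function. To see this, approximate $g$ in the norm $\sum_{|\alpha|\le l}\|\nabla_\beta^\alpha\,\cdot\,\|_{L^2_\beta(U_0;L^2_x)}$ by smooth functions $g_n$. For each smooth $g_n$ the composition $g_n(h(x),x)$ is clearly measurable. Applying the estimate just proved to the differences $g_n-g_m$ shows that $g_n(h(\cdot),\cdot)$ is Cauchy in $L^2_x$. The same estimate identifies the limit with $g(h(\cdot),\cdot)$ defined via the continuous representatives, which proves both measurability and the bound.
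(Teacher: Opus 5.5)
Your proof is correct and follows the paper's argument essentially verbatim: cut off with $\chi\in C_c^\infty(U_0)$ equal to one near $K$, apply the embedding $H^l(\mathbb R^k)\hookrightarrow L^\infty(\mathbb R^k)$ to $\beta\mapsto\chi(\beta)g(\beta,x)$ for a.e.\ $x$, use $h(x)\in K$, then square and integrate in $x$, and use $|U_0|<\infty$ for the second bound. Your extra paragraph on interpreting $g(h(x),x)$ through continuous representatives, and on its measurability via smooth approximation and an a.e.\ convergent subsequence, fills in a point that the paper's proof passes over silently.
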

\begin{proof}
Choose \(U_0\Subset U\) such that \(K\Subset U_0\), and choose
\(\chi\in C_c^\infty(U_0)\) with \(\chi\equiv1\) on a neighborhood of
\(K\). For a.e. \(x\in\mathbb R^d\), apply the Sobolev embedding
\(H^l(\mathbb R^k)\hookrightarrow L^\infty(\mathbb R^k)\) to
\[
\beta\mapsto \chi(\beta)g(\beta,x).
\]
Since \(h(x)\in K\) and \(\chi=1\) on \(K\), we get
\[
|g(h(x),x)|
\le
\|\chi g(\cdot,x)\|_{L^\infty_\beta}
\lesssim\|\chi g(\cdot,x)\|_{H^l_\beta(\mathbb R^k)}.
\]
By Leibniz' rule and the fact that \(\chi\) is fixed,
\[
\|\chi g(\cdot,x)\|_{H^l_\beta(\mathbb R^k)}
\lesssim_{K,U_0,l}
\sum_{|\alpha|\le l}
\|\nabla_\beta^\alpha g(\cdot,x)\|_{L^2_\beta(U_0)}.
\]
Therefore
\[
|g(h(x),x)|
\lesssim_{K,U_0,l}
\sum_{|\alpha|\le l}
\|\nabla_\beta^\alpha g(\cdot,x)\|_{L^2_\beta(U_0)}.
\]
Squaring, integrating in \(x\), and using that the number of multiindices
\(|\alpha|\le l\) is finite, we obtain
\[
\begin{aligned}
\|g(h(\cdot),\cdot)\|_{L^2_x}^2
&\lesssim_{K,U_0,l}
\sum_{|\alpha|\le l}
\int_{\mathbb R^d}
\|\nabla_\beta^\alpha g(\cdot,x)\|_{L^2_\beta(U_0)}^2\,dx
\\
&\lesssim_{K,U_0,l}
\sum_{|\alpha|\le l}
\|\nabla_\beta^\alpha g\|_{L^2_\beta(U_0;L^2_x)}^2 .
\end{aligned}
\]
Taking square roots gives \eqref{param-selection-L2}. Since \(U_0\) has
finite measure,
\[
\|\nabla_\beta^\alpha g\|_{L^2_\beta(U_0;L^2_x)}
\le
|U_0|^{1/2}
\sup_{\beta\in U_0}
\|\nabla_\beta^\alpha g(\beta,\cdot)\|_{L^2_x},
\]
which gives \eqref{param-selection-L2-sup}. The proof is complete.
\end{proof}

				\end{document}